\documentclass{amsart}
\usepackage{graphicx}
\usepackage{amsthm}
\usepackage{tikz-cd}
\usepackage{amssymb}
\usepackage{amsmath}
\usepackage{amsthm}
\usepackage{MnSymbol}
\usepackage{hyperref}
\usepackage{mathtools}
\usepackage{graphicx}
\usepackage{mathrsfs}
\usepackage{bbm}
\usepackage[dvipsnames]{xcolor}
\usetikzlibrary{fit,positioning,calc}
\usepackage{stmaryrd}
\usepackage{booktabs}
\usepackage{tabularx}
\usepackage{array}

\theoremstyle{plain}
\newtheorem{theorem}{Theorem}[section]
\newtheorem*{theorem*}{Theorem}

\theoremstyle{plain}
\newtheorem{proposition}[theorem]{Proposition}

\theoremstyle{plain}

\theoremstyle{plain}
\newtheorem{corollary}[theorem]{Corollary}

\theoremstyle{definition}
\newtheorem{example}[theorem]{Example}

\theoremstyle{definition}
\newtheorem{definition}[theorem]{Definition}

\theoremstyle{definition}
\newtheorem{remark}[theorem]{Remark}

\theoremstyle{plain}
\newtheorem{lemma}[theorem]{Lemma}

\theoremstyle{definition}
\newtheorem{assumption}[theorem]{Assumption}

\theoremstyle{plain}
\newtheorem{introthm}{Theorem}

\hypersetup{colorlinks=true,
            linkcolor=RoyalBlue,
            citecolor=ForestGreen,
            urlcolor=Blue}

\def\AA{\mathbb{A}}

\def\DD{\mathbb{D}}
\def\FF{\mathbb{F}}
\def\GG{\mathbb{G}}
\def\NN{\mathbb{N}}
\def\QQ{\mathbb{Q}}
\def\VV{\mathbb{V}}
\def\ZZ{\mathbb{Z}}
\newcommand\cA{\mathcal{A}}
\newcommand\cC{\mathcal{C}}
\newcommand\cV{\mathcal{V}}
\newcommand\cO{\mathcal{O}}
\newcommand\cP{\mathcal{P}}
\newcommand\cT{\mathcal{T}}
\newcommand\cF{\mathcal{F}}
\newcommand\frakm{\mathfrak{m}}
\newcommand\frp{\mathfrak{p}}
\newcommand\tilG{\widetilde{G}}
\newcommand\tilH{\widetilde{H}}
\newcommand\alg{\textup{alg}}
\newcommand{\coker}{\textup{coker}}
\newcommand{\Fil}{\textup{Fil}}

\newcommand\Frac{\textup{Frac}}
\newcommand\Frob{\textup{Frob}}
\newcommand\Gal{\textup{Gal}}
\newcommand\id{\textup{id}}

\newcommand\pr{\textup{pr}}

\newcommand{\red}{\textup{red}}
\newcommand{\Rep}{\textup{Rep}}
\newcommand{\Res}{\textup{Res}}
\newcommand\res{\textup{res}}
\newcommand\Aut{\textup{Aut}}
\newcommand\Hom{\textup{Hom}}
\newcommand\End{\textup{End}}
\newcommand\GL{\textup{GL}}
\newcommand{\perf}{\textup{perf}}
\newcommand{\cris}{\textup{cris}}
\newcommand{\Ainf}{\mathbb{A}_{\textup{inf}}}
\newcommand{\Bcris}{\mathbb{B}_{\textup{cris}}}
\newcommand{\AinfK}{\mathbb{A}_{\textup{inf},K}}
\newcommand{\BcrisK}{\mathbb{B}_{\textup{cris},K}}
\newcommand{\BdR}{\mathbb{B}_{\textup{dR}}}
\newcommand{\Acris}{\mathbb{A}_{\textup{cris}}}
\newcommand{\AcrisK}{\mathbb{A}_{\textup{cris},K}}
\newcommand{\cts}{\mathrm{cts}}
\newcommand{\phimod}{\textup{$\varphi$-Mod}}
\newcommand{\sep}{\textup{sep}}
\newcommand{\LT}{\textup{LT}}
\newcommand{\sh}{\textup{sh}}
\newcommand{\OC}{\cO_C}
\newcommand{\OCb}{\cO_{C^\flat}}
\newcommand{\Gmhat}{\widehat{\GG}_m} 

\newcommand{\Fp}{\FF_p}
\newcommand{\Zp}{\ZZ_p}
\newcommand{\Qp}{\QQ_p}
\newcommand{\Fq}{\FF_q}
\newcommand{\cc}{{\circ\circ}}
\newcommand{\isomto}{\xrightarrow{\sim}}

\newcommand{\BR}{R^\sep}
\newcommand{\Fpbar}{\overline{\FF}_p}
\newcommand{\Cb}{C^\flat}
\newcommand{\sA}{\mathscr{A}}

\newcommand{\Comm}{\mathrm{Comm}}
\newcommand{\tsigma}{\tilde{\sigma}}
\newcommand{\Fit}{\mathrm{Fit}}
\newcommand{\Id}{\mathrm{Id}}

\makeatletter
\newcommand{\notoc@start}{%
  \global\let\notoc@orig@tocwrite\@tocwrite
  \global\let\@tocwrite\notoc@tocwrite
}
\newcommand{\notoc@tocwrite}[2]{%
  \global\let\@tocwrite\notoc@orig@tocwrite
}
\newcommand{\sectionnotoc}[1]{%
  \notoc@start
  \section*{#1}%
}
\newcommand{\subsectionnotoc}[1]{%
  \notoc@start
  \subsection*{#1}%
}
\makeatother

\title{Formal groups and $(\varphi,\Gamma)$-modules}
\author{Daishi Kiyohara}
\date{\today}
\begin{document}
\maketitle
\begin{abstract}
Let $K/E$ be a finite unramified extension of $p$-adic local fields, and let $H$ be a one-dimensional formal $\cO_E$-module of finite height over $\cO_K$.
We introduce the exponential period map of $H$ and use it to construct a complete regular local ring $R_{H,K}$ with imperfect residue field, an endomorphism $\varphi_E$, and a commuting action of $\Gamma=\Gal(K(H[p^\infty](\overline{K}))/K)$.
We prove an equivalence of categories between finitely generated $\cO_E$-modules with a continuous $\Gal_K$-action and \'{e}tale $(\varphi_E,\Gamma)$-modules over $R_{H,K}$.
This recovers the classical cyclotomic and Lubin-Tate equivalences in the corresponding cases.
In general, $R_{H,K}$ can have Krull dimension greater than one, and $\varphi_E$ need not lift the $q_E$-power Frobenius modulo $\pi_E$.
The proof uses $F$-dynamical systems over $\cO_E$, which combine contraction modulo $\pi_E$ with Frobenius on the residue field.
For every flat $F$-dynamical system, we establish an equivalence of categories between \'{e}tale $\varphi$-modules and continuous representations of the absolute Galois group of the residue field on finitely generated $\cO_E$-modules.
\end{abstract}
\tableofcontents

\newpage
\section{Introduction}
\subsection{Main theorem}
Fontaine's theory of \'{e}tale $(\varphi,\Gamma)$-modules \cite{Fontaine1990Representations} is a cornerstone of the theory of Galois representations of a $p$-adic local field $K$.
It is a powerful tool for studying Galois representations through their relation to $p$-adic Hodge theory \cite{Cherbonnier1998Representations,Berger2002Representationsa} and has played a crucial role in the $p$-adic local Langlands correspondence for $\GL_2(\Qp)$ \cite{Colmez2010Representations}.
Fontaine's construction involves the cyclotomic extension $K(\zeta_{p^\infty})/K$ with $\Gamma=\Gal(K(\zeta_{p^\infty})/K)$.
In his original paper \cite[Introduction]{Fontaine1990Representations}, Fontaine proposed extending the theory to a wider class of extensions $K_{\infty}$ over $K$ whose Galois group $\Gamma$ is a $p$-adic Lie group.
A central difficulty in extending the classical construction is lifting the action on the field of norms \cite{Wintenberger1983corps} to a Cohen ring together with a compatible Frobenius.
Berger subsequently proved that, in the coefficient-linear one-variable setting, the existence of such a lift satisfying the finite-height condition forces $\Gamma$ to be abelian \cite[Theorem B]{Berger2014Lifting}.

In the present paper, we develop a generalization of $(\varphi,\Gamma)$-modules to torsion towers of one-dimensional formal groups.
More precisely, we fix a finite unramified extension $K/E$ of $p$-adic local fields, and let $H$ be a one-dimensional formal $\cO_E$-module of finite height over $\cO_K$.
Let $L/K$ be any finite extension in a fixed algebraic closure $\overline{K}$.
Set
\[K_\infty=K(H[p^\infty](\overline{K})),\quad L_\infty=LK_\infty,\quad\Gamma_L=\Gal(L_\infty/L),\quad \Gamma=\Gamma_K.\]
We will construct a complete regular local ring $R_{H,L}$ equipped with commuting actions of $\varphi_E$ and $\Gamma_L$.
An \'{e}tale $(\varphi_E,\Gamma_L)$-module over $R_{H,L}$ is a finitely generated \'{e}tale $\varphi_E$-module with a commuting semilinear $\Gamma_L$-action satisfying the continuity condition of Definition \ref{def:phigammamod}.
\begin{introthm}[Theorem \ref{thm:phigamma_equiv}]\label{introthm:main}
The category of finitely generated $\cO_E$-modules with a continuous $\Gal_L$-action is equivalent to the category of \'{e}tale $(\varphi_E,\Gamma_L)$-modules over $R_{H,L}$.
\end{introthm}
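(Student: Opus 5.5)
The argument runs in two stages. First an equivalence for $\varphi_E$-modules alone, via the general machinery of $F$-dynamical systems announced in the abstract. Then descent along $\Gamma_L$ to bring in the full Galois group $\Gal_L$.

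\textbf{Stage 1: the $\varphi_E$-module equivalence.} I will check that $R_{H,L}$, with its endomorphism $\varphi_E$, is a flat $F$-dynamical system over $\cO_E$. Concretely, $\varphi_E$ should act modulo $\pi_E$ as a contraction combined with the $q_E$-Frobenius on the imperfect residue field $k_{H,L}$. The construction via the exponential period map of $H$ should identify $k_{H,L}$ with the field of norms $X_K(L_\infty)$, or a completion of it. Taking $\Gal_{k_{H,L}} \cong \Gal_{L_\infty} = \ker(\Gal_L\to\Gamma_L)$ from Fontaine--Wintenberger, the general theorem gives an equivalence
\[
T\mapsto D(T)=(R^{\sep}\otimes_{\cO_E}T)^{\Gal_{L_\infty}}
\]
between finitely generated $\cO_E$-modules with continuous $\Gal_{L_\infty}$-action and \'etale $\varphi_E$-modules over $R_{H,L}$. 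Here $R^{\sep}$ is the $\pi_E$-adically completed strict henselian/separable-closure lift attached to the dynamical system. The quasi-inverse is $M\mapsto (R^{\sep}\otimes M)^{\varphi_E=1}$.

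\textbf{Stage 2: extension to $\Gal_L$ and $\Gamma_L$.} The key input is that $R^{\sep}$ carries a $\Gal_L$-action commuting with $\varphi_E$ and extending the $\Gamma_L$-action on $R_{H,L}=(R^{\sep})^{\Gal_{L_\infty}}$. This should come from functoriality of the exponential period construction: $\Gal_L$ acts on $\overline K$, hence on the periods of $H$ over $\cO_{\mathbb C_p}$. Given that action, for a $\Gal_L$-representation $T$ the module $D(T)$ inherits a semilinear $\Gamma_L$-action commuting with $\varphi_E$. Continuity in the sense of Definition \ref{def:phigammamod} follows from continuity of the action on $R^{\sep}$ together with $T$ being finitely generated, arguing by dévissage on $\pi_E^n$-torsion. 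Conversely, for $M$ an \'etale $(\varphi_E,\Gamma_L)$-module, $\Gal_L$ acts diagonally on $R^{\sep}\otimes M$ through $\Gamma_L$ on $M$. This action preserves $\varphi_E$-invariants and recovers a $\Gal_L$-action on $V(M)$. The comparison isomorphisms $R^{\sep}\otimes_{R_{H,L}}D(T)\cong R^{\sep}\otimes_{\cO_E}T$ from Stage 1 are $\Gal_L$-equivariant by construction, so the functors remain mutually quasi-inverse. Full faithfulness for morphisms is then immediate from Stage 1, since equivariance on both sides corresponds.

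\textbf{Main obstacle.} I expect the hardest step to be the identification in Stage 1: matching the residue field of $R_{H,L}$, and its separable closure, with the field of norms of $L_\infty/L$. This is needed so that the Galois group the dynamical-system theorem produces is exactly $\Gal_{L_\infty}$, compatibly with the $\Gal_L$-action. This is delicate because $R_{H,L}$ may have Krull dimension $>1$, and $\varphi_E$ need not lift Frobenius, so the classical field-of-norms argument does not apply directly.

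To handle it, I will first treat $L=K$ and compute $R_{H,K}/\pi_E$ through the exponential period map. The aim is to realize it inside $\OCb$, with fraction field dense in the perfection of $X_K(K_\infty)$. Then I will show that finite étale extensions of $k_{H,K}$ correspond to finite extensions of $K_\infty$, via tilting and almost purity. For general $L$, I will define $R_{H,L}$ as the unique finite étale extension of $R_{H,K}$ lifting the corresponding extension of residue fields, and reduce to the case $L=K$.
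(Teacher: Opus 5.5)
Your overall architecture is the paper's. You apply Theorem \ref{thm:FDS_cat_equiv} to the flat $F$-dynamical system $(R_{H,L},\varphi_E)$ to get $\Rep_{\cO_E}(\Gal_{L_\infty})\simeq\textup{$\varphi_E$-Mod}_{R_{H,L}}$, and then add $\Gamma_L$ through the diagonal $\Gal_L$-action and the equivariance of the comparison maps.

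The first gap is in the identification $\Gal_{k_{H,L}}\cong\Gal_{L_\infty}$. The residue field is not the field of norms. By Corollary \ref{cor:description_kh}, $k_H\cong\Frac(\kappa'\llbracket t_1,\cdots,t_n\rrbracket)$, which is not a local field once the absolute height $n$ exceeds one, so Fontaine--Wintenberger says nothing about it. Your fallback (density of $k_H^\perf$ in $\widehat{K}_\infty^\flat$ plus tilting) is the right mechanism, but density alone is not enough. It only gives an injection $\Gal_{K_\infty}\hookrightarrow\Gal_{k_H}$ whose image is the decomposition group $D_w$ (Theorem \ref{thm:decomposition}). For $n>1$ this is a proper subgroup, because $k_H$ is not henselian (Corollary \ref{cor:henselian_absheight_one}). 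The comparison holds for $k_{H,K}$ precisely because it is the henselization $(k_H^\sep)^{D_w}$. Only then does each finite separable $\ell/k_{H,K}$ carry a unique extension of $w$, so that $\ell\mapsto\widehat{\ell^\perf}$ matches finite separable extensions of $k_{H,K}$ with finite extensions of $\widehat{K}_\infty^\flat$ of the same degree, which tilting then matches with finite extensions of $\widehat{K}_\infty$. You need to make this henselization step explicit.

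Relatedly, you cannot realize $R_{H,K}/\pi_E$ inside $\OCb$ as soon as $df>1$. It is $k_{H,K}\llbracket s_{k,i}\rrbracket$ with $s_{k,i}=t_{k,i}-t_{0,i}^{q_E^k}$, and the reduced period map kills every $s_{k,i}$; this is exactly why $\varphi_E$ is not a Frobenius lift. Periods only see the residue field, so contraction and flatness of $\varphi_E$ must be proved on the abstract model $\cA_H$. The paper shows $\overline{\varphi}^{fd}(\cA_H/\pi_E)\subset(\cA_H/\pi_E)^p$, and that $\varphi_E$ is finite on the regular ring $\cA_H$, hence flat.

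The second gap is continuity. The $\Gal_L$-action on $R_H^\sep$ is not continuous for its own $\frakm$-adic topology. Already for $H=\Gmhat$ over $\Qp$ one has $\frakm=(p)$ and $\gamma(T)-T=(1+T)^{\chi(\gamma)}-1-T\notin pR_{H,\Qp}$ for every $\gamma\ne1$. So ``continuity on $R^\sep$ plus d\'evissage'' has no content until you say which topology you use. The one in Definition \ref{def:phigammamod} comes from the period map $R_H^\sep\to B=W_{\cO_E}(\Cb)$ of Lemma \ref{lem:rhsep_wcb}. The paper simply base-changes the comparison isomorphism to $B$. There $\DD_{H,L}(V)\otimes_{R_{H,L}}B\cong V\otimes_{\cO_E}B$ is a homeomorphism for the finite-module weak topologies, and the action on the right is visibly continuous, so no d\'evissage is needed.

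More seriously, for the converse you never show that $\VV_{H,L}(M)$ is a \emph{continuous} representation. This is the only place the continuity hypothesis on $M$ enters. One transports continuity through $\VV_{H,L}(M)\otimes_{\cO_E}B\cong M\otimes_{R_{H,L}}B$. One then uses Lemma \ref{lem:padic_weak_topologies}: the weak topology on $B$ restricts to the $\pi_E$-adic topology on $\cO_E$ and to the discrete topology on $\cO_E/\pi_E^n$. This shows $v\mapsto v\otimes1$ is a topological embedding. Without this step your quasi-inverse does not land in $\Rep_{\cO_E}(\Gal_L)$.
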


Assume $E=L=K$, and let $h$ denote the relative $\cO_K$-height.
The following table summarizes several examples,
where $\dim R_{H,K}$ denotes the Krull dimension.
\vspace{0.2cm}
\begin{center}
\begingroup
\small
\renewcommand{\arraystretch}{1.15}
\renewcommand{\tabularxcolumn}[1]{m{#1}}

\begin{tabularx}{\linewidth}{@{}
  >{\raggedright\arraybackslash}m{0.46\linewidth}
  >{\centering\arraybackslash}X
  >{\centering\arraybackslash}m{0.15\linewidth}
@{}}
\toprule
Formal group $H$
  & $\Gamma$
  & $\dim R_{H,K}$ \\
\midrule

$\widehat{\mathbb G}_m$ with $K=\mathbb Q_p$
  & $\mathbb Z_p^\times$
  & $1$ \\

\addlinespace[0.6ex]
Lubin--Tate over $\mathcal O_K$
  & $\mathcal O_K^\times$
  & $1$ \\

\addlinespace[0.6ex]
Relative height $h>1$ over $\cO_K$, with absolute endomorphism ring $\mathcal O_K$
  & An open subgroup of $\mathrm{GL}_h(\mathcal O_K)$
  & $h^2-h+1$ \\

\addlinespace[0.6ex]
Potentially Lubin--Tate for an unramified extension $K'/K$
of degree $h>1$ (Section \ref{sec:pot_lubintate})
  & $\mathcal O_{K'}^\times
       \rtimes \operatorname{Gal}(K'/K)$
  & $h$ \\

\bottomrule
\end{tabularx}
\endgroup
\end{center}
\vspace{0.2cm}
The first two cases recover the classical cyclotomic equivalence \cite{Fontaine1990Representations} and Lubin--Tate equivalence \cite{Ren2009Galois} respectively.
When the relative height $h$ is greater than one, the base ring $R_{H,K}$ has Krull dimension greater than one.
The endomorphism $\varphi_K$ on $R_{H,K}$ is induced from the canonical Frobenius on the period ring $\AinfK$, but it does not reduce modulo $\pi_K$ to the $q_K$-power Frobenius if $h>1$, where $q_K=\#\kappa_K$.

\subsection{Overview of construction}
For the remainder of the introduction, we assume that $E=L=K$.
The equivalence is constructed from \textit{exponential periods} of formal groups.
Let $H$ be a one-dimensional formal $\cO_K$-module over $\cO_K$, let $h$ be the $\cO_K$-height, and let $T(H)$ be the integral Tate module.
We construct the \textit{exponential period map}, an injective $\Gal_K$-equivariant $\cO_K$-linear homomorphism
\[\tau_H:T(H)\to H(\AinfK).\]
Here, $H(\AinfK)$ consists of topologically nilpotent elements in the weak topology with the $\cO_K$-module structure induced by the formal group law of $H$.
Rigidity of the universal cover $\widetilde{H}$ identifies $\widetilde{H}(\cO_C)$ with $\widetilde{H}(\AinfK)$.
Projecting the lift of a compatible system of division points to its zeroth component gives its exponential period.

Applying a formal logarithm $\ell$ to exponential periods recovers the usual crystalline periods.
More precisely, let $[\ell]$ be the class in the de Rham realization $D_K$ of the Dieudonn\'{e} module.
Then
\[\ell(\tau_H(\alpha))=\langle [\ell],\alpha\rangle\in\BcrisK^+\]
for every $\alpha\in T(H)$, where $\langle \,-,-\,\rangle$ is the $p$-adic integration pairing \cite{Colmez1992Periodes}.

We first construct an abstract ring $\cA_H$ with commuting actions of $\varphi_K$ and $\Gamma$ which maps to $\AinfK$ using Frobenius iterates of exponential periods.
The absolute endomorphism ring of $H$ is defined as the union of endomorphism rings $\End_{\cO_{K'}}(H)$ for all finite extensions $K'$ of $K$.
In our setting, the absolute endomorphism ring $\Lambda_H$ is the ring of integers of a finite unramified extension $E_H/K$, and $T(H)$ is free of rank $n=h/[E_H:K]$ over $\Lambda_H$.
The choice of a $\Lambda_H$-basis $\alpha_1,\cdots,\alpha_n$ of $T(H)$ gives an isomorphism
\[\cA_H\cong\Lambda_H[[t_{k,i}:0\le k\le h-1,1\le i\le n]]\]
such that the exponential periods $\tau_i=\tau_H(\alpha_i)$ define a homomorphism
\[\xi_H:\cA_H\to\AinfK,\quad t_{k,i}\mapsto \varphi_K^k(\tau_i)\]
equivariant for $\varphi_K$ and $\Gal_K$.

The image of the composite $\cA_H\to\AinfK\to\OCb$ gives a subalgebra $o_H$ topologically generated by the reductions of exponential periods modulo $\pi_K$.
Let $\frp_H$ denote the kernel of $\cA_H\to \OCb$.
Localizing $\cA_H$ at the prime ideal $\frp_H$ and then $\frp_H$-adically completing it gives a complete local ring $R_H$ with residue field $k_H=\Frac(o_H)$, equipped with natural commuting actions of $\varphi_K$ and $\Gamma$.
Localizing at $\frp_H$ inverts elements whose reductions are nonzero in $k_H$; their images need not be units in $\AinfK$, but they are units in $W_{\cO_K}(\Cb)$.
Therefore, we obtain a commutative diagram
\begin{center}
    \begin{tikzcd}
    R_H\arrow[r]\arrow[d]& W_{\cO_K}(\Cb)\arrow[d]\\
    k_H\arrow[r]&\Cb
    \end{tikzcd}
\end{center}
consisting of equivariant maps for $\varphi_K$ and $\Gal_K$.
The next theorem relates the absolute Galois groups of $k_H$ and $K_\infty$.
Choose a separable closure $k_H^\sep\subset\Cb$, and let $w$ denote the valuation on $k_H^\sep$ induced from the valuation $v_{\Cb}$ on $\Cb$.
\begin{theorem}[Theorem \ref{thm:decomposition}]\label{introthm:decom}
The action of $\Gal_{K}$ on $\Cb$ induces a continuous injective homomorphism
\[\Gal_{K_\infty}\to \Gal_{k_H}\]
whose image is the decomposition subgroup $D_w$, i.e., the closed subgroup consisting of automorphisms which preserve $w$.
\end{theorem}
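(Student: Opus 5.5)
We have $k_H=\Frac(o_H)\subset \Cb$, where $o_H$ is topologically generated by the reductions $\bar\tau$ of the exponential periods and their Frobenius twists. Since $\Gal_K$ acts on $\Cb$ continuously and isometrically, and the exponential periods are $\Gal_K$-equivariant, $\Gal_{K_\infty}$ fixes every element of $T(H)$ and hence fixes $o_H$ pointwise. It therefore fixes $k_H$, and so preserves its separable closure $k_H^\sep\subset\Cb$. Restriction to $k_H^\sep$ gives a continuous homomorphism $\Gal_{K_\infty}\to\Gal_{k_H}$, and its image preserves $w$ because $w$ is the restriction of $v_{\Cb}$, which $\Gal_K$ preserves.

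\textbf{Injectivity.} The plan is to relate $k_H$ to the tilt of $\widehat{K_\infty}$.

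First, I would show that the completion of $k_H$ inside $\Cb$ contains the tilt $\widehat{K_\infty}^\flat$, or at least a subfield whose separable closure is dense in $\Cb$. For this, recall that the exponential period of $\alpha=(x_n)$ is obtained from the lift to $\widetilde H(\OC)\cong\widetilde H(\AinfK)$. Its reduction modulo $\pi_K$ in $\OCb$ recovers, through the universal-cover identification $\widetilde H(\OC/p)=\varprojlim H(\OC/p)$, the compatible system $(x_n \bmod p)$ up to Frobenius twists $\varphi^k$. Hence $\widehat{K_\infty}^\flat$ is topologically generated over $\kappa$ by the $\varphi^{-k}(\bar\tau_i)$, $k\ge 0$. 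This means $\widehat{K_\infty}^\flat$ is the completion of the perfection of $k_H$ with respect to $w$.

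Tilting then gives $\Gal_{K_\infty}\cong\Gal_{\widehat{K_\infty}^\flat}$. The latter is isomorphic to the decomposition group of the perfect closure of $k_H$, by Krasner's lemma together with the equivalence between the étale sites of a valued field and of its henselization/completion. That decomposition group is in turn identified with $D_w\subset\Gal_{k_H}$, since passing to the perfection is purely inseparable and does not change Galois groups.

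\textbf{Image equals $D_w$.} Let $k_H^h$ be the henselization of $k_H$ with respect to $w$. Then $D_w=\Gal(k_H^\sep/k_H^h)$, and $\Gal(k_H^\sep/k_H^h)\cong \Gal_{\widehat{k_H}}$ because a henselian field and its completion have the same separable extensions. Writing $\widehat{k_H}^{\,\perf}$ for the completed perfection, which equals $\widehat{K_\infty}^\flat$, we get $\Gal_{\widehat{k_H}}\cong\Gal_{\widehat{k_H}^{\,\perf}}$. This chain is compatible with the map from $\Gal_{K_\infty}$, so the map is an isomorphism onto $D_w$. Injectivity also follows directly: an element acting trivially on $k_H^\sep$ acts trivially on its closure, which is $\Cb$, and $\Gal_{K_\infty}$ acts faithfully on $\Cb$.

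\textbf{Main obstacle.} The hard step is the identification of $\widehat{K_\infty}^\flat$ with the completed perfection of $k_H$. This requires the reductions $\bar\tau_i$ and their Frobenius iterates to generate enough, which in turn needs the explicit description of the reduction of $\tau_H$ modulo $\pi_K$ via the universal cover and the lemma that the $p$-adic completion of the perfection is exactly $\widehat{K_\infty}^\flat$. I would attempt it by computing $\theta$ on $\varphi^{-k}(\tau_i)$: it reduces to $x_k$ modulo terms of positive valuation. An approximation argument then shows the closure contains all of $\cO_{\widehat{K_\infty}}/p$, and hence its tilt.
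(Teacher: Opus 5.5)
Your architecture matches the paper's. Everything reduces to showing that the closure of $k_H^\perf$ in $\Cb$ is $\widehat{K}_\infty^\flat$; tilting and rank-one valuation theory then finish the argument. Your chain through the henselization and completion is an acceptable substitute for the paper's restriction to $k_H^\alg$ and unique extension of isometries.

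The gap is in that key identification. You rightly flag it as the main obstacle, but the computation you propose for it fails in general. By Proposition~\ref{prop:explicit_isom_tilG_to_G}, if $\alpha=(x_m)_m\in T(H)$, then the $m$-th tilt coordinate of $\overline{\tau}_H(\alpha)$ is $V^m(\overline{x}_m)$. Here $V^m=f^{\sigma^{-m}}\circ\cdots\circ f^{\sigma^{-1}}$ with $[p]_G(X)=f(X^p)$, and since $G$ has height $h_0$, $V^m$ has order $p^{m(h_0-1)}$. So the zeroth tilt coordinate of $\overline{\tau}_H(\alpha)^{1/p^m}$ is essentially $\overline{x}_m^{\,p^{m(h_0-1)}}$, not $\overline{x}_m$. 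This coordinate is exactly what $\theta_K$ of a Frobenius pull-back of $\tau_H(\alpha)$ computes modulo $\pi_K$, and the $p^{m(h_0-1)}$-th root cannot be extracted in $\OC/(\pi_K)$.

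Frobenius pull-backs of a period are periods of division points only in the Lubin--Tate-type case $d=1$, where $\varphi_K$ acts on periods as $[-c_0]_H$. In general, the period of a division point is a combination of several pull-backs, as the operator $\nu$ in the proof of Proposition~\ref{prop:tau_image} shows. The missing idea is to use the reduced periods $u_m^\flat$ of $p^{-m}\alpha\in V(H)$ directly. Writing $[p]_G(T)=g(T^{p^{h_0}})$ with $g'(0)\neq0$, one has $u_m^\flat=g\bigl((u_{m+1}^\flat)^{p^{h_0}}\bigr)$. Hence every $u_m^\flat$ lies in the perfection of $\kappa_K\llbracket\overline{\tau}_H(\alpha)\rrbracket\subset o_H$, and its zeroth tilt coordinate is exactly $\overline{x}_m$ (Lemma~\ref{lem:deperf_alpha}).

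Second, suppose you had elements of the closure lifting every $\overline{x}_m$. Your closing approximation argument would still need $\cO_{K_\infty}/(\pi_K)$ to be generated over $\kappa_{K'}$ by residues of torsion points. This holds for a single division tower $K_\alpha$, whose successive division points are uniformizers of a totally ramified tower (Lemma~\ref{lem:perfectoid_varpi}). You give no argument for the compositum $K_\infty$ when $n>1$, and rings of integers do not behave well under composita.

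The paper avoids this issue. It proves density of $k_\alpha^\perf$ in $\widehat{K_\alpha}^\flat$ one tower at a time. It then untilts the closure $\ell$ of $k_H^\perf$ to a complete subfield $\ell^\sharp\subset C$ containing every $\widehat{K_\alpha}$, hence $\widehat{K}_\infty$, which gives $\widehat{K}_\infty^\flat\subset\ell$. The reverse inclusion holds because the reduced periods lie in $\widehat{K}_\infty^\flat=(\Cb)^{\Gal_{K_\infty}}$.

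The same untilting trick, applied to the algebraically closed closure of $k_H^\alg$, is also what justifies the density of $k_H^\sep$ in $\Cb$ (Corollary~\ref{cor:deperf_cb}). You assert that density without proof in your direct injectivity argument.
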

The proof of Theorem \ref{introthm:decom} involves the tilting equivalence for perfectoid fields \cite[Theorem 3.7]{Scholze2012Perfectoid}.
We first show that the completion $\widehat{K}_\infty$ is perfectoid; the tilting equivalence then gives $\Gal_{\widehat{K}_\infty}\cong\Gal_{\widehat{K}_\infty^\flat}$.
We show that $\widehat{K}_\infty^\flat$ is the completed perfection of $k_H$ with respect to the valuation on $\Cb$, and we use it to prove Theorem \ref{introthm:decom}.
Let $k_{H,K}\subset k_H^\sep$ be the henselization associated with $w$.
Its absolute Galois group is $D_w$, so Theorem \ref{introthm:decom} gives
\[\Gal_{K_\infty}\cong\Gal_{k_{H,K}}.\]
Finally, we define $R_{H,K}$ as the completion at the maximal ideal of the ind-(finite \'{e}tale) local $R_H$-algebra corresponding to $k_{H,K}/k_H$.
The commuting actions of $\varphi_K$ and $\Gamma$ extend to $R_{H,K}$.

\begin{example}
In the multiplicative case $H=\Gmhat$ and $K=\Qp$, choose compatible primitive $p$-power roots of unity $\zeta_{p^n}$ and put $\epsilon=(1,\zeta_p,\cdots)\in\cO_{\Cb}$.
The corresponding generator of $T(\Gmhat)$ has exponential period $\mu=[\epsilon]-1\in\Ainf$.
Then
\[R_H\cong \bigl(\Zp\llbracket \mu\rrbracket[\tfrac{1}{\mu}]\bigr)^\wedge,\quad k_H\cong \Fp(\!(\epsilon-1)\!)\]
where the completion is $p$-adic.
$k_H$ is the field of norms of the cyclotomic tower \cite{Wintenberger1983corps}.
In this case, $k_H$ is already henselian, so
\[k_{H,K}=k_H,\quad R_{H,K}=R_H\]
and our equivalence recovers Fontaine's theory \cite{Fontaine1990Representations}.
\end{example}

\subsection{Proof strategy of Theorem \ref{introthm:main}}\label{subsec:proofstrategy}
We first establish an equivalence of categories
\[\Rep_{\cO_K}(\Gal_{K_\infty})\simeq \textup{$\varphi_K$-Mod}_{R_{H,K}}.\]

In the cyclotomic and Lubin-Tate settings, the finite free version of this equivalence has a prismatic interpretation, using Laurent $F$-crystals \cite{Bhatt2022Prisms}, \cite[Theorem 1.3]{Marks2025Prismatic}, \cite[Theorem 1.1]{Wu2021Galois}.
When $h>1$, however, the endomorphism $\varphi_K$ on $R_{H,K}$ does not lift the $q_K$-power Frobenius modulo $\pi_K$ and hence does not arise from a ramified $\delta$-structure.
These prismatic constructions therefore do not apply to the pair $(R_{H,K},\varphi_K)$.
We leave it as an open question whether Theorem \ref{introthm:main} admits a geometric interpretation analogous to the prismatic interpretation of Fontaine's equivalence.

Our proof uses contraction modulo $\pi_K$: some iterate of the induced endomorphism on $R_{H,K}/(\pi_K)$ sends the maximal ideal into its square.
Together with flatness of $\varphi_K$ and its Frobenius action on the residue field, this contraction property yields the desired equivalence for \'{e}tale $\varphi_K$-modules.
We formulate these properties in the framework of $F$-dynamical systems.
\begin{definition}
An \textit{$F$-dynamical system over $\cO_K$} is a pair $(R,\varphi_K)$, where $R$ is a complete Noetherian local $\cO_K$-algebra and $\varphi_K$ is a local $\cO_K$-algebra endomorphism of $R$ satisfying the following conditions:
\begin{enumerate}
    \item The structure map $\cO_K\to R$ is faithfully flat.
    \item The induced endomorphism $\overline{\varphi}_K$ on $\overline{R}=R/(\pi_K)$ is contracting in the sense that $\overline{\varphi}_K^N(\frakm_{\overline{R}})\subset \frakm_{\overline{R}}^2$ for some $N\ge1$.
    \item The endomorphism on the residue field induced by $\varphi_K$ is the $q$-th power Frobenius, where $q=\#\kappa_K$.
\end{enumerate}
We say $(R,\varphi_K)$ is flat when $\varphi_K:R\to R$ is flat.
\end{definition}

Let $(R,\varphi_K)$ be an $F$-dynamical system over $\cO_K$, let $k$ be the residue field, and let $k^\sep$ be a separable closure of $k$.
Write $R^\sep=\widehat{R^\sh}$ for the completion at the maximal ideal of the strict henselization of $R$ corresponding to $k^\sep/k$.
The endomorphism $\varphi_K$ extends uniquely to an endomorphism $\varphi_K^\sep$ inducing the $q$-th power Frobenius on $k^\sep$, which commutes with a natural action of $\Gal_k$.
For $T\in\Rep_{\cO_K}(\Gal_k)$ and $M\in \textup{$\varphi_K$-Mod}_R$, we set
\[\DD(T)=(T\otimes_{\cO_K}R^\sep)^{\Gal_k},\quad \VV(M)=(M\otimes_RR^\sep)^{\varphi_K=\id}.\]
These functors give the following general equivalence.
\begin{introthm}[Theorem \ref{thm:FDS_cat_equiv}]\label{introthm:fds}
Let $(R,\varphi_K)$ be a flat $F$-dynamical system over $\cO_K$.
Then the functor 
\[\DD:\Rep_{\cO_K}(\Gal_{k})\simeq \textup{$\varphi_K$-Mod}_{R}\]
is an equivalence of categories with a quasi-inverse functor $\VV$.
\end{introthm}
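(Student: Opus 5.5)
We follow the standard dévissage strategy of Fontaine, reducing to the torsion case and then to $\pi_K$-torsion, where the statement becomes a question about étale $\varphi$-modules over $\overline{R}=R/(\pi_K)$ with a contracting Frobenius-like endomorphism.

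\textbf{Step 1: invariants of $R^\sep$.} We first study $R^\sep$ itself. We show that $(R^\sep)^{\Gal_k}=R$ and that $(R^\sep)^{\varphi_K^\sep=\id}=\cO_K$, and likewise modulo $\pi_K^n$. For the first claim, $R^\sep$ is a completed ind-(finite étale) extension with group $\Gal_k$, so Galois descent applies at each finite level and passes to the limit. For the second claim, by completeness and flatness over $\cO_K$ it suffices to show $(\overline{R^\sep})^{\overline{\varphi}=\id}=\kappa_K$. Let $x$ be fixed.
\begin{itemize}
\item Its image in $k^\sep$ is fixed by $q$-Frobenius, so it lies in $\kappa_K$.
\item Subtracting a Teichmüller-type constant, we may assume $x$ lies in the maximal ideal.
\item The contraction $\overline{\varphi}^N(\frakm)\subset\frakm^2$ then forces $x\in\bigcap_j\frakm^{2^j}=0$.
\end{itemize}
To use the contraction on $\overline{R^\sep}$, we note that $\frakm_{\overline{R^\sep}}=\frakm_{\overline R}\overline{R^\sep}$, since the extension is ind-étale.

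\textbf{Step 2: Artin--Schreier solvability in the $\pi_K$-torsion case.} We show that for $M$ free of rank $d$ over $\overline R$ with $\Phi$ an isomorphism $\overline\varphi^*M\isomto M$, the map $M\otimes\overline{R^\sep}\xrightarrow{\varphi-1}M\otimes \overline{R^\sep}$ is surjective. We also show that its kernel $V$ is a $\kappa_K$-vector space of dimension $d$ with $V\otimes_{\kappa_K}\overline{R^\sep}\isomto M\otimes\overline{R^\sep}$. This is the main obstacle.

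The plan is to solve $\varphi(x)=Ax$ successively modulo $\frakm^{j}$. Modulo $\frakm$, the equation is a Lang-type system over $k^\sep$, which is solvable with $d$-dimensional solution space because $A$ is invertible. Contraction makes the correction equations at higher levels linear and eventually contracting, so the solutions converge by completeness. More precisely, after $N$ steps, $\overline\varphi^N$ sends $\frakm^j$ into $\frakm^{2j}$, giving a fixed-point argument. Lifting solutions from the residue field uses henselianity of $R^\sep$, since the equations $\varphi(x)=Ax$ reduce to étale equations at the residue level. Flatness of $\varphi_K$ is used to ensure that $\overline\varphi^*$ is exact. This lets us handle non-free torsion $M$ by finite presentation and the five lemma, and to show that étaleness is preserved under kernels and cokernels.

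\textbf{Step 3: passage to general $M$ and $T$.} Using Step 2 and induction on $n$ via the exact sequences $0\to \pi_K^{n-1}M/\pi_K^n M\to M/\pi_K^n\to M/\pi_K^{n-1}\to0$, we obtain that $\VV$ is exact on $\pi_K$-power torsion objects. We also obtain the comparison isomorphism $\VV(M)\otimes_{\cO_K}R^\sep\isomto M\otimes_RR^\sep$, which is compatible with $\varphi$ and $\Gal_k$.

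Symmetrically, for $T$ torsion we have $T\otimes R^\sep\cong \DD(T)\otimes_R R^\sep$ by Step 1 and Galois descent. Here the action of $\Gal_k$ on $T$ factors through a finite quotient, and finite étale descent applies.

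Taking invariants on both sides of these comparison isomorphisms, using Step 1, yields $\VV\DD\cong\id$ and $\DD\VV\cong\id$ in the torsion case. Finally, we pass to finitely generated objects by taking inverse limits over $n$. Here we use that both categories are $\pi_K$-adically complete, together with Mittag-Leffler for the finite-length transition systems, so that the functors commute with the limit.
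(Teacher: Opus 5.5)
Your outline follows the paper's architecture: invariants of $R^\sep$ (Lemma~\ref{lem:galois_invariants}, Proposition~\ref{prop:key_ses}), reduction of the $\pi_K$-torsion case to Katz's theorem over $k^\sep$ by a contraction/geometric-series argument (Lemma~\ref{lem:contract_to_resfield}), d\'evissage using flatness of $\varphi_K$, and a limit argument. Step~2, however, has a genuine gap.

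Everything in Step~2 requires $M$ to be free over $\overline R$. This covers both the matrix $A$ and the Nakayama argument that deduces $V\otimes_{\kappa_K}\overline{R^\sep}\isomto M\otimes_{\overline R}\overline{R^\sep}$ from the residue-field statement. But the objects Step~3 feeds into it, the graded pieces $\pi_K^{n-1}M/\pi_K^nM$, are only known to be \'etale $\varphi$-modules over $\overline R$. This is not a side issue. The comparison isomorphism you want makes $M\otimes_{\overline R}\overline{R^\sep}$ free, hence $M$ free by faithfully flat descent, so freeness is equivalent to the conclusion.

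Your proposed repair by ``finite presentation and the five lemma'' does not close the gap as stated, for two reasons:
\begin{enumerate}
\item You would need a presentation by free \'etale $\varphi$-modules with $\varphi$-equivariant maps, and you do not construct one.
\item Exactness of the corresponding row of $\VV$'s needs surjectivity of $\varphi-\id$ on $N\otimes_{\overline R}\overline{R^\sep}$ for the kernel $N$. Since $N$ is again not known to be free, your Step~2 does not apply to it, and the argument regresses.
\end{enumerate}

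The missing idea is Lemma~\ref{lem:structure_thm_contracting}. \'Etaleness and base change of Fitting ideals give $\Fit_i(M)=\overline\varphi(\Fit_i(M))\overline R$. Iterating with $\overline\varphi^N(\frakm)\subset\frakm^2$ puts every proper $\Fit_i(M)$ inside $\bigcap_s\frakm^{2^s}=0$. So each Fitting ideal is $0$ or $\overline R$, and $M$ is free.

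Your five-lemma route can also be rescued. Run the contraction argument module-theoretically, as in Lemma~\ref{lem:contract_to_resfield}: $\varphi-\id$ is bijective on $\frakm N$ for every $\varphi$-module $N$, so surjectivity needs no freeness. Then build free \'etale covers from a minimal generating set, lifting the invertible residual matrix of $\varphi_M$.

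Two smaller corrections:
\begin{enumerate}
\item Henselianity plays no role in lifting solutions from $k^\sep$. Since $\varphi$ is not a Frobenius lift, $\varphi(x)=Ax$ is not a polynomial system over $R^\sep$. The lifting is exactly your contraction fixed-point argument.
\item On the $\DD$ side, the modules $\DD(T/\pi_K^n)$ are not of finite length over $R$ once $\dim R/(\pi_K)>0$. The limit step must therefore use surjectivity of the transition maps, which comes from exactness of $\DD$ on torsion objects, as in Proposition~\ref{prop:functor_D_fds}. Finiteness is available only for the $\VV(M/\pi_K^nM)$.
\end{enumerate}
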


We sketch the proof of Theorem \ref{introthm:fds}.
Put $Q=R/(\pi_K)$.
Contraction of the induced endomorphism $\varphi_K$ on $Q$ gives an equivalence
\[-\otimes_Qk:\textup{$\varphi_K$-Mod}_{Q}\simeq\textup{$\varphi_K$-Mod}_{k}.\]
The latter category is equivalent to the category of finite-dimensional continuous $\Fq$-representations of $\Gal_k$ \cite[Proposition 4.1.1]{Katz1973padic}.
The full statement follows by lifting through $R/(\pi_K^n)$ and using completeness, where flatness of $\varphi_K$ controls the torsion d\'{e}vissage.

For a one-dimensional formal $\cO_K$-module $H$ of finite height over $\cO_K$, we will prove that $(R_{H,K},\varphi_K)$ is a flat $F$-dynamical system over $\cO_K$.
Since the residue field $k_{H,K}$ has absolute Galois group isomorphic to $\Gal_{K_\infty}$, we deduce from Theorem \ref{introthm:fds} an equivalence of categories
\[\DD:\Rep_{\cO_K}(\Gal_{K_\infty})\simeq \textup{$\varphi_K$-Mod}_{R_{H,K}}.\]
Let $T\in \Rep_{\cO_K}(\Gal_K)$.
The diagonal action of $\Gal_K$ on $T\otimes_{\cO_K}(R_{H,K})^\sep$ induces a semilinear action of $\Gamma=\Gal_K/\Gal_{K_\infty}$ on $\DD(T\vert_{\Gal_{K_\infty}})$, still denoted $\DD(T)$.
The $\Gamma$-action satisfies the continuity condition of Definition \ref{def:phigammamod}, so $\DD(T)$ is an \'{e}tale $(\varphi_K,\Gamma)$-module.
Conversely, the additional $\Gamma$-action induces a continuous $\Gal_K$-action on the output of $\VV$, which yields the equivalence in Theorem \ref{introthm:main}:
\[\DD:\Rep_{\cO_K}(\Gal_K)\simeq \textup{$(\varphi_K,\Gamma)$-Mod}_{R_{H,K}}.\]

\subsection{Independence of exponential periods and the structure of $R_{H,K}$}
Still assuming $E=K=L$, we now describe the rings $o_H$, $R_H$, and $R_{H,K}$ more explicitly.
These structural results are established in Section \ref{sec:period_alg}; the construction and the proof of Theorem \ref{introthm:main} do not depend on them.
Let $\kappa_H$ denote the residue field of $\Lambda_H$, and fix a $\Lambda_H$-basis $\alpha_1,\cdots,\alpha_n$ of $T(H)$.
The ring $o_H$ constructed earlier is exactly the $\kappa_H$-subalgebra of $\OCb$ topologically generated by $\overline{\tau}_i$ with $1\le i\le n$, the reductions of the exponential periods of $\alpha_i$.
Those reduced periods satisfy the following formal independence property.
\begin{theorem}[Theorem \ref{thm:bH_injective}]\label{introthm:indep_char_p}
The homomorphism
\[\Fpbar\llbracket t_1,\cdots,t_n\rrbracket\to \OCb,\quad t_i\mapsto\overline{\tau}_i\]
is injective.
\end{theorem}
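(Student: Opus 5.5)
The plan is to argue by contradiction using the Galois symmetry of the reduced periods. Let $\mathfrak{P}\subset\Fpbar\llbracket t_1,\cdots,t_n\rrbracket$ be the kernel of $t_i\mapsto\overline{\tau}_i$. Since $\OCb$ is a domain, $\mathfrak{P}$ is prime, and we assume $\mathfrak{P}\neq0$.

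\emph{Step 1: the structure maps.} Reducing $H$ modulo $\pi_K$ gives a formal $\cO_K$-module $H_0$ over $\kappa_K$ of height $h$. After a change of coordinates we may arrange $[\pi_K]_{H_0}(x)=x^{q^{h}}$, possibly up to a unit twist that is harmless over $\Fpbar$. Reducing the exponential period map $\tau_H$ modulo $\pi_K$ identifies $\overline{\tau}_H(\alpha)$ with the zeroth coordinate of the point of $\widetilde{H}_0(\OCb)$ lifting $\alpha$. This gives two structures:
\begin{itemize}
\item \textbf{Frobenius relation:} $\overline{\tau}_H(\pi_K\beta)=\overline{\tau}_H(\beta)^{q^h}$ for $\beta\in T(H)\otimes K$ in the appropriate lattice.
\item \textbf{Linear action:} for $g\in\Gal_K$ with $g\alpha_i=\sum_j a_{ij}\alpha_j$, where $a_{ij}\in\Lambda_H$, we have $g(\overline{\tau}_i)=\sum_j^{H_0}[a_{ij}]_{H_0}(\overline{\tau}_j)$.
\end{itemize}
Since the endomorphisms $[a]_{H_0}$ are defined over $\kappa_H\subset\Fpbar$, the second point says that $\mathfrak{P}$ is stable under the automorphisms $\sigma_g$ of $\Fpbar\llbracket t\rrbracket$ given by $t_i\mapsto\sum^{H_0}_j[a_{ij}](t_j)$, for $g$ in an open subgroup $U\subset\GL_n(\Lambda_H)$ containing the image of $\Gal_{K}$ restricted suitably (Frobenius on coefficients is handled by replacing $\Fpbar$-linearity with semilinearity). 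The linear part of $\sigma_g$ is the reduction $\bar g\in\GL_n(\kappa_H)$.

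\emph{Step 2: rescaling to get a large linear group.} An open subgroup such as $1+\pi_K^mM_n(\Lambda_H)$ has trivial reduction, so invariance under it alone does not constrain initial forms. To fix this, I would pass to the basis $\pi_K^{-m}\alpha_i$ of $T(H)\otimes K$ and let $\mathfrak{P}_m$ be the kernel for the periods $\overline{\tau}_H(\pi_K^{-m}\alpha_i)$. By the Frobenius relation, $\mathfrak{P}$ is the preimage of $\mathfrak{P}_m$ under $t_i\mapsto t_i^{q^{hm}}$, so $\mathfrak{P}\neq0$ forces $\mathfrak{P}_m\neq0$. Conjugating $1+\pi_K^{m'}A$ by $\pi_K^{m}$, with $m'$ fixed so that this subgroup lies in $U$ and $m\ge m'$, gives linear parts running through all of $1+M_n(\kappa_H)\cap\GL_n$, which generates at least $\mathrm{SL}_n(\kappa_H)$. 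The scalars $\Lambda_H^\times$ act through $t\mapsto[\zeta]t$ and supply the torus. Hence the initial ideal $\mathrm{in}(\mathfrak{P}_m)\subset\Fpbar[t_1,\cdots,t_n]$ is a nonzero homogeneous ideal stable under a group $G_m$ that contains $\mathrm{SL}_n(\kappa_H)$.

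\emph{Step 3: the contradiction (main obstacle).} Invariance under a finite group does not by itself force a homogeneous ideal to vanish, since invariant polynomials exist. So the contradiction has to come from comparing all levels $m$ at once. The approach I would try first is this:
\begin{itemize}
\item Let $d$ be the smallest degree of a nonzero element of $\mathrm{in}(\mathfrak{P}_0)$.
\item The Frobenius compatibility gives $\mathrm{in}(\mathfrak{P}_0)\supseteq\{f(t^{q^{hm}}) : f\in \mathrm{in}(\mathfrak{P}_m)\}$, while the valuations satisfy $v(\overline{\tau}_H(\pi_K^{-m}\alpha))=q^{-hm}v(\overline{\tau}_H(\alpha))$.
\item Using these, bound the minimal degree $d_m$ of $\mathrm{in}(\mathfrak{P}_m)$ uniformly in $m$.
\item Because $\mathrm{in}(\mathfrak{P}_m)$ is stable under $\mathrm{SL}_n(\kappa_H)$ for ever larger $m$ (and, by varying the unramified base, under $\mathrm{SL}_n$ of larger finite fields), its lowest-degree piece is a nonzero $\mathrm{SL}_n(\mathbb{F}_{q'})$-stable subspace of forms of degree $d_m$, hence contains Dickson-type invariants.
\item The degree of such an invariant is at least $q'^{\,n-1}$, which tends to infinity and contradicts the uniform bound on $d_m$.
\end{itemize}
If this degree-growth argument is too crude, the fallback is a valuation argument. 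From the Frobenius relation and the Newton polygon of $[\pi_K]$, one would show that the points $\sum^{H_0}[a_i](\overline{\tau}_i)$ with $a\in\Lambda_H^n\setminus\pi_K\Lambda_H^n$ all have the same valuation, namely that of a generic linear form. Then the lowest-degree form of any $f\in\mathfrak{P}$ would have to vanish at all $\kappa_H$-rational directions after rescaling, and passing to finer levels $m$ forces it to vanish identically. I expect making either version rigorous — controlling initial ideals uniformly across the levels $m$ — to be the hardest step.
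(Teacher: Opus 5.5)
There is a genuine gap. It starts in Step 2, and Step 3 inherits it.

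\textbf{Step 2 produces no new symmetry.} Rescaling every basis vector by the same scalar leaves all matrices unchanged. From $g\alpha_i=\sum_j a_{ij}\alpha_j$ you get $g(\pi_K^{-m}\alpha_i)=\sum_j a_{ij}\,\pi_K^{-m}\alpha_j$. So the linear part of the induced automorphism $t_i\mapsto\sum^{G}_j[a_{ij}](t_j)$ is still the reduction of $\rho(g)$. That reduction is the identity whenever $\rho(g)\in 1+\pi_K^{m'}M_n(\Lambda_H)$, for every $m$.

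In fact level $m$ carries no new information at all. In your normalization $[\pi_K](x)=x^{q^h}$, the level-$0$ periods are the $q^{hm}$-th powers of the level-$m$ periods. Since $\OCb$ is reduced, $\mathfrak{P}_m$ is just $\mathfrak{P}_0$ with $c\mapsto c^{q^{-hm}}$ applied to the coefficients, and it has the same symmetries. Your inclusion of initial ideals is also reversed: what holds is $g\in\mathfrak{P}_0\Rightarrow g(t^{q^{hm}})\in\mathfrak{P}_m$.

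\textbf{Step 3 cannot be repaired along these lines.} Passing to an unramified extension of $K$ changes neither $I_K$ nor $\Lambda_H$, so no $\mathrm{SL}_n(\FF_{q'})$ with $q'\to\infty$ ever appears. The only linear symmetry you have is the image of $\rho(I_K)$ in $\GL_n(\kappa_H)$, a fixed finite group. As you note yourself, invariance of a nonzero ideal under a finite group gives no contradiction. Even granting a larger group, a nonzero $\mathrm{SL}_n$-stable space of forms of fixed degree need not contain invariants; the space of linear forms already fails. So the Dickson-degree argument does not work. The valuation fallback has the same defect: the equal valuations of primitive reduced periods are true, but they say nothing about cancellation inside a lowest-degree form.

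\textbf{The missing idea.} Use the small inertia elements not through their linear part, which is trivial, but through the size of the displacement they produce.
By Theorem \ref{thm:openimage}, for all large $r$ there is $\gamma_{r,i}\in I_K$ with $\rho(\gamma_{r,i})=\Id+p^rE_{i,i}$. Then any $g$ in the kernel satisfies $g(x_1,\cdots,x_i+\delta_r(x_i),\cdots,x_n)=0$, where $x_j=\overline{\tau}_j$ and $\delta_r(t)=[p^r+1]_G(t)-t$. The series $\delta_r$ has order $p^{rh_0}$, with $h_0=\mathrm{ht}(G)$, and a nonzero leading coefficient in a finite field. Hence $\delta_r(x_i)\neq0$ and $v(\delta_r(x_i))=p^{rh_0}v(x_i)$.

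Expanding in Hasse derivatives gives $\sum_{k\ge1}\delta_r(x_i)^k\,\partial^{[k]}_{t_i}g(\mathbf{x})=0$. So the first nonzero value $\partial^{[s]}_{t_i}g(\mathbf{x})$ would have valuation at least $p^{rh_0}v(x_i)$ for every $r$, which is impossible. Thus the kernel is stable under all Hasse derivatives. Since the kernel lies in the augmentation ideal, every coefficient $\partial^{[\mathbf{k}]}g(\mathbf{0})$ vanishes, and $g=0$.

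This argument needs no rescaling and no normalization of $[\pi_K]$. It uses only diagonal elements of the inertia image.
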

We prove Theorem \ref{introthm:indep_char_p} by applying arbitrarily small inertia transformations in each variable separately, using the fact that the image of $\rho:I_K\to\GL_n(E_H)$ is open \cite{Serre1967groupes,Sen1973Lie}, where $I_K$ denotes the inertia subgroup of $\Gal_K$.

As a consequence, we have
\[o_H\cong\kappa_H\llbracket t_1,\cdots,t_n\rrbracket,\quad k_H\cong\Frac(\kappa_H\llbracket t_1,\cdots,t_n\rrbracket).\]
When the absolute height $n$ is greater than one, $k_H$ is not a characteristic $p$ local field, and in particular, this deperfection differs from the classical field of norms construction \cite{Wintenberger1983corps}.
For $1\le k\le h-1$ and $1\le i\le n$, write
\[s_{k,i}=t_{k,i}-(t_{0,i})^{q^k}.\]
Then
\[\frp_H\coloneqq \ker(\cA_H\to o_H)=\left(\pi_K,\,s_{k,i}\mid 1\le k\le h-1,1\le i\le n\right)\]
and these generators form a regular sequence in $\cA_H$.
Therefore, the base ring $R_{H,K}$ is a complete regular local ring of dimension $n(h-1)+1$.
These parameters also explain the failure of $\varphi_K$ to lift Frobenius modulo $\pi_K$.
If $h>1$, then 
\[t_{1,i}-t_{0,i}^q=s_{1,i}\not\in\pi_K \cA_H.\]
On the other hand, 
\[\xi_H(s_{1,i})=\varphi_K(\tau_i)-\tau_i^q\in\pi_K\AinfK.\]
Therefore, $\xi_H$ does not reflect divisibility by $\pi_K$.
The same obstruction persists in $R_{H,K}$ because $\pi_K$ and the $s_{k,i}$ form a regular system of parameters there.

Before reduction, we also prove formal independence of the exponential periods together with their first $h-1$ Frobenius iterates.
Recall that the image of $\xi_H:\cA_H\to\AinfK$ is the $\Lambda_H$-subalgebra of $\AinfK$ topologically generated by exponential periods and their $\varphi_K$-iterates.
Let $\breve{K}$ denote the completion of the maximal unramified extension of $K$.
\begin{theorem}[Theorem \ref{thm:injectivity_ainf}]\label{introthm:injective_ainf}
The homomorphism 
\[\cO_{\breve{K}}\llbracket t_{k,i}:0\le k\le h-1,1\le i\le n\rrbracket\to\AinfK,\quad t_{k,i}\mapsto \varphi_K^k(\tau_i)\]
is injective.
\end{theorem}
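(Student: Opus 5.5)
We will deduce the injectivity from the characteristic $p$ statement, Theorem \ref{introthm:indep_char_p}, together with a change of variables and a $\pi_K$-adic d\'evissage. Write $\theta:\cO_{\breve{K}}\llbracket t_{k,i}\rrbracket\to\AinfK$ for the map in question. Both source and target are $\pi_K$-adically separated and $\pi_K$-torsion free; the target $\AinfK=W_{\cO_K}(\OCb)$ moreover satisfies $\AinfK/\pi_K\AinfK=\OCb$, which is a domain. It therefore suffices to prove two things. First, $\theta$ is injective modulo $\pi_K$ after a suitable change of coordinates. Second, $\theta^{-1}(\pi_K\AinfK)=\pi_K\cdot(\text{source})$. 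Granting these, an element $f\neq0$ with $\theta(f)=0$ can be written $f=\pi_K^m g$ with $g\notin\pi_K(\text{source})$; then $\theta(g)=0$ because $\AinfK$ is $\pi_K$-torsion free, which contradicts the second statement.

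The difficulty, as noted in the introduction, is that $\theta$ does \emph{not} reflect divisibility by $\pi_K$ in the naive coordinates. Indeed $\theta(t_{k,i}-t_{0,i}^{q^k})=\varphi_K^k(\tau_i)-\tau_i^{q^k}\in\pi_K\AinfK$. So I will change variables. Since $\varphi_K$ on $\AinfK$ lifts the $q$-power Frobenius, we may write
\[
\varphi_K^k(\tau_i)=\tau_i^{q^k}+\pi_K\,u_{k,i}, \qquad u_{k,i}\in\AinfK .
\]
The plan is to prove that the family
\[
\{\tau_i\}_{1\le i\le n}\cup\{u_{k,i}\}_{1\le k\le h-1,\ 1\le i\le n}
\]
is formally independent over $\cO_{\breve K}$. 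I will do this by a second induction: at each step, reduce modulo $\pi_K$ and use that the reductions $\overline{\tau}_i$ and $\overline{u}_{k,i}$ are algebraically and formally independent over $\Fpbar$ in $\OCb$.

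The key step, and the main obstacle, is to show that the $n(h-1)+n=nh$ elements $\overline{\tau}_i,\overline{u}_{k,i}$ of $\OCb$ are formally independent over $\Fpbar$. Theorem \ref{introthm:indep_char_p} only gives this for the $\overline\tau_i$. I would try to compute $\overline{u}_{k,i}$ explicitly. Writing $[\pi_K]_H(X)=\pi_KX+\dots+X^{q^h}$ and using the defining relation $[\pi_K]_H(\varphi_K^{-1}\text{-shifted periods})$ from the construction of $\tau_H$, one expects $\overline{u}_{k,i}$ to be expressible, up to units and lower-order terms, through the Frobenius twists $\overline{\tau}_i^{q^j}$ together with the reductions of the ``higher'' components of the universal cover lift. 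Equivalently, one expects it to be expressible through $\varphi_K^k$ applied to the log-periods $\langle[\ell],\alpha_i\rangle$ of Colmez. The independence would then follow by the same inertia argument as in Theorem \ref{introthm:indep_char_p}. The open image of $\rho:I_K\to\GL_n(E_H)$ acts on the Dieudonn\'e module $D_K$ of rank $h$. Under the crystalline comparison, the $h$ Frobenius iterates of the periods of $\alpha_i$ span a rank-$h$ $\Lambda_H$-lattice of periods on which small inertia elements act through independent directions. For each variable separately, this lets us find transformations $\sigma$ which move one generator by a small amount while fixing the others to high order. A nonzero power series relation of minimal order is then differentiated away, which yields a contradiction.

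If this explicit identification of $\overline{u}_{k,i}$ proves intractable, the fallback is to argue by dimension. The image of $\theta$ is a complete local domain. The action of an open subgroup of $I_K$ via $\GL_h$ on the $nh$ periods $\varphi_K^k(\tau_i)$ is ``generic enough'' (Sen's theorem, the Lie algebra being all of $\mathfrak{gl}_n(E_H)$). Together with the tangent-space map on $\varphi_K^k(\tau_i)$ modulo $(\pi_K,\text{ideal})^2$, this forces the kernel of $\theta$ to be a prime of height $0$, hence zero, since the source is a domain of dimension $nh+1$.
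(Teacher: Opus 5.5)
\textbf{The key step fails.} The reduction itself is fine: the substitution $t_{k,i}\mapsto x_i^{q^k}+\pi_K y_{k,i}$ is injective, so formal independence of $\{\tau_i,u_{k,i}\}$ would suffice. The problem is the claim that $\overline{\tau}_i,\overline{u}_{k,i}$ are formally independent over $\Fpbar$. That claim is false, not merely hard.

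For $1\le j\le q$ we have $\binom{q}{j}\pi_K^j\in\pi_K^2\cO_K$, so $(a+\pi_Kb)^q\equiv a^q\pmod{\pi_K^2}$. Applying $\varphi_K$ to $\varphi_K(\tau_i)=\tau_i^q+\pi_Ku_{1,i}$ then gives $\varphi_K^2(\tau_i)\equiv\tau_i^{q^2}+\pi_K\varphi_K(u_{1,i})\pmod{\pi_K^2}$. By induction, $\overline{u}_{k,i}=\overline{u}_{1,i}^{\,q^{k-1}}$. So for $h\ge3$ the series $y_{2,i}-y_{1,i}^q$ already lies in the kernel.

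For $h=2$ the claim fails too. Write $\tau_i=(x_{0,i},x_{1,i},\dots)$ in $\pi_K$-Witt coordinates. Lemma \ref{lem:pth_power_Witt} gives $\overline{u}_{1,i}=x_{1,i}$, and the recursion in the proof of Theorem \ref{thm:ses_htild_ainf} determines $x_{1,i}^{q^d}$ from $x_{0,i}=\overline{\tau}_i$ alone. Concretely, take the potentially Lubin--Tate group over $\Zp$ with $[p]_H(X)=pX+X^{p^2}$, so $n=1$ and $h=2$. Comparing first Witt coordinates in $\varphi^2(\tau)=p\tau+\tau^{p^2}$ gives $x_1^{p^2}=x_0^p$, that is, $\overline{u}_1^{\,p}=\overline{\tau}$.

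This is structural. Every Witt coordinate of a period is determined by its reduction, so whatever you produce by dividing by powers of $\pi_K$ reduces to something determined by the $\overline{\tau}_i$. An inertia element that perturbs $\overline{\tau}_i$ therefore perturbs all of these reductions in lockstep. The independence of the Frobenius iterates is invisible modulo $\pi_K$; this is the same phenomenon as $\xi_H$ not reflecting $\pi_K$-divisibility. No reduction of this kind to Theorem \ref{thm:bH_injective} can work.

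\textbf{The fallback does not supply a mechanism.} A $p$-adic Lie group acting by continuous automorphisms on the complete local domain $\mathrm{im}(\theta)$ gives no lower bound on its Krull dimension by itself. Nor is there a tangent-space map modulo $(\pi_K,\dots)^2$ that would connect Sen's theorem to $\ker\theta$.

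The missing idea is to stay in characteristic zero and linearize at first order in $p^r$.
\begin{itemize}
\item Work in the $p$-adically separated lattice $\AcrisK$. For $\gamma\in I_K$ with $\rho(\gamma)=\Id+p^r\lambda E_{ji}$, Lemma \ref{lem:linearization} shows that $\gamma\varphi^k(\tau_i)-\varphi^k(\tau_i)\equiv p^r\sigma^k(\lambda)\,a_{k,i}\,\varphi^k(\ell(\tau_j))$ modulo $p^{2r-c}\AcrisK$, with each $a_{k,i}$ a unit. This uses $\ell([p^r\lambda]_H(\tau_j))=p^r\lambda\,\ell(\tau_j)$ and bounds on $\tau_j^m$ in $\AcrisK$.
\item Insert this into the Taylor expansion of a relation $g$ and let $r\to\infty$. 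You get $\sum_k(\partial g/\partial t_{k,i})(\boldsymbol{\tau})\,\sigma^k(\lambda)\,a_{k,i}\,\varphi^k(\ell(\tau_j))=0$ for all $\lambda\in\Lambda_H$ and all $j$.
\item Artin independence in $\lambda$, together with invertibility of the crystalline period matrix of Corollary \ref{cor:period_matrix}, forces every $\partial g/\partial t_{k,i}$ into $\ker\theta$. That invertibility comes from simplicity of the relative Dieudonn\'e module and Proposition \ref{prop:relation_to_padic_integration}.
\item Hasse derivatives, reduction to the residue field, and a $\pi_K$-divisibility loop then give $g=0$.
\end{itemize}
Your remark that the Frobenius iterates give ``independent directions'' under the crystalline comparison is the right intuition. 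But those directions live in the log-periods $\ell(\tau_j)\in\BcrisK^+$, which have no meaning modulo $\pi_K$.
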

The proof linearizes the inertia action $\rho:I_K\to\GL_n(E_H)$, which has an open image, and uses the nondegeneracy of the crystalline period pairing.
Restricting the coefficients to $\Lambda_H$ shows that $\xi_H$ is injective.
Hence, we obtain an equality
\[\dim(\mathrm{im}(\xi_H))=nh+1=\dim_{K}(\Gamma)+1\] 
relating the Krull dimension to the dimension as a $K$-analytic Lie group, where the last equality follows from $\dim_K(\Gamma)=n^2[E_H:K]$ by the open-image theorem.

Putting these results together, we obtain the formula
\[\dim\cA_H=nh+1,\quad\dim o_H=n,\quad\dim R_{H,K}=n(h-1)+1.\]
The relative height $h>1$ produces additional parameters for the base ring, while the absolute height $n>1$ produces a residue field that is not a one-variable local field.

Potentially Lubin-Tate groups with respect to finite unramified $L/K$ provide a concrete comparison between the multivariable construction and the one-variable theory.
The multivariable theory describes $\cO_K$-representations of $\Gal_K$, while the one-variable theory describes semilinear $\cO_L$-representations of $\Gal_K$.
In Section \ref{sec:pot_lubintate}, we compare the two theories by base change and iteration of Frobenius.

\subsectionnotoc{Outline}
In Section \ref{sec:exp_period}, we introduce exponential periods of formal groups of arbitrary dimension, and compare them with the $p$-adic integration pairing between the Tate and Dieudonn\'{e} modules.
In Section \ref{sec:onedim}, we analyze exponential periods in the one-dimensional case, establish a short exact sequence (Theorem \ref{thm:ses_htild_ainf}), and then construct the abstract model $\cA_H$.
In Section \ref{sec:perfectoid}, we show that the $p$-adic completion of the splitting field $K(H[p^\infty](\overline{K}))$ is perfectoid, and that its tilt is the completed perfection of the subfield constructed from reduced exponential periods.
In Section \ref{sec:fds}, we introduce the algebraic framework of $F$-dynamical systems and prove that the category of \'{e}tale $\varphi$-modules over a flat $F$-dynamical system is equivalent to the category of Galois representations of the residue field.
In Section \ref{sec:phigamma}, we construct the base ring $R_{H,L}$, study its properties, and finally prove the main theorem (Theorem \ref{introthm:main}).
In Section \ref{sec:period_alg}, we prove independence of reduced exponential periods and of the Frobenius iterates of the exponential periods, and explicitly describe $o_H$, $R_H$, and $R_{H,L}$.
In Section \ref{sec:pot_lubintate}, we study potentially Lubin-Tate formal groups, establish an equivalence of categories between one-variable $(\varphi,\Gamma)$-modules and semilinear Galois representations, and compare this theory with multivariable $(\varphi,\Gamma)$-modules.
Appendix \ref{app:witt} collects basic results on ramified Witt vectors, and Appendix \ref{app:onevariable} adapts Berger's argument \cite{Berger2014Lifting} to explain the obstruction to noncommutative coefficient-linear actions on a one-variable power series ring.

\sectionnotoc{Acknowledgement}
The author thanks his advisor Mark Kisin for continued encouragement and many helpful discussions.
The present work builds on Jean-Marc Fontaine's theory of $(\varphi,\Gamma)$-modules and owes a fundamental intellectual debt to his ideas.
On a more personal level, reading a letter from Fontaine \cite{Fontaineletter}, which Mark Kisin shared with the author in 2024, provided an important source of inspiration and motivation at an early stage of the project.
The author thanks Naoki Imai for his hospitality at the University of Tokyo.
The author thanks Sanath Devalapurkar, Oakley Edens, Naoki Imai, Yutaro Mikami, Dylan Pentland, Alexander Petrov, Gal Porat, Takeshi Tsuji, and Takumi Watanabe for helpful discussions.
The author specifically thanks Takeshi Tsuji for suggesting the argument of Lemma \ref{lem:contract_to_resfield}.
The main theorem (Theorem \ref{introthm:main}) was announced on August 11, 2026, at a conference at the University of Utah.
The author was supported by the Ezoe Memorial Foundation during the work.

\sectionnotoc{AI Disclosure}
By early August 2026, the author had completed a draft of this paper and subsequently used AI tools for proofreading and to obtain feedback on the exposition.
During the final revisions, the author also used GPT Astra in working out the precise estimates in Subsection \ref{subsec:indep_model}.

\sectionnotoc{Notation}
Let $K$ be a finite extension of $\Qp$, and let $\pi_K$ be a uniformizer of $K$.
Write $\kappa_K$ for the residue field of $K$.

Let $E$ be an intermediate field of $K/\Qp$, and let $\pi_E$ be a uniformizer of $E$.
Let $\overline{K}$ be an algebraic closure of $K$.
For an algebraic extension $L$ of $K$, we denote its $p$-adic completion by $\widehat{L}$.

For a field $L$, let $\Gal_L$ be the absolute Galois group of $L$.

Let $C=\widehat{\overline{K}}$, and let $\OC$ be its ring of integers.
For a perfectoid field $L$, we denote by $L^\flat$ its tilt.
We write $\AinfK=W_{\cO_K}(\OCb)$.
Let $\theta_K:\AinfK\to \cO_C$ be the canonical map.

We write $R\llbracket X_1,\cdots,X_d\rrbracket$ for the formal power series ring and $R\llbracket X_1,\cdots,X_d\rrbracket_0$ for the ideal of series with zero constant term.

Write $H$ for a formal group over $\cO_K$, $G$ for its special fiber, and $h_0$ for the ordinary height of $G$.

\newpage
\section{Exponential periods of formal groups}\label{sec:exp_period}
Let $H$ be a commutative formal group of dimension $d$ and of finite height over $\cO_K$.
In this section, we construct a natural homomorphism, called the exponential period map,
\[\tau_H:T(H)\to H(\AinfK)\]
which is $\Gal_K$-equivariant and injective.
It is related to the usual crystalline periods through a formal logarithm $\ell$ of $H$.
With this notation, the composite
\[T(H)\xrightarrow{\tau_H}(\ker\theta_K)^d\xrightarrow{\ell}\BcrisK^+\]
coincides with the $p$-adic integration pairing against the class $[\ell]$ in the de Rham realization $D_K$.

The construction uses the inclusion $T(H)\subset\widetilde{H}(\cO_C)$ and the rigidity isomorphism $\widetilde{H}(\AinfK)\isomto \widetilde{H}(\cO_C)$ of the universal cover.
Then $\tau_H$ is defined by the following diagram
\begin{center}
\begin{tikzcd}
  T(H)\arrow[rrd,dotted,"\tau_H"']\arrow[r,phantom,"\subset"]&\tilH(\OC)&\tilH(\AinfK)\arrow[l,"\sim"',"\textup{\textcolor{blue}{rigidity}}"]\arrow[d,"\pr"]\\
  &&H(\AinfK)
\end{tikzcd}
\end{center}
where $\pr$ denotes projection to the zeroth component.
We will show that $\pr$ is injective.
As an application of exponential periods, we give a short proof of the classical crystallinity statement of the rational Tate module $V(H)$ in the one-dimensional case.

\subsection{Universal covers and rigidity}
In this subsection, we review the definition of universal covers and prove the key rigidity property.
Let $K$ be a finite extension of $\Qp$.
It is often useful to view a formal group as a functor from the category of adic algebras over $\cO_K$ to $\Zp$-modules.
\begin{definition}
An \textit{adic $\cO_K$-algebra} is a pair $(A,I)$ of an $\cO_K$-algebra $A$ and a finitely generated ideal $I$ containing $\pi_K$ such that $A$ is separated and complete with respect to the $I$-adic topology.
\end{definition}
For an adic $\cO_K$-algebra $(A,I)$, we will view $A$ as a complete topological ring with respect to the $I$-adic topology.
Let $A^\cc$ denote the ideal of topologically nilpotent elements.

After choosing coordinates, a commutative formal group $H$ of dimension $d$ over $\cO_K$ can be viewed as a functor from the category of adic $\cO_K$-algebras to the category of $\Zp$-modules which sends $(A,I)$ to $H(A)\coloneqq (A^\cc)^d$ with a module structure given by the formal $\Zp$-module law of $H$.

\begin{definition}[\cite{Fargues2018Courbes,Scholze2013Moduli}]
Let $H$ be a formal group over $\cO_K$.
The \textit{universal cover} of $H$ is defined as the functor $\widetilde{H}$ from the category of adic $\cO_K$-algebras to the category of $\Qp$-vector spaces which sends $(A,I)$ to
\[\tilH(A)\coloneqq \varprojlim_{[p]_H}H(A)=\{(x_i)_{i\ge0}\in H(A)^{\NN}:x_i=[p]_H(x_{i+1})\}.\]
Here, multiplication by $p$ is invertible, with inverse $(x_0,x_1,\cdots)\mapsto (x_1,x_2,\cdots)$; therefore, $\tilH(A)$ is naturally a $\Qp$-vector space.
\end{definition}

The universal cover of a formal group satisfies a rigidity property with respect to certain infinitesimal thickenings.
We will show that reduction modulo any finitely generated ideal $J\subset I$ induces an isomorphism $\tilH(A)\to\tilH(A/J)$, where $A/J$ is endowed with the induced topology from $A$.
However, the topological ring $A/J$ is not necessarily separated and complete with the induced topology, and we define its formal-group points directly by evaluation of lifts.

Let $(A,I)$ be an adic $\cO_K$-algebra, and let $J$ be a subideal of $I$.
We consider the topology on $A/J$ induced from $A$; then we have $(A/J)^\cc=A^\cc/J$.
For $F(X)\in \cO_K\llbracket X_1,\cdots,X_d\rrbracket$ and an element $a\in ((A/J)^\cc)^d$, choose a lift $\widetilde{a}\in (A^\cc)^d$.
The image of $F(\widetilde{a})$ in $A/J$ is independent of the choice.
Therefore, we can similarly define $H(A/J)\coloneqq ((A/J)^\cc)^d$ with a module structure via the formal $\Zp$-module law of $H$, and define
\[\tilH(A/J)\coloneqq\varprojlim_{[p]_H}H(A/J).\]

We can now state the rigidity lemma for universal covers.
It is a slight variant of \cite[Proposition 4.5.2]{Fargues2018Courbes} and replaces the closedness assumption by finite generation.

\begin{lemma}\label{lem:rigidity}
Let $H$ be a formal group of finite height over $\cO_K$.
Let $(A,I)$ be an adic $\cO_K$-algebra and let $J\subset I$ be a finitely generated ideal.
Then the reduction 
\[\tilH(A)\to\tilH(A/J)\]
is an isomorphism, and the inverse sends $(x_i)_i$ to $(\lim_{n\to\infty}[p^n]_H(a_{n+i}))_i$ where $a_{n+i}\in (A^\cc)^d$ is a lift of $x_{n+i}$.
\end{lemma}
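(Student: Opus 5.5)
The plan is to follow the argument of Fargues--Fontaine for rigidity of universal covers. The key input is that $[p]_H$ contracts the ideal $J$: if $a,b\in (A^\cc)^d$ satisfy $a\equiv b \pmod{J^m}$ (coordinatewise), then $[p]_H(a)\equiv[p]_H(b)\pmod{J^{m'}}$ with $m'$ strictly larger, at least after a bounded number of iterations. To see this, work in the universal situation. Write $[p]_H(X)-[p]_H(Y)$ as a combination of the coordinates of $X-Y$ with coefficients in $\cO_K\llbracket X,Y\rrbracket$. The linear part of $[p]_H$ is $p\cdot\mathrm{Id}$. Finite height gives that $[p]_H \bmod \pi_K$ factors through a power of the relative Frobenius, so modulo $p$ the series $[p]_H$ has no linear terms. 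Concretely, there is an $N$ such that $[p^N]_H(X)\equiv F(X^{(p)})$ modulo $p$, where $X^{(p)}$ denotes the coordinatewise $p$-th powers.

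The contraction estimate then runs as follows.
\begin{itemize}
\item Suppose $a=b+\delta$ with $\delta\in (J^m)^d$.
\item Then $a^p-b^p\in pJ^m+J^{pm}\subset J^{m+1}$, since $p\in(\pi_K)\subset I$. More precisely, I will take care to use $p\in I$ and $J\subset I$, so the estimate is in terms of $I$-powers mixed with $J$-powers.
\item A cleaner formulation uses the ideal $J_m=J^m+IJ^{m-1}+\cdots$. The most convenient version: if $a\equiv b \bmod \mathfrak{a}$ for an ideal $\mathfrak a\subset J$, then $[p^N]_H(a)\equiv [p^N]_H(b)\bmod (I\mathfrak a+\mathfrak a^p)\subset I\mathfrak a$.
\item Iterating, $[p^{Nk}]_H(a)\equiv[p^{Nk}]_H(b)\pmod{I^kJ}$.
\end{itemize}
Since $A$ is $I$-adically separated and complete, this is the estimate needed.

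Next I define the inverse map and check it.
\begin{enumerate}
\item \emph{Well-definedness and convergence.} Let $(x_i)\in\tilH(A/J)$, and choose lifts $a_j$ of $x_j$. Then $[p]_H(a_{j+1})\equiv a_j\pmod J$. By the estimate, $[p^{n+1}]_H(a_{n+1+i})$ and $[p^n]_H(a_{n+i})$ agree modulo $I^{\lfloor n/N\rfloor}J$. So the sequence $[p^n]_H(a_{n+i})$ is Cauchy and converges in $(A^\cc)^d$; here $A^\cc$ is closed, as it is the preimage of the nilradical of $A/I$.
\item \emph{Independence of lifts.} The same estimate applied to two choices of lifts, which differ by elements of $J$, shows the limit does not depend on the choice.
\item \emph{The limits form an element of $\tilH(A)$.} Continuity of $[p]_H$ gives $[p]_H(y_{i+1})=y_i$, where $y_i$ denotes the limit.
\item \emph{The limits lift $x_i$.} Each term $[p^n]_H(a_{n+i})$ reduces to $x_i$. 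The ideal $J$ is finitely generated, hence $I$-adically closed, since $A$ is $I$-adically complete and Noetherian-free closedness of finitely generated ideals holds in complete rings (Stacks). Therefore the limit also reduces to $x_i$.
\item \emph{Composite in the other order.} Starting from $(z_i)\in\tilH(A)$, take $a_j=z_j$; then the limit is $z_i$ exactly.
\end{enumerate}
So the reduction map is surjective, and, by independence of lifts, injective.

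The main obstacle is the contraction estimate for the difference of $[p^N]_H$ values. One needs a precise use of finite height: the series $[p]_H$ reduces modulo $\pi_K$ to a series in $X^{(q^{h})}$ up to an invertible change, or, in dimension $d$, the Frobenius-factorization of $[p^N]$ modulo $p$. It must then be checked that $(b+\delta)^{p}-b^p\in I\delta$ for $\delta\in\mathfrak a\subset J\subset I$. This holds because each cross term involves either $p$ or a product $\delta\cdot b$ with $b$ topologically nilpotent. If $b$ is not in $I$, I would instead use that $\delta^p\in I\mathfrak a$ since $\delta\in I$, together with the binomial terms carrying the factor $p\in I$. The closedness of $J$, which is where finite generation replaces the closedness hypothesis of \cite{Fargues2018Courbes}, is the second point to verify carefully.
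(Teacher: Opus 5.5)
Your contraction estimate is right, and it is the same input the paper uses. Modulo $\pi_K$ the series $[p]_H$ is a series in the $p$-th powers of the coordinates. This holds for every formal group, so $N=1$ and finite height plays no role. Hence $a\equiv b \bmod \mathfrak a$ with $\mathfrak a\subset J$ gives $[p]_H(a)\equiv[p]_H(b)\bmod \pi_K\mathfrak a+\mathfrak a^p\subset I\mathfrak a$. Steps 1--3 and 5 are fine, and so is injectivity.

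The gap is step 4, which is where surjectivity is decided. It is not true that a finitely generated ideal of an $I$-adically complete ring is $I$-adically closed once Noetherianity (and with it Artin--Rees) is dropped. Here is a counterexample:
\begin{itemize}
\item Let $S=\kappa_K[s]\oplus\bigoplus_{m\ge1}(\kappa_K[s]/(s^m))e_m$ be the square-zero extension, $A=S\llbracket x\rrbracket$, $I=(x)$, and $J=(x(s+x))\subset I$.
\item Put $G=\sum_{j\ge1}(-1)^je_jx^j$. Since $s^je_j=0$, the congruence $(s+x)h\equiv G \bmod x^n$ is solvable for every $n$. Hence $xG\in J+x^{n+1}A$ for all $n$, so $xG$ lies in the closure of $J$.
\item An exact solution of $(s+x)h=G$ would force $h_0$ to have $e_j$-component $-s^{j-1}e_j\neq0$ for every $j$, which is impossible in a direct sum. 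So $xG\notin J$.
\end{itemize}
Thus ``the limit reduces to $x_i$ because $J$ is closed'' cannot be justified as you state it.

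The repair is to use finite generation of $J$ the way the paper does. Your estimate actually gives $[p^{n+1}]_H(a_{n+1+i})-[p^n]_H(a_{n+i})\in I^{\lfloor n/N\rfloor}J$ for all $n\ge0$. Write $J=(f_1,\dots,f_r)$. Each such difference is $\sum_j c_{n,j}f_j$ with $c_{n,j}\in I^{\lfloor n/N\rfloor}$. The series $\sum_n c_{n,j}$ converge $I$-adically, so $y_i-a_i=\sum_j\bigl(\sum_n c_{n,j}\bigr)f_j\in J$.

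In other words, $A$ is complete for the filtration $\{JI^n\}$, and your limits converge in that finer filtration. This gives the lifting property without any closedness of $J$.

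With this change your argument is essentially the paper's. The paper packages the same estimate as an explicit inverse $\iota$ to $\tilH(A/JI)\to\tilH(A/J)$ and iterates it to $\tilH(A/JI^n)$. It then passes to $\tilH(A)$ using exactly this completeness for $\{JI^n\}$.
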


\begin{proof}
We first prove that for every ideal $J\subset I$ the reduction map
\begin{equation}\label{eq:rigidity}
    \tilH(A/JI)\to\tilH(A/J)
\end{equation}
is an isomorphism.

Fix coordinates $X=(X_1,\cdots,X_d)$ on $H$ and let $P=(P_1,\cdots,P_d)=[p]_H$.
Then $P\in (\cO_K\llbracket X_1,\cdots,X_d\rrbracket_0)^d$.
Since each component of $P(X)$ is a power series in $X_1^p,\cdots,X_d^p$ modulo $\pi_K$, the partial derivatives are all divisible by $\pi_K$.
Reducing modulo $\pi_K$, we obtain componentwise
\[P(X+\epsilon)-P(X)\in (\pi_K \epsilon_1,\cdots,\pi_K\epsilon_d,\epsilon_1^p,\cdots,\epsilon_d^p).\]

Let $x\in (A^\cc/J)^d$.
Pick two lifts $a,a+j\in (A^\cc)^d$; then we have $P(a+j)-P(a)\in (JI)^d$.
Therefore, the map $\iota:H(A/J)\to H(A/JI)$ sending $x$ to $[P(a)]$ for any lift $a$ is well-defined.
Let $r:H(A/JI)\to H(A/J)$ denote the reduction map.
Then the identities $r\iota=[p]_H$ and $\iota r=[p]_H$ hold, so the shifted map 
\[\tilH(A/J)\to \tilH(A/JI),\quad (x_0,x_1,\cdots)\mapsto (\iota(x_1),\iota(x_2),\cdots)\]
gives an inverse of the reduction map, showing the isomorphism (\ref{eq:rigidity}).

Set $J_0=J$ and $J_{n+1}=J_nI$ for each $n\ge0$.
By (\ref{eq:rigidity}), the reduction map $\tilH(A/J_n)\to \tilH(A/J)$ is an isomorphism with an inverse $(x_i)_i\mapsto ([p^n]_H(a_{n+i}))_i$ where $a_{n+i}\in ((A/J_n)^\cc)^d$ is a lift of $x_{n+i}$.

We now assume that $J$ is finitely generated.
Then $A$ is separated and complete for the filtration $\{J_n\}$; separatedness follows from $J_n\subset I^{n+1}$, while completeness follows by expressing successive differences in terms of finitely many generators of $J$ and summing their $I$-adically convergent coefficient series.
Hence,
\[\tilH(A)\cong\varprojlim_n\tilH(A/J_n).\]
These isomorphisms identify the inverse system $\tilH(A/J_n)$ with the constant system $\tilH(A/J)$, providing an isomorphism between their inverse limits.
The inverse map sends $(x_i)_i$ to $(\lim_{n\to\infty}[p^n]_H(a_{n+i}))_i$ where $a_{n+i}\in (A^\cc)^d$ is a lift of $x_{n+i}$.
\end{proof}

\subsection{The exponential period map}
Let $\AinfK=W_{\cO_K}(\OCb)$.
There is a surjective homomorphism $\theta_K:\AinfK\to\OC$ which fits into the following commutative diagram
\begin{center}
\begin{tikzcd}
  \AinfK\arrow[r,"\theta_K"]\arrow[d]&\OC\arrow[d]\\
  \OCb\arrow[r]&\OC/(\pi_K)
\end{tikzcd}
\end{center}
where the vertical maps are reductions modulo $\pi_K$.
The kernel of $\theta_K$ is a principal ideal \cite[Proposition 3.1.9]{Fargues2018Courbes}.
Let $\xi$ be any generator.
The ring $\AinfK$ is separated and complete for the $(\pi_K,\xi)$-adic topology, called the \textit{weak topology}, and hence is an adic $\cO_K$-algebra.

\begin{proposition}\label{prop:tilH_isom}
Let $H$ be a formal group over $\cO_K$ of finite height.
Then the commutative diagram
\begin{center}
\begin{tikzcd}
    \tilH(\AinfK)\arrow[r,"\theta_K"]\arrow[d]&\tilH(\OC)\arrow[d]\\
    \tilH(\OCb)\arrow[r]&\tilH(\OC/(\pi_K))
\end{tikzcd}
\end{center}
consists of $\Gal_K$-equivariant isomorphisms of $\Qp$-vector spaces.
\end{proposition}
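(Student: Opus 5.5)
The plan is to obtain all four isomorphisms from Lemma \ref{lem:rigidity}, applied to suitable adic $\cO_K$-algebras and finitely generated subideals. Every map in the square is induced by a $\Gal_K$-equivariant ring homomorphism ($\theta_K$, reduction modulo $\pi_K$ on $\AinfK$ and on $\OC$, and the canonical map $\OCb\to\OC/(\pi_K)$). It is also compatible with the formal group laws, which have coefficients in $\cO_K$. Hence the diagram commutes, its maps are $\Gal_K$-equivariant, and they are $\Qp$-linear, since multiplication by $p$ is the shift on both sides and commutes with the induced maps. The only real content is bijectivity, and by commutativity it suffices to prove that three of the maps are bijective.

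\emph{Left vertical map.} Take $(A,I)=(\AinfK,(\pi_K,\xi))$, which is adic by the remark preceding the proposition. Take $J=(\pi_K)\subset I$, which is finitely generated. Lemma \ref{lem:rigidity} gives $\tilH(\AinfK)\isomto\tilH(\AinfK/(\pi_K))$, where the target is formed with the topology on $\OCb=\AinfK/(\pi_K)$ induced from the weak topology. I will check that this induced topology is the usual valuation topology on $\OCb$, whose ideal of definition is generated by the image $\overline{\xi}$ of $\xi$; this image is a pseudo-uniformizer, for instance when $\xi=[\varpi]-\pi_K$ with $\varpi$ suitable. Granting this, the topologically nilpotent elements agree, so $H(\AinfK/(\pi_K))=H(\OCb)$ as the paper's $H(\OCb)$ for $(\OCb,(\overline{\xi}))$.

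\emph{Right vertical map.} Take $(A,I)=(\OC,(p))$ and $J=(\pi_K)$. Since $\pi_K$ divides $p$, $J$ is a finitely generated subideal of $I$, and the lemma gives $\tilH(\OC)\isomto\tilH(\OC/(\pi_K))$.

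\emph{Top horizontal map.} Take $J=\ker\theta_K=(\xi)\subset I$, again finitely generated. The lemma gives $\tilH(\AinfK)\isomto\tilH(\AinfK/(\xi))$, where the quotient carries the topology induced from $(\pi_K,\xi)$. Under $\theta_K$ this is $\OC$ with the $\pi_K$-adic topology, i.e. the $p$-adic topology, so the target is $\tilH(\OC)$. The bottom map then follows from commutativity. Alternatively, one can apply the lemma to $(\OCb,(\overline{\xi}))$ with $J=(\overline{\xi})$: since $\theta_K(\xi)=0$, the map $\OCb\to\OC/(\pi_K)$ identifies $\OCb/(\overline{\xi})\cong\OC/(\pi_K)$, because both equal $\AinfK/(\pi_K,\xi)$.

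The main obstacle is the topological bookkeeping: checking that the quotient topologies induced from the weak topology coincide with the natural topologies on $\OCb$ and $\OC$. This matters because the sets $H(-)=((-)^{\cc})^d$ depend on which topology is used. I would verify it by computing the images of the powers $(\pi_K,\xi)^n$ in each quotient: in $\OCb$ they are generated by $\overline{\xi}^n$, and in $\OC$ by $\pi_K^n$, up to cofinality.
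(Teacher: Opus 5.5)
Your strategy matches the paper's. You apply Lemma~\ref{lem:rigidity} to $(\AinfK,(\pi_K,\xi))$ with finitely generated subideals and close the square by commutativity. The paper takes $J=(\pi_K)$, $(\xi)$ and $(\pi_K,\xi)$, which makes the three maps out of $\tilH(\AinfK)$ isomorphisms directly. Your identification of the induced quotient topologies on $\OCb$ (generated by the pseudo-uniformizer $\overline{\xi}$) and on $\OC$ ($\pi_K$-adic) is also correct.

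One step fails as written. For the right vertical map you take $(A,I)=(\OC,(p))$ and $J=(\pi_K)$, and claim $J\subset I$ because $\pi_K$ divides $p$. But $\pi_K\mid p$ gives $(p)\subset(\pi_K)$, the opposite containment. So when $K/\Qp$ is ramified, $J\not\subset I$ and the lemma's hypotheses are not met. Worse, $(\OC,(p))$ is not an adic $\cO_K$-algebra in the paper's sense, since $I$ must contain $\pi_K$.

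This is not purely cosmetic. The first step of the lemma's proof needs $P(a+j)-P(a)\in JI$. With $j\in\pi_K\OC$, the term $\pi_K j$ is only known to lie in $\pi_K^2\OC$, not in $JI=\pi_K^{e+1}\OC$ when $e>1$.

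Any of the following repairs it:
\begin{enumerate}
\item Take $I=J=(\pi_K)$ on $\OC$; the $\pi_K$-adic and $p$-adic topologies agree.
\item Use your own alternative: apply the lemma to $(\OCb,(\overline{\xi}))$ with $J=(\overline{\xi})$ to get the bottom map, then deduce the right vertical map from commutativity.
\item Follow the paper and use $J=(\pi_K,\xi)$ inside $\AinfK$.
\end{enumerate}
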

\begin{proof}
Apply Lemma \ref{lem:rigidity} to $(\AinfK,I)$ with $I=(\pi_K,\xi)$ and the three finitely generated ideals $J_1=(\pi_K)$, $J_2=(\xi)$ and $J_3=(\pi_K,\xi)$.
The quotient identifications and commutativity give all four isomorphisms; the quotient by $\xi$ is $\cO_C$ with $\pi_K$-adic topology, the quotient by $\pi_K$ is $\OCb$ with its valuation topology, and the quotient by $(\pi_K,\xi)$ is $\cO_C/(\pi_K)$ with discrete topology.
Functoriality gives $\Gal_K$-equivariance.
\end{proof}

We first define the exponential period map on the universal cover $\widetilde{H}(\cO_C)$.
\begin{definition}
Let $H$ be a formal group over $\cO_K$ of finite height.
We define the \textit{exponential period map}
\begin{equation}\label{eq:exp_period}
    \widetilde{\tau}_H:\tilH(\OC)\to H(\AinfK)
\end{equation}
as the composite of the inverse of the isomorphism $\tilH(\AinfK)\isomto \tilH(\OC) $ in Proposition \ref{prop:tilH_isom} and the projection $\tilH(\AinfK)\to H(\AinfK)$ sending $(x_i)_{i\ge0}$ to $x_0$.
We denote its restriction to $T(H)$ by $\tau_H$.
\end{definition}

\begin{proposition}\label{prop:tilh_to_h_ainf}
Let $H$ be a formal group over $\cO_K$ of finite height.
Then the projection $\tilH(\AinfK)\to H(\AinfK)$ is injective.
\end{proposition}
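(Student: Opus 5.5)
The plan is to transport the problem, via the rigidity isomorphism of Proposition \ref{prop:tilH_isom}, to the reduced perfect ring $\OCb$, where torsion in $H$ can be controlled by Frobenius.

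Let $y=(y_i)_{i\ge0}\in\tilH(\AinfK)$ with $y_0=0$. Reduction modulo $\pi_K$ is applied componentwise. So the image $\bar y=(\bar y_i)_i\in\tilH(\OCb)$ under the vertical isomorphism of Proposition \ref{prop:tilH_isom} also satisfies $\bar y_0=0$. That map is an isomorphism, so it suffices to show $\bar y=0$. In other words, it suffices to show that the projection $\tilH(\OCb)\to H(\OCb)$ is injective.

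To show this, I will prove that $H(\OCb)$ has no nonzero $p$-torsion. Granting this, the relation $[p]_H(\bar y_1)=\bar y_0=0$ forces $\bar y_1=0$, and by induction $[p]_H(\bar y_{i+1})=\bar y_i=0$ forces $\bar y_{i+1}=0$ for every $i$. Hence $\bar y=0$, and therefore $y=0$.

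The key step, and the main obstacle, is the absence of $p$-torsion in $H(\OCb)$. The ring $\OCb$ is a $\kappa_K$-algebra, so points of $H$ over it are points of the special fiber $G$, which is a connected formal group of finite height over $\kappa_K$. The kernel $G[p]$ is then a finite connected (infinitesimal) group scheme. Any finite infinitesimal group scheme is killed by some power of the relative Frobenius, so $G[p]\subset\ker(F^N_{G})$ for some $N\ge1$.

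Since $[p]_G:G\to G$ is an isogeny, hence an fppf epimorphism with kernel $G[p]$, the endomorphism $F^N$ factors as $F^N=\psi\circ[p]_G$ for some homomorphism $\psi$. On coordinates this says that each $X_j^{q^N}$, or $X_j^{p^N}$ after twisting by the appropriate Frobenius on coefficients, lies in the ideal generated by the components of $[p]_G(X)$.

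Consequently, if $x\in H(\OCb)=(\OCb^{\cc})^d$ satisfies $[p]_H(x)=0$, then $x_j^{p^{N'}}=0$ for all $j$ and some $N'$. Since $\OCb$ is reduced (indeed a domain), this gives $x=0$. If the factorization argument is awkward in dimension $d$, I would instead check directly, using the structure of $[p]$ modulo $\pi_K$ for finite height, that $\ker[p]_G$ is supported at the origin with nilpotent coordinate functions. The conclusion, that $p$-torsion points over a reduced ring vanish, is the same.
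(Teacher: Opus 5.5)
Your proof is correct. It shares the paper's first step: both arguments use the rigidity isomorphism $\tilH(\AinfK)\isomto\tilG(\OCb)$ of Proposition \ref{prop:tilH_isom} to reduce to the projection $\tilG(\OCb)\to G(\OCb)$.

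The two routes differ in the key lemma for this bottom map. The paper (Lemma \ref{lem:tilh_to_h_OCb}) proves it is an \emph{isomorphism} for every perfect adic $\kappa_K$-algebra. It does this by identifying $\tilG$ with $\varprojlim_F G^{(p^{-i})}$ through iterated Verschiebung, citing Scholze--Weinstein and Fargues--Fontaine, and then using that Frobenius is bijective on a perfect ring.

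You prove only injectivity, but your argument is more elementary and more general. Finite height makes $\kappa_K\llbracket X\rrbracket/([p]_G(X))$ an Artinian local ring, so some power of each coordinate lies in the ideal generated by the components of $[p]_G$. Hence $G(R)[p]=0$ for every reduced adic $\kappa_K$-algebra $R$, perfect or not. Your fallback ideal-theoretic formulation is actually the cleanest version, since it avoids the fppf factorization $F^N=\psi\circ[p]_G$ and the Frobenius-twist bookkeeping.

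What the paper's route buys is surjectivity as well. The bijection $\tilG(\OCb)\cong G(\OCb)$ is used later, for example in Corollary \ref{cor:characterize_exp_period}, where reduction identifies $\mathrm{im}(\widetilde{\tau}_H)$ with all of $G(\OCb)$, and in Proposition \ref{prop:explicit_isom_tilG_to_G}. So your argument suffices for this proposition, but it would not replace Lemma \ref{lem:tilh_to_h_OCb} in the rest of the paper.
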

\begin{proof}
Let $G=H\times_{\cO_K}\kappa_K$ be the special fiber of $H$.
Consider the following commutative diagram
\begin{equation}\label{cd:univ_Ainf_OCb}
\begin{tikzcd}
    \tilH(\AinfK)\arrow[r,"\pr"]\arrow[d]&H(\AinfK)\arrow[d]\\
    \tilG(\OCb)\arrow[r,"\pr"]&G(\OCb)
\end{tikzcd}
\end{equation}
The left vertical map is an isomorphism by Proposition \ref{prop:tilH_isom}.
By Lemma \ref{lem:tilh_to_h_OCb} below, the bottom horizontal map is also an isomorphism; commutativity therefore implies that the top horizontal map is injective.
\end{proof}
\begin{lemma}\label{lem:tilh_to_h_OCb}
Let $k$ be a perfect field of characteristic $p$, let $G$ be a formal group of finite height over $k$, and let $R$ be a perfect adic $k$-algebra.
Then the projection map $\pr:\tilG(R)\to G(R)$ is an isomorphism.
\end{lemma}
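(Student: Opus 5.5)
We aim to show that for a finite height formal group $G$ over a perfect field $k$ of characteristic $p$, the map $[p]_G$ on $G(R)$ is bijective whenever $R$ is a perfect adic $k$-algebra. Granting this, $\pr:\tilG(R)\to G(R)$ is an isomorphism: injectivity holds because $x_{i+1}$ is the unique $[p]_G$-preimage of $x_i$, and surjectivity because we may set $x_{i+1}=[p]_G^{-1}(x_i)$ recursively.

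The key input is the structure of $[p]_G$ in characteristic $p$. Since $G$ has finite height, the isogeny $[p]_G$ factors through the relative Frobenius. Over a perfect field $k$, we will use that some power of Frobenius factors through $[p]_G$ up to an automorphism. Concretely, write $[p]_G=V\circ F$, where $F:G\to G^{(p)}$ is the relative Frobenius and $V:G^{(p)}\to G$ is the Verschiebung. Let $\sigma$ be the absolute Frobenius of $R$, which is a bijection because $R$ is perfect. Evaluating $F$ on $R$-points gives $G(R)\to G^{(p)}(R)$, $x\mapsto \sigma(x)$ coordinatewise, and this is bijective because $\sigma$ is bijective on $R$ and preserves topological nilpotence.

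It therefore remains to show that $V$ induces a bijection $G^{(p)}(R)\to G(R)$. Finite height means that $\ker[p]_G$ is a finite connected group scheme, so $[p]_G$ is purely inseparable. Hence there is an $m$ such that the $m$-fold relative Frobenius $F^m:G\to G^{(p^m)}$ factors as $F^m=\psi\circ[p]_G$ for some isogeny $\psi$. Applying the same reasoning with the roles reversed, $[p]_G$ divides $F^m$ and $F^m$ divides $[p]_G^{m'}$. On $R$-points every $F^j$ is bijective, because it is coordinatewise $\sigma^j$ followed by the identification given by base change along the Frobenius of $k$. The first factorization then shows that $[p]_G$ is injective. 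The divisibility $F^m \mid [p]_G^{m'}$ shows that $[p]_G^{m'}$ is surjective, and therefore $[p]_G$ is surjective.

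The main obstacle is justifying these factorizations at the level of power series, in coordinates $X=(X_1,\dots,X_d)$. We intend to use Dieudonné theory, or more concretely the fact that $\ker[p]_G$ is a finite infinitesimal subgroup. That subgroup is contained in $\ker F^m$ for $m$ large, so $F^m$ factors through the quotient $G/\ker[p]_G\cong G$. Conversely, $\ker F$ is finite, hence killed by some power of $p$ and therefore contained in $\ker[p]^{m'}$, which gives $[p]^{m'}=\chi\circ F$. Both statements are standard for formal Lie groups of finite height. Once they are in place, all the maps involved are given by power series without constant term, so they preserve $R^\cc$, and the argument on $R$-points goes through as described.
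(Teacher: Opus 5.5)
Your reduction to bijectivity of $[p]_G$ on $G(R)$ is fine. The injectivity half is also fine: $F^m=\psi\circ[p]_G$, and $F^m$ is injective on $R$-points, so $[p]_G$ is injective.

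\textbf{The surjectivity step does not follow.} Consider a factorization $[p]_G^{m'}=\chi\circ F^m$ through a map that is bijective on $R$-points. (Your $[p]^{m'}=\chi\circ F$ is just $[p]_G=V\circ F$ with $m'=1$.) Such a factorization only says that $[p]_G^{m'}$ and $\chi$ have the same image. It gives no information on whether that image is all of $G(R)$. For surjectivity you need $[p]$ as the \emph{outer} factor of a surjective map.

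\textbf{The fix uses your first factorization plus commutativity.}
\begin{itemize}
\item $\psi$ is a homomorphism, so $\psi\circ[p]_G=[p]_{G^{(p^m)}}\circ\psi$. Hence $F^m=[p]_{G^{(p^m)}}\circ\psi$.
\item Since $F^m$ is surjective on $R$-points, $[p]_{G^{(p^m)}}$ is surjective on $G^{(p^m)}(R)$.
\item $F^m\colon G(R)\to G^{(p^m)}(R)$ is a bijective group homomorphism with $F^m\circ[p]_G=[p]_{G^{(p^m)}}\circ F^m$. Therefore $[p]_G$ is surjective on $G(R)$.
\end{itemize}
Equivalently, apply your first factorization to $G^{(p^{-m})}$, which exists because $k$ is perfect. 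The second factorization can then be dropped entirely.

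\textbf{Comparison with the paper.} With this repair your argument is a correct proof, but by a different route.
\begin{itemize}
\item The paper never shows that $[p]_G$ is bijective on points. It uses the twisted Verschiebung composites $V^i$ to identify $\tilG$ with $\varprojlim_F G^{(p^{-i})}$ as functors on all adic $k$-algebras, citing Scholze--Weinstein and Fargues--Fontaine. Perfectness of $R$ is used only to collapse the Frobenius limit onto its zeroth term.
\item Your route is more self-contained. Its only input is that $F^m$ factors through $[p]_G$ (quotient by a finite infinitesimal subgroup). In dimension one this is explicit: $[p]_G(X)=u(X^{p^{h_0}})$ with $u$ invertible under composition.
\item Your route also yields the stronger pointwise fact that $[p]_G$ is an automorphism of $G(R)$ for perfect $R$.
\item In exchange, the paper's route gives the isomorphism $\cV$ for non-perfect rings too, with an explicit formula. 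That formula is reused in Proposition~\ref{prop:explicit_isom_tilG_to_G} for $\OC/(\pi_K)$, where your argument would not apply.
\end{itemize}
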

\begin{proof}
Consider the following commutative diagram:
\begin{equation}\label{cd:univ}
    \begin{tikzcd}
    G\arrow[d,equal]&G\arrow[l,"p"]\arrow[d,"V"]&G\arrow[d,"V^2"]\arrow[l,"p"]&\cdots\arrow[l]\\
    G&G^{(p^{-1})}\arrow[l,"F"]&G^{(p^{-2})}\arrow[l,"F"]&\cdots\arrow[l]
    \end{tikzcd}
\end{equation}
where $G^{(p^{-i})}=G\times_{k,F_k^{-i}}k$, $F_k$ is the Frobenius on $k$, and $F:G^{(p^{-(i+1)})}\to G^(p^{-i})$ is relative Frobenius.
The vertical map $V^i$ is the composite of the appropriately twisted Verschiebung maps.
More precisely, write $[p]_G(X)=f(X^p)$, and let $\sigma$ denote the $p$-power Frobenius on $k$; then we set
\[V^0=\id,\quad V^i=f^{\sigma^{-i}}\circ f^{\sigma^{-(i-1)}}\circ \cdots \circ f^{\sigma^{-1}}\,\,(i\ge1).\]

The vertical arrows define a homomorphism 
\[\cV:\widetilde{G}\to\varprojlim_{F}G^{(p^{-i})}.\] 
Then $\cV$ is an isomorphism (see the proof of \cite[Proposition 3.1.3 (iii)]{Scholze2013Moduli} as well as \cite[Proposition 4.6.4]{Fargues2018Courbes}).
Since $R$ is perfect, the projection map
\[\varprojlim_{F}G^{(p^{-i})}(R)\to G(R)\]
is an isomorphism.
\end{proof}

\begin{corollary}\label{cor:tau_equivariant}
The exponential period map 
\[\widetilde{\tau}_H:\tilH(\OC)\to H(\AinfK)\]
is a $\Gal_K$-equivariant injective $\Zp$-module homomorphism.
Its restriction to $T(H)$ lands inside $(\ker\theta_K)^d$.
\end{corollary}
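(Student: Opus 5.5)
Three things have to be checked: equivariance, injectivity, and that the image of $T(H)$ lies in $(\ker\theta_K)^d$. By definition, $\widetilde{\tau}_H$ is the inverse of the isomorphism $\theta_K:\tilH(\AinfK)\isomto\tilH(\OC)$ of Proposition \ref{prop:tilH_isom}, followed by the projection $\pr:\tilH(\AinfK)\to H(\AinfK)$.

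\emph{Equivariance and linearity.} Proposition \ref{prop:tilH_isom} says the top arrow is a $\Gal_K$-equivariant isomorphism of $\Qp$-vector spaces, so its inverse is too. The projection is a $\Zp$-module homomorphism because the $\Zp$-structure on $\tilH(\AinfK)$ is defined componentwise through the formal group law. It commutes with $\Gal_K$ because $\Gal_K$ acts on $\tilH(\AinfK)$ componentwise, via its action on $\AinfK$. Hence the composite $\widetilde{\tau}_H$ is $\Gal_K$-equivariant and $\Zp$-linear.

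\emph{Injectivity.} The first map is an isomorphism, and $\pr$ is injective by Proposition \ref{prop:tilh_to_h_ainf}, so the composite is injective.

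\emph{Image of $T(H)$.} Let $\alpha=(x_i)_{i\ge0}\in T(H)\subset\tilH(\OC)$, viewed as a compatible system of torsion points. Since $\alpha$ lies in $T(H)$, its zeroth component is $x_0=0$. Let $(y_i)_i\in\tilH(\AinfK)$ be the unique lift, so that $\theta_K(y_i)=x_i$ componentwise for every $i$; in particular $\widetilde{\tau}_H(\alpha)=y_0$. This is where the definition $H(A/J)=((A/J)^\cc)^d$ enters: the map $\tilH(\AinfK)\to\tilH(\OC)$ is induced by reduction modulo $J_2=(\xi)$, which acts on coordinates. Then $\theta_K(y_0)=x_0=0$ coordinatewise, that is, $y_0\in(\ker\theta_K)^d$.

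The only point needing care is that the isomorphism of Proposition \ref{prop:tilH_isom} is literally coordinatewise application of $\theta_K$, so that "zeroth component zero in $\OC$" transfers to "zeroth component in $\ker\theta_K$". This follows from the construction in Lemma \ref{lem:rigidity}, where $H(A/J)$ is defined using coordinates, so I expect no genuine obstacle.
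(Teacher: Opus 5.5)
Your proposal is correct and follows the paper's proof: $\widetilde{\tau}_H$ is the inverse of the rigidity isomorphism of Proposition~\ref{prop:tilH_isom} followed by the injective projection of Proposition~\ref{prop:tilh_to_h_ainf}, and $\theta_K\circ\widetilde{\tau}_H$ is the zeroth projection, which kills $T(H)$. Your check that the rigidity map is coordinatewise application of $\theta_K$ is exactly the fact the paper uses implicitly in its last step.
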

\begin{proof}
By Propositions \ref{prop:tilH_isom} and \ref{prop:tilh_to_h_ainf}, $\widetilde{\tau}_H$ is a composition of an isomorphism followed by an injective map, both $\Gal_K$-equivariant $\Zp$-module homomorphisms.
The composite $\theta_K\circ \widetilde{\tau}_H:\widetilde{H}(\cO_C)\to H(\cO_C)$ is projection to the zeroth component, so it vanishes on $T(H)$.
\end{proof}
The exponential period map is functorial in $H$.
Consequently, if $H$ has an $\cO_E$-action, then $\widetilde{\tau}_H$ is $\cO_E$-linear for the formal $\cO_E$-module law.
\begin{remark}
In the unramified setting treated by Abrashkin, with $p>2$, $\tau_H$ is the map $j$ of \cite[Section 1.5.4]{Abrashkin1997Explicit}.
For $\alpha=(\alpha_i)\in T(H)$, choose lifts $\widetilde{\alpha}_i\in(\AinfK)^d$ of $\alpha_i$.
The map $j$ is then defined by 
\[j(\alpha)=\lim_{n\to\infty}[p^n]_H(\widetilde{\alpha}_n).\]
The universal cover description above gives the same map by Lemma \ref{lem:rigidity} and works over arbitrary $\cO_K$.
\end{remark}
\subsection{Comparison with crystalline periods}
The exponential period map $\tau_H$ recovers the classical crystalline periods after applying a formal logarithm.

Let $H$ be a formal group over $\cO_K$ of dimension $d$ and height $h_0<\infty$, and let $G$ be the special fiber of $H$.
Let $K_0$ denote the maximal unramified subextension of $K/\Qp$.
On the one hand, the contravariant rational Dieudonn\'{e} module $D$ of the special fiber $G$ is a $K_0$-vector space of dimension $h_0$ equipped with a bijective semilinear endomorphism $\phi_p$.
Set $D_K\coloneqq D\otimes_{K_0}K$.
This $h_0$-dimensional $K$-vector space carries the Hodge filtration
\[\Fil^i(D_K)=\begin{cases}D_K&i\le 0\\ \Omega_H&i=1\\ 0&i\ge2
\end{cases}\]
where $\Omega_H$ is the $d$-dimensional space of invariant differential forms.
The combined structure $(D,\phi_p:D\to D,\Fil^\bullet(D_K))$ is usually referred to as a filtered $\varphi$-module over $K$.

The assignment $H\mapsto (D,\phi_p,\Fil^\bullet D_K)$ defines a fully faithful contravariant functor from the isogeny category of finite-height formal groups over $\cO_K$ to the category of filtered $\varphi$-modules over $K$.
On the other hand, the rational Tate module defines a fully faithful covariant functor from the same isogeny category to the category of finite-dimensional continuous $\Qp$-representations of $\Gal_K$.
Fontaine's crystalline period ring $\Bcris$ relates these constructions through the functor $\DD_\cris$.

\begin{theorem}[{\cite[Theorem 6.2]{Fontaine1982certains}}]\label{thm:crystalline_comparison}
Let $H$ be a formal group of finite height over $\cO_K$, and let $V=V(H)$.
Then there is a natural Frobenius-equivariant isomorphism
\[D\isomto\Hom_{\Qp[\Gal_K]}(V,\Bcris).\]
After extension of scalars to $K$, the Hodge filtration identifies with the filtration induced by $\BdR$.
\end{theorem}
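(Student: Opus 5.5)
The plan is to construct the comparison map directly from the exponential period map $\tau_H$ of Corollary \ref{cor:tau_equivariant}, prove that it is injective, and then conclude by a dimension count. The first step is to realize $D_K$ concretely via \emph{quasi-logarithms} (Katz, Fontaine). These are the power series $f\in K\llbracket X_1,\cdots,X_d\rrbracket$ such that $df$ has integral coefficients and $f(X+_HY)-f(X)-f(Y)$ is integral, taken modulo integral series and then tensored with $\Qp$. Genuine logarithms span the subspace $\Omega_H$, and this matches $\Fil^1$. Under this description the Dieudonn\'{e} module $D$ is the $K_0$-structure coming from the special fiber $G$, and $\phi_p$ is induced by Frobenius.

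For such an $f$ and $\alpha\in T(H)$, set $\langle f,\alpha\rangle=f(\tau_H(\alpha))$. Since $\tau_H(\alpha)\in(\ker\theta_K)^d$ and $\ker\theta_K$ has divided powers in $\Acris$, the bounded denominators of $f$ (controlled by the integrality of $df$) should make this series converge in $\BcrisK^+$. The next checks are the following.
\begin{enumerate}
\item Additivity in $\alpha$ follows from the quasi-logarithm property, with the integral error term vanishing in the limit. To see this, write $\tau_H(\alpha)=\lim[p^n]_H(\widetilde{\alpha}_n)$ as in Lemma \ref{lem:rigidity} and use $f([p^n]x)=p^nf(x)+(\text{integral})$.
\item Integral series pair to zero by the same limiting argument, so the pairing descends to $D_K$.
\item $\Gal_K$-equivariance is inherited from Corollary \ref{cor:tau_equivariant}.
\item Compatibility with Frobenius comes from functoriality of $\widetilde{H}$ and of $\tau_H$ under the Frobenius of $\AinfK$, via Lemma \ref{lem:tilh_to_h_OCb} on the special fiber.
\end{enumerate}
Together these give a $\phi$-equivariant $K_0$-linear map $D\to\Hom_{\Qp[\Gal_K]}(V,\Bcris)$.

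For bijectivity, note that $\dim_{K_0}\Hom_{\Qp[\Gal_K]}(V,\Bcris)=\dim_{K_0}\DD_\cris(V^\vee)\le\dim_{\Qp}V=h_0$ by the general theory of admissible representations (Fontaine), while $\dim_{K_0}D=h_0$. It therefore suffices to prove injectivity. This is the \textbf{main obstacle}. I would try the following.
\begin{enumerate}
\item Take a nonzero class $f$ with $\langle f,-\rangle\equiv0$ on $T(H)$, hence on $\widetilde{H}(\cO_C)$ by $\Qp$-linearity.
\item Specialize along $\theta_K$. Then $f\circ\theta_K$ would vanish on the image of $\widetilde{H}(\cO_C)\to H(\cO_C)$, which contains an open neighborhood of $0$.
\item Conclude that $f$ is integral modulo $\Omega_H$-components.
\item Reduce the remaining logarithm part to the Hodge--Tate statement of Tate: $\Hom_{\Gal_K}(V,C(1))\cong\Omega_H\otimes C^{\Gal_K}$, together with the injectivity of $\Omega_H\to\Hom(T(H),C(1))$.
\end{enumerate}
Frobenius equivariance lets me assume $f$ spans a $\phi$-stable piece, which should simplify the bookkeeping. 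If this direct route stalls, the fallback is to use the fully faithfulness of both functors on the isogeny category, since the map is functorial in $H$.

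Finally, for the filtration: for a genuine logarithm $\ell$, $\theta_K(\ell(\tau_H(\alpha)))=\ell(0)=0$, so $\Omega_H$ maps into $\Fil^1\BdR$. Equality of the filtrations then follows from dimensions. Both sides are of dimension $d$ in $\Fil^1$: on the right this comes from the Hodge--Tate weights of $V$, which are $0$ with multiplicity $h_0-d$ and $1$ with multiplicity $d$ by Tate's theorem.
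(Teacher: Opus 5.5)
The paper does not prove this theorem; it is quoted from Fontaine. What the paper proves itself is only the case $d=1$ of crystallinity (Proposition \ref{prop:onedim_crystalline}), using Colmez's pairing as a black box, the nonvanishing of $\ell(\tau_H(\alpha))$, and simplicity of the isocrystal $D$. Your plan has two genuine gaps.

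\textbf{The pairing.} Setting $\langle f,\alpha\rangle=f(\tau_H(\alpha))$ is the wrong definition as soon as $f$ is a quasi-logarithm rather than a logarithm. It does not descend to $D_K$, and it is not additive.
\begin{itemize}
\item Take $f=X_i$, an integral series, so $[f]=0$. Then $f(\tau_H(\alpha))$ is the $i$-th coordinate of $\tau_H(\alpha)$, which is nonzero for suitable $i$ by Corollary \ref{cor:tau_equivariant}.
\item The defect $f(\tau_H(\alpha+\beta))-f(\tau_H(\alpha))-f(\tau_H(\beta))=(\delta f)(\tau_H(\alpha),\tau_H(\beta))$ is bounded but in general nonzero.
\end{itemize}
Your limiting argument does not remove this error. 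The difference $f([p^n]_Hx)-p^nf(x)$ is only bounded, not small, so it survives in $f(\lim_n[p^n]_H(\widetilde{\alpha}_n))$. What kills it is the prefactor $p^n$ in Colmez's definition $\langle[f],\alpha\rangle=\lim_np^nf(\widetilde{\alpha}_n)$. That limit agrees with $f(\tau_H(\alpha))$ only for genuine logarithms; this is exactly Proposition \ref{prop:relation_to_padic_integration}, via $p^n\ell(\tau_n)=\ell(\tau_0)$.

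For the same reason, $\phi_p$-equivariance on the $K_0$-space $D$ has to come from Colmez's estimates with lifts in $\Ainf$. It cannot come from functoriality over $\AinfK$: the Frobenius $\varphi_K$ there only sees $\phi_p^{[K_0:\Qp]}$ when $K\neq K_0$.

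\textbf{Injectivity.} Your argument breaks at step (i). $V(H)=T(H)\otimes\Qp$ is a proper subspace of $\tilH(\OC)$; the logarithm gives $0\to V(H)\to\tilH(\OC)\to\mathrm{Lie}(H)\otimes C\to0$. Its image in $H(\OC)$ is the torsion subgroup, not a neighborhood of $0$. So after applying $\theta_K$ you learn nothing beyond $f(0)=0$. With your definition, vanishing on a neighborhood would even force $f=0$ as a series, which contradicts integral series pairing to zero. Step (iii) is therefore unsupported. The fallback also fails: full faithfulness on the isogeny category does not by itself give injectivity of the comparison map for a fixed $H$.

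The missing idea is to use the filtration.
\begin{itemize}
\item With Colmez's pairing, $D_K\to\DD_{\mathrm{dR}}(V^\vee)$ is a filtered map. Your observation that $\Omega_H$ lands in $\Fil^1\BdR$ is correct.
\item A filtered map is injective once it is injective on graded pieces. Here these are $\Omega_H\to\Hom_{\Gal_K}(V,C(1))$, $[\ell]\mapsto(\alpha\mapsto\ell(\tau_H(\alpha))\bmod\Fil^2\BdR)$, and $D_K/\Omega_H\to\Hom_{\Gal_K}(V,C)$, $[f]\mapsto\theta\circ\langle[f],-\rangle$.
\item After identifying these with Tate's maps (the second involves the dual $p$-divisible group), both are injective by Tate's Hodge--Tate theorem.
\item The injection $D\otimes_{K_0}K\to\DD_\cris(V^\vee)\otimes_{K_0}K\hookrightarrow\DD_{\mathrm{dR}}(V^\vee)$ then gives injectivity on $D$.
\end{itemize}
From there your dimension count and filtration comparison go through.

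The paper's one-dimensional shortcut (a single nonzero pairing plus simplicity of $D$) is not available here. For $d>1$ the isocrystal $D$ need not be simple, e.g.\ for $H=\Gmhat^{2}$.
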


Explicit Dieudonn\'{e} theory in \cite{Fontaine1977Groupes} (see also \cite[Section 3]{Colmez1992Periodes}, \cite[Section V]{Katz1981Crystalline}) provides a way to understand $D$ and $D_K$ in terms of formal power series.
For $f(X)\in K\llbracket X_1,\cdots,X_d \rrbracket_0$, set $\delta f=f(H(X,Y))-f(X)-f(Y)$.
We say that $f(X)$ is a \textit{quasi-logarithm} if $\delta f$ and $df$ have bounded denominators\footnote{The condition on $df$ can be omitted as in \cite{Colmez1992Periodes} because boundedness of $\delta f$ implies boundedness of $df$.}.
The assignment $f\mapsto [f]$ identifies $D_K$ as follows:
\[D_K\cong \frac{\{f\in K\llbracket X \rrbracket_0:\textup{$\delta f$, $df$ bounded}\}}{\{f\in K\llbracket X \rrbracket_0: \textup{$f$ bounded}\}}.\]
A \textit{logarithm} of $H$ is a power series $f$ satisfying $\delta f=0$, and $f\mapsto df$ induces an isomorphism
\[\{f\in K\llbracket X \rrbracket_0:\delta f=0\}\cong \Omega_H.\]

When we restrict to quasi-logarithms $f(X)$ with coefficients in $K_0$, we obtain an isomorphism
\[D\cong \frac{\{f\in K_0\llbracket X \rrbracket_0:\textup{$\delta f$, $df$ bounded}\}}{\{f\in K_0\llbracket X \rrbracket_0: \textup{$f$ bounded}\}}\]
between $K_0$-vector spaces.
The isomorphism is Frobenius-equivariant with respect to $\phi_p$ defined by $\phi_p([f(X)])=[f^\sigma(X^p)]$, where $\sigma$ is the arithmetic Frobenius on $K_0$.

Let $\Acris$ denote the $p$-adic completion of the divided power envelope of $\Ainf$ with respect to $\theta:\Ainf\to O_C$, and let $\Bcris^+=\Acris[1/p]$.
Let $\BcrisK^+=\Bcris^+\otimes_{K_0}K$.

Let $H$ be a finite-height formal group over $\cO_K$, and let $V=V(H)$.
The \textit{$p$-adic integration pairing} \cite[Proposition 3.1]{Colmez1992Periodes} is a pairing
\[\langle-,-\rangle:D_K\times V\to\BcrisK^+\]
$K$-linear in the first variable and $\Qp$-linear in the second, with the following properties.
\begin{enumerate}
  \item It is equivariant with respect to the $\Gal_K$-action on $V$ and $\BcrisK^+$.
  \item It is compatible with filtrations, satisfying
  \[\langle \omega,\alpha\rangle\in \Fil^1(\BdR^+)\]
  for all $\omega\in \Omega_H$ and $\alpha\in V$.
  \item It restricts to a pairing
  \[\langle-,-\rangle:D\times V\to\Bcris^+\]
  which is Frobenius equivariant, i.e., 
  \[\langle \phi_p(\omega),\alpha\rangle=\varphi_p(\langle \omega,\alpha\rangle)\]
  holds for $\omega\in D$ and $\alpha\in V$.
\end{enumerate}
Here we briefly review the construction without proof.
View $V(H)$ as a subspace of $\widetilde{H}(\cO_C)$, let $\alpha=(\alpha_n)_n\in V$ with $\alpha_n=[p]_H(\alpha_{n+1})$, and let $f$ be a quasi-logarithm of $H$.
For each $n$, choose a lift $\widetilde{\alpha}_n\in (\AinfK)^d$ of $\alpha_n$; then $f(\widetilde{\alpha}_n)$ converges inside $\BcrisK^+$.
Moreover, the sequence $\{p^nf(\widetilde{\alpha}_n)\}_n$ converges to an element of $\BcrisK^+$ which depends only on $\alpha$ and $[f]\in D_K$.
For $\omega=[f]$, define
\[\langle\omega,\alpha\rangle\coloneqq\lim_{n\to\infty}p^nf(\widetilde{\alpha}_n).\]
Our sign convention is opposite to that of \cite{Colmez1992Periodes}.

The following relation between exponential periods and the $p$-adic integration pairing explains the terminology.
\begin{proposition}\label{prop:relation_to_padic_integration}
Let $H$ be a formal group over $\cO_K$ of finite height.
Let $\ell$ be a logarithm of $H$.
The composite
\[T(H)\xrightarrow{\tau_H}(\ker(\theta_K))^d\xrightarrow{\ell}\BcrisK^+\]
coincides with the restriction of the $p$-adic integration pairing
\[\langle[\ell],-\rangle:T(H)\to\BcrisK^+.\]
\end{proposition}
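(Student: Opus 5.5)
We compare the two limits directly, using the explicit formula for the inverse of the rigidity isomorphism in Lemma \ref{lem:rigidity}. Let $\alpha=(\alpha_n)_n\in T(H)$ and choose lifts $\widetilde{\alpha}_n\in(\AinfK)^d$ of $\alpha_n\in(\cO_C^{\cc})^d$ along $\theta_K$. By Lemma \ref{lem:rigidity}, applied with $J=(\xi)$ as in Proposition \ref{prop:tilH_isom}, the element of $\tilH(\AinfK)$ corresponding to $\alpha$ has zeroth component
\[\tau_H(\alpha)=\lim_{n\to\infty}[p^n]_H(\widetilde{\alpha}_n),\]
where the limit is taken in the weak topology on $\AinfK$. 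On the other side, the definition of the pairing gives $\langle[\ell],\alpha\rangle=\lim_n p^n\ell(\widetilde{\alpha}_n)$ in $\BcrisK^+$, and we may use the same lifts because this limit is independent of that choice.

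The key identity is the functional equation of the logarithm. Since $\delta\ell=0$, we have $\ell\circ[p^n]_H=p^n\ell$ as power series over $K$. Evaluating at $\widetilde{\alpha}_n$ gives
\[p^n\ell(\widetilde{\alpha}_n)=\ell\bigl([p^n]_H(\widetilde{\alpha}_n)\bigr)\]
in $\BcrisK^+$. Both sides make sense there: the argument lies in $(\AinfK^{\cc})^d$, and $\ell$ converges on such elements in $\BcrisK^+$ by the same divided-power estimate that underlies the construction of the pairing. Concretely, $\ell$ has coefficients with denominators of size about $1/n$ on degree-$n$ terms, and $\xi^n/n!\in\Acris$. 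Writing $y_n=[p^n]_H(\widetilde{\alpha}_n)$, we get
\[\langle[\ell],\alpha\rangle=\lim_n \ell(y_n),\qquad y_n\to\tau_H(\alpha)\ \text{weakly}.\]

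It remains to show that $\ell(\lim_n y_n)=\lim_n\ell(y_n)$, i.e.\ that evaluation of $\ell$ is continuous from the set of topologically nilpotent elements of $\AinfK$, with the weak topology, to $\BcrisK^+$. I expect this continuity to be the main obstacle, and I would handle it as follows. First, I would note that all $y_n$ and $\tau_H(\alpha)$ lie in $(\ker\theta_K)^d$ up to elements of the ideal $(\pi_K,\xi)$. In fact, $\theta_K(y_n)=[p^n]_H(\alpha_n)=0$, so $y_n\in(\ker\theta_K)^d=(\xi)^d$, and the limit also lies there because $\ker\theta_K$ is closed. Second, on $(\xi\AinfK)^d$, a monomial of total degree $m$ in $\ell$ has coefficient with denominator at most $m$ (up to a constant), while its value lies in $\xi^m\AinfK\subset m!\,\Acris$. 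Hence the partial sums of $\ell$ converge $p$-adically in $\Acris[1/p]$, with bounded denominators, uniformly on $(\xi\AinfK)^d$.

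Third, if $y_n-y\in(\pi_K,\xi)^N$ with $y_n,y\in(\xi\AinfK)^d$, then expanding $\ell(y_n)-\ell(y)$ shows that each homogeneous piece tends to $0$ in the $p$-adic topology of $\Acris$. Here I would use that $(\pi_K,\xi)^N\subset\sum_{a+b=N}\pi_K^a\xi^b\AinfK$ and that $\xi^b\in b!\Acris$, so that high powers of the ideal become $p$-adically small after dividing by $b!$, up to a factor like $p^{b/(p-1)}$ growth versus $p^{v_p(b!)}$. If this direct estimate proves messy, I would instead truncate: modulo $p^M\Acris$, only finitely many terms of $\ell$ matter on $(\xi\AinfK)^d$, and each truncated polynomial is weakly continuous. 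Combining these steps gives $\ell(\tau_H(\alpha))=\langle[\ell],\alpha\rangle$.
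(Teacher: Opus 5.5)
Your reduction is logically sound, but the continuity step you flag as the main obstacle never arises in the paper's proof. The estimate you propose for that step is also false when $K$ is ramified.

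\textbf{The choice of lifts.} You already note that the limit $\lim_n p^n\ell(\widetilde{\alpha}_n)$ does not depend on the lifts. So take $\widetilde{\alpha}_n=\tau_n$, where $(\tau_n)_{n\ge0}\in\tilH(\AinfK)$ is the element corresponding to $\alpha$ under Proposition~\ref{prop:tilH_isom}.
\begin{itemize}
\item That isomorphism is induced by $\theta_K$, so $\theta_K(\tau_n)=\alpha_n$ and these are legitimate lifts.
\item The relation $[p^n]_H(\tau_n)=\tau_0=\tau_H(\alpha)$ then holds exactly, not just in the limit.
\item The functional equation gives $p^n\ell(\tau_n)=\ell([p^n]_H(\tau_n))=\ell(\tau_0)$ for every $n$.
\end{itemize}
So the sequence defining $\langle[\ell],\alpha\rangle$ is constant, equal to $\ell(\tau_H(\alpha))$. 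That is the paper's whole proof. It uses neither the limit formula of Lemma~\ref{lem:rigidity} nor any continuity of $\ell$ for the weak topology.

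\textbf{The continuity argument.} You claim $\xi^m\AinfK\subset m!\,\Acris$, or even $\subset m!\,p^{-c}\AcrisK$, for a generator $\xi$ of $\ker\theta_K$. This fails in general. Divided powers are available for $\ker\theta\subset\Ainf$, but when $e(K/\Qp)>1$ the ideal $\ker\theta_K$ is strictly larger than $(\ker\theta)\AinfK$.

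Here is a counterexample. Take $p=3$, $\pi_K^4=3$, $\xi=[\pi_K^\flat]-\pi_K$, and let $\sigma$ be the automorphism of $K$ with $\sigma(\pi_K)=-\pi_K$. The ring homomorphism $\AcrisK\to\cO_C$, $a\otimes c\mapsto\theta(a)\sigma(c)$, sends $\xi$ to $2\pi_K$, which has valuation $1/4<1/(p-1)$. Hence $\xi^m/m!$ is not contained in $p^{-c}\AcrisK$ for any $c$. The convergence in Colmez's construction comes from topological nilpotence together with the roughly $1/m$ bound on the coefficients of $\ell$, not from divided powers.

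Your truncation fallback can be repaired along those lines.
\begin{itemize}
\item Since $[\pi_K^\flat]^{ep}\in p\AcrisK$ and $\pi_K^e\in p\cO_K$, there is $N_0$ with $\xi^{N_0}\in p\AcrisK$.
\item This gives a uniform tail bound for $\ell$ on $(\ker\theta_K)^d$.
\item It also gives $(\pi_K,\xi)^N\AinfK\subset p^{\lfloor N/c'\rfloor}\AcrisK$ for some constant $c'$, so each truncation of $\ell$ is continuous from the weak topology to the $p$-adic topology.
\end{itemize}
With the choice of lifts above, however, none of this is needed.
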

\begin{proof}
Let $\alpha\in T(H)$, and consider the element $(\tau_n)_n\in\tilH(\AinfK)$ corresponding to $\alpha$ under the natural isomorphism $\tilH(\OC)\cong\tilH(\AinfK)$ in Proposition \ref{prop:tilH_isom}.
By definition of exponential periods, we have $\tau_H(\alpha)=\tau_0$.

By the construction of the $p$-adic integration pairing, we have
\[\langle [\ell],\alpha\rangle=\lim_{n\to\infty} p^n\ell(\tau_n).\]
For each $n$, we have
\[p^n\ell(\tau_n)=\ell([p^n]_H(\tau_n))=\ell(\tau_0).\]
\end{proof}

Using only the construction and the equivariance of the integration pairing, along with properties of exponential periods, we now recover crystallinity in dimension one; the argument does not use the comparison isomorphism of Theorem \ref{thm:crystalline_comparison}.
\begin{proposition}\label{prop:onedim_crystalline}
Let $H$ be a one-dimensional formal group over $\cO_K$ of finite height.
Then $V(H)$ is a crystalline representation.
\end{proposition}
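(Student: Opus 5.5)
The plan is to show that $V^\vee=\Hom_{\Qp}(V,\Qp)$ is crystalline by producing $h_0$ linearly independent crystalline periods from the integration pairing. Since crystallinity is stable under duals, this gives the claim for $V=V(H)$. By the equivariance properties (1) and (3) of the $p$-adic integration pairing, $\omega\mapsto\langle\omega,-\rangle$ defines a $K_0$-linear, Frobenius-equivariant map
\[\iota:D\to \Hom_{\Qp[\Gal_K]}(V,\Bcris^+)\subset (V^\vee\otimes_{\Qp}\Bcris)^{\Gal_K}=\DD_\cris(V^\vee).\]
We always have $\dim_{K_0}\DD_\cris(V^\vee)\le\dim_{\Qp}V=h_0=\dim_{K_0}D$. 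So it suffices to show that $\iota$ is injective; then equality holds and $V^\vee$, hence $V$, is crystalline.

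\emph{Step 1: the kernel is an isocrystal.} The kernel $D'=\ker\iota$ is a $K_0$-subspace of $D$. It is stable under $\phi_p$: if $\langle\omega,-\rangle=0$, then $\langle\phi_p\omega,-\rangle=\varphi_p\langle\omega,-\rangle=0$. Since $\phi_p$ is bijective and $D'$ is finite-dimensional, $D'$ is a sub-isocrystal of $D$.

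\emph{Step 2: simplicity of $D$.} Since $H$ is one-dimensional of height $h_0$, the special fiber $G$ is a one-dimensional formal group over $\kappa_K$. Its Dieudonné isocrystal is therefore isoclinic of slope $1/h_0$ (or $(h_0-1)/h_0$ in the contravariant normalization). The denominator $h_0$ of the slope equals the rank, so by Dieudonné--Manin the base change $D\otimes_{K_0}\breve{K}_0$ is a simple isocrystal. Consequently $D'\otimes_{K_0}\breve K_0$ is $0$ or everything, and so $D'=0$ or $D'=D$.

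\emph{Step 3: exclude $D'=D$ using exponential periods.} Suppose $D'=D$. By $K$-linearity the extended pairing $D_K\times V\to\BcrisK^+$ vanishes, and in particular $\langle[\ell],\alpha\rangle=0$ for a logarithm $\ell$ and every $\alpha\in T(H)$. By Proposition \ref{prop:relation_to_padic_integration}, $\langle[\ell],\alpha\rangle=\ell(\tau_H(\alpha))$.

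Take $\alpha\neq0$. Then $x=\tau_H(\alpha)\neq0$ by Corollary \ref{cor:tau_equivariant}, and $x\in\ker\theta_K$. Map into $\BdR^+$, into which $\BcrisK^+$ injects. There $x\in\Fil^r\setminus\Fil^{r+1}$ for some $r\ge1$, since $\AinfK\to\BdR^+$ is injective and $\bigcap_r\Fil^r=0$. Write $\ell(X)=X+\sum_{n\ge2}a_nX^n$. Each $a_nx^n$ lies in $\Fil^{nr}\subset\Fil^{r+1}$, and the series converges in $\BdR^+$, so $\ell(x)\equiv x\not\equiv0 \bmod \Fil^{r+1}$. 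Hence $\ell(\tau_H(\alpha))\neq0$, a contradiction. Therefore $D'=0$ and $\iota$ is injective.

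I expect the main obstacle to be Step 3, and specifically the convergence and filtration bookkeeping for $\ell(x)$ in $\BdR^+$. The coefficients $a_n$ have denominators of size about $1/n$, and one must check that the series $\ell(x)$ in $\BcrisK^+$ agrees with the filtration estimate in $\BdR^+$. I would handle this by writing $x=\xi^r u$ and using $\xi$-adic completeness of $\BdR^+$ together with the fact that $\ell(x)$ is already defined in $\BcrisK^+$ by the construction of the pairing. Step 2 relies only on the standard classification of isocrystals of one-dimensional formal groups.
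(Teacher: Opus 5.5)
Your proposal is correct and follows the paper's route. Simplicity of the isocrystal $D$ forces the pairing map $D\to\DD_\cris(V^\vee)$ to be zero or injective. Nonvanishing of $\ell(\tau_H(\alpha))$ in $\BdR^+$ rules out zero; the paper writes $\ell(\tau)=\tau u$ with $u\in 1+\Fil^1\BdR^+$, which is your filtration-order argument. The dimension bound and closure of crystalline representations under duals then finish. Your extra care on simplicity via Dieudonn\'e--Manin over $\breve{K}_0$, and on matching the $\BcrisK^+$ and $\BdR^+$ limits of $\ell(x)$, only makes explicit what the paper leaves implicit.
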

\begin{proof}
The $p$-adic integration pairing induces a $K_0$-linear homomorphism
\begin{equation}\label{eq:D_to_Dcrisdual}
  D\to \Hom_{\Qp[\Gal_K]}(V,\Bcris)=\DD_\cris(V^\vee)
\end{equation}
which is equivariant with respect to Frobenius.
We note that $D$ has slope $1/h_0$ and rank $h_0$, so it is simple as an isocrystal.
Consequently, if (\ref{eq:D_to_Dcrisdual}) is not injective, then it must be a zero map and so the pairing on $D_K$ must also be zero.
Choose the normalized logarithm $\ell$ of $H$ and a nonzero $\alpha\in T(H)$.
By Corollary \ref{cor:tau_equivariant}, $\tau=\tau_H(\alpha)$ is nonzero and lies in $\ker\theta_K$, so
\[\ell(\tau)=\tau u\]
for some $u\in 1+\Fil^1\BdR^+\subset(\BdR^+)^\times$.
Therefore, 
\[\langle [\ell],\alpha\rangle=\ell(\tau)\neq0\] 
by Proposition \ref{prop:relation_to_padic_integration}.
Therefore, the pairing is nonzero and the map (\ref{eq:D_to_Dcrisdual}) is injective.
It follows that
\[\dim_{K_0}(\DD_\cris(V^\vee))\ge \dim_{K_0}(D)=h_0.\]
On the other hand, every Galois representation $W$ satisfies
\[\dim_{K_0}(\DD_\cris(W))\le \dim_{\Qp}(W)\]
which implies the upper bound $\dim_{K_0}(\DD_\cris(V^\vee))\le h_0$.
Hence, we conclude that 
\[\dim_{K_0}(\DD_\cris(V^\vee))=h_0=\dim_{\Qp}(V^\vee)\]
so $V^\vee$ is crystalline.
Since the category of crystalline representations is closed under duality \cite[Theorem 5.2 (i)]{Fontaine1982certains}, we conclude that $V$ is crystalline.
\end{proof}
\subsection{Reduced periods and tilting}
We now express reduced exponential periods in tilt coordinates.
This compatibility will be used in Section \ref{sec:perfectoid} to identify the completion of the perfection of the period field with the tilt of the torsion tower.

Let $G$ be a formal group of finite height and dimension $d$ over $\Fpbar$.
Rigidity (Proposition \ref{prop:tilH_isom}) and Lemma \ref{lem:tilh_to_h_OCb} give the following isomorphisms
\begin{center}
\begin{tikzcd}
  \tilG(\OCb)\arrow[d,"\mathrm{red}"]\arrow[r,"\pr"]&G(\OCb)\\
  \tilG(\OC/(\pi_K))\arrow[ru,dotted]
\end{tikzcd}
\end{center}
We will give a formula for the resulting isomorphism 
\[\iota:\tilG(\OC/(\pi_K))\to G(\OCb).\]
Write $[p]_G(X)=f(X^p)$, and let $\sigma$ denote the $p$-power Frobenius on $\Fpbar$.
By defining the maps $V^i:G\to G^{(p^{-i})}$ by
\[V^0=\id,\quad V^i=f^{\sigma^{-i}}\circ f^{\sigma^{-(i-1)}}\circ \cdots \circ f^{\sigma^{-1}}\,\,(i\ge1),\]
there is a canonical isomorphism
\[\cV:\widetilde{G}(\OC/(\pi_K))\to\varprojlim_F G^{(p^{-i})}(\OC/(\pi_K)),\quad (x_i)_{i\ge0}\mapsto (V^i(x_i))_{i\ge0}\]
constructed in the proof of Lemma \ref{lem:tilh_to_h_OCb}.

\begin{proposition}\label{prop:explicit_isom_tilG_to_G}
Let $G$ be a formal group law over $\Fpbar$ of finite height.
There is a commutative diagram
\begin{center}
\begin{tikzcd}
    \tilG(\OCb)\arrow[d,"\mathrm{red}"]\arrow[r,"\pr"]&G(\OCb)\\
    \tilG(\OC/(\pi_K))\arrow[r,"\cV"']\arrow[ru,"\iota"]&\varprojlim_F G^{(p^{-i})}(\OC/(\pi_K))\arrow[u]
\end{tikzcd}
\end{center}
consisting of isomorphisms, where the right vertical map is induced by
\[\OCb\cong\varprojlim_{x\mapsto x^p}\cO_C/(\pi_K).\]
\end{proposition}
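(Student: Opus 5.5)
We first fix conventions so that every arrow in the diagram is defined and is an isomorphism. The left vertical arrow $\mathrm{red}$ is an isomorphism by Lemma \ref{lem:rigidity}: apply it to $\OCb$ with $I$ generated by a pseudo-uniformizer $\varpi$ satisfying $\OCb/(\varpi)\cong\OC/(\pi_K)$ (for instance $\varpi=\pi_K^\flat$), and take $J=(\varpi)$. The top arrow $\pr$ is an isomorphism by Lemma \ref{lem:tilh_to_h_OCb}, since $\OCb$ is perfect. The bottom arrow $\cV$ is an isomorphism by the proof of Lemma \ref{lem:tilh_to_h_OCb}.

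For the right vertical map we use the composite
\[\varprojlim_F G^{(p^{-i})}(\OC/(\pi_K))\cong \varprojlim_F G^{(p^{-i})}(\OCb/(\varpi))\cong G(\OCb).\]
Concretely, a compatible system $(y_i)$, with $y_i$ a $d$-tuple in $\OC/(\pi_K)$ and $F(y_{i+1})=y_i$, is sent to the $d$-tuple in $\OCb$ whose coordinates are $(y_{i,j})_i$. The relative Frobenius $F\colon G^{(p^{-(i+1)})}\to G^{(p^{-i})}$ acts on coordinates by $X\mapsto X^p$, so $(y_{i,j})_i$ is a compatible $p$-power system. It therefore defines an element of $\varprojlim_{x\mapsto x^p}\OC/(\pi_K)=\OCb$. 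Since $\sigma$ is bijective on $\Fpbar$, twisting the group law by $\sigma^{-i}$ is compatible with the untwisting needed here, and the map is a group homomorphism. Bijectivity follows because the description is exactly the identification $\OCb\cong\varprojlim\OC/(\pi_K)$ applied coordinatewise, and topological nilpotence is preserved. With all four arrows isomorphisms, we define $\iota=\pr\circ\mathrm{red}^{-1}$, which makes the upper triangle commute by definition.

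The substance is commutativity of the lower triangle, namely $\pr=(\text{right vertical})\circ\cV\circ\mathrm{red}$ on $\tilG(\OCb)$. Take $x=(x_i)\in\tilG(\OCb)$. Then $\cV(\mathrm{red}(x))=(V^i(\bar x_i))_i$, so we must show that $V^i(x_i)\equiv x_0^{1/p^i}$ modulo $\varpi$, coordinatewise. Here $x_0^{1/p^i}$ is the unique $p^i$-th root in the perfect ring $\OCb$; the inverse-limit coordinate in position $i$ of $x_0$ is precisely the reduction of $x_0^{1/p^i}$ modulo $\varpi$.

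We argue directly in $\OCb$, where we claim the stronger identity $V^i(x_i)=x_0^{1/p^i}$ exactly. Since $[p]_G(X)=f(X^p)$ and $\OCb$ is perfect, we have $[p]_G(y)=f(y^p)=\bigl(f^{\sigma^{-1}}(y)\bigr)^p$ for every $y$. Hence $x_{i-1}=[p]_G(x_i)$ gives $x_{i-1}^{1/p}=f^{\sigma^{-1}}(x_i)$. Applying $\sigma^{-(i-1)}$-twisted $p^{i-1}$-th roots and inducting on $i$ then yields
\[x_0^{1/p^i}=f^{\sigma^{-i}}\circ\cdots\circ f^{\sigma^{-1}}(x_i)=V^i(x_i).\]

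The main obstacle is keeping the Frobenius twists straight in this induction. The point to check is that $(g(y))^{1/p}=g^{\sigma^{-1}}(y^{1/p})$ for a power series $g$ over $\Fpbar$, and that the order of composition in $V^i$ matches. Once the exact identity holds in $\OCb$, reducing modulo $\varpi$ gives the congruence, so the lower triangle commutes. It follows that $\iota$ is also equal to (right vertical)$\circ\cV$.
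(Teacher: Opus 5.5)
Your proof is correct and is essentially the paper's argument. The paper writes $x\in\tilG(\OCb)$ as a doubly indexed family $\alpha_i^{(m)}$ and iterates the vertical relation $\alpha_i^{(m)}=f^{\sigma^{-m}}\circ\sigma(\alpha_{i+1}^{(m)})$ to obtain $\alpha_0^{(m)}=V^m(\alpha_m^{(0)})$; this is exactly the zeroth-coordinate projection of your identity $x_0^{1/p^i}=V^i(x_i)$ in $\OCb$, so proving it with $p$-th roots in the perfect ring and reducing afterwards is a tidier bookkeeping of the same computation.
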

\begin{proof}
We need to show the following equality
\begin{equation}\label{eq:iota}
    \iota((x_i)_{i\ge0})=(V^i(x_i))_{i\ge0}.
\end{equation}

Write $X=(X_1,\cdots,X_d)$, and $[p]_G=f(X^p)$ with $f(X)\in\Fpbar\llbracket X \rrbracket_0^d$, and let $\sigma$ denote $p$-power Frobenius on both $\Fpbar$ and $\cO_C/(\pi_K)$.
Let $\alpha=(\alpha_i)_i\in \tilG(\OCb)$ so that $\alpha_i=[p]_G(\alpha_{i+1})$ holds for all $i$.
Write $\alpha_i=(\alpha_i^{(m)})$ under the tilt identification, where $\alpha_i^{(m)}\in(\cO_C/(\pi_K))^d$ satisfies $\alpha_i^{(m)}=\sigma \alpha_i^{(m+1)}$ componentwise for every $m$.
In other words, we represent an element $\alpha\in\tilG(\OCb)$ by a doubly indexed family $\alpha_i^{(m)}$:
\begin{center}
\begin{tikzpicture}
\matrix (M) [
  matrix of math nodes,
  row sep=0.1cm,
  column sep=0.1cm
] {
  \alpha_0^{(0)} & \alpha_0^{(1)} & \cdots & \alpha_0^{(m)} & \cdots \\
  \alpha_1^{(0)} & \alpha_1^{(1)} & \cdots & \alpha_1^{(m)} & \cdots \\
  \vdots         & \vdots         & \ddots & \vdots         & \cdots \\
  \alpha_m^{(0)} & \alpha_m^{(1)} & \cdots & \alpha_m^{(m)} & \cdots \\
  \vdots         & \vdots         & \vdots & \vdots         & \ddots \\
};

\node[
  fit=(M-1-1)(M-1-5),
  fill=cyan!30,
  rounded corners=4pt,
  inner xsep=4pt,
  inner ysep=2pt,
  opacity=.5
] {};

\node[
  fit=(M-1-1)(M-5-1),
  fill=YellowOrange!30,
  rounded corners=4pt,
  inner xsep=2pt,
  inner ysep=4pt,
  opacity=.5
] {};

\matrix (M2) [
  matrix of math nodes,
  row sep=0.1cm,
  column sep=0.1cm
] at (M.center) {
  \alpha_0^{(0)} & \alpha_0^{(1)} & \cdots & \alpha_0^{(m)} & \cdots \\
  \alpha_1^{(0)} & \alpha_1^{(1)} & \cdots & \alpha_1^{(m)} & \cdots \\
  \vdots         & \vdots         & \ddots & \vdots         & \cdots \\
  \alpha_m^{(0)} & \alpha_m^{(1)} & \cdots & \alpha_m^{(m)} & \cdots \\
  \vdots         & \vdots         & \vdots & \vdots         & \ddots \\
};
\end{tikzpicture}
\end{center}
The first column (\textcolor{YellowOrange}{orange strip}) is exactly $\mathrm{red}(\alpha)\in\tilG(\OC/(\pi_K))$.
The first row (\textcolor{cyan}{blue strip}) is $\pr(\alpha)=\alpha_0\in G(\OCb)$.
By definition,
\[\iota((\alpha_i^{(0)})_i)=(\alpha_0^{(m)})_m.\]
We now study the relations in the doubly indexed family.
The horizontal relation is $\alpha_i^{(m)}=\sigma\alpha_i^{(m+1)}$.
To analyze the vertical relation, let $(x^{(m)})\in (\OCb)^d$.
An element $a\in\Fpbar$ corresponds to $(\sigma^{-m}(a))_m\in\OCb$, so $a(x^{(m)})=(\sigma^{-m}(a)x^{(m)})_m$.
Consequently, for $f(X)\in\Fpbar\llbracket X \rrbracket$ we have
\[f((x^{(m)})_m)=(f^{\sigma^{-m}}(x^{(m)}))_m.\]
Since $(\alpha_i^{(m)})_m=[p]_G((\alpha_{i+1}^{(m)})_m)$, we have
\begin{align*}
    \alpha_i^{(m)}
    = ([p]_G)^{\sigma^{-m}}(\alpha_{i+1}^{(m)})
    = f^{\sigma^{-m}}\circ\sigma(\alpha_{i+1}^{(m)}).
\end{align*}
Hence, the vertical relation is
\begin{equation}\label{eq:vertical}
  \alpha_i^{(m)}
  =
  f^{\sigma^{-m}}
  \circ
  \sigma\bigl(\alpha_{i+1}^{(m)}\bigr)
\end{equation}
Iterating (\ref{eq:vertical}) $m$ times,
we obtain the following formula:
\begin{align*}\label{eq:formula_m0_from_mm}
    \alpha^{(m)}_0
    &=(f^{\sigma^{-m}}\circ\sigma)\circ (f^{\sigma^{-m}}\circ\sigma)\circ \cdots\circ (f^{\sigma^{-m}}\circ\sigma)(\alpha^{(m)}_m)\\
    &=(f^{\sigma^{-m}}\circ f^{\sigma^{-(m-1)}}\circ\cdots\circ f^{\sigma^{-1}})(\alpha^{(0)}_m)
\end{align*}
where we used $\sigma\circ f^{\sigma^{-j}}=f^{\sigma^{1-j}}\circ\sigma$ and $\sigma^m(\alpha^{(m)}_m)=\alpha^{(0)}_m$.
This proves the desired equality (\ref{eq:iota}).
\end{proof}

\newpage 
\section{Exponential periods of one-dimensional formal groups}\label{sec:onedim}
In this section, we first give a detailed analysis of exponential periods in the one-dimensional case.
For a one-dimensional formal $\cO_E$-module, we construct an $\cO_E$-linear operator $\cP$ on $H(\AinfK)$, whose restriction gives the short exact sequence of Corollary \ref{cor:ses_TH_ainf}
\begin{center}
\begin{tikzcd}
0\arrow[r]&T(H) \arrow[r,"\tau_H"]&\ker\theta_K\arrow[r,"\cP"]&\pi_K\cdot\AinfK\arrow[r]&0
\end{tikzcd}
\end{center}
where the module structures are induced by $H$.

Assuming that $K/E$ is finite unramified of degree $f$, we then construct the abstract period algebra that will serve as the starting point for the base ring in Theorem \ref{introthm:main}.
Let $E_H$ be the fraction field of the absolute endomorphism ring of $H$, and set $K'=KE_H$.
Writing $d$ for the degree of the Frobenius minimal polynomial introduced in Section \ref{subsec:descr_image} and $\alpha_1,\cdots,\alpha_n$ for a $\Lambda_H$-basis of $T(H)$, we will construct a ring
\[\cA_H\cong\cO_{K'}\llbracket t_{k,i}:0\le k\le df-1,1\le i\le n\rrbracket\]
with commuting actions of $\varphi_E$ and $\Gamma$, together with an equivariant homomorphism
\[\xi_H:\cA_H\to\AinfK,\quad t_{k,i}\mapsto \varphi_E^k(\tau_H(\alpha_i)).\]
The algebra $\cA_H$ is defined independently of any algebraic relations among the actual periods.
The map $\xi_H$ realizes its generators as periods; the injectivity questions are treated in Section \ref{sec:period_alg}.
\subsection{Endomorphisms and Tate modules of one-dimensional formal groups}
We recall basic facts about one-dimensional formal groups and their endomorphisms that will be used below, referring to \cite{Hazewinkel1978Formal} for further background.

We first recall the height and endomorphism theory over a field $k$ of characteristic $p$.
Let $G$ be a one-dimensional formal group over $k$.
Either $[p]_G(T)=0$, or there is an integer $h_0\ge1$ such that $[p]_G(T)=u(T^{p^h_0})$ for a series $u(T)\in k\llbracket T\rrbracket_0$ with $u'(0)\neq0$.
The height of $G$ is $\infty$ in the first case and $h_0$ in the second.
\begin{theorem}[{\cite[Corollary 20.2.14]{Hazewinkel1978Formal}}]
Let $H$ be a one-dimensional formal group of finite height $h_0$ over a separably closed field $k$ of characteristic $p$.
Then $\End_k(H)$ is isomorphic to the maximal order in the central division algebra $D_{1/h_0}$ of Hasse invariant $1/h_0$ and dimension $h_0^2$ over $\Qp$.
\end{theorem}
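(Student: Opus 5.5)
The plan is the classical route through Dieudonné theory over $\Fpbar$ combined with Hasse-invariant bookkeeping. Since $k$ is separably closed and $p$-power Frobenius isogenies do not change the endomorphism ring up to the obvious identification, I first reduce to $k$ algebraically closed. The reduction uses the fact that formal groups of height $h_0$ over a separably closed field are determined up to isomorphism by $h_0$; this is Lazard's classification, proved by lifting the normal form $[p](T)\equiv T^{p^{h_0}}$ step by step, and the steps only involve solving separable equations of Artin–Schreier type. It also uses that all endomorphisms are already defined over $k$. I then reduce further to the single model $H_{h_0}$ over $\Fp$ with $[p](T)=T^{p^{h_0}}$. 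This is the reduction of a Lubin–Tate group of $\QQ_{p^{h_0}}$, or the Honda group with type $p$ and Frobenius $F^{h_0}$.

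Next I compute $\End(H_{h_0})$ over $\Fpbar$. Its Dieudonné module is $M=W(\Fpbar)[F,V]/(F^{h_0}-V^{h_0})$, and this is a rank-$h_0$ free $W(\Fpbar)$-module with basis $1,F,\dots,F^{h_0-1}$. By full faithfulness of Dieudonné theory, endomorphisms correspond to $\End(M)$ as a Dieudonné module. I expect to show that these are right multiplications by elements of the ring
\[\cO_D=W(\FF_{p^{h_0}})[\Pi],\qquad \Pi^{h_0}=p,\qquad \Pi a=\sigma(a)\Pi.\]
The elements of $W(\FF_{p^{h_0}})$ act because $\sigma^{h_0}$ fixes them, and $\Pi$ corresponds to the endomorphism $T\mapsto T^p$. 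Concretely, an endomorphism is determined by where it sends the generator $1$. Commuting with $F$ and $V$, together with $F^{h_0}=p\,\sigma^{h_0}$-semilinear twisted, forces the coefficients to lie in $W(\Fpbar)^{\sigma^{h_0}=1}=W(\FF_{p^{h_0}})$.

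Finally I identify $\cO_D$. The algebra $D=\cO_D[1/p]$ is the cyclic algebra $(\QQ_{p^{h_0}}/\Qp,\sigma,p)$, so it is central simple of dimension $h_0^2$ with Hasse invariant $1/h_0$, and the normalization depends on sign conventions. The valuation $v(\Pi)=1/h_0$ extends uniquely, so $D$ is a division algebra. It follows that $\cO_D$, which equals $\{x:v(x)\ge0\}$, is its unique maximal order.

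The main obstacle is the semilinear-algebra computation of $\End(M)$. In particular, one has to show there are no extra endomorphisms, which requires checking that the commutant of $F,V$ is exactly $\cO_D$. I would handle this by decomposing along the $\Pi$-adic filtration and comparing ranks $h_0^2$ over $\Zp$.
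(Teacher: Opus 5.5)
The paper gives no proof of this statement; it quotes it from Hazewinkel, so your proposal has to stand on its own. The architecture is the standard one and can be completed:
\begin{itemize}
\item Lazard's classification over separably closed fields reduces to a model $H_{h_0}$ over $\Fp$ with $[p](T)=T^{p^{h_0}}$. For example, take the reduction of the group attached to $pT+T^{p^{h_0}}$, which is defined over $\Zp$ by the argument of Lemma~\ref{lem:potentially_lubintate} with $K=\Qp$, $L=\QQ_{p^{h_0}}$.
\item One then computes the endomorphisms of $H_{h_0}$.
\item Finally one identifies $\ZZ_{p^{h_0}}\langle\Pi\rangle$, with $\Pi a=\sigma(a)\Pi$ and $\Pi^{h_0}=p$, as the maximal order of $(\QQ_{p^{h_0}}/\Qp,\sigma,p)$.
\end{itemize}

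\textbf{The central computation uses the wrong module.} Put $\mathcal{D}=W[F,V]$. The quotient $\mathcal{D}/\mathcal{D}(F^{h_0}-V^{h_0})$ (a left ideal; the two-sided ideal is far too big) is free of rank $2h_0$ on $1,F,\dots,F^{h_0},V,\dots,V^{h_0-1}$. It satisfies $F^{2h_0}=p^{h_0}$, so it is pure of slope $1/2$ and its rational endomorphism algebra is $M_{h_0}(D_{1/2})$. Your computation, run on this module, gives the wrong ring. The rank and basis you quote belong instead to $M=\mathcal{D}/\mathcal{D}(V-F^{h_0-1})=\bigoplus_{i<h_0}WF^i$, with $F^{h_0}=p$ and $V=F^{h_0-1}$. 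This is the contravariant convention, where $T\mapsto T^p$ goes to $F$; covariantly one uses $\mathcal{D}/\mathcal{D}(F-V^{h_0-1})$.

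With the correct presentation, what you call the main obstacle is immediate. Since $M$ is cyclic, an endomorphism is $\bar d\mapsto dm$ for a unique $m$ with $(V-F^{h_0-1})m=0$. For $m=\sum_{i<h_0}a_iF^i$, expand using $V\cdot 1=F^{h_0-1}$, $VF^i=pF^{i-1}$, $F^{h_0}=p$, and the fact that $W$ is $p$-torsion free. The equation then says exactly $\sigma^{h_0}(a_i)=a_i$ for all $i$. No filtration or rank count is needed.

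You should also track the order reversals (one from this cyclic description, one from contravariance) rather than defer to sign conventions, because $\cO_{D_{1/h_0}}\not\cong\cO_{D_{-1/h_0}}$ for $h_0>2$. The cleanest check is on series: $\Pi\circ[a]=[\sigma(a)]\circ\Pi$, which follows from $[a]^\sigma=[\sigma(a)]$ as in \eqref{eq:sigma}.

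\textbf{The descent step needs a different justification.} Your remark about Frobenius isogenies does not show that an endomorphism over $\bar k$ has coefficients in $k$. Also, $k$ may be imperfect, so Dieudonn\'{e} theory is not available over $k$ itself. It suffices to do the following:
\begin{itemize}
\item Run the same computation over the perfect field $\bar k$; again $W(\bar k)^{\sigma^{h_0}=1}=W(\FF_{p^{h_0}})$.
\item Note that the resulting endomorphisms are $\Pi$ and the reductions of the $[a]$, $a\in\ZZ_{p^{h_0}}$, all with coefficients in $\FF_{p^{h_0}}\subset k$.
\item Conclude $\End_k(H_{h_0})=\End_{\bar k}(H_{h_0})$.
\end{itemize}

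Alternatively, you can bypass Dieudonn\'{e} theory. For $g\in\End_k(H_{h_0})$, comparing $T^{p^{h_0}}$-coefficients of $g\circ[p]=[p]\circ g$ gives $g'(0)\in\FF_{p^{h_0}}$. If $g'(0)=0$, then $g(T)=g_1(T^p)$ with $g_1\in\End_k(H_{h_0})$: a homomorphism with zero differential factors through Frobenius, and $H_{h_0}^{(p)}=H_{h_0}$. Subtracting $[a]$ with $\bar a=g'(0)$ and iterating gives $g=\sum_m[a_m]\circ\Pi^m$. Hence $\End_k(H_{h_0})=\ZZ_{p^{h_0}}\langle\Pi\rangle$ for every field $k\supseteq\FF_{p^{h_0}}$, perfect or not.

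\textbf{The division-algebra step is circular.} That $D$ is a division algebra should be proved, not deduced from uniqueness of extensions of $v$, which presupposes it. The function $v(\sum_{i<h_0}a_i\Pi^i)=\min_i(v(a_i)+i/h_0)$ attains its minimum at a single index and is multiplicative, so $D$ has no zero divisors.
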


For the following facts, let $K$ be a complete discretely valued field of mixed characteristic $(0,p)$ with residue field $k$.
Let $H$ be a one-dimensional formal group over $\cO_K$.
The \textit{special fiber} $G$ is the formal group over $k$ obtained by reducing the coefficients of $H$ modulo the maximal ideal of $\cO_K$.
We write $\mathrm{ht}(H)=\mathrm{ht}(G)$ and call it the height of $H$.

Let $H_1$ and $H_2$ be one-dimensional formal groups over $\cO_K$. 
Then the derivative map
\[\Hom_{\cO_K}(H_1,H_2)\to \cO_K,\quad f\mapsto f'(0)\]
is an injective homomorphism of additive groups with closed image for the valuation topology \cite[Lemma 2.1.1]{Lubin1964Oneparameter}.
Therefore, we can identify $\End_{\cO_K}(H)$ with its image under $f\mapsto f'(0)$.
Since the image is closed and contains $\ZZ$, it contains $\Zp$.

\begin{definition}
Let $H$ be a one-dimensional formal group of finite height over $\cO_K$.
Its \textit{absolute endomorphism ring} is
\[\Lambda_H\coloneqq\bigcup_{K\subset K'\subset\overline{K},[K':K]<\infty}\End_{\cO_{K'}}(H)\]
\end{definition}

\begin{proposition}[{\cite[Proposition 5.1.1]{Cox1974Formal}}]\label{prop:absolute_end_finite}
Let $K/\Qp$ be finite, and let $H$ be a one-dimensional formal group of finite height $h_0$ over $\cO_K$.
Let $K_{h_0}/K$ be the unramified extension of degree $h_0$ inside $\overline{K}$.
Then $\Lambda_H\subset\cO_{K_{h_0}}$.
\end{proposition}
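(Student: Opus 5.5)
Every endomorphism of $H$ defined over some $\cO_{K'}$ with $K'/K$ finite extends to an endomorphism of the special fibre $G$ over $\Fpbar$, so the plan is to confine $\Lambda_H$ using two pieces of structure at once: the division algebra $\End_{\Fpbar}(G)$ and the embedding $f\mapsto f'(0)$ into a commutative field. First, fix $K'$ and $f\in\End_{\cO_{K'}}(H)$. Reduction modulo the maximal ideal gives a ring homomorphism
\[\End_{\cO_{K'}}(H)\to\End_{\Fpbar}(G)\cong\cO_{D_{1/h_0}}.\]
This map is injective: if $\bar f=0$ then $f(T)\equiv0$ modulo $\frakm_{K'}$. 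Since $f$ commutes with $[p]_H$, standard arguments (Lubin) show that such an endomorphism is divisible by $p$ in $\End$. Iterating, $f\in\bigcap p^n\End=0$ by closedness of the derivative image.

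Consequently $\Lambda_H$ embeds into the maximal order of $D_{1/h_0}$, and it is commutative because it embeds into $\cO_{\overline{K}}$ via $f'(0)$. Its fraction field $F=\Lambda_H\otimes\QQ$ is therefore a commutative subfield of $D_{1/h_0}$. It is a field since $\Lambda_H$ is a domain finite over $\Zp$: it is a $\Zp$-submodule of the rank-$h_0^2$ order, hence finitely generated once we note that the union stabilizes by Noetherianity. Commutative subfields of a central division algebra of degree $h_0$ have degree dividing $h_0$ over $\Qp$, so $[F:\Qp]\mid h_0$.

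Second, I will show $F$ is unramified over $\Qp$, or more precisely that it embeds into $K_{h_0}$ compatibly with the derivative embedding $F\hookrightarrow\overline{K}$. The key relation is between $f'(0)$ and the action on the Lie algebra versus on $G$. Take a uniformizer $\varpi$ of $F$ and realize it as an endomorphism $f_\varpi$. Its reduction $\bar f_\varpi$ is an isogeny of height $h_0/e_F$ times the valuation normalization. On the other hand, $f_\varpi'(0)=\varpi$ computed in $K'$, and Lubin's height formula gives $\mathrm{ht}(\bar f)=h_0\, v_p(f'(0))$ in the one-dimensional case. I expect this valuation compatibility to force $e(F/\Qp)\le e(K'/\Qp)$ in a uniform way. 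The cleaner route, which I would try first, is the following. The tangent action gives an embedding $F\hookrightarrow K'$ for every large $K'$. Choosing $K'$ among extensions where the endomorphisms are defined, and using Galois descent (for $\sigma\in\Gal_K$, ${}^\sigma f$ is an endomorphism with derivative $\sigma(f'(0))$), we get that $F$ is stable under $\Gal_K$ inside $\overline{K}$. Thus $FK/K$ is Galois, and the action of $\Gal_K$ on $\Lambda_H$ is compatible with the action of inertia, which fixes reductions: ${}^\sigma\bar f=\bar f$ for $\sigma\in I_K$ because $\sigma$ acts trivially on $\Fpbar$-coefficients. Combined with injectivity of reduction, inertia fixes every $f$, so $f'(0)$ lies in the fixed field of $I_K$, i.e.\ in $K^{\mathrm{ur}}$.

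Hence $FK\subset K^{\mathrm{ur}}$, and its degree over $K$ divides $[F:\Qp]$, which divides $h_0$. So $FK\subset K_{h_0}$ and $\Lambda_H\subset\cO_{K_{h_0}}$, since $\Lambda_H$ is integral.

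The main obstacle is the injectivity of reduction together with the inertia-fixing step. The point requiring care there is that an endomorphism over $\cO_{K'}$ with ${}^\sigma f$ having the same reduction as $f$ must equal $f$; this follows from injectivity applied to ${}^\sigma f-f$.
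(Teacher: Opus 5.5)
Your argument is correct, and it differs from the paper's at the one substantive step: showing that $KE_H/K$ is unramified. The paper gets this by citing Waterhouse. You derive it directly from the injectivity of reduction $\Lambda_H\to\End_{\Fpbar}(G)$, which both proofs need anyway.

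Your argument runs as follows. For $\sigma\in I_K$ and $f\in\End_{\cO_{K'}}(H)$, the series ${}^\sigma f$ is again an endomorphism of $H$, because $H$ has coefficients in $\cO_K$. Since $\sigma(c)\equiv c$ modulo $\frakm_{\overline{K}}$ for every $c\in\cO_{\overline{K}}$, the reductions of $f$ and ${}^\sigma f$ in $\Fpbar\llbracket T\rrbracket$ coincide. Injectivity applied to $f-_H{}^\sigma f$ then gives ${}^\sigma f=f$, hence $\sigma(f'(0))=f'(0)$ and $\Lambda_H\subset\cO_{K^{\mathrm{ur}}}$.

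This makes the proof self-contained, modulo Lubin's injectivity and the structure of $\End_{\Fpbar}(G)$; the paper's citation only buys brevity. The remaining steps coincide with the paper's:
\begin{itemize}
\item embedding $E_H$ in $D_{1/h_0}$ to get $[E_H:\Qp]\mid h_0$;
\item $\Gal_K$-stability of $E_H$;
\item the final degree count.
\end{itemize}

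Two small repairs are needed.

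First, your justification of injectivity does not work as written. From $\bar f=0$ you only get $f'(0)\in\frakm_{K'}$, which for ramified $K'$ need not lie in $p\cO_{K'}$. So ``divisible by $p$ in $\End$'' is not something you can establish directly. The standard argument is this: if $f\neq0$, write $f=\pi_{K'}^r f_1$ with $r\ge1$ and $\bar f_1\neq0$. Reduce $f\circ[p]_H=[p]_H\circ f$ modulo $\pi_{K'}^{r+1}$; the right side is $p\pi_{K'}^r f_1$ plus terms divisible by $\pi_{K'}^{2r}$, so it vanishes. This gives $\bar f_1\circ[p]_G=0$, which is impossible because $[p]_G\neq0$ by finite height. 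Alternatively, simply cite Lubin as the paper does.

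Second, the claim that the degree of $FK$ over $K$ divides $[F:\Qp]$ needs the stability you established. Restriction gives an injection $\Gal(FK/K)\hookrightarrow\Aut(F)$ with fixed field $F\cap K$, so $[FK:K]=[F:F\cap K]$. Without stability you would only get an inequality, which is not enough to conclude $FK\subset K_{h_0}$.

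The speculative height/valuation detour in your second paragraph can simply be dropped.
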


\begin{proof}
Let $G$ denote the special fiber of $H$.
Reduction induces an injective homomorphism $\Lambda_H\to\End_{k^\sep}(G)$ \cite[Lemma 2.3.1]{Lubin1964Oneparameter}.
Therefore, $E_H\coloneqq\Frac(\Lambda_H)$ embeds as a commutative subfield of $D_{1/h_0}$, and $[E_H:\Qp]$ divides $h_0$ (see \cite[Theorem 2.3.2]{Lubin1964Oneparameter}).
Waterhouse's unramifiedness result \cite[Corollary 3.5]{Waterhouse1972pdivisible} implies that $KE_H/K$ is unramified, hence Galois.
The action of $\Gal_K$ preserves the absolute endomorphism ring and acts on its derivatives in the usual way, so it preserves $E_H$.
Restriction gives a homomorphism
\[\Gal(KE_H/K)\hookrightarrow\Aut(E_H)\]
which is injective because $K$ and $E_H$ generate $KE_H$.
The fixed field theorem implies
\[[KE_H:K]=[E_H:E_H\cap K].\]
Combining it with $[E_H:\Qp]\mid h_0$ concludes the proof.
\end{proof}

Put $E_H=\Frac(\Lambda_H)$.
The degree $[E_H:\Qp]$ is finite and divides $\mathrm{ht}(H)$.
The \textit{absolute height} of $H$ is defined by
\[n\coloneqq \mathrm{ht}(H)/[E_H:\Qp].\]
The action of $\Lambda_H$ on $T(H)$ extends to an $E_H$-vector space structure on $V(H)=T(H)\otimes_{\Zp}\Qp$ of dimension $n$.
The $\Gal_K$-action is semilinear with respect to its action on $E_H$.

The field $E_H$ also determines the general linear group in which the inertia image is open.
Let $H$ be a one-dimensional formal group of finite height over $\cO_K$.
For $n\ge1$, let $H[p^n](\overline{K})$ denote the group of points $x\in\frakm_{\overline{K}}$ satisfying $[p^n]_H(x)=0$.
Recall that the Tate module
\[T(H)\coloneqq\varprojlim_{[p]_H}H[p^n](\overline{K})\] 
is a free $\Zp$-module of rank $h_0=\mathrm{ht}(H)$, equipped with a natural continuous action of $\Gal_K$.
Write \[\rho:\Gal_K\to\Aut_{\Zp}(T(H))\] for this representation.
By Proposition \ref{prop:absolute_end_finite}, the inertia subgroup $I_K\subset\Gal_K$ fixes $E_H$, and so its action on $V(H)$ is $E_H$-linear.
The following open-image theorem is due to Serre and Sen; \cite[Theorem 5]{Serre1967groupes} when $\Lambda_H=\Zp$ and \cite[Theorem 3 and the following remark]{Sen1973Lie} in general (see also \cite[Theorem 1.1]{Berger2020Rigidity} for a formulation using an open subgroup of the full Galois image).
\begin{theorem}\label{thm:openimage}
Let $K/\Qp$ be finite, and let $H$ be a one-dimensional formal group of finite height $h_0$ over $\cO_K$.
Put $E_H=\Frac(\Lambda_H)$ and $n=h_0/[E_H:\Qp]$.
The image of the $E_H$-linear representation
\[\rho\vert_{I_K}:I_K\to\Aut_{E_H}(V(H))\cong \GL_n(E_H)\]
is open for the $p$-adic topology.
\end{theorem}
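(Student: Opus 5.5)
The plan is to follow Sen's Lie-algebraic argument, replacing Serre's original case $\Lambda_H=\Zp$ by its $E_H$-linear version. Set $\mathfrak g=\mathrm{Lie}(\rho(I_K))$. Since $\rho(I_K)$ is a compact subgroup of $\GL_n(E_H)$, it is a $p$-adic Lie group, and $\mathfrak g$ is a $\Qp$-Lie subalgebra of $\mathfrak{gl}_n(E_H)=\End_{E_H}(V(H))$. Openness of the image is then equivalent to $\mathfrak g=\mathfrak{gl}_n(E_H)$. We first replace $K$ by a finite unramified extension containing $KE_H$; this does not change $I_K$. The proof of $\mathfrak g=\mathfrak{gl}_n(E_H)$ has three inputs: Hodge--Tate weights, commutants, and the centre.

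\emph{Step 1 (Hodge--Tate weights and the Sen operator).} By Tate's theorem, or by Proposition \ref{prop:onedim_crystalline} together with the filtration on $D_K$, the representation $V(H)$ is Hodge--Tate. Its weights are $1$ with multiplicity $\dim\Omega_H=1$ and $0$ with multiplicity $h_0-1$. Sen's theorem says the Sen operator $\Theta$ lies in $\mathfrak g\otimes_{\Qp}C$. Hence $\Theta$ is a semisimple element of $\mathfrak g\otimes C$ whose eigenvalues are $0$ and $1$ only, with the $1$-eigenspace of $C$-dimension one. Decompose $E_H\otimes_{\Qp}C=\prod_{\iota}C$ over the embeddings $\iota:E_H\to C$. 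Then $\Theta$ has rank one in exactly one $\iota$-component and vanishes in the others; which component this is depends on the embedding through which $E_H$ acts on $\mathrm{Lie}(H)$.

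\emph{Step 2 (commutant).} We show that the commutant of $\mathfrak g$ in $\End_{\Qp}(V)$ is exactly $E_H$. Any element commuting with $\mathfrak g$ commutes with an open subgroup of $\rho(I_K)$, hence with $\rho(I_{K''})$ for some finite $K''/K$. Passing from inertia to the full Galois group is harmless up to a finite unramified extension, by a Frobenius-semisimplicity argument as in Sen. Tate's theorem on $p$-divisible groups then identifies this commutant with $\End_{\cO_{K''}}(H)\otimes\Qp\subset E_H$. In particular $V$ is an absolutely irreducible $\mathfrak g$-module over $E_H$: after extending scalars along each $\iota$, we get an irreducible representation of $\mathfrak g_\iota=\mathfrak g\otimes_{E_H,\iota}C$ on $C^n$ with scalar commutant. (One has to check that $\mathfrak g$ is actually an $E_H$-subspace, or work with its $E_H$-span and descend at the end.)

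\emph{Step 3 (the centre).} The determinant $\det_{E_H}\rho$ restricted to inertia is, up to finite order, a Lubin--Tate-type character: it is the Tate module of the one-dimensional height-$[E_H:\Qp]$ exterior power. Its image is open in $\cO_{E_H}^\times$, so $\mathfrak g$ surjects onto the centre $E_H$ via the trace.

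The main obstacle, and where I expect the real work, is deducing $\mathfrak{sl}_n\subset\mathfrak g$. The plan here is Serre's minuscule argument. By Step 2, $\mathfrak g_\iota$ is reductive, since it acts faithfully and irreducibly, and its derived algebra $\mathfrak s$ acts irreducibly. By Step 1, $\mathfrak g_\iota$ contains a semisimple element with exactly two eigenvalues, one of multiplicity one. Such an element gives a weight of $\mathfrak s$ that is minuscule with a one-dimensional extremal weight space. From the classification of irreducible representations with this property, for example via the Kostant--Serre argument that such an element generates $\mathfrak{sl}_n$ together with its conjugates under the group, we get $\mathfrak s=\mathfrak{sl}_n(C)$. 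Galois descent from $C$ to $E_H$ then gives $[\mathfrak g,\mathfrak g]=\mathfrak{sl}_n(E_H)$. Combined with Step 3, this yields $\mathfrak g=\mathfrak{gl}_n(E_H)$, and openness follows.
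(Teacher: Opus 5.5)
The paper does not prove this statement; it cites Serre for $\Lambda_H=\Zp$ and Sen in general. Your outline is the Serre--Sen Lie-algebra strategy, but it has a gap exactly where the general case is harder than Serre's. The gap is your last sentence: ``Galois descent from $C$ to $E_H$ then gives $[\mathfrak g,\mathfrak g]=\mathfrak{sl}_n(E_H)$''.

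\textbf{The descent step.} Since $\mathfrak g$ is only a $\Qp$-subalgebra of $\mathfrak{gl}_n(E_H)$, the algebra $\mathfrak a=\mathfrak g\otimes_{\Qp}C$ lies in $\End_{E_H}(V)\otimes_{\Qp}C=\prod_\iota\mathfrak{gl}_n(C)$. It need not be the product of its projections $\pi_\iota(\mathfrak a)$, and those projections depend only on the $E_H$-span $E_H\mathfrak g$. So your factorwise minuscule argument proves only $E_H\mathfrak g=\mathfrak{gl}_n(E_H)$. Steps 2 and 3 do not close the difference. Suppose $E_H$ has a subfield $F$ with $[E_H:F]=2$ and $n\ge2$, and put $\mathfrak g=E_H\oplus\mathfrak{su}_n(E_H/F)$. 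Then:
\begin{itemize}
\item its commutant in $\End_{\Qp}(V)$ is $E_H$;
\item its trace onto $E_H$ is surjective;
\item $\pi_\iota(\mathfrak a)=\mathfrak{gl}_n(C)$ for every $\iota$, so each projection contains rank-one idempotents;
\item yet $[\mathfrak g,\mathfrak g]=\mathfrak{su}_n(E_H/F)\neq\mathfrak{sl}_n(E_H)$.
\end{itemize}
Your parenthetical remark names this issue but leaves it open. It is the heart of the proof, not a formality; only when $E_H=\Qp$ is there nothing to descend.

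\textbf{How to close it.} What rules out such examples is that $\Theta$ itself, not just its projections, lies in $\mathfrak a$, and it is supported in the single factor $\iota_0$. For $x\in\mathfrak a$, the bracket $[x,\Theta]$ is again supported in $\iota_0$, with component $[\pi_{\iota_0}(x),\Theta_{\iota_0}]$. Your minuscule argument really proves $\pi_{\iota_0}(\mathfrak a)=\mathfrak{gl}_n(C)$. The ideal of $\mathfrak{gl}_n(C)$ generated by a rank-one idempotent is all of $\mathfrak{gl}_n(C)$, so $\mathfrak a$ contains the entire $\iota_0$-factor. Next, $\mathfrak a$ is stable under $1\otimes\sigma$ for $\sigma\in\Gal_{\Qp}$ acting on $C$, and these maps permute the factors transitively. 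Hence $\mathfrak a=\prod_\iota\mathfrak{gl}_n(C)$, and $\mathfrak g=\mathfrak{gl}_n(E_H)$ by counting dimensions. This also produces the centre, so Step 3 and the exterior-power input become unnecessary.

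\textbf{Step 2.} This step has a smaller gap.
\begin{itemize}
\item Tate's theorem only describes elements commuting with an open subgroup of some $\Gal_{K''}$. An element commuting with $\rho(I_{K''})$ is merely a vector in the unramified representation $\End_{I_{K''}}(V)$. Appealing to ``a Frobenius-semisimplicity argument as in Sen'' does not show that Frobenius acts on it through a finite quotient.
\item A commutant equal to $E_H$ does not by itself make a Lie algebra act irreducibly; you also need semisimplicity.
\end{itemize}
Both problems disappear if you first treat $\mathfrak g_G=\mathrm{Lie}\,\rho(\Gal_{K'})$ with $K'=KE_H$:
\begin{itemize}
\item By Tate, its commutant is $\bigcup_{K''}\End_{\cO_{K''}}(H)\otimes\Qp=E_H$.
\item $V$ is irreducible under every open subgroup, for instance because $D$ remains a simple isocrystal of slope $1/h_0$ over every finite extension. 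Together with the commutant, this makes $E_H\mathfrak g_G$ absolutely irreducible.
\item $\Theta\in\mathfrak g\otimes C\subseteq\mathfrak g_G\otimes C$, so the argument above gives $\mathfrak g_G=\mathfrak{gl}_n(E_H)$.
\end{itemize}
Since $\rho(I_K)$ is normal, $\mathfrak g\otimes_{\Qp}C$ is then an ideal of $\prod_\iota\mathfrak{gl}_n(C)$ containing $\Theta$. The same ideal-plus-transitivity argument yields $\mathfrak g=\mathfrak{gl}_n(E_H)$.
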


Let $E$ be an intermediate field of the finite extension $K/\Qp$.
A \textit{formal $\cO_E$-module} over $\cO_K$ is a formal group $H$ equipped with a ring homomorphism 
\[\cO_E\to\End_{\cO_K}(H),\quad a\mapsto [a]_H\]
satisfying $[a]_H'(0)=a$.
Under our derivative identification, this is equivalent to $\cO_E\subset\End_{\cO_K}(H)$.
For a formal $\cO_E$-module $H$ of finite height, we define its \textit{relative height} by
\[\mathrm{ht}_{\cO_E}(H)=\mathrm{ht}(H)/[E:\Qp].\]

For the construction in Section \ref{subsec:model}, we will assume that $K/E$ is finite unramified and that $H$ is a one-dimensional formal $\cO_E$-module of finite height over $\cO_K$.
In this setting, the following integrality theorem identifies its absolute endomorphism ring with $\cO_{E_H}$.
\begin{proposition}[{\cite[Theorem 5.2.1]{Cox1974Formal}}]\label{prop:end_int_closed}
Let $K/E$ be a finite unramified extension of $p$-adic local fields.
If $H$ is a one-dimensional formal $\cO_E$-module of finite height $h_0$ over $\cO_K$, then $\Lambda_H=\cO_{E_H}$ is a discrete valuation ring.
Consequently, $T(H)$ is a finite free $\Lambda_H$-module of rank $n=h_0/[E_H:\Qp]$.
\end{proposition}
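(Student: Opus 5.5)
We must show: for $K/E$ finite unramified and $H$ a one-dimensional formal $\cO_E$-module of finite height $h_0$ over $\cO_K$, the ring $\Lambda_H$ equals $\cO_{E_H}$. The consequence for $T(H)$ then follows.

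\textbf{Step 1: $\Lambda_H$ is an order in $E_H$ containing $\cO_E$.} By Proposition \ref{prop:absolute_end_finite}, $\Lambda_H\subset\cO_{K_{h_0}}$. Each $\End_{\cO_{K'}}(H)$ is closed in $\cO_{K'}$. The union is stabilized at some finite $K'$ because $E_H$ is finite over $\Qp$, so $\Lambda_H$ is a closed subring of $\cO_{E_H}$. Since $H$ is an $\cO_E$-module, $\cO_E\subset\Lambda_H$. Also $\Lambda_H$ is a finite free $\Zp$-module with fraction field $E_H$, so it is an order in $E_H$.

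\textbf{Step 2: reduce to an unramified question.} By Waterhouse's result, $KE_H/K$ is unramified. Since $K/E$ is unramified, $KE_H/E$ is unramified, and hence so is $E_H/E$. Thus $\cO_{E_H}=\cO_E[\zeta]$ for a root of unity $\zeta$ of prime-to-$p$ order $q_E^{r}-1$, where $r=[E_H:E]$. So it suffices to show that $\Lambda_H$ contains the Teichmüller lift $\zeta$ of a generator of the residue field $\kappa_{E_H}$.

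\textbf{Step 3: produce the Teichmüller endomorphisms.} The residue field $\kappa$ of $\Lambda_H$ is a subfield of $\kappa_{E_H}$. Any order $\Lambda$ with $\cO_E\subset\Lambda\subset\cO_{E_H}$ is local, since it is a finite $\cO_E$-algebra inside a domain integral over a henselian ring. Pick $a\in\Lambda_H$ reducing to a generator $\bar a$ of $\kappa^\times$. Then the Teichmüller limit
\[
\lim_m a^{q^m}, \qquad q=\#\kappa,
\]
lies in $\Lambda_H$ because $\Lambda_H$ is closed. So $\Lambda_H\supset W_{\cO_E}(\kappa)=\cO_E[\mu_{q-1}]$. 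It remains to show $\kappa=\kappa_{E_H}$, which I expect to be the main obstacle.

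\textbf{Main obstacle: showing $\kappa=\kappa_{E_H}$.} My first attempt uses the classical description of endomorphisms of the special fiber. Reduction embeds $\Lambda_H$ into $\End_{\overline{\kappa}}(G)$, the maximal order of $D_{1/h_0}$. In that maximal order, the commutant of $\cO_E$ is the maximal order of the central division algebra over $E$ of invariant $1/\mathrm{ht}_{\cO_E}(H)$, and its elements are the $\overline{\kappa}$-endomorphisms of $G$ as an $\cO_E$-module.

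Next I would use the fact that $\Lambda_H\otimes_{\cO_E}\cO_{\breve E}$ acts on $\mathrm{Lie}(H)$, which is one-dimensional, and on the Dieudonné module. An endomorphism $u$ of $H$ defined over $\cO_{K'}$ acts on $\mathrm{Lie}(H_{\cO_{K'}})$ by $u'(0)$. For $x\in\cO_{E_H}$ with $p^Nx\in\Lambda_H$, the lifting of $x$ is governed by Grothendieck–Messing. The Hodge filtration is a line stable under $\Lambda_H$, and it must be stable under all of $\cO_{E_H}$ once it is stable under $E_H$. The Dieudonné module of $G$ over $W(\overline{\kappa})$ is free over $\cO_{E_H}\otimes W(\overline{\kappa})$, because $G$ with its $\cO_{E_H}$-action (obtained by the isogeny argument) is the special fiber of a formal $\cO_{E_H}$-module. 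Hence the endomorphism $x$ of $G$ preserves the lifted filtration over the unramified base $\cO_{K'}$, using the divided powers on $(p)$. Since $K'$ is unramified over $E$ and $p$ need not be odd, I would work instead with $\pi_E$ and the $\cO_E$-Grothendieck–Messing theory, whose divided powers are relative to $\cO_E$, and use the unramifiedness of $K'/E$ there.

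Therefore $x$ lifts to $\cO_{K'}$, giving $\cO_{E_H}\subset\Lambda_H$. A discrete valuation ring contains no proper order with the same fraction field, so $\Lambda_H=\cO_{E_H}$. Finally, $T(H)$ is torsion-free and finitely generated over the DVR $\Lambda_H$, hence free, of rank $h_0/[E_H:\Qp]$ by counting $\Zp$-ranks.
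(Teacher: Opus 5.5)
The paper gives no argument for this statement; it quotes Cox. Your proposal is therefore an independent, deformation-theoretic route, and its core is sound. Suppose $\Lambda_H$ is realized over a single finite \emph{unramified} $K'/K$. Then $\pi_E$ is a uniformizer of $K'$, so $\pi_E\cO_{K'}$ carries $\cO_E$-divided powers. The $\cO_E$-linear Grothendieck--Messing criterion then says that an endomorphism $x$ of $G_{\kappa_{K'}}$ lifts to $H_{\cO_{K'}}$ iff $\DD(x)$ preserves the Hodge filtration $\Fil\subset\DD(G)(\cO_{K'})$. Take $x\in\cO_{E_H}$ with $p^Nx\in\Lambda_H$. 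Then $p^N\DD(x)$ preserves $\Fil$, and since $\Fil$ is a direct summand of a free $\cO_{K'}$-module, $\DD(x)$ preserves $\Fil$ as well. This gives $\cO_{E_H}\subset\Lambda_H$ directly. It also shows exactly where unramifiedness of $K/E$ enters, namely that the divided powers reach down to the residue field, which matches the failure of the statement over ramified bases (quasi-canonical lifts). The price is that the argument rests on an $\cO_E$-linear crystalline deformation theory, considerably heavier than anything else used in this part of the paper.

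As written, though, the decisive step needs several repairs.

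First, the reason you give is freeness of the Dieudonn\'{e} module over $\cO_{E_H}\otimes W(\overline{\kappa})$ ``because $G$ is the special fiber of a formal $\cO_{E_H}$-module''. If this refers to $H$ it is circular; if it refers to an isogenous group it concerns a different special fiber. In any case freeness is not what yields stability of $\Fil$; saturation is. No isogeny is needed to make $\cO_{E_H}$ act on $G$: $\End_{\overline{\kappa}}(G)$ is the maximal order of $D_{1/h_0}$, i.e. the set of all integral elements, so it already contains $\cO_{E_H}$.

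Second, your Step 1 only realizes $\Lambda_H$ over some possibly ramified $K''$, while the criterion needs $p^Nx$ to lift over an unramified $\cO_{K'}$. You can get this from Proposition \ref{prop:absolute_end_finite} together with injectivity of $u\mapsto u'(0)$. If $u\in\End_{\cO_{K''}}(H)$ has $u'(0)\in K_{h_0}$, then for every $g\in\Gal_{K_{h_0}}$ the endomorphisms $u^g$ and $u$ have the same derivative. Hence $u^g=u$, and $u$ is defined over $\cO_{K_{h_0}}$. Enlarge $K'$ further so that $x$ is defined over $\kappa_{K'}$.

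Third, the $\cO_E$-divided powers $y\mapsto y^{q_E}/\pi_E$ on $\pi_E\cO_{K'}$ are nilpotent only when $q_E>2$. For $q_E=2$ you need a form of the lifting criterion for formal groups that does not require nilpotent divided powers, as in Zink's display theory and its $\cO_E$-linear analogue.

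Finally, Steps 2 and 3 (the Teichm\"{u}ller reduction to $\kappa=\kappa_{E_H}$) become superfluous once the main step is in place, since it proves $\cO_{E_H}\subset\Lambda_H$ for all $x$ at once.
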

\begin{remark}
Set $E=K$, and fix $h\ge1$.
Then the isomorphism classes of formal $\cO_K$-modules $H$ of relative height $h$ over $\cO_K$ are in bijection with the monic Eisenstein polynomials over $\cO_K$ of degree $h$.
This was proved in \cite[Theorem 3.6.1]{Cox1974Formal} by extending the methods in \cite{Honda1970theory}.
The Eisenstein polynomial under the bijection is exactly the minimal polynomial of $q_K$-power Frobenius over $K$ in the endomorphism ring of the special fiber.
\end{remark}

\subsection{A description of exponential periods}\label{subsec:descr_image}
In this subsection, we characterize one-dimensional exponential periods of formal groups as the kernel of an operator defined by a Frobenius relation.

Fix a subfield $E\subset K$ containing $\Qp$, and keep the notation $q_K=\#\kappa_K$.
Let $H$ be a one-dimensional formal $\cO_E$-module over $\cO_K$ of finite height $h_0=\mathrm{ht}(H)$, and let $G$ denote its special fiber over $\kappa_K$.
Reduction embeds $\cO_E$ into $\End_{\kappa_K}(G)\subset\cO_{D_{1/h_0}}$, and the $q_K$-power Frobenius $X\mapsto X^{q_K}$ defines an element $\xi_{q_K}$ of this ring.
The reduced $\cO_E$-endomorphisms are defined over $\kappa_K$, so they commute with $\xi_{q_K}$, defining a commutative subalgebra $\cO_E[\xi_{q_K}]$ of $\cO_{D_{1/h_0}}$.
Let $P(X)=\sum_{i=0}^{d}c_iX^i$ be the monic minimal polynomial of $\xi_{q_K}$ over $E$.
Every $E$-conjugate of $\xi_{q_K}$ has positive valuation, so the nonleading coefficients of its minimal polynomial lie in $\frakm_E$.
Since $E(\xi_{q_K})$ is a commutative subfield of $D_{1/h_0}$, its degree over $\Qp$ divides $h_0$.
Therefore, the degree $d=[E(\xi_{q_K}):E]$ of $P$ divides the relative height $\mathrm{ht}_{\cO_E}(H)$.

Define the $\cO_E$-linear operator 
\[\cP:H(\AinfK)\to H(\AinfK),\quad \cP(x)=\sideset{}{_H}\sum_{i=0}^{d} [c_i]_H(\varphi_K^i(x))\]
where the sum is taken with respect to the formal group law of $H$.
Writing $\widetilde{\tau}_H$ for the exponential period map on $\widetilde{H}(\OC)$, our main goal is to prove the $\cO_E$-linear short exact sequence
\begin{center}
\begin{tikzcd}
0\arrow[r]&\tilH(\OC)\arrow[r,"\widetilde{\tau}_H"]&H(\AinfK)\arrow[r,"\cP"]&\pi_K\AinfK\arrow[r]&0
\end{tikzcd}
\end{center}
of Theorem \ref{thm:ses_htild_ainf}, where $\pi_K\AinfK$ is equipped with the formal group law of $H$.
Corollary \ref{cor:tau_equivariant} establishes injectivity of the period map $\widetilde{\tau}_H$ on the universal cover.
We next identify its image with $\ker\cP$.
\begin{proposition}\label{prop:tau_image}
Let $H$ be a one-dimensional formal $\cO_E$-module of finite height over $\cO_K$, with $\cP$ defined above.
Then
\[\mathrm{im}(\widetilde{\tau}_H)=\ker(\cP:H(\AinfK)\to H(\AinfK)).\]
\end{proposition}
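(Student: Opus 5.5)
The plan is to characterize $\mathrm{im}(\widetilde{\tau}_H)$ through the special fiber, using the commutative square (\ref{cd:univ_Ainf_OCb}) from the proof of Proposition \ref{prop:tilh_to_h_ainf}. By Proposition \ref{prop:tilH_isom}, $\mathrm{im}(\widetilde{\tau}_H)=\pr(\tilH(\AinfK))$. In (\ref{cd:univ_Ainf_OCb}) the left vertical map is an isomorphism, and so is the bottom map by Lemma \ref{lem:tilh_to_h_OCb}. Hence for every $x\in H(\AinfK)$ there is a unique $y\in\tilH(\AinfK)$ with $\pr(y)\equiv x\pmod{\pi_K}$. Both inclusions will be proved using this canonical lift.

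\emph{Step 1: $\mathrm{im}\subset\ker\cP$.} Since $\varphi_K$ on $\AinfK=W_{\cO_K}(\OCb)$ is $\cO_K$-linear and continuous for the weak topology, it commutes with the formal group law and with all $[a]_H$, $a\in\cO_E$. So the formula defining $\cP$ also gives an $\cO_E$-linear operator $\widetilde{\cP}$ on $\tilH(\AinfK)$ with $\pr\circ\widetilde{\cP}=\cP\circ\pr$. Under $\tilH(\AinfK)\cong\tilG(\OCb)\cong G(\OCb)$, which is $\cO_E$-linear and $\varphi_K$-equivariant, $\varphi_K$ becomes $x\mapsto x^{q_K}$, i.e. the endomorphism $\xi_{q_K}$ of $G$. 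Thus $\widetilde{\cP}$ corresponds to $P(\xi_{q_K})=0\in\End(G)$. Hence $\widetilde{\cP}=0$, and so $\cP$ kills $\pr(\tilH(\AinfK))$.

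\emph{Step 2: $\ker\cP\subset\mathrm{im}$.} Let $x\in\ker\cP$, let $y$ be its canonical lift, and set $z=x-_H\pr(y)$. Then $z\in\pi_K\AinfK$ and $\cP(z)=\cP(x)-_H\cP(\pr(y))=0$ by Step 1. It therefore suffices to show that $\cP$ is injective on $\pi_K\AinfK$.

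Suppose $z\neq0$. Since $\AinfK$ is $\pi_K$-adically separated and $\pi_K$-torsion free, we can write $z=\pi_K^m w$ with $m\ge1$ and $w\notin\pi_K\AinfK$. We estimate the terms of $\cP(z)$ modulo $\pi_K^{m+1}$:
\begin{itemize}
\item For $i<d$, the coefficient $c_i$ lies in $\frakm_E\subset\pi_K\cO_K$. Since $[c_i]_H(X)\in c_iX+X^2\cO_K\llbracket X\rrbracket$, we get $[c_i]_H(\varphi_K^i z)\in\pi_K^{m+1}\AinfK$.
\item The leading term is $\varphi_K^d(z)=\pi_K^m\varphi_K^d(w)$, and $\varphi_K^d(w)\notin\pi_K\AinfK$ because Frobenius is bijective on $\OCb$.
\item The cross terms of the formal sum lie in $\pi_K^{2m}\AinfK\subset\pi_K^{m+1}\AinfK$.
\end{itemize}
Hence $\cP(z)\equiv\pi_K^m\varphi_K^d(w)\not\equiv0\pmod{\pi_K^{m+1}}$, a contradiction. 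So $z=0$ and $x=\pr(y)\in\mathrm{im}(\widetilde{\tau}_H)$.

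The main obstacle is Step 1. One must check carefully that the rigidity isomorphisms are compatible with $\varphi_K$, which follows from functoriality of Lemma \ref{lem:rigidity} for the endomorphism $\varphi_K$ of the adic algebra $\AinfK$. One must also check that the Frobenius on $G(\OCb)$ is exactly the element $\xi_{q_K}$, including the $\cO_E$-linearity needed to evaluate $P(\xi_{q_K})$ as a sum in $\End(G)$.
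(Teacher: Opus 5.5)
Your proof is correct. Step~1 is the paper's argument: reduce modulo $\pi_K$, where $\cP$ becomes $P(\xi_{q_K})=0$, and use that reduction $\tilH(\AinfK)\to\tilG(\OCb)$ is injective. The $\varphi_K$-compatibility you flag as the main obstacle is in fact immediate. The vertical maps are just reduction modulo $\pi_K$, and the Witt Frobenius reduces to $x\mapsto x^{q_K}$ on $\OCb$, so no functoriality of Lemma~\ref{lem:rigidity} is needed.

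For the reverse inclusion you take a genuinely different route from the paper.

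\emph{The paper's route.} It stays inside $\sA=\ker\cP$. Using $c_\ell/\pi_E\in\cO_E$ for $\ell<d$ and the invertibility of $\varphi_K$, it rewrites $\cP(x)=0$ as $x=\nu([\pi_E]_H(x))$, where $\nu$ is an explicit operator built from $[-c_\ell/\pi_E]_H$ and $\varphi_K^{\ell-d}$. It concludes that $[\pi_E]_H$ is an automorphism of $\sA$ with inverse $\nu$. It then writes down the preimage $(x,\nu(x),\nu^2(x),\dots)$ in the universal cover explicitly. This direction never uses Lemma~\ref{lem:tilh_to_h_OCb}, and it exhibits the division points as explicit Frobenius twists.

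\emph{Your route.} You use Lemma~\ref{lem:tilh_to_h_OCb} to produce an element of the image with the same reduction as $x$. You then reduce everything to injectivity of $\cP$ on $\pi_K\AinfK$, which you prove by a leading-term estimate. The ingredients are $c_\ell\in\frakm_E$ for $\ell<d$, injectivity of Frobenius on $\OCb$, and $\pi_K$-torsion-freeness of $\AinfK$. The estimate is legitimate: since $z\in\pi_K\AinfK$, all power-series evaluations converge $\pi_K$-adically, and the $\pi_K$-adic topology is finer than the weak topology, so the limits agree and your congruences modulo $\pi_K^{m+1}$ pass to them.

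\emph{Comparison.} Your argument directly yields the uniqueness of lifts in Corollary~\ref{cor:characterize_exp_period}. It also mirrors the uniqueness half of the Witt-coordinate recursion in Theorem~\ref{thm:ses_htild_ainf}. The paper's argument is more self-contained and constructive in this direction.
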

\begin{proof}
Let $G$ denote the special fiber of $H$.
Reduction modulo $\pi_K$ gives a commutative diagram
\begin{center}
\begin{tikzcd}
    \tilH(\AinfK)\arrow[r,"\cP"]\arrow[d]&\tilH(\AinfK)\arrow[d]\\
    \tilG(\OCb)\arrow[r,"P(\xi_{q_K})"]&\tilG(\OCb)
\end{tikzcd}
\end{center}
where the bottom arrow is induced by the following endomorphism of $G$
\[P(\xi_{q_K})=\sideset{}{_G}\sum_{i=0}^{d}[c_i]_G\circ\xi_{q_K}^i.\]
The bottom arrow vanishes because $P(\xi_{q_K})=0$.
Rigidity (Proposition \ref{prop:tilH_isom}) identifies the vertical arrows, so the induced operator $\cP$ on $\tilH(\AinfK)$ vanishes.
It follows that $\mathrm{im}(\widetilde{\tau}_H)\subset\sA$ where $\sA=\ker(\cP:H(\AinfK)\to H(\AinfK))$.

The $\cO_E$-linearity of $\cP$ makes $\sA$ an $\cO_E$-submodule of $H(\AinfK)$.
Since $\OCb$ is perfect, $\varphi_K$ is an automorphism of $\AinfK$ and it commutes with $\cP$.
Therefore, $\sA$ is stable under $\varphi_K^{\ZZ}$.
Since $c_i/\pi_E\in\cO_E$ for $i<d$, we have
\[\cP(x)=\varphi_K^d(x)+_H\sideset{}{_H}\sum_{i=0}^{d-1}\left[\frac{c_i}{\pi_E}\right]_H(\varphi_K^{i}([\pi_E]H(x))).\]
Applying $\varphi_K^{-d}$ and taking the formal group inverse gives
\begin{equation}\label{eq:sA_equiv_condition}
    x\in\sA\Leftrightarrow x=\sideset{}{_H}\sum_{i=0}^{d-1}\left[-\frac{c_i}{\pi_E}\right]_H(\varphi_K^{i-d}([\pi_E]_H(x)))
\end{equation}
Define the $\cO_E$-linear operator 
\[\nu=\sideset{}{_H}\sum_{i=0}^{d-1}\left[-\frac{c_i}{\pi_E}\right]_H\varphi_K^{i-d}:\sA\to \sA.\]
Equation (\ref{eq:sA_equiv_condition}) gives the identity
\[\nu([\pi_E]_H(x))=x\]
for every $x\in\sA$.
Since $\nu$ is $\cO_E$-linear, we also have $[\pi_E]_H(\nu(x))=x$ for every $x\in\sA$.
Hence, $[\pi_E]_H$ is an automorphism of $\sA$ with inverse $\nu$.
We can identify the $[p]_H$ and $[\pi_E]_H$ inverse limits because there exist $e$ and $u\in\cO_E^\times$ such that $p=u\pi_E^{e}$.
In the latter coordinates, the map
\begin{equation}\label{eq:nu}
    \sA\to\tilH(\AinfK),\quad x\mapsto (x,\nu(x),\nu^2(x),\cdots)
\end{equation}
is clearly a right inverse to the zeroth projection $\tilH(\AinfK)\to\sA$.
Conversely, a compatible sequence $(y_m)_m\in \tilH(\AinfK)$ has $y_m\in \sA$ and $[\pi_E^m]_H(y_m)=y_0$, so we have $y_m=\nu^m(y_0)$.
The map (\ref{eq:nu}) is therefore the inverse of the zeroth projection $\tilH(\AinfK)\to\sA$, proving $\sA=\mathrm{im}(\widetilde{\tau}_H)$.
\end{proof}

By Corollary \ref{cor:tau_equivariant} and Proposition \ref{prop:tilH_isom}, the reduction modulo $\pi_K$ induces an isomorphism
\[\textup{im}(\widetilde{\tau}_H)\subset H(\AinfK)\to G(\OCb).\]
Proposition \ref{prop:tau_image} characterizes the inverse by the equation $\cP(x)=0$.
\begin{corollary}\label{cor:characterize_exp_period}
Let $H$ be a one-dimensional formal $\cO_E$-module of finite height over $\cO_K$, and let $P$ be the minimal polynomial of $\xi_{q_K}$ over $E$.
\begin{enumerate}
    \item For every $x\in\frakm_{\Cb}$, there is a unique lift $\{x\}_H\in\AinfK$ satisfying $\cP(\{x\}_H)=0$.
    \item Write $G$ for the special fiber of $H$.
    The assignment $x\mapsto \{x\}_H$ defines an $\cO_E$-linear isomorphism
    \[G(\OCb)\to \textup{im}(\widetilde{\tau}_H)\]
    inverse to the reduction modulo $\pi_K$.

\end{enumerate}
\end{corollary}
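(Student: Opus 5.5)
The plan is to assemble three earlier results: Corollary \ref{cor:tau_equivariant}, Proposition \ref{prop:tilH_isom}, and Lemma \ref{lem:tilh_to_h_OCb}, together with the kernel description in Proposition \ref{prop:tau_image}.

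First I form the chain of maps
\[\tilH(\OC)\xrightarrow{\widetilde{\tau}_H}\mathrm{im}(\widetilde{\tau}_H)\subset H(\AinfK)\xrightarrow{\bmod \pi_K} G(\OCb),\]
and claim the composite $\mathrm{im}(\widetilde{\tau}_H)\to G(\OCb)$ is an $\cO_E$-linear isomorphism. To see this, recall that by definition $\widetilde{\tau}_H$ is the inverse of the rigidity isomorphism $\tilH(\AinfK)\isomto\tilH(\OC)$ of Proposition \ref{prop:tilH_isom}, followed by the zeroth projection. So $\mathrm{im}(\widetilde{\tau}_H)$ is the image of $\pr:\tilH(\AinfK)\to H(\AinfK)$, which is injective by Proposition \ref{prop:tilh_to_h_ainf}.

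Next I use the commutative square (\ref{cd:univ_Ainf_OCb}). Its left vertical map $\tilH(\AinfK)\to\tilG(\OCb)$ is an isomorphism by Proposition \ref{prop:tilH_isom}, and its bottom map $\pr:\tilG(\OCb)\to G(\OCb)$ is an isomorphism by Lemma \ref{lem:tilh_to_h_OCb}, since $\OCb$ is a perfect adic $\kappa_K$-algebra. Commutativity of the square then shows that reduction modulo $\pi_K$ restricted to $\pr(\tilH(\AinfK))=\mathrm{im}(\widetilde{\tau}_H)$ is a bijection onto $G(\OCb)$. The maps are $\cO_E$-linear by functoriality, because the $\cO_E$-action on $H$ reduces to the $\cO_E$-action on $G$. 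This proves (2) once $\{x\}_H$ is defined as the inverse image of $x$.

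For (1), note that $G(\OCb)=\frakm_{\Cb}$ as sets, since $G$ is one-dimensional and the topologically nilpotent elements of $\OCb$ form $\frakm_{\Cb}$. Also, the topologically nilpotent elements of $\AinfK$ are exactly the lifts of elements of $\frakm_{\Cb}$, so any lift of such $x$ lies in $H(\AinfK)$. By Proposition \ref{prop:tau_image}, the condition $\cP(y)=0$ for $y\in H(\AinfK)$ is equivalent to $y\in\mathrm{im}(\widetilde{\tau}_H)$. Existence and uniqueness of a lift $y$ of $x$ with $\cP(y)=0$ is therefore precisely the bijectivity established above.

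The only point needing care is this identification of sets in (1). I must check that $H(\AinfK)$ consists of all lifts of $\frakm_{\Cb}$ for the weak topology, and that "lift" means reduction modulo $\pi_K$. Once that is checked, there is no real obstacle; the statement is bookkeeping on top of the rigidity results.
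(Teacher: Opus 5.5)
Your proposal is correct and is essentially the paper's argument. The paper likewise restricts reduction modulo $\pi_K$ to $\mathrm{im}(\widetilde{\tau}_H)$ and shows it is an $\cO_E$-linear isomorphism onto $G(\OCb)$ via the rigidity isomorphism and the isomorphism $\pr:\tilG(\OCb)\to G(\OCb)$ in the square (\ref{cd:univ_Ainf_OCb}). It then uses Proposition \ref{prop:tau_image} to characterize the inverse by $\cP=0$. Your extra check, that every lift of an element of $\frakm_{\Cb}$ is topologically nilpotent for the weak topology so that $\cP$ is defined on it, is exactly the bookkeeping needed for uniqueness in (1).
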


\begin{example}[Multiplicative case]\label{ex:multiplicative}
Consider the multiplicative formal group $H=\Gmhat$ over $\Zp$.
With $K=E=\Qp$, we have $[p]_{\Gmhat}(X)=(X+1)^p-1$, $\xi_p=[p]_G$, and $P(X)=X-p$.
Choose primitive roots of unity $\zeta_{p^n}$ compatibly; then $\alpha=(\zeta_{p^n}-1)_n$ belongs to $T(H)$.
Writing $\epsilon=(1,\zeta_p,\zeta_{p^2},\cdots)\in\OCb$, the reduced exponential period map sends $\alpha$ to $\overline{\tau}_H(\alpha)=\epsilon-1$.
The Teichm\"{u}ller lift gives an element $[\epsilon]-1\in\Ainf$ reducing to $\epsilon-1$ and satisfying
\[\varphi([\epsilon]-1)=[\epsilon]^p-1=[p]_{H}([\epsilon]-1).\]
Uniqueness in Corollary \ref{cor:characterize_exp_period} therefore gives
\[\tau_{H}(\alpha)=[\epsilon]-1.\]
Consequently, the exponential period of $\alpha$ is Fontaine's cyclotomic parameter $\mu=[\epsilon]-1$.
\end{example}

\begin{example}[Lubin-Tate case]\label{ex:tau_LT}
Take $E=K$, and let $H$ be a Lubin-Tate group associated to a uniformizer $\pi_K$; then $P(X)=X-\pi_K$.
After identifying the $p$-adic and $\pi_K$-adic Tate modules, write $\alpha=(\alpha_i)\in T(H)$ with $[\pi_K]_H(\alpha_{i+1})=\alpha_i$.
Its reduced period has the unique lift $\tau$ satisfying
\[\varphi_K(\tau)=[\pi_K]_H(\tau),\]
recovering the construction of \cite[Lemma 1.2]{Ren2009Galois}.
\end{example}

\begin{theorem}\label{thm:ses_htild_ainf}
Let $H$ be a one-dimensional formal $\cO_E$-module of finite height over $\cO_K$.
There is a $\Gal_K$-equivariant short exact sequence of $\cO_E$-modules
\begin{center}
\begin{tikzcd}
0\arrow[r]&\tilH(\OC)\arrow[r,"\widetilde{\tau}_H"]&H(\AinfK)\arrow[r,"\cP"]&\pi_K\cdot\AinfK\arrow[r]&0
\end{tikzcd}
\end{center}
where the last two terms carry the formal group law of $H$.
\end{theorem}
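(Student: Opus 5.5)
Injectivity of $\widetilde{\tau}_H$ is Corollary \ref{cor:tau_equivariant}, and exactness in the middle is Proposition \ref{prop:tau_image}. $\Gal_K$-equivariance and $\cO_E$-linearity of $\cP$ are immediate, since $\varphi_K$ commutes with $\Gal_K$ and the $[c_i]_H$ are endomorphisms defined over $\cO_K$. It therefore remains to show two things: that $\cP$ takes values in $\pi_K\AinfK$, and that $\cP:H(\AinfK)\to\pi_K\AinfK$ is surjective.

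\emph{Image in $\pi_K\AinfK$.} Reducing modulo $\pi_K$, the operator $\cP$ becomes the endomorphism $P(\xi_{q_K})$ of $G$ applied to $G(\OCb)$. Here I use that $\varphi_K$ reduces to the $q_K$-power map on $\OCb$, and that the coefficients of the series $[c_i]_H$ lie in $\cO_K$, so their reductions are the series $[c_i]_G$. Since $P(\xi_{q_K})=0$ in $\End_{\kappa_K}(G)$, we get $\cP(x)\equiv 0 \pmod{\pi_K}$. The same diagram as in the proof of Proposition \ref{prop:tau_image} applies, now read on $H(\AinfK)$ rather than on the universal cover.

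\emph{Surjectivity.} I would solve $\cP(x)=y$ for given $y\in\pi_K\AinfK$ by successive approximation in the weak topology. As in the proof of Proposition \ref{prop:tau_image}, rewrite
\[\cP(x)=\varphi_K^d(x)+_H\sideset{}{_H}\sum_{i<d}\left[\tfrac{c_i}{\pi_E}\right]_H\varphi_K^i([\pi_E]_H(x)).\]
The equation $\cP(x)=y$ is then equivalent to the fixed-point equation
\[x=\Phi(x):=\varphi_K^{-d}\Bigl(y-_H\sideset{}{_H}\sum_{i<d}\left[\tfrac{c_i}{\pi_E}\right]_H\varphi_K^i([\pi_E]_H(x))\Bigr).\]
The key estimate is that $x\mapsto[\pi_E]_H(x)$ is contracting: if $x\equiv x' \pmod{J}$ for an ideal $J$ in the filtration $J_{n+1}=J_nI$, then $[\pi_E]_H(x)\equiv[\pi_E]_H(x') \pmod{JI}$. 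This holds because $[\pi_E]_H(X)$ has linear term $\pi_E X$, and modulo $\pi_E$ it is a series in $X^p$; this is the same estimate as in Lemma \ref{lem:rigidity}, with $I=(\pi_K,\xi)$. The automorphisms $\varphi_K^{\pm1}$ of $\AinfK$ preserve $(\pi_K)$, but they do not preserve $(\xi)$. So I would instead use the $\Phi$-stable filtration $(\pi_K)+\frakm_{\OCb}$-type ideals, i.e.\ the topology given by $\pi_K^a$ together with $[\varpi]^b$ for a pseudo-uniformizer $\varpi$. These ideals are stable under $\varphi_K$ up to reindexing $b\mapsto q_Kb$.

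For the iteration itself, I would start from $x_0=\varphi_K^{-d}(y)$ and set $x_{m+1}=\Phi(x_m)$. Each successive difference gains a factor of $\pi_E$ up to higher-order terms in the contracting estimate, so the sequence converges $\pi_K$-adically. This is the cleanest route, since $\AinfK$ is $\pi_K$-adically complete and the contraction is genuinely $\pi_E$-adic. Note that $x_0\in\pi_K\AinfK$ is topologically nilpotent, and $\Phi$ preserves $\pi_K\AinfK$. The limit $x$ therefore lies in $\pi_K\AinfK\subset H(\AinfK)$ and satisfies $\cP(x)=y$.

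The main obstacle is making this contraction precise. Concretely, I need
\[\Phi(x)-\Phi(x')\in\pi_E^{m+1}\AinfK\quad\text{whenever } x-x'\in\pi_E^{m}\AinfK \text{ with } x,x'\in\pi_K\AinfK.\]
This requires checking that $[\pi_E]_H(x)-[\pi_E]_H(x')$ is divisible by $\pi_E^{m+1}$; it follows from $[\pi_E]_H(X+\epsilon)-[\pi_E]_H(X)\in(\pi_E\epsilon,\epsilon^p)$ together with $p\ge2$ and $m\ge1$. The base case $m=0$ uses that both arguments lie in $\pi_K\AinfK$. I would verify separately that the formal group subtraction and the series $[c_i/\pi_E]_H$ are $\pi_E$-adically Lipschitz with constant $1$ on $\pi_K\AinfK$.
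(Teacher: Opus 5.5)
Your proposal is correct, but your surjectivity argument differs from the paper's.

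\textbf{The paper's route.} The paper works in $\pi_K$-Witt coordinates. It writes $[\pi_E]_H(X)=U(X^p)+\pi_KV(X)$ and uses Lemma \ref{lem:pth_power_Witt} together with $\pi_Kz=V_{\pi_K}\varphi_K(z)$. From this it shows that the $n$-th Witt coordinate of the right-hand side of $\varphi_K^d(x)=[\pi_E]_H(\cdots)+_Hy$ depends only on $x_0,\dots,x_{n-1}$ and $y$. It then solves $x_n^{q_K^d}=b_n$ using perfectness of $\OCb$. This gives, for every prescribed reduction $x_0\in\frakm_{\Cb}$, a unique solution of $\cP(x)=y$. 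So the paper proves a bit more than surjectivity: each fiber of $\cP$ maps bijectively onto $\frakm_{\Cb}$ under reduction, and for $y=0$ this re-derives Corollary \ref{cor:characterize_exp_period}.

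\textbf{Your route.} You rewrite $\cP(x)=y$ as a fixed-point equation $x=\Phi(x)$, which is the inhomogeneous analogue of the operator $\nu$ from Proposition \ref{prop:tau_image}. You then iterate on $\pi_K\AinfK$ using only the contraction estimate for $[\pi_E]_H$. This avoids the ramified Witt-vector calculus entirely. It produces just one preimage, which is all you need since the kernel is already identified.

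\textbf{One bookkeeping correction.} This theorem does not assume $K/E$ unramified, so $\pi_E\AinfK$ can be strictly smaller than $\pi_K\AinfK$. Your $m=0$ step then fails in general: for $\epsilon\in\pi_K\AinfK$, the term $\epsilon^p$ only lies in $\pi_K^p\AinfK$, which is not contained in $\pi_E\AinfK$ when $e(K/E)>p$. The fix is to run the iteration with the $\pi_K$-adic filtration instead. For $m\ge1$ and $\epsilon\in\pi_K^m\AinfK$ one has $(\pi_K\epsilon,\epsilon^p)\subset\pi_K^{m+1}\AinfK$. This needs only $[\pi_E]_H\equiv U(X^p)\pmod{\pi_K}$, which follows directly from $[\pi_E]_G$ having zero derivative over $\kappa_K$. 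The statement modulo $\pi_E$ that you asserted needs an extra invariant-differential argument when $K/E$ is ramified. In the unramified case used later in the paper, your $\pi_E$-adic version is literally correct.
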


\begin{proof}
The reduction of $\cP$ is $P(\xi_{q_K})=0$, so its image lies in $\pi_K\AinfK$.
Corollary \ref{cor:tau_equivariant} and Proposition \ref{prop:tau_image} reduce the theorem to surjectivity of $\cP$.

We use Lemma \ref{lem:pth_power_Witt}, the key fact that taking a $p$-th power removes dependence on the last ramified Witt coordinates in every positive index, and refer the reader to Appendix \ref{app:witt} for basic facts and notation about ramified Witt vectors.
Fix $y=(y_n)_{n\ge0}\in \pi_K\cdot\AinfK$, written in the $\pi_K$-Witt coordinates, so $y_0=0$.
For every $x_0\in\frakm_{\Cb}$, we will construct a unique lift $x\in\AinfK^\cc$ satisfying $\cP(x)=y$.
The zeroth coordinate equation holds for the prescribed $x_0$ because $\cP$ reduces modulo $\pi_K$ to zero and $y_0=0$.
We construct the remaining Witt coordinates $x_n$ recursively for $n\ge1$.
The equation $\cP(x)=y$ is equivalent to
\begin{equation}\label{eq:cpx_equal_y}
    \varphi_K^d(x)=[\pi_E]_H\left(\sideset{}{_H}\sum_{i=0}^{d-1}\left[-\frac{c_i}{\pi_E}\right]_H(\varphi_K^i(x))\right)+_Hy.
\end{equation}
The $n$-th Witt coordinate of the left-hand side is $(x_n)^{q_K^d}$.
For Witt vectors over a $\kappa_K$-algebra, multiplication by $\pi_K$ satisfies $\pi_Kz=V_{\pi_K} \varphi_K(z)$.
Its zeroth coordinate is zero, and its $n$-th coordinate for $n\ge1$ is $z_{n-1}^{q_K}$.
The reduction of $[\pi_E]_H(X)$ has zero derivative, so we can write
\[[\pi_E]_H(X)=U(X^p)+\pi_K V(X)\]
with $U,V\in\cO_K\llbracket X\rrbracket_0$.
For $n\ge1$, Lemma \ref{lem:pth_power_Witt} shows that the $n$-th coordinate of $U(z^p)$ depends only on $z_0,\cdots,z_{n-1}$, and the Verschiebung identity gives the same assertion for $\pi_K V(z)$.
Consequently, the $n$-th coordinate of the right-hand side of (\ref{eq:cpx_equal_y}) depends only on $x_0,\cdots,x_{n-1}$ and $y_0,\cdots,y_n$. 
For each $n\ge1$, perfection of $\OCb$ determines a unique $x_n$ from the equation $x_n^{q_K^d}=b_n$ where $b_n$ depends only on the previously chosen coordinates and $y$.
The resulting Witt vector $x$ satisfies (\ref{eq:cpx_equal_y}) in every coordinate and lies in $\AinfK^\cc$ because its reduction $x_0$ is topologically nilpotent.
This proves existence and uniqueness.
\end{proof}
\begin{corollary}\label{cor:ses_TH_ainf}
Let $H$ be a one-dimensional formal $\cO_E$-module of finite height over $\cO_K$.
There is a $\Gal_K$-equivariant short exact sequence of $\cO_E$-modules
\begin{center}
\begin{tikzcd}
0\arrow[r]&T(H) \arrow[r,"\tau_H"]&H(\ker\theta_K)\arrow[r,"\cP"]&H(\pi_K\AinfK)\arrow[r]&0
\end{tikzcd}
\end{center}
where $H(I)$ denotes the ideal $I$ with the formal group law of $H$.
\end{corollary}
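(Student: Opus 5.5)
The plan is to deduce the corollary from Theorem \ref{thm:ses_htild_ainf} by restricting it along $T(H)\subset\tilH(\OC)$. The restriction has to land in the correct submodules on both sides.

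Injectivity of $\tau_H$ and $\Gal_K$-equivariance are immediate from Corollary \ref{cor:tau_equivariant}, which also gives $\tau_H(T(H))\subset\ker\theta_K$. Moreover $\cP\circ\tau_H=0$ by Proposition \ref{prop:tau_image}. Next I check that $\cP$ maps $\ker\theta_K$ into $\pi_K\AinfK$; this already holds on all of $H(\AinfK)$ by Theorem \ref{thm:ses_htild_ainf}. Both $\ker\theta_K$ and $\pi_K\AinfK$ are ideals inside $\AinfK^\cc$, so they are $\cO_E$-submodules for the formal group law of $H$.

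Exactness in the middle goes as follows. Let $x\in\ker\theta_K$ with $\cP(x)=0$. By Proposition \ref{prop:tau_image}, $x=\widetilde{\tau}_H(\beta)$ for some $\beta=(\beta_i)_i\in\tilH(\OC)$. By construction $\theta_K(\widetilde{\tau}_H(\beta))=\beta_0$, since $\theta_K$ of the zeroth component is the zeroth component of the image under the rigidity isomorphism. So $\beta_0=0$, and then $\beta_i\in H[p^i](\overline{K})$ for all $i$, i.e.\ $\beta\in T(H)$. This uses the standard identification of $T(H)$ with $\{\beta\in\tilH(\OC):\beta_0=0\}$. Each $\beta_i$ is a $p^i$-torsion point of $H(\cO_C)$, and torsion points of a finite-height formal group over $\cO_C$ are algebraic over $K$, being roots of $[p^i]_H$, which is a distinguished series times a unit by Weierstrass preparation.

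The main point is surjectivity onto $\pi_K\AinfK$. Given $y\in\pi_K\AinfK$, Theorem \ref{thm:ses_htild_ainf} gives some $x\in H(\AinfK)$ with $\cP(x)=y$, but $x$ need not lie in $\ker\theta_K$. The fibre of $\cP$ over $y$ is $x+_H\mathrm{im}(\widetilde{\tau}_H)$. So I need $\beta\in\tilH(\OC)$ with $\theta_K(x)=-_H\beta_0$, i.e.\ surjectivity of the zeroth projection $\tilH(\OC)\to H(\OC)$. This holds because $[p]_H:H(\cO_C)\to H(\cO_C)$ is surjective. For $a\in\frakm_C$, the equation $[p]_H(X)=a$ has a root in $\frakm_C$ by Weierstrass preparation, as $[p]_H(X)-a$ is a unit times a distinguished polynomial of degree $p^{h_0}$ and $C$ is algebraically closed. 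Iterating gives a compatible sequence $(\beta_i)$ with $\beta_0$ equal to $-_H\theta_K(x)$. Then $x':=x+_H\widetilde{\tau}_H(\beta)$ satisfies $\cP(x')=y$ and $\theta_K(x')=0$.

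All maps are $\cO_E$-linear and $\Gal_K$-equivariant by Theorem \ref{thm:ses_htild_ainf} and functoriality, which finishes the argument.
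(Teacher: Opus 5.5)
Your argument is correct and is the same as the paper's. The paper applies the snake lemma to the map from the exact sequence of Theorem \ref{thm:ses_htild_ainf} to $0\to H(\cO_C)\xrightarrow{\id}H(\cO_C)\to 0$, with vertical maps $\pr$, $\theta_K$, $0$, using $\theta_K\circ\widetilde{\tau}_H=\pr$ and that $\pr$ is surjective with kernel $T(H)$; your element-wise checks of middle exactness and of surjectivity are exactly that diagram chase written out, with somewhat more detail on why $[p]_H$ is surjective on $\frakm_C$.
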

\begin{proof}
The compatibility $\theta_K\circ\widetilde{\tau}_H=\pr$ gives the following commutative diagram, which involves the exact sequence of Theorem \ref{thm:ses_htild_ainf}.
\begin{center}
\begin{tikzcd}
    0\arrow[r]&\tilH(\OC)\arrow[d,"\pr"]\arrow[r,"\widetilde{\tau}_H"]&\AinfK^\cc\arrow[d,"\theta_K"]\arrow[r,"\cP"]&\pi_K\cdot\AinfK\arrow[d]\arrow[r]&0\\
    0\arrow[r]&H(\cO_C)\arrow[r,"\id"]&H(\cO_C)\arrow[r]&0\arrow[r]&0
\end{tikzcd}
\end{center}
Since $C$ is algebraically closed, $[p]_H:H(\OC)\to H(\OC)$ is surjective, and successive choices of $p$-division points show that $\pr$ is surjective with kernel $T(H)$.
The snake lemma gives the desired short exact sequence.
\end{proof}
\begin{remark}\label{rmk:existing_exponential_ses}
For $K/\Qp$ unramified, Abrashkin proved a related short exact sequence for formal groups of arbitrary dimension using crystalline period rings \cite[Proposition, \S2.1]{Abrashkin1997Explicit}.
The argument above gives the one-dimensional sequence directly over a possibly ramified field $K$.
\end{remark}

\begin{remark}\label{rem:blochkato}
Corollary \ref{cor:ses_TH_ainf} defines a connecting map
\[\delta:H(\pi_K\AinfK)^{\Gal_K}\to H_{\mathrm{cont}}^1(\Gal_K,T(H)).\]
The fixed-ring identity $(\AinfK)^{\Gal_K}=\cO_K$, together with $\theta_K\vert_{\cO_K}=\id$, gives 
\[(\ker\theta_K)^{\Gal_K}=0.\]
Consequently, the connecting homomorphism is an injective $\cO_E$-linear homomorphism
\[\delta:H(\pi_K\cO_K)\to H_{\mathrm{cont}}^1(\Gal_K,T(H)).\]
\end{remark}

\subsection{Formal group ring}
We construct \textit{formal group rings}, which provide models for the subalgebras of period rings topologically generated by exponential periods.
We adapt \cite[Definition 2.4]{Calmes2013Invariants} by imposing the additional relations associated with the $\Lambda$-action.

Let $R$ be a commutative ring, let $H$ be a one-dimensional commutative formal $\Lambda$-module over $R$, and let $M$ be a free $\Lambda$-module of rank $n$.
Write $R[M]=R[x_m:m\in M]$ for the polynomial ring with one variable $x_m=[m]$ for each element of the underlying set of $M$.
Let $J=([m]:m\in M)$ be the augmentation ideal of $R[M]$, and set $R\llbracket M\rrbracket=\varprojlim_r R[M]/J^r$.
The following is a formal $\Lambda$-module analogue of \cite[Corollary 2.13]{Calmes2013Invariants}.
\begin{lemma}\label{lem:formal_group_ring}
Let $M$ be a finite free $\Lambda$-module of rank $n$.
Let $I_H\subset R\llbracket M\rrbracket$ be the closed ideal generated by
\[[m_1+m_2]-H([m_1],[m_2]),\quad  [\lambda m]-[\lambda]_H([m])\]
for $m_1,m_2,m\in M$ and $\lambda\in\Lambda$.
Every $\Lambda$-basis $e_1,\cdots,e_n$ of $M$ determines an isomorphism
\[R\llbracket M \rrbracket /I_H\cong R\llbracket t_1,\cdots,t_n \rrbracket\]
which sends $[e_i]\mapsto t_i$.
\end{lemma}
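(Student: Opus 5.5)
We construct mutually inverse continuous $R$-algebra homomorphisms between $R\llbracket M\rrbracket/I_H$ and $R\llbracket t_1,\cdots,t_n\rrbracket$.

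\emph{The map $\Phi$ to power series.} Every $m\in M$ is uniquely $m=\sum_i\lambda_ie_i$ with $\lambda_i\in\Lambda$. Define $\Phi$ on the polynomial ring $R[M]$ by
\[[m]\mapsto \sideset{}{_H}\sum_{i=1}^n[\lambda_i]_H(t_i)\in R\llbracket t_1,\cdots,t_n\rrbracket_0 .\]
The right-hand side has zero constant term, so $\Phi(J)\subset(t_1,\cdots,t_n)$ and hence $\Phi(J^r)\subset(t)^r$. Therefore $\Phi$ extends continuously to $R\llbracket M\rrbracket=\varprojlim_r R[M]/J^r$.

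The map $\Phi$ kills the generators of $I_H$. For $[m_1+m_2]-H([m_1],[m_2])$ this follows from associativity and commutativity of $H$ together with additivity $[\lambda+\mu]_H=H([\lambda]_H,[\mu]_H)$. For $[\lambda m]-[\lambda]_H([m])$ it follows from $[\lambda]_H\circ[\mu]_H=[\lambda\mu]_H$ and the fact that $[\lambda]_H$ is an endomorphism of $H$, so it distributes over the formal sum. Since $\Phi$ is continuous and the target is separated, $\Phi$ also kills the closure of the ideal generated by these elements. Hence $\Phi$ factors through $\overline{\Phi}:R\llbracket M\rrbracket/I_H\to R\llbracket t\rrbracket$.

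\emph{The inverse map $\Psi$.} Let $\Psi:R\llbracket t_1,\cdots,t_n\rrbracket\to R\llbracket M\rrbracket/I_H$ send $t_i\mapsto[e_i]$. This is well defined because $[e_i]$ lies in the image of $J$, and a power series in elements of $J$ converges in the $J$-adic completion. One must check that $R\llbracket M\rrbracket/I_H$ is complete and separated for the image of the $J$-adic filtration. This is the subtle point; see the last paragraph.

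Clearly $\overline{\Phi}\Psi=\id$, since it fixes each $t_i$. For $\Psi\overline{\Phi}=\id$, write $m=\sum\lambda_ie_i$. Applying the two families of relations repeatedly in $R\llbracket M\rrbracket/I_H$ gives
\[[m]\equiv \sideset{}{_H}\sum_i[\lambda_i]_H([e_i]) ,\]
which is exactly $\Psi\overline{\Phi}([m])$. So $\Psi\overline{\Phi}$ agrees with the identity on the images of all generators $[m]$. Both maps are continuous, and the images of the $[m]$ topologically generate the quotient, so $\Psi\overline{\Phi}=\id$.

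\emph{Main obstacle: topology.} The key issue is making the quotient and the continuity arguments precise. Since $M$ is infinite, $J$ is not finitely generated, so the $J$-adic completion need not be $J$-adically complete in the naive sense. I would avoid this difficulty by working throughout with the inverse system $R[M]/J^r$.

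At each finite level, the relations show that $R[M]/(J^r+I_H^{\mathrm{alg}})$ is generated over $R$ by monomials of degree $<r$ in the $[e_i]$. Here $I_H^{\mathrm{alg}}$ denotes the ideal generated by the truncations of the relations, which are polynomial modulo $J^r$. Using the formula $[m]\equiv\sum_H[\lambda_i]_H([e_i])$ modulo $J^r$, this quotient is identified with $R[t]/(t)^r$. Passing to the limit then identifies the closed quotient $R\llbracket M\rrbracket/I_H$, defined as the limit of these truncated quotients, with $R\llbracket t\rrbracket$. This levelwise argument sidesteps both completeness of the quotient and the convergence needed to define $\Psi$.
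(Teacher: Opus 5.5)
Your proposal is correct and follows the paper's proof. The paper also defines the continuous map $[m]\mapsto [c_1]_H(t_1)+_H\cdots+_H[c_n]_H(t_n)$, uses the formal $\Lambda$-module identities and continuity to factor it through $R\llbracket M\rrbracket/I_H$, and takes $t_i\mapsto[e_i]$ as the inverse.

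The topological obstacle in your last paragraph is not actually an obstacle, so the levelwise detour is unnecessary. $\Psi$ is just the limit of the maps $R[t]/(t)^r\to R[M]/J^r$, followed by the projection to $R\llbracket M\rrbracket/I_H$, so it needs no completeness of the quotient. The identity $\Psi\overline{\Phi}=\id$ needs only that $I_H$ is closed, which makes the quotient Hausdorff.
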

\begin{proof}
Every element $m\in M$ has a unique expression $m=c_1e_1+\cdots+c_ne_n$ with $c_i\in\Lambda$.
Since the series $[c_1]_H(t_1)+_H\cdots+_H[c_n]_H(t_n)$ has zero constant term, the assignment $[m]\mapsto [c_1]_H(t_1)+_H\cdots+_H [c_n]_H(t_n)$ defines a continuous $R$-algebra homomorphism
\[R\llbracket M \rrbracket \to R\llbracket t_1,\cdots,t_n \rrbracket .\]
The formal $\Lambda$-module identities and continuity show that this homomorphism factors through $R\llbracket M\rrbracket/I_H$.
The continuous homomorphism in the opposite direction
\[R\llbracket t_1,\cdots,t_n \rrbracket \to R\llbracket M \rrbracket /I_H,\quad t_i\mapsto [e_i]\]
is its inverse.
\end{proof}
\begin{definition}
Let $H$ be a formal $\Lambda$-module over $R$, and let $M$ be a finite free $\Lambda$-module.
We define the \textit{formal group ring} by
\[R\llbracket M \rrbracket _{H,\Lambda}\coloneqq R\llbracket M \rrbracket /I_H\]
endowed with the quotient topology.
\end{definition}
\begin{remark}\label{rem:formalgroupring_functor}
A $\Lambda$-linear map $f:M\to N$ induces the continuous $R$-algebra homomorphism $R\llbracket M\rrbracket_{H,\Lambda}\to R\llbracket N\rrbracket_{H,\Lambda}$ sending $[m]\mapsto [f(m)]$.
This construction defines a functor to complete topological $R$-algebras.
By functoriality, every $\Lambda$-linear endomorphism $f$ of $M$ induces a continuous $R$-algebra endomorphism of $R \llbracket M \rrbracket_{H,\Lambda}$ by sending $[m]\mapsto[f(m)]$.
\end{remark}
Formal group rings satisfy the universal property that for every adic $R$-algebra $A$ and a finite free $\Lambda$-module $M$ there is a canonical isomorphism
\[\Hom_{R,\cts}(R\llbracket M\rrbracket_{H,\Lambda},A)\simeq\Hom_{\Lambda}(M,H(A)),\]
converting a $\Lambda$-linear map into a continuous homomorphism of $R$-algebras.
In fact, any $\Lambda$-linear map
\[\iota:M\to H(A)\]
induces a continuous $R$-algebra homomorphism
\[\widetilde{\iota}:R\llbracket M \rrbracket_{H,\Lambda} \to A,\quad [m]\mapsto \iota(m).\]

Let $H$ be a one-dimensional formal group of finite height over $\cO_{K}$, and set $K'=KE_H$.
The absolute endomorphisms give $T(H)$ a natural $\Lambda_H$-module structure.
For $\gamma\in\Gal_K$ and $\lambda\in\Lambda_H$, $([\lambda]_H)^\gamma$ is an endomorphism of $H$, whose derivative is $\sigma(\lambda)$ with $\sigma=\gamma\vert_{K'}\in\Gal(K'/K)$, so it coincides with $[\sigma(\lambda)]_H$.
Hence, the action of $\gamma$ on $T(H)$ is $\sigma$-semilinear.
The following lemma extends such semilinear actions to formal group rings.
In particular, it gives a natural $\Gal_K$-action on $\cO_{K'}\llbracket T(H)\rrbracket_{H,\Lambda_H}$ whenever $T(H)$ is finite free over $\Lambda_H$.
\begin{lemma}\label{lem:semilinear_action}
Let $M$ be a finite free $\Lambda_H$-module, and let $\iota:M\to M$ be a $\sigma$-semilinear endomorphism for some $\sigma\in\Gal(K'/K)$.
Then $\iota$ induces a continuous $\cO_K$-algebra endomorphism $\widetilde{\iota}$ of $\cO_{K'}\llbracket M \rrbracket _{H,\Lambda_H}$ whose restriction to $\cO_{K'}$ is $\sigma$.
\end{lemma}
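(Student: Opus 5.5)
The plan is to define $\widetilde{\iota}$ in two stages: first on the polynomial ring $\cO_{K'}[M]$, then pass to the completion and to the quotient by $I_H$. On $\cO_{K'}[M]=\cO_{K'}[x_m:m\in M]$ set
\[\widetilde{\iota}\Bigl(\sum a\, x_{m_1}\cdots x_{m_r}\Bigr)=\sum \sigma(a)\, x_{\iota(m_1)}\cdots x_{\iota(m_r)}.\]
This is a ring endomorphism, restricts to $\sigma$ on $\cO_{K'}$, and is $\cO_K$-linear because $\sigma$ fixes $K$.

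It sends the augmentation ideal $J$ into $J$, since $x_m\mapsto x_{\iota(m)}$, and hence $J^r$ into $J^r$. It therefore extends continuously to an endomorphism of $\cO_{K'}\llbracket M\rrbracket$.

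The remaining task is to check that $\widetilde{\iota}(I_H)\subset I_H$; since $I_H$ is a closed ideal and $\widetilde{\iota}$ is continuous, it suffices to check this on generators.

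For the additivity generators we have
\[\widetilde{\iota}\bigl([m_1+m_2]-H([m_1],[m_2])\bigr)=[\iota(m_1)+\iota(m_2)]-H^{\sigma}([\iota m_1],[\iota m_2]),\]
using additivity of $\iota$. Here $H^\sigma$ means $\sigma$ applied to the coefficients of $H$. Since $H$ is defined over $\cO_K$ and $\sigma$ fixes $\cO_K$, we have $H^\sigma=H$, so the image is again a generator.

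For the $\Lambda_H$-generators,
\[\widetilde{\iota}\bigl([\lambda m]-[\lambda]_H([m])\bigr)=[\sigma(\lambda)\iota(m)]-([\lambda]_H)^{\sigma}([\iota m]),\]
using $\sigma$-semilinearity of $\iota$. This is the step I expect to be the main point. We need $([\lambda]_H)^\sigma=[\sigma(\lambda)]_H$ as power series over $\cO_{K'}$. As in the discussion preceding the lemma, $([\lambda]_H)^\sigma$ is an endomorphism of $H^\sigma=H$, it is defined over a finite extension, and its derivative at $0$ is $\sigma(\lambda)$. Injectivity of $f\mapsto f'(0)$ \cite[Lemma 2.1.1]{Lubin1964Oneparameter} then identifies it with $[\sigma(\lambda)]_H$.

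Here I take $[\lambda]_H$ to have coefficients in $\cO_{K'}$, so that $\sigma$ acts on its coefficients. This holds because $KE_H/K$ is unramified and, by the argument behind Proposition \ref{prop:absolute_end_finite}, the endomorphisms in $\Lambda_H$ are defined over $\cO_{K'}$; this is implicit in the formal group ring $\cO_{K'}\llbracket M\rrbracket_{H,\Lambda_H}$ being defined at all. To apply $\sigma$ to these coefficients, extend $\sigma$ to some element of $\Gal_K$; the result only depends on $\sigma$ because the coefficients lie in $K'$.

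With both kinds of generators mapped into $I_H$, $\widetilde{\iota}$ preserves $I_H$. It therefore descends to a continuous $\cO_K$-algebra endomorphism of $\cO_{K'}\llbracket M\rrbracket_{H,\Lambda_H}$ restricting to $\sigma$ on $\cO_{K'}$. Continuity holds for the quotient topology, and one can also read it off in the coordinates $t_1,\dots,t_n$ given by Lemma \ref{lem:formal_group_ring}, where $\widetilde{\iota}$ sends each $t_i$ into the ideal of series with zero constant term.
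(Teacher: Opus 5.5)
Your proposal is correct and is the same argument as the paper's. You define $\widetilde{\iota}$ by $[m]\mapsto[\iota(m)]$ and by $\sigma$ on coefficients, check that both families of generators of $I_H$ land in $I_H$ using $H^\sigma=H$ and $([\lambda]_H)^\sigma=[\sigma(\lambda)]_H$ (the derivative-injectivity observation made just before the lemma), and conclude by continuity. Your extra steps only make explicit what the paper leaves implicit: working first on $\cO_{K'}[M]$ with $\widetilde{\iota}(J)\subset J$, and noting that each $[\lambda]_H$ has coefficients in $\cO_{K'}$ (true because $([\lambda]_H)^g$ and $[\lambda]_H$ have the same derivative for $g\in\Gal_{K'}$).
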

\begin{proof}
On the completed polynomial ring $\cO_{K'}\llbracket M \rrbracket$, define
\[\widetilde{\iota}([m])=[\iota(m)],\quad\widetilde{\iota}(a)=\sigma(a)\]
for $m\in M$ and $a\in \cO_{K'}$.
It remains to check that $\widetilde{\iota}(I_H)\subset I_H$.
For $\lambda\in\Lambda_H$ and $m\in M$, semilinearity gives 
\begin{align*}
    \widetilde{\iota}([\lambda m]-[\lambda]_H([m]))
    &= [\sigma(\lambda)\iota(m)]-[\sigma(\lambda)]_H([\iota(m)])\in I_H.
\end{align*}
Since $\sigma$ fixes the coefficients of $H$, we have
\begin{align*}
    \widetilde{\iota}(H([m_1],[m_2])-[m_1+m_2])
    =H([\iota(m_1)],[\iota(m_2)])-[\iota(m_1)+\iota(m_2)]\in I_H
\end{align*}
for $m_1,m_2\in M$.
Continuity now gives $\widetilde{\iota}(I_H)\subset I_H$, so $\widetilde{\iota}$ descends to the formal group ring.
\end{proof}

\subsection{The abstract period algebra}\label{subsec:model}
Assume that $K/E$ is finite unramified, and let $f$ denote the degree $[K:E]$.
Let $H$ be a one-dimensional formal $\cO_E$-module of finite height over $\cO_K$.
Put $K'=KE_H$, and let $\sigma=\sigma_E\in \Gal(K'/E)$ be the lift of the $q_E$-power Frobenius on the residue field.
Proposition \ref{prop:end_int_closed} implies $\Lambda_H=\cO_{E_H}$, so $T(H)$ is finite free over $\Lambda_H$.
Let $P(X)\in\cO_E[X]$ be the minimal polynomial of $\xi_{q_K}$ over $E$, and set $d=\deg P$.

For $k\ge0$, write $H_k=H^{\sigma^k}$ for the twisted formal group law.
Define the $\Lambda_H$-module structure on $H_k$ by $[\lambda]_{H_k}=([\lambda]_H)^{\sigma^k}$.
Since $K/E$ is unramified, there is an identification $\AinfK\cong W_{\cO_E}(\OCb)$; let $\varphi_E$ be the Witt Frobenius lifting $x\mapsto x^{q_E}$.
It satisfies $\varphi_E^f=\varphi_K$ with $f=[K:E]$.
Then the $k$-th Frobenius iterates of exponential periods define a map
\[j_k:T(H)\to H_k(\AinfK),\quad \alpha\mapsto\varphi_E^k(\tau_H(\alpha)).\]
With the twisted $\Lambda_H$-action on $H_k$, the map $j_k$ is $\Lambda_H$-linear and $\Gal_K$-equivariant.
We define
\[\cA_k\coloneqq \cO_{K'}\llbracket T(H)\rrbracket_{H_k,\Lambda_H}\]
where we use the twisted formal $\Lambda_H$-module law on $H_k$ given by $([\lambda]_H)^{\sigma^k}$.
The formal group ring $\cA_k$ is endowed with the topology defined by the ideal generated by $\pi_K$ and the augmentation ideal.
By the universal property of formal group rings, $j_k$ induces a continuous $\cO_{K'}$-algebra homomorphism
\[\xi_k:\cO_{K'}\llbracket T(H)\rrbracket_{H_k,\Lambda_H}\to\AinfK,\quad [\alpha]\mapsto \varphi_E^k(\tau_H(\alpha))\]
which is $\Gal_K$-equivariant.

We now define an abstract period ring of $H$ by
\[\cA_H\coloneqq\widehat{\bigotimes}_{0\le k\le df-1}\cA_k\]
with the completed tensor product taken over $\cO_{K'}$.
The continuous homomorphisms $\xi_k$ together induce a continuous homomorphism
\[\xi_H=\widehat{\bigotimes}_{0\le k\le df-1}\xi_k:\cA_H\to\AinfK.\]

We now proceed to define an endomorphism $\varphi_E$ on $\cA_H$.
For each $k\ge0$, acting via $\sigma=\sigma_E$ on coefficients and identifying their generators gives an isomorphism
\[\varphi_E:\cA_k\to \cA_{k+1}.\]
It fits into the following commutative diagram
\begin{center}
\begin{tikzcd}
    \cA_k\arrow[r,"\xi_k"]\arrow[d,"\varphi_E"]&\AinfK\arrow[d,"\varphi_E"]\\
    \cA_{k+1}\arrow[r,"\xi_{k+1}"]&\AinfK
\end{tikzcd}
\end{center}

\begin{lemma}\label{lem:form_of_pt}
Let $K/E$ be finite, and let $H$ be a one-dimensional formal $\cO_E$-module of finite height over $\cO_K$.
Put $K'=KE_H$ and $r=[K':K]$.
The minimal polynomial $P(T)$ of $\xi_{q_K}$ over $E$ is of the form $Q(T^r)$ with $Q(T)\in\cO_E[T]$.
\end{lemma}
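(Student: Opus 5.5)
The idea is to show that the subfield $F\coloneqq E(\xi_{q_K}^r)$ of $E(\xi_{q_K})\subset D_{1/h_0}$ satisfies $[E(\xi_{q_K}):F]=r$. Then $T^r-\xi_{q_K}^r\in F[T]$ is the minimal polynomial of $\xi_{q_K}$ over $F$. Taking the norm from $F$ down to $E$ gives $P(T)=\prod_{\iota}(T^r-\iota(\xi_{q_K}^r))=Q(T^r)$, where $\iota$ runs over the $E$-embeddings of $F$ into an algebraic closure. The coefficients of $Q$ are, up to sign, the coefficients of $P$, so $Q\in\cO_E[T]$ follows from $P\in\cO_E[T]$.

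\textbf{Step 1: twisted commutation.} Write $\xi=\xi_{q_K}$ and let $G$ be the special fiber, defined over $\kappa_K$. By reduction (as in the proof of Proposition \ref{prop:absolute_end_finite}), $\Lambda_H$ embeds in $\End_{\kappa_K^\sep}(G)$. For $\lambda\in\Lambda_H$ with reduction $\overline{[\lambda]}$, applying the $q_K$-power Frobenius to coefficients gives
\[\xi\circ\overline{[\lambda]}=\overline{[\lambda]}^{(q_K)}\circ\xi .\]
The endomorphism $\overline{[\lambda]}^{(q_K)}$ is the reduction of $([\lambda]_H)^{\gamma}$ for a Frobenius lift $\gamma\in\Gal_K$. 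By the derivative computation preceding Lemma \ref{lem:semilinear_action}, this equals $\overline{[\sigma(\lambda)]}$, where $\sigma$ generates $\Gal(K'/K)$ and acts on $E_H$ through $\Gal(K'/K)\hookrightarrow\Aut(E_H)$. Hence $\xi\lambda=\sigma(\lambda)\xi$ in $D_{1/h_0}$. Since $\sigma$ has exact order $r$ on $E_H$, the element $\xi^j$ commutes with all of $E_H$ if and only if $r\mid j$. Note also $\cO_E\subset\Lambda_H$, because $H$ is a formal $\cO_E$-module. Consequently $F=E(\xi^r)$ commutes elementwise with $E_H$.

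\textbf{Step 2: the degree count, which I expect to be the main point.} The field $E(\xi)$ is spanned over $F$ by $1,\xi,\dots,\xi^{r-1}$, since $\xi$ is a root of $T^r-\xi^r\in F[T]$. For $\lambda\in E_H^\times$, consider the $F$-linear operator $x\mapsto\lambda x\lambda^{-1}$ on $D_{1/h_0}$. The line $F\xi^j$ lies in the subspace $\{x:\lambda x=x\,\sigma^{-j}(\lambda)\text{ for all }\lambda\in E_H\}$. For distinct $j$ modulo $r$, these correspond to distinct characters $\sigma^{-j}$ of $E_H$. By Dedekind-type linear independence (equivalently, by decomposing $D_{1/h_0}$ as an $E_H\otimes E_H$-bimodule, where the isotypic pieces for distinct automorphisms are independent), the sum $\sum_{j<r}F\xi^j$ is direct. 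Each summand is nonzero because $\xi$ is a unit in $D_{1/h_0}$. Hence $[E(\xi):F]=r$, and $T^r-\xi^r$ is irreducible over $F$.

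\textbf{Step 3: conclusion.} The minimal polynomial of $\xi$ over $E$ is $N_{F[T]/E[T]}(T^r-\xi^r)$, which has the form $Q(T^r)$ with $Q\in E[T]$. As already noted, $Q$ has coefficients in $\cO_E$ because its coefficients are coefficients of $P\in\cO_E[T]$. The delicate point is Step 1: identifying the Frobenius twist of $[\lambda]$ with $[\sigma(\lambda)]$, which requires compatible normalization of $\sigma$ on the residue field. If needed, I would replace $\sigma$ by $\sigma^{-1}$; this does not affect the argument, since only the order of $\sigma$ matters.
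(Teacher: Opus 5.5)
Your proof is correct and uses the same two ingredients as the paper's. These are the twisted commutation $\xi a=\sigma(a)\xi$ for $a\in E_H$ with $\sigma$ of exact order $r$, and the linear independence of the subspaces of $D_{1/h_0}$ on which $E_H$ acts through distinct automorphisms. The difference is only in packaging: the paper splits $P(\xi)=0$ into the pieces $P_i(\xi)$ collecting exponents $\equiv i \pmod r$, shows each piece vanishes, and concludes by minimality of $P$, whereas you use the same independence to get $[E(\xi):E(\xi^r)]=r$ and identify $P$ with $Q(T^r)$ for $Q$ the minimal polynomial of $\xi^r$, which also makes $r\mid d$ immediate.
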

In the proof below, we do not require that $K/E$ is unramified.
In fact, $K'=KE_H$ is an unramified extension of $K$ and the arithmetic Frobenius $\sigma_K$ acts on $K'$.
\begin{proof}
Let $\xi=\xi_{q_K}$ and $\sigma=\sigma_K\vert_{E_H}$.
Then $\sigma$ has order $r=[KE_H:K]$.
Inside $D=\End_{\Fpbar}(G)\otimes\Qp$, we have $\xi a=\sigma_K(a)\xi$ for every $a\in E_H$.
Write
\[P_i(T)=\sum_{j\equiv i\pmod{r}}c_jT^j,\quad p_i=P_i(\xi).\]
By choosing a primitive generator $a$ of $E_H/E$, the elements $a,\sigma(a),\cdots,\sigma^{r-1}(a)$ are distinct.
Right multiplication by $a$ on $D$ is $E_H$-linear, and satisfies $p_ia=\sigma^i(a)p_i$.
The eigenspaces for these distinct eigenvalues form a direct sum, so the equality $\sum_i p_i=P(\xi)=0$ implies that $p_i=0$ for every $i$.
By minimality of $P$, we conclude $P=P_0$, showing the assertion.
\end{proof}
For $0\le i\le df-1$, we denote the image of $[\alpha]\in\cA_i$ inside $\cA_H$ by $[\alpha^{(i)}]$.

\begin{lemma}
There is a well-defined $\cO_{K'}$-algebra homomorphism
\[\nu:\cA_{df}\to\cA_H,\quad[\alpha]\mapsto\sideset{}{_H}\sum_{\ell=0}^{d-1}[-c_\ell]_H([\alpha^{(f\ell)}]).\]
\end{lemma}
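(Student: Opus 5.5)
The plan is to obtain $\nu$ from the universal property of the formal group ring $\cA_{df}=\cO_{K'}\llbracket T(H)\rrbracket_{H_{df},\Lambda_H}$. It suffices to show that
\[\iota:T(H)\to H_{df}(\cA_H),\quad \alpha\mapsto\sideset{}{_H}\sum_{\ell=0}^{d-1}[-c_\ell]_H([\alpha^{(f\ell)}])\]
is a well-defined $\Lambda_H$-linear map into the topologically nilpotent elements of $\cA_H$. Here $\cA_H$ is regarded as an adic $\cO_{K'}$-algebra with the topology of $\pi_K$ and the augmentation ideals. The universal property then yields a continuous $\cO_{K'}$-algebra homomorphism $\widetilde{\iota}=\nu$ with $[\alpha]\mapsto\iota(\alpha)$. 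Topological nilpotence is immediate: each $[\alpha^{(i)}]$ lies in an augmentation ideal, and $H$ and the $[c]_H$ have no constant term.

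The key step is matching the twisted structures. We use Lemma \ref{lem:form_of_pt}, which says $P(T)=Q(T^r)$ with $r=[K':K]$. Consequently $c_\ell=0$ unless $r\mid\ell$, and $r\mid d$ since $P$ is monic of degree $d$. Next observe that $\sigma_E^{f}=\sigma_K$. Since $H$ has coefficients in $\cO_K$, the group law of $H_{fm}=H^{\sigma_K^m}$ equals that of $H$ for every $m$. Its $\Lambda_H$-module structure is $[\lambda]_{H_{fm}}=([\lambda]_H)^{\sigma_K^m}=[\sigma_K^m(\lambda)]_H$, using that $\sigma_K^m$ acts on $E_H$ by Galois conjugation as in Lemma \ref{lem:semilinear_action}. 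When $r\mid m$, $\sigma_K^m$ is trivial on $K'\supset E_H$, so $H_{fm}=H$ as formal $\Lambda_H$-modules. This applies to $m=d$, giving $H_{df}=H$, and to every $\ell$ with $c_\ell\ne0$.

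With this in hand, linearity is checked as follows. In $\cA_{f\ell}$ the defining relations of $I_{H_{f\ell}}$ give
\[[(\alpha+\beta)^{(f\ell)}]=H([\alpha^{(f\ell)}],[\beta^{(f\ell)}]),\qquad [(\lambda\alpha)^{(f\ell)}]=[\lambda]_H([\alpha^{(f\ell)}])\]
for the relevant $\ell$. So $\alpha\mapsto[\alpha^{(f\ell)}]$ is $\Lambda_H$-linear into $H(\cA_H)$. Each $[-c_\ell]_H$ is an $\cO_K$-endomorphism of $H$ with $c_\ell\in\cO_E$. Since $\cO_E$ lies in the commutative ring $\Lambda_H$, each $[-c_\ell]_H$ commutes with all $[\lambda]_H$ and is therefore $\Lambda_H$-linear. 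The formal sum $\sum_H$ of $\Lambda_H$-linear maps is again $\Lambda_H$-linear, so $\iota$ is $\Lambda_H$-linear into $H(\cA_H)=H_{df}(\cA_H)$.

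I expect the only real obstacle to be the twist bookkeeping in the second paragraph. Without Lemma \ref{lem:form_of_pt}, the terms with $r\nmid\ell$ would only be $\sigma_K^\ell$-semilinear, and $H_{df}$ might carry a nontrivial twist. Once the vanishing of those coefficients is established, the rest is formal.
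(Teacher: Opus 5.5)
Your proposal is correct and follows the paper's proof. You apply Lemma \ref{lem:form_of_pt} to see that only indices $f\ell$ with $r\mid\ell$ occur and that $r\mid d$, so every relevant twisted formal $\Lambda_H$-module law, on the source $\cA_{df}$ and on the targets $\cA_{f\ell}$, is the untwisted one; the assignment is then $\Lambda_H$-linear, and the universal property of formal group rings produces $\nu$. You simply write out the linearity and topological-nilpotence checks that the paper leaves implicit.
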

\begin{proof}
Recall that the $\Lambda_H$-module structure on $\cA_k$ is defined using the $\sigma^k$-twisted formal law.
Put $r=[K':K]$.
By Lemma \ref{lem:form_of_pt}, the condition $c_\ell\neq0$ implies $r\mid \ell$, so $f\ell$ is divisible by $fr=[K':E]$, meaning that the group law on any relevant target is untwisted.
Since $r\mid d$, the $\sigma^{fd}$-twisted law on the source is also untwisted.
The assignment in the statement is therefore $\Lambda_H$-linear for the formal group laws on source and target, and the universal property gives $\nu$.
\end{proof}
By Proposition \ref{prop:tau_image}, the exponential period $\tau=\tau_H(\alpha)$ satisfies the equation
\[\varphi_E^{fd}(\tau)=\sideset{}{_H}\sum_{\ell=0}^{d-1}[-c_\ell]_H(\varphi_E^{f\ell}(\tau)).\]
Therefore, the following diagram commutes.
\begin{center}
\begin{tikzcd}
    \cA_{df}\arrow[rd,"\xi_{df}"']\arrow[r,"\nu"]&\cA_H\arrow[d,"\xi_H"]\\
    &\AinfK
\end{tikzcd}
\end{center}
The continuous homomorphisms $\varphi_E:\cA_k\to\cA_{k+1}$, followed by either the canonical inclusion $\cA_{k+1}\to\cA_H$ when $0\le k\le df-2$ or the homomorphism $\nu:\cA_{df}\to\cA_H$ when $k=df-1$, together induce a continuous endomorphism $\varphi_E:\cA_H\to\cA_H$.
By construction, it fits into the following commutative diagram
\begin{center}
    \begin{tikzcd}
    \cA_H\arrow[r,"\varphi_E"]\arrow[d,"\xi_H"]&\cA_H\arrow[d,"\xi_H"]\\
    \AinfK\arrow[r,"\varphi_E"]&\AinfK
    \end{tikzcd}
\end{center}
Moreover, the actions of $\varphi_E$ and $\Gal_K$ on $\cA_H$ commute with each other.
On the generators of $\cA_k$ with $k\le df-2$, both composites send $[\alpha]\mapsto [\gamma\alpha]$ in $\cA_{k+1}$.
At the last factor, commutation follows from the Galois equivariance of $\nu$.

We can work in explicit coordinates of $\cA_H$ by choosing a basis of $T(H)$.
Fix a $\Lambda_H$-basis $\alpha_1,\cdots,\alpha_n$ of $T(H)$.
Then there is an isomorphism
\[\cA_H\cong \cO_{K'}\llbracket t_{k,i}:0\le k\le df-1,1\le i\le n\rrbracket,\quad [\alpha_i^{(k)}]\mapsto t_{k,i}.\]
Every element $\gamma$ of $\Gal_{K_\infty}$ acts trivially on $T(H)$, and semilinearity forces it to fix $\Lambda_H$, and hence $K'=KE_H$.
It follows that $\gamma$ fixes the coefficients and generators of $\cA_H$, so the Galois action on $\cA_H$ factors through $\Gamma=\Gal(K_\infty/K)$.
In the coordinates $t_{k,i}$ fixed above, $\varphi_E$ restricts to $\sigma$ on $\cO_{K'}$ and acts on each variable by
\[\varphi_E(t_{k,i})=t_{k+1,i}\,\,(0\le k\le df-2),\quad \varphi_E(t_{df-1,i})=\sideset{}{_H}\sum_{\ell=0}^{d-1}[-c_\ell]_H(t_{f\ell,i}).\]
Fix $\gamma\in\Gamma$ and $1\le i\le n$, write $\gamma\alpha_i=c_1\alpha_1+\cdots+c_n\alpha_n$, and set 
\[F_{\gamma,i}(X_1,\cdots,X_n)=[c_1]_H(X_1)+_H\cdots+_H[c_n]_H(X_n).\]
With this notation, $\gamma$ acts on the variable $t_{k,i}$ by
\[\gamma(t_{k,i})=(F_{\gamma,i})^{\sigma^k}(t_{k,1},\cdots,t_{k,n}).\]
The continuous $\cO_{K'}$-algebra homomorphism $\xi_H$ identifies with
\[\cA_{H}\to\AinfK,\quad t_{k,i}\mapsto \varphi_E^k(\tau_H(\alpha_i)).\]

Let $\overline{\xi}_H:\cA_H\to\OCb$ be the reduction of $\xi_H$ modulo $\pi_K$, and put $o_H=\mathrm{im}(\overline{\xi}_H)$ and $\frp_H=\ker(\overline{\xi}_H)$.
Since $\OCb$ is a domain, $\frp_H$ is a prime ideal.
Set
\[R_H=((\cA_{H})_{\frp_H})^\wedge=\varprojlim_r (\cA_{H})_{\frp_H}/\frp_H^r(\cA_{H})_{\frp_H}.\]
Let $k_H=\Frac(o_H)\subset\Cb$.
The ring $R_H$ is a complete local ring with residue field $k_H$.
Since $\xi_H$ sends $\frp_H$ into $(\pi_K)$ and every element outside $\frp_H$ to a unit of $W_{\cO_K}(\Cb)$, it extends uniquely to a local homomorphism $R_H\to W_{\cO_K}(\Cb)$.
Moreover, $\varphi_E^{-1}(\frp_H)=\frp_H$ and $\Gal_K$ preserves $\frp_H$, so these actions extend to $R_H$.
The resulting commutative diagram is equivariant for $\varphi_E$ and $\Gal_K$.
\begin{center}
    \begin{tikzcd}
        R_H\arrow[r]\arrow[d]&W_{\cO_K}(\Cb)\arrow[d]\\
        k_H\arrow[r]&\Cb
    \end{tikzcd}
\end{center}
The residue field $k_H$ of $R_H$ is defined directly from the reduced exponential periods.
In the next section, we relate the absolute Galois groups of $k_H$ and $K_\infty$.

\newpage
\section{Formal groups and perfectoid fields}\label{sec:perfectoid}
In this section, we first prove that the completed torsion field $\widehat{K}_\infty$ is perfectoid and that its tilt is the completed perfection of the reduced period field $k_H$.
We then identify $\Gal_{K_\infty}$ with the valuation decomposition subgroup of $\Gal_{k_H}$; passing to the corresponding henselization gives the required isomorphism between absolute Galois groups.

\subsection{Perfectoidness of the torsion tower}
Let $K/E$ be a finite unramified extension of $p$-adic local fields.
Write $\kappa_K=\Fq$, and choose a uniformizer $\pi$ of $E$, which is also a uniformizer of $K$.

\begin{definition}[{\cite[Definition 3.1]{Scholze2012Perfectoid}}]
A \textit{perfectoid field} is a complete nonarchimedean field $F$ of residue characteristic $p$, equipped with a nondiscrete rank one valuation, such that the Frobenius is surjective on $\cO_F/(p)$.
\end{definition}

\begin{proposition}\label{prop:perfectoid_formal_group}
Let $K/E$ be finite unramified, and let $H$ be a one-dimensional formal $\cO_E$-module of finite height over $\cO_K$.
Set $K_\infty=K(H[p^\infty](\overline{K}))$, and let $\widehat{K}_\infty$ be its $p$-adic completion.
Then $\widehat{K}_\infty$ is a perfectoid field.
\end{proposition}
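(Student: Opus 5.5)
The plan is to verify the definition of perfectoid field directly for $\widehat{K}_\infty$. First, the valuation on $\widehat{K}_\infty$ is nondiscrete. Second, Frobenius is surjective on $\cO_{\widehat{K}_\infty}/(p)$. Since $\cO_{\widehat K_\infty}/(p)=\cO_{K_\infty}/(p)$, it suffices to show that every element of $\cO_{K_\infty}$ is a $p$-th power modulo $p$. Equivalently, it suffices to do this modulo $\pi^{a}$ for some fixed $a$ with $0<v(\pi^a)\le v(p)$ and then bootstrap using the standard fact that surjectivity of Frobenius modulo some $\varpi$ with $0<v(\varpi)\le v(p)$ suffices. We will in fact work modulo $\pi$.

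Nondiscreteness comes from the torsion points. Fix a nonzero $\alpha=(\alpha_m)_m\in T(H)$, taken with respect to $[\pi]_H$ after identifying the $p$-adic and $\pi$-adic Tate modules. The Newton polygon of $[\pi]_H(X)-\alpha_m$ shows that $v(\alpha_{m+1})=v(\alpha_m)/q^{h}$, where $h$ is the relative height, for $m\gg0$. Indeed, modulo $\pi$ the series $[\pi]_H$ is $u(X^{q^h})$ with $u'(0)\neq 0$, and the linear coefficient is $\pi$. Hence $v(\alpha_m)\to0$ through nonzero values, so the value group is not discrete.

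For Frobenius surjectivity I would show that $\cO_{K_\infty}/(\pi)$ is generated as a ring over $\cO_K/(\pi)=\Fq$ by the images of the $\alpha_m$ for $\alpha$ running over a $\cO_E$-basis of $T(H)$ and $m\ge0$, or at least by elements of the form $\beta_{m}$ with $\beta_{m}\equiv \beta_{m+1}^{p^{s}}\cdot(\text{unit})$. The key congruence is that modulo $\pi$ we have $\alpha_m=[\pi]_H(\alpha_{m+1})\equiv u(\alpha_{m+1}^{q^h})$. Since $u(Y)=Y\cdot(\text{unit series})$ is a power series over $\Fq$ in $Y=\alpha_{m+1}^{q^h}$, and $\Fq$ is perfect, this expresses $\alpha_m$ mod $\pi$ as a $p$-th power in $\cO_{K_\infty}/(\pi)$. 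The convergence is fine because $\alpha_{m+1}$ is topologically nilpotent. The same argument applies to $\Fq$-polynomial expressions in the $\alpha_m$, so the subring of $\cO_{K_\infty}/(\pi)$ generated by all torsion coordinates lies in the image of Frobenius.

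The main obstacle is that $\cO_{K_\infty}$ need not be generated over $\cO_K$ by the torsion points. The ring $\cO_K[H[\pi^m]]$ may be a proper order in $\cO_{K_m}$ with $K_m=K(H[\pi^m])$. To handle this, I would use an almost-mathematics argument: show that $\cO_K[H[\pi^m]]\subset \cO_{K_m}$ has cokernel killed by elements of valuation tending to $0$, or more precisely that the different of $K_{m+1}/K_m$ tends to zero. The cleanest route I would try first is the Sen/Coates–Greenberg criterion: $K_\infty/K$ is a $p$-adic Lie extension with open inertia image in $\GL_n(E_H)$ by Theorem \ref{thm:openimage}. It is therefore infinitely wildly ramified, and by Sen's theorem on ramification in $p$-adic Lie extensions, the completion is deeply ramified. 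Deeply ramified algebraic extensions have perfectoid completions, by the Coates–Greenberg/Gabber–Ramero characterization. If I prefer to stay self-contained, I would instead show that every $x\in\cO_{K_\infty}$ is congruent modulo $\pi^{1-\epsilon}$ to an element of $\cO_K[\alpha_m]$'s. I would then apply the bootstrap, which works with any $\varpi$ of positive valuation, together with the $p$-th power relation above.
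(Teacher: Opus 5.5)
Your route (a) is a valid proof, but it is different from the paper's. The paper also avoids the obstacle you identify, by a step you were one observation away from.

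\textbf{The paper's argument.} It first treats a single division tower. For nonzero $\alpha$, take $\pi$-division points $\beta_i$ with $\beta_0=0$, $\beta_1\neq0$ and $[\pi]_H(\beta_{i+1})=\beta_i$. Weierstrass preparation writes $[\pi]_H(X)/X$ as a unit times an Eisenstein polynomial over $\cO_K$, and $[\pi]_H(X)-\beta_i$ as a unit times an Eisenstein polynomial over $\cO_{K(\beta_i)}$. So each step of the tower is totally ramified, with uniformizer $\beta_{i+1}$. Hence $\cO_{K_\alpha}=\bigcup_i\cO_K[\beta_i]$ \emph{exactly}: for one tower the ring of integers is generated by the torsion coordinates. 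Your $p$-th power congruence modulo $\pi$, plus the same bootstrap to mod $p$, then shows $\widehat{K_\alpha}$ is perfectoid.

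The passage to $K_\infty$ then needs no control of $\cO_{K_\infty}$ at all. $K_\infty$ is a union of finite extensions $L$ of $K_\alpha$. Each $\widehat{K_\alpha}L$ is perfectoid by Scholze's theorem that finite extensions of perfectoid fields are perfectoid, and the union is dense in $\widehat{K}_\infty$.

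\textbf{Your route (a).} You instead invoke Sen's ramification theorem through Coates--Greenberg (a $p$-adic Lie extension with infinite inertia is deeply ramified), together with the Gabber--Ramero equivalence between deep ramification and perfectoid completion. This is valid. It uses nothing about $H$ beyond infinite inertia, which your nondiscreteness computation already gives. So the open image theorem and your torsion-coordinate computation become unnecessary. The cost is much heavier input than the paper's explicit Eisenstein computation plus the field case of almost purity.

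\textbf{Your route (b) is not a proof as written.} You claim every $x\in\cO_{K_\infty}$ is congruent modulo $\pi^{1-\epsilon}$ to an element of the $\cO_K$-algebra generated by torsion coordinates. That is exactly the almost-purity content for the compositum of the towers $K_\alpha$, and you give no argument for it.

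As literally stated, the claim is also false whenever $E_H\not\subset K$. In that case $KE_H\subset K_\infty$ is a nontrivial unramified extension of $K$, so the residue field of $K_\infty$ strictly contains $\kappa_K$. But the subring generated over $\cO_K$ by the topologically nilpotent torsion coordinates has residue field $\kappa_K$. So an element whose residue lies outside $\kappa_K$ cannot be approximated modulo any element of the maximal ideal.

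To repair this, you would at least have to enlarge the coefficients to the ring of integers of the maximal unramified subextension. That is harmless for Frobenius, since its residue field is perfect. You would then still have to prove the approximation itself, which is the step the paper sidesteps by reducing to a single totally ramified tower.
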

\begin{lemma}\label{lem:perfectoid_varpi}
Let $K/E$ be finite unramified, and let $H$ be a one-dimensional formal $\cO_E$-module of finite height over $\cO_K$.
Let $\alpha=(\alpha_i)_{i\ge0}$ be a nonzero element of $T(H)$.
Set $K_{\alpha}=\bigcup_{n=1}^\infty K(\alpha_n)$ and let $\widehat{K_\alpha}$ denote its completion.
The extension $K_\alpha/K$ is infinite and totally ramified, and its completion $\widehat{K_\alpha}$ is a perfectoid field.
\end{lemma}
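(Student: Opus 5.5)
We work with the $[\pi]_H$-division tower: since $p=u\pi^e$ with $u\in\cO_E^\times$, we may reindex $\alpha$ so that $\alpha_{i}=[\pi]_H(\alpha_{i+1})$, and $K_\alpha$ is unchanged. Choose the least $i_0$ with $\alpha_{i_0}\neq0$ and shift so that $\alpha_0\neq 0$, $[\pi]_H(\alpha_0)=0$.

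\textbf{Valuations of the division points.} Reducing modulo $\pi$, the series $[\pi]_H(X)\equiv u(X^{q^h})$ with $u'(0)\neq0$, where $q=q_E$ and $h$ is the relative height. Thus $[\pi]_H(X)=\pi X\cdot(\text{unit})+\dots+ (\text{unit})X^{q^h}+\dots$, and $X^{q^h}$ is the first monomial with unit coefficient. The Newton polygon of $[\pi]_H(X)-\alpha_i$ then shows the following. If $v(\alpha_i)<v(\pi)/(q^h-1)$, every root $x$ of $[\pi]_H(x)=\alpha_i$ of positive valuation has $v(x)=v(\alpha_i)/q^h$. In particular $v(\alpha_{i+1})=v(\alpha_i)/q^h$ once $i\ge1$, because for the nonzero $\pi$-torsion point $\alpha_0$ one has $v(\alpha_0)\le v(\pi)/(q^h-1)$, and the inequality becomes strict after one step. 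Hence $v(\alpha_n)\to0$ with denominators divisible by $q^{hn}$ up to a bounded factor. So $e(K(\alpha_n)/K)\to\infty$ and $K_\alpha/K$ is infinite.

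\textbf{Total ramification.} The field $K(\alpha_{n+1})$ is generated over $K(\alpha_n)$ by a root of the polynomial obtained from $[\pi]_H(X)-\alpha_n$ via Weierstrass preparation. This is a distinguished polynomial of degree $q^h$, and by the Newton polygon computation it is Eisenstein-like over $\cO_{K(\alpha_n)}$: all roots have the same valuation $v(\alpha_n)/q^h$. Comparing with the valuation group, which grows by $q^h$ in the limit, we get $[K(\alpha_{n+1}):K(\alpha_n)]\le q^h$ with ramification index $\ge$ the denominator gain. For $n$ large this forces total ramification at each step. The first few steps ($K(\alpha_1)/K$ using the torsion point) need the same argument applied to $[\pi]_H(X)/X$, which is Eisenstein after preparation, since its constant term is $\pi\cdot$unit. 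If one cannot force equality at each stage, it still suffices that the residue field of $K_\alpha$ is finite. For this we would use that $\alpha_n$ is a uniformizer-like element with $v(\alpha_n)$ equal to $v(\pi)/(\text{ramification index})$. This bookkeeping is the main obstacle, and I would resolve it by showing directly that $\alpha_{n}$ is a uniformizer of $K(\alpha_n)$ by induction: $\alpha_n$ has valuation $1/[K(\alpha_n):K]$ because the degrees multiply by at most $q^h$ (or $q^h-1$ at the first step) while valuations divide by exactly that.

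\textbf{Perfectoidness.} The value group of $K_\alpha$ is non-discrete by the first step. For Frobenius surjectivity on $\cO_{\widehat{K_\alpha}}/(p)$, it suffices to work modulo $\alpha_1$-ish, namely modulo an element $\varpi$ with $0<v(\varpi)\le v(p)$, e.g.\ $\varpi=\alpha_1^{N}$ suitably. Since $\cO_{K(\alpha_{n})}=\cO_K[\alpha_n]$ (as $\alpha_n$ is a uniformizer and the extension is totally ramified), every element of $\cO_{K_\alpha}$ is a polynomial in some $\alpha_{n}$ with $\cO_K$ coefficients. Modulo $\pi$, $\alpha_n\equiv u'(\alpha_{n+1}^{q^h})$ up to $\pi\alpha_{n+1}$-terms, and these terms lie in the ideal $(\varpi)$ once we take $\varpi$ with $v(\varpi)\le v(\pi)$. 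So $\alpha_n$ is a $p$-th power modulo $\varpi$. Coefficients in $\cO_K$ are $p$-th powers modulo $\pi$ because $\kappa_K$ is perfect. Hence Frobenius is surjective on $\cO_{K_\alpha}/\varpi$, equivalently on $\cO_{\widehat{K_\alpha}}/\varpi$, which gives perfectoidness by the standard criterion (\cite{Scholze2012Perfectoid}).
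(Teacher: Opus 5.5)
Your proposal is correct and follows essentially the paper's route: pass to the $[\pi]_H$-division tower, show inductively that each layer is totally ramified with the new division point a uniformizer, and then use $[\pi]_G(X)=g(X^{q^h})$ and perfectness of $\kappa_K$ to make every generator of $\cO_{K_\alpha}=\bigcup_n\cO_K[\alpha_n]$ a $p$-th power modulo $\pi$ (the paper reads off total ramification from the Eisenstein polynomial produced by Weierstrass preparation, whereas you compare the degree bound from preparation with the ramification forced by the Newton polygon). Two small repairs are needed: the Newton polygon of $[\pi]_H(X)-\alpha_i$ is a single segment as soon as $v(\alpha_i)\le v(\pi)$, which already gives $v(\alpha_1)=v(\alpha_0)/q^h$ at the first step, a step your uniformizer induction needs but your stated threshold $v(\pi)/(q^h-1)$ excludes; and the passage from surjectivity of Frobenius modulo $\pi$ to perfectoidness should either be done as in the paper, via a pseudouniformizer $\varpi$ with $\varpi^p\mid p$ and surjectivity of $\cO/\varpi\to\cO/\varpi^p$, or be justified by a short successive-approximation argument.
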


\begin{proof}
Write $p=u\pi^e$ with $u\in\cO_E^\times$.
We can identify the inverse limits
\[\varprojlim_{[\pi]_H}\frakm_C\cong \varprojlim_{[p]_H}\frakm_C=\widetilde{H}(\OC),\quad (\beta_i)_{i\ge0}\mapsto (\alpha_i=[u^{-i}]_H(\beta_{ei}))_{i\ge0}.\] 
Hence, we may prove the assertion using a sequence $\beta=(\beta_i)_{i\ge0}$ satisfying $\beta_0=0$, $\beta_1\neq0$, and $[\pi]_H(\beta_{i+1})=\beta_i$.
Let $G$ be the special fiber of $H$ over $\kappa_K$, and  set $Q=q_E^{\mathrm{ht}_{\cO_E}(H)}$, the degree of the first nonzero term of $[\pi]_G$.
We first show that $K(\beta_1)/K$ and every extension $K(\beta_{i+1})/K(\beta_i)$ are totally ramified, with $\beta_i$ a uniformizer of $K(\beta_i)$.
\begin{enumerate}
    \item By Weierstrass preparation, $[\pi]_H(X)/X$ is the product of a unit series and an Eisenstein polynomial of degree $Q-1$ over $\cO_K$.
    Hence, $K(\beta_1)/K$ is totally ramified and $\beta_1$ is a uniformizer.
    \item For $i\ge1$, the point $\beta_{i+1}$ is a root of $[\pi]_H(X)-\beta_i$.
    Since $\beta_i$ is a uniformizer of $K(\beta_i)$, Weierstrass preparation factors this series as a unit times an Eisenstein polynomial of degree $Q$ over $K(\beta_i)$.
    Therefore, $K(\beta_{i+1})/K(\beta_i)$ is totally ramified of degree $Q$, and $\beta_{i+1}$ is a uniformizer of $K(\beta_{i+1})$.
\end{enumerate}
The $\pi$-division tower therefore has unbounded ramification index, and its union is $K_{\alpha}$.
We conclude that $K_\alpha/K$ is infinite and totally ramified.

The ring of integers of $K_{\alpha}$ is $A_\alpha=\bigcup_i \cO_K[\beta_i]$ because each $\beta_i$ is a uniformizer and generates the totally ramified extension $K(\beta_i)/K$.
Since $[\pi]_G$ has first nonzero term of degree $Q$, it has the form $[\pi]_G(X)=g(X^Q)$ with $g\in\kappa_K\llbracket X\rrbracket$.
Let $\sigma$ act on coefficients by $Q$-power Frobenius; then 
\[\overline{\beta}_i= (g^{\sigma^{-1}}([\overline{\beta}_{i+1}]))^Q\]
in $A_\alpha/(\pi)$.
Therefore, each $\overline{\beta}_i$ is a $p$-th power in $A_\alpha/(\pi_K)$, so Frobenius is surjective on $\cO_{\widehat{K}_\alpha}/(\pi)$.
Since its valuation is nondiscrete, choose a pseudouniformizer $\varpi$ such that $0<v(\varpi)<v(\pi)/p$.
Surjectivity modulo $\pi$ implies surjectivity of $\Phi:\cO_{\widehat{K}_\alpha}/(\varpi)\to \cO_{\widehat{K}_\alpha}/(\varpi^p)$.
Since $\varpi^p\mid p$, \cite[Remark 3.2]{Scholze2026Etale} gives surjectivity of Frobenius modulo $p$, showing that $\widehat{K}_\alpha$ is perfectoid.
\end{proof}
\begin{proof}[Proof of Proposition \ref{prop:perfectoid_formal_group}]
Choose a nonzero element $\alpha\in T(H)$.
By Lemma \ref{lem:perfectoid_varpi}, $\widehat{K}_\alpha$ is perfectoid.
The inclusion $\widehat{K}_\alpha\subset \widehat{K}_\infty$ shows that the valuation on $\widehat{K}_\infty$ is nondiscrete.
Each finite composite $\widehat{K}_\alpha L$, with $L/K_\alpha$ finite inside $K_\infty$, is perfectoid by \cite[Theorem 3.7 (i)]{Scholze2012Perfectoid}.
Their union is dense inside $\widehat{K}_\infty$, so Frobenius is surjective on $\cO_{\widehat{K}_\infty}/(p)$.
Therefore, $\widehat{K}_\infty$ is perfectoid.
\end{proof}

Tilting gives an equivalence between perfectoid algebras over $\widehat{K}_\infty$ and perfectoid algebras over $\widehat{K}_\infty^\flat$ \cite[Theorem 5.2]{Scholze2012Perfectoid}.
In particular, tilting identifies their finite separable extensions and hence their absolute Galois groups.

\begin{remark}
Set $\widetilde{\AA}_H=W_{\cO_E}(\widehat{K}_\infty^\flat)$.
The Witt ring carries its weak topology induced by the valuation topology on $\widehat{K}_\infty^\flat$.
The tilting equivalence, the equivalence for \'{e}tale Frobenius modules over ramified Witt vectors of a perfect field $\widehat{K}_\infty^\flat$, and Galois descent give an equivalence of categories
\[\Rep_{\cO_E}(\Gal_K)\simeq\textup{$(\varphi_E,\Gamma)$-Mod}_{\widetilde{\AA}_H}\]
where the right-hand side consists of finitely generated \'{e}tale $\varphi_E$-modules with a commuting continuous $\Gamma$-action.
Therefore, perfectoidness already supplies an equivalence over a ring with perfect residue field.
The construction below produces an analogous equivalence over a base ring with imperfect residue field, using the exponential periods of $H$.
\end{remark}

\subsection{The period field $k_H$ is a deperfection}
Reducing the exponential period map modulo $\pi_K$ gives an injective $\Lambda_H$-linear homomorphism
\[\overline{\tau}_{H}:T(H)\to \OCb\]
where the target is equipped with the formal $\Lambda_H$-module law of the special fiber $G$.
Since the reduced exponential periods are topologically nilpotent, the universal property of the formal group rings gives a continuous homomorphism
\[\overline{\xi}:\kappa_{K'}\llbracket T(H) \rrbracket _{H,\Lambda_H}\to \OCb.\]
The image of $\overline{\xi}$ is the ring $o_H$ defined in Section \ref{subsec:model}.
In fact, the reductions of the additional generators $\varphi_E^k(\tau_H(\alpha))$ are the powers $\overline{\tau}_H(\alpha)^{q_E^k}$.
Recall the notation $k_H=\Frac(o_H)$.

\begin{lemma}\label{lem:kh_stability}
\begin{enumerate}
    \item The subfield $k_H\subset\Cb$ depends only on the isomorphism class of $H$ over $\cO_K$, and in particular is independent of the chosen coordinate.
    \item The subfield $k_H$ is stable under the action of $\Gal_K$ on $\Cb$.
\end{enumerate}
\end{lemma}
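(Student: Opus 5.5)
Both parts come down to one description of $o_H$: it is the closed subring of $\OCb$ generated, as a topological $\kappa_{K'}$-algebra, by the elements of $\overline{\tau}_H(T(H))$. Indeed, $\overline{\xi}$ is continuous from a complete ring, and its image is topologically generated by the images $\overline{\tau}_H(\alpha)$ of the generators $[\alpha]$. The images of the Frobenius-iterate generators are the powers $\overline{\tau}_H(\alpha)^{q_E^k}$, so they add nothing. It is then enough to show that this subring, or at least its fraction field, is intrinsic to $H$ and is stable under $\Gal_K$.

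For (1), first treat the choice of coordinate. Let $H'$ be a formal group over $\cO_K$ isomorphic to $H$, via $g:H\to H'$ with $g\in\cO_K\llbracket X\rrbracket$, $g(0)=0$ and $g'(0)\in\cO_K^\times$. Functoriality of the universal cover and of the rigidity isomorphism (Proposition \ref{prop:tilH_isom}) gives $\tau_{H'}(g_*\alpha)=g(\tau_H(\alpha))$ in $\AinfK$. Reducing modulo $\pi_K$ gives $\overline{\tau}_{H'}(g_*\alpha)=\overline{g}(\overline{\tau}_H(\alpha))$, where $\overline{g}\in\kappa_K\llbracket X\rrbracket$. The map $g_*:T(H)\to T(H')$ is a bijection. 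A power series with coefficients in $\kappa_K$ evaluated at a topologically nilpotent element of $o_H$ stays in $o_H$, because $o_H$ is closed and contains $\kappa_K$. Hence $o_{H'}\subset o_H$. Applying the same argument to $g^{-1}$ gives equality, and so $k_{H'}=k_H$.

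The identification of $\Lambda_H$ with $\Lambda_{H'}$ through $g$ matches the coefficient rings $\kappa_{K'}$, so this step is harmless. The main thing to check is that $o_H$ is closed in $\OCb$, i.e. that the image of $\overline{\xi}$ is complete. I expect to get this from compactness of the source, which is a formal power series ring over the finite field $\kappa_{K'}$ in finitely many variables and hence compact. If closedness causes trouble, I will instead argue with $\overline{g}$ directly in the formal group ring: the assignment $[\alpha]\mapsto \overline{g}([\alpha])$ defines a continuous homomorphism between the formal group rings, and composing with $\overline{\xi}$ gives the inclusion.

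For (2), I use that $\overline{\tau}_H$ is $\Gal_K$-equivariant, which follows from Corollary \ref{cor:tau_equivariant} after reduction. For $\gamma\in\Gal_K$ this gives $\gamma(\overline{\tau}_H(\alpha))=\overline{\tau}_H(\gamma\alpha)\in o_H$. The automorphism $\gamma$ of $\OCb$ is continuous and maps $\kappa_{K'}\subset\OCb$ into itself, since $K'/K$ is Galois. It therefore sends the closed subring topologically generated by $\kappa_{K'}$ and $\overline{\tau}_H(T(H))$ into itself. Applying the same to $\gamma^{-1}$ gives $\gamma(o_H)=o_H$, and passing to fraction fields gives $\gamma(k_H)=k_H$.

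The only potential obstacle anywhere is again the closedness of $o_H$, which compactness settles.
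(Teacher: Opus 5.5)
Your proposal is correct and follows essentially the same route as the paper. For (i), functoriality of the universal cover and the rigidity isomorphism gives $\overline{\tau}_{H'}(\alpha')=\overline{f}(\overline{\tau}_H(\alpha))$ with $\overline{f}$ defined over $\kappa_K$, and applying the same to $f^{-1}$ gives equality; for (ii), Galois equivariance of $\overline{\tau}_H$ on generators plus continuity does the job. Your extra observation, that $o_H$ is closed as the compact image of $\kappa_{K'}\llbracket t_1,\dots,t_n\rrbracket$ (or, alternatively, that $\overline{f}(\overline{\tau}_H(\alpha))=\overline{\xi}(\overline{f}([\alpha]))$), makes explicit a step the paper leaves implicit.
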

\begin{proof}
(i) 
A morphism $f:H\to H'$ over $\cO_K$ induces a morphism between their universal covers.
If $f$ is an isomorphism, let $\alpha\mapsto\alpha'$ under the induced map $T(H)\to T(H')$; then functoriality implies
\[\overline{\tau}_{H'}(\alpha')=\overline{f}(\overline{\tau}_{H}(\alpha)).\]
Since $\overline{f}$ has coefficients in $\kappa_K$, the inclusion $k_{H'}\subset k_H$ follows.
Applying the same argument to $f^{-1}$ gives the reverse inclusion, so $k_H=k_{H'}$.

(ii)
Galois equivariance of the period map gives
\[\gamma\cdot\overline{\tau}_{H}(\alpha)=\overline{\tau}_{H}(\gamma\cdot\alpha)\]
for all $\gamma\in\Gal_K$ and $\alpha\in T(H)$.
Continuity and the equivariance on generators show that $\mathrm{im}(\overline{\xi})$ is $\Gal_K$-stable, and therefore so is its fraction field.
\end{proof}

The reduction modulo $\pi_K$ of the exponential period map $\widetilde{\tau}_H$ coincides with the composite
\[\widetilde{H}(\cO_C)\simeq\widetilde{G}(\OCb)\xrightarrow{\pr}G(\OCb)\]
by rigidity.
For $\alpha\in V(H)$, we denote by $\alpha^\flat$ its image.
\begin{proposition}\label{prop:deperf_infty}
Assume that $K/E$ is unramified.
Let $H$ be a one-dimensional formal $\cO_E$-module of finite height over $\cO_K$.
Then 
\begin{enumerate}
    \item The perfection $k_H^\perf$ is dense in $\widehat{K}_\infty^\flat$ for the valuation induced from $\Cb$.
    \item The field $\widehat{K}_\infty^\flat$ is the closure in $\Cb$ of the compositum of the fields $\kappa_{K}(\!(\alpha^\flat)\!)$ where $\alpha$ ranges over the nonzero elements of $V(H)$.
\end{enumerate}
\end{proposition}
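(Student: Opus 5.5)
The plan is to prove (ii) and (i) together: first place everything inside $\widehat{K}_\infty^\flat$, then show that the closure $F$ of $k_H^\perf$ (respectively of the compositum in (ii)) is a closed perfect subfield of $\widehat{K}_\infty^\flat$, and finally identify $F$ with $\widehat{K}_\infty^\flat$ by untilting.

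\emph{Step 1 (containment).} For $\alpha=(\alpha_m)_m\in V(H)$, Proposition \ref{prop:explicit_isom_tilG_to_G} computes $\alpha^\flat$ as $(V^m(\alpha_m))_m\in\varprojlim_{x\mapsto x^p}\cO_C/(\pi_K)$. Each $V^m$ has coefficients in $\kappa_K$ and each $\alpha_m$ lies in $\cO_{K_\infty}$, so every component lies in $\cO_{K_\infty}/(\pi_K)$. Hence $\alpha^\flat\in\cO_{\widehat{K}_\infty^\flat}$.

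The generators of $o_H$ are the $\overline{\tau}_H(\alpha)=\alpha^\flat$ for $\alpha\in T(H)$, together with their $q_E^k$-powers. Therefore $o_H$, $k_H$, $k_H^\perf$, and every field $\kappa_K(\!(\alpha^\flat)\!)$ lie in the closed perfect field $\widehat{K}_\infty^\flat$. In particular the closures in (i) and (ii) are contained in $\widehat{K}_\infty^\flat$.

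\emph{Step 2 (the closures are perfect and nondiscretely valued).} For (i) the closure $F$ of $k_H^\perf$ is perfect by construction. For (ii), write $[\pi]_G(X)=g(X^Q)$ as in Lemma \ref{lem:perfectoid_varpi}. The identity $\alpha^\flat=[\pi]_G((\pi^{-1}\alpha)^\flat)$ then gives
\[(\alpha^\flat)^{1/Q}=g^{\sigma^{-1}}\bigl((\pi^{-1}\alpha)^\flat\bigr),\]
which lies in the closure of the compositum, since $g^{\sigma^{-1}}$ is a power series over $\kappa_K$ with zero constant term. So that closure is perfect as well. Both closures contain $k_H$, hence the elements $(\pi^{-m}\alpha)^\flat$, whose valuations tend to $0$. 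Thus both are nondiscretely valued, complete and perfect, i.e.\ perfectoid fields of characteristic $p$. It therefore suffices to show that $F$ (and likewise the closure in (ii), which contains $k_H^\perf$ by the displayed identity) equals $\widehat{K}_\infty^\flat$.

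\emph{Step 3 (untilting; the main point).} By the tilting correspondence applied to the perfectoid field $C$, the closed perfectoid subfield $F\subset\Cb$ has an untilt
\[F^\sharp=\theta_K\bigl(W_{\cO_K}(\cO_F)\bigr)[1/p]\subset C,\]
a closed perfectoid field containing $K$ and contained in $\widehat{K}_\infty$. The correspondence $F\mapsto F^\sharp$ preserves inclusions and satisfies $F^\sharp{}^\flat=F$. It therefore suffices to prove $K_\infty\subset F^\sharp$, because then $F^\sharp=\widehat{K}_\infty$ and $F=\widehat{K}_\infty^\flat$.

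This is where the exponential periods enter, and I expect it to be the main obstacle. For $\alpha\in T(H)$ and $m\ge0$, apply the proof of Theorem \ref{thm:ses_htild_ainf} with $y=0$ to the element $x_0=(p^{-m}\alpha)^\flat\in\cO_F$. The Witt coordinates of the unique solution $x=\widetilde{\tau}_H(p^{-m}\alpha)$ are produced recursively: each $x_n$ is a $q_K^d$-th root of a quantity built from $x_0,\dots,x_{n-1}$ by Witt polynomials with coefficients in $\cO_K$, and Lemma \ref{lem:pth_power_Witt} controls which coordinates enter. Since $\cO_F$ is perfect and closed, every $x_n$ lies in $\cO_F$, so $\widetilde{\tau}_H(p^{-m}\alpha)\in W_{\cO_K}(\cO_F)$.

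Applying $\theta_K$ and using $\theta_K\circ\widetilde{\tau}_H=\pr$ gives
\[\alpha_m=\theta_K\bigl(\widetilde{\tau}_H(p^{-m}\alpha)\bigr)\in F^\sharp.\]
Hence all torsion points lie in $F^\sharp$, so $K_\infty\subset F^\sharp$. This yields (i), and by Step 2 also (ii).

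The delicate verification is that the recursion stays inside $W_{\cO_K}(\cO_F)$: one must check that every coefficient appearing in it, namely those of $[\pi_E]_H$, of $[c_i/\pi_E]_H$ and of the Witt addition laws, lies in $\cO_K$. One must also confirm the identification of $F^\sharp$ with the closure of $\theta_K(W_{\cO_K}(\cO_F))[1/p]$.
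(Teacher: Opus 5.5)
\textbf{The gap is in Step 3, at the untilt $F^\sharp$.} Steps 1 and 2 are essentially fine. The Witt-vector recursion in Step 3 is also correct: the existence argument of Theorem \ref{thm:ses_htild_ainf} runs inside $W_{\cO_K}(\cO_F)$, because $\cO_F$ is closed and perfect and all coefficients lie in $\cO_K$, and uniqueness identifies the output with $\widetilde\tau_H(p^{-m}\alpha)$.

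The problem is the untilt itself. The tilting equivalence is relative to a perfectoid base. If $L\subset C$ is perfectoid, then closed perfectoid subfields of $\Cb$ containing $L^\flat$ correspond, compatibly with inclusions, to closed perfectoid subfields of $C$ containing $L$, via $F\mapsto\theta_K(W_{\cO_K}(\cO_F))[1/p]$. Applied with base $C$, it says nothing about subfields of $\Cb$. For an arbitrary closed perfectoid $F\subset\Cb$ there is no reason for $\ker\theta_K\cap W_{\cO_K}(\cO_F)$ to be generated by a primitive element of degree one. That is exactly what is needed for $\theta_K(W_{\cO_K}(\cO_F))[1/p]$ to be a perfectoid field with tilt $F$. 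Your Steps 1--2 only show that $F$ is a perfectoid subfield of $\widehat{K}_\infty^\flat$, which is not enough. So $(F^\sharp)^\flat=F$ and monotonicity are unproved, and your conclusion $F^\sharp=\widehat{K}_\infty\Rightarrow F=\widehat{K}_\infty^\flat$ rests on them. This is more basic than the identification you flag at the end.

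\textbf{The missing step} is to show first that $F\supset\widehat{K}_\alpha^\flat$ for every nonzero $\alpha\in T(H)$, which is the paper's Lemma \ref{lem:deperf_alpha}. You can then untilt over the perfectoid field $\widehat{K}_\alpha$ of Lemma \ref{lem:perfectoid_varpi}.
\begin{itemize}
\item Only the reduction modulo $\pi_K$ of your identity $\theta_K(\widetilde\tau_H(p^{-m}\alpha))=\alpha_m$ is needed. That reduction is $\pr_0((p^{-m}\alpha)^\flat)=\overline{\alpha}_m$, where $\pr_0:\OCb=\varprojlim_{x\mapsto x^p}\OC/(\pi_K)\to\OC/(\pi_K)$ is a ring map. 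It follows directly from Proposition \ref{prop:explicit_isom_tilG_to_G}, so no Witt recursion is required.
\item The $\alpha_m$ are uniformizers of a totally ramified tower, so they generate $\cO_{K_\alpha}/(\pi_K)$ over $\kappa_K$. Hence $\pr_0(\cO_F)\supset\cO_{K_\alpha}/(\pi_K)$.
\item Perfectness of $\cO_F$ gives the same for every projection $\pr_i$, and closedness then gives $\cO_{\widehat{K}_\alpha^\flat}\subset\cO_F$.
\item Untilting over $\widehat{K}_\alpha$ gives $\widehat{K}_\alpha\subset F^\sharp\subset\widehat{K}_\infty$ for every $\alpha$. Hence $F^\sharp=\widehat{K}_\infty$ and $F=\widehat{K}_\infty^\flat$.
\end{itemize}

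\textbf{For (ii)}, apply this argument directly to the closure $m$ of the compositum. It contains every $(p^{-n}\alpha)^\flat$, hence $k_\alpha^\perf$, hence every $\widehat{K}_\alpha^\flat$. Your shortcut ``$m\supset k_H^\perf$'' would need $\kappa_{K'}\subset m$. Your displayed identity does not give this, since the fields in (ii) only have coefficients in $\kappa_K$.
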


\begin{remark}\label{rem:OE_mod_assum_is_necessary}
The unramified hypothesis in Proposition \ref{prop:deperf_infty} cannot be omitted in general.
Consider $H=\Gmhat$ with $E=\Qp$.
For $K=\Qp$, the torsion tower is $\QQ_{p,\infty}=\Qp(\zeta_{p^\infty})$.
The calculation in Example \ref{ex:multiplicative} gives $k_H=\Fp(\!(\epsilon-1)\!)$, and Proposition \ref{prop:deperf_infty} (i) identifies its completed perfection with $\widehat{\QQ}_{p,\infty}^\flat$.
Let $L/\Qp$ be finite and totally ramified such that $L\not\subset\QQ_{p,\infty}$.
The extension $L_\infty=L(\zeta_{p^\infty})$ is a proper finite extension of $\QQ_{p,\infty}$.
Completion and tilting preserve its degree, so $\widehat{L}_\infty^\flat$ strictly contains $\widehat{\QQ}_{p,\infty}^\flat$.
Since $L$ is totally ramified, its residue field is still $\Fp$, and the reduced multiplicative period remains $\epsilon-1$.
Hence, the period field over $L$ is again $k_H'=\Fp(\!(\epsilon-1)\!)$.
Consequently,
\[((k_H')^\perf)^\wedge=\widehat{\QQ_{p,\infty}}^\flat\subsetneqq\widehat{L}_\infty^\flat\]
so the density assertion fails over $L$.
\end{remark}

We prove Proposition \ref{prop:deperf_infty} by first establishing the corresponding density statement for each division tower $K_\alpha$ with $\alpha\in T(H)\setminus\{0\}$.
Write $\alpha^\flat=\overline{\tau}_H(\alpha)$, and let $o_\alpha$ be the image of the evaluation map $\kappa_{K}\llbracket T\rrbracket\to\OCb$ sending $T$ to $\alpha^\flat$.
Set $k_\alpha=\Frac(o_\alpha)=\kappa_{K}(\!(\alpha^\flat)\!)$.
\begin{lemma}\label{lem:deperf_alpha}
Let $K/E$ be finite unramified, let $H$ be a one-dimensional formal $\cO_E$-module of finite height over $\cO_K$, and let $\alpha\in T(H)$ be nonzero.
Then 
\begin{enumerate}
    \item The perfection $(k_\alpha)^\perf$ is dense in $\widehat{K_\alpha}^\flat$.
    \item  For $u_n=p^{-n}\alpha\in V(H)$, let $u_n^\flat\in\OCb$ denote its image under the universal cover identification.
    Then 
    \[(k_\alpha)^\perf=\bigcup_{n}\kappa_{K}(\!(u_n^\flat)\!).\]
\end{enumerate}
\end{lemma}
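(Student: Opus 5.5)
Work with the $\pi$-adic division sequence $\beta=(\beta_i)$ from the proof of Lemma \ref{lem:perfectoid_varpi}, with $[\pi]_H(\beta_{i+1})=\beta_i$, $\beta_0=0$, $\beta_1\neq0$, after rescaling $\alpha$ by a unit. Let $\beta^\flat_i\in\OCb$ be the reduced period of the element $\pi^{-i}\beta\in\widetilde{H}(\OC)$, so that $\beta_i^\flat=[\pi]_G(\beta_{i+1}^\flat)$ in $\OCb$. Write $[\pi]_G(X)=g(X^Q)$ with $g'(0)\neq0$ and $Q=q_E^{\mathrm{ht}_{\cO_E}(H)}$.

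\emph{Step 1: part (ii).} Since $g$ is invertible under composition over $\kappa_K$, we get $(\beta_{i+1}^\flat)^Q=g^{-1}(\beta_i^\flat)\in\kappa_K\llbracket\beta_i^\flat\rrbracket$. Conversely, $\beta_i^\flat=g((\beta_{i+1}^\flat)^Q)$ lies in $\kappa_K\llbracket\beta_{i+1}^\flat\rrbracket$. Hence $\kappa_K(\!(\beta_i^\flat)\!)\subset\kappa_K(\!(\beta_{i+1}^\flat)\!)\subset\kappa_K(\!(\beta_i^\flat)\!)^{1/Q}$, using that $\kappa_K$ is perfect. Iterating gives $\bigcup_i\kappa_K(\!(\beta_i^\flat)\!)\subset k_\alpha^\perf$. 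For the reverse inclusion I also need $(\beta_i^\flat)^{1/p}\in\kappa_K(\!(\beta_j^\flat)\!)$ for some $j$. This holds because $(\beta_i^\flat)^{1/Q}=g^{\sigma^{-1}}(\beta_{i+1}^\flat)$, where $\sigma$ is the $Q$-power Frobenius on coefficients. So the union is perfect and equals $k_\alpha^\perf$. Passing between $u_n=p^{-n}\alpha$ and the $\beta$'s is harmless: $p=u\pi^e$, the elements $[u]_G$ are automorphisms defined over $\kappa_K$, and so the fields $\kappa_K(\!(u_n^\flat)\!)$ and $\kappa_K(\!(\beta_{en}^\flat)\!)$ coincide.

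\emph{Step 2: identify $\beta^\flat$ with a tilt.} By Proposition \ref{prop:explicit_isom_tilG_to_G} (after base change to $\Fpbar$), the reduced period $\beta_0^\flat$ has $m$-th tilt coordinate $V^m(\overline{\beta}_m)$ in $\OC/(\pi)$, and this lies in $\cO_{K_\alpha}/(\pi)$. Thus every $\beta_i^\flat$ lies in $\widehat{K_\alpha}^\flat=\varprojlim_{x\mapsto x^p}\cO_{\widehat{K_\alpha}}/(\pi)$. Therefore $k_\alpha^\perf\subset\widehat{K_\alpha}^\flat$.

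\emph{Step 3: density.} This is the main obstacle. The plan is to show that $\kappa_K\llbracket\beta_i^\flat\rrbracket$ surjects onto $\cO_{K(\beta_i)}/(\pi)$ via $\sharp$-reduction. Projecting to the zeroth tilt coordinate sends $\beta_i^\flat$ to $\overline{\beta}_i$ up to the unit-type correction $V^0=\id$, which is exact for $\beta_i^\flat$ itself. Since $\cO_{K(\beta_i)}=\cO_K[\beta_i]$, the image of $\kappa_K[\beta_i^\flat]$ under $\theta$ mod $\pi$ is all of $\cO_{K(\beta_i)}/(\pi)$. Because $\bigcup_i\cO_{K(\beta_i)}/(\pi)=\cO_{K_\alpha}/(\pi)$, the ring $o:=\bigcup_i\kappa_K\llbracket\beta_i^\flat\rrbracket$ maps onto $\cO_{\widehat{K_\alpha}}/(\pi)=\cO_{\widehat{K_\alpha}^\flat}/(\pi^\flat)$.

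Now $o$ is perfect by Step 1. A perfect subring surjecting onto $\cO^\flat/(\pi^\flat)$ is dense: given $x$, approximate $x^{1/p^m}$ modulo $\pi^\flat$ and raise to the $p^m$-th power to get approximation modulo $(\pi^\flat)^{p^m}$. This gives (i). The delicate point to check carefully is that the zeroth tilt coordinate of $\beta_i^\flat$ really is $\overline{\beta}_i$, i.e.\ compatibility of $\theta_K$ with reduction, which follows from $\theta_K\circ\widetilde{\tau}_H=\pr$ (Corollary \ref{cor:tau_equivariant}).
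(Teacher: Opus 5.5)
Your proposal is correct and is essentially the paper's argument: part (ii) comes from writing the multiplication series of $G$ as an invertible series in a $p$-power of the variable, and part (i) from surjectivity of the zeroth tilt projection $\pr_0$ onto $\cO_{K_\alpha}/(\pi)$. That surjectivity uses the fact that $\pr_0$ of the reduced period of a division point is its reduction, and Frobenius then handles the higher coordinates, which is what your ``approximate $x^{1/p^m}$ and raise to the $p^m$-th power'' step does, matching the paper's $\pr_i=\pr_0\circ\Frob_p^{-i}$. The differences are cosmetic (you use $[\pi]_G$ and the $\pi$-division tower where the paper uses $[p]_G$ and $u_n=p^{-n}\alpha$), and two small slips do not affect the argument: the coordinates $V^m(\overline{\beta}_m)$ should use $p$-division rather than $\pi$-division points, and $\beta_1\neq0$ is obtained by shifting the sequence (replacing $\alpha$ by $\pi^{-k}\alpha$), not by a unit rescaling.
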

\begin{proof}
Evaluation at the nonzero topologically nilpotent element $\alpha^\flat$ identifies $o_\alpha$ with $\kappa_{K}\llbracket\alpha^\flat\rrbracket$.
If $h_0$ is the height of $G$, then $[p]_G(T)=f(T^{p^{h_0}})$ for some $f\in \kappa_{K}\llbracket T \rrbracket_0$ with $f'(0)\neq0$.
The relation
\[u_n^\flat=f((u_{n+1}^\flat)^{p^{h_0}})\]
gives
\[\kappa_{K}\llbracket u_n^\flat\rrbracket=\kappa_{K}\llbracket (u_{n+1}^\flat)^{p^{h_0}}\rrbracket\]
so
\[(o_\alpha)^\perf=\bigcup_{n=0}^\infty\kappa_{K}\llbracket u_n^\flat\rrbracket.\]
Taking fraction fields proves (ii).

We now prove (i).
Let $A_\alpha=\cO_{K_{\alpha}}$ and $\widehat{A}_\alpha=\cO_{\widehat{K_\alpha}}$.
Since completion does not change the mod $\pi_K$ quotient, we have
\[\widehat{A}_\alpha^\flat\coloneqq \varprojlim_{x\mapsto x^p}A_\alpha/(\pi).\]
The inclusion $o_\alpha^\perf\subset\OCb$ identifies it as a subring of $\widehat{A}_\alpha^\flat$.
It suffices to prove that this subring is dense.
The inverse limit topology is defined by the discrete quotients $A_\alpha/(\pi)$, so it is enough to show that each projection
\[\pr_i:o_\alpha^\perf\to A_\alpha/(\pi)\]
is surjective.
Since $\pr_i=\pr_0\circ\Frob_p^{-i}$ and Frobenius is an automorphism of $o_\alpha^\perf$, it suffices to prove surjectivity of $\pr_0$.
Since $A_\alpha/(\pi_K)$ is generated over $\kappa_{K}$ by the reductions of the nonzero division coordinates $\alpha_m$, it is enough to lift each $\overline{\alpha}_m$ through $\pr_0$.
Proposition \ref{prop:explicit_isom_tilG_to_G} identifies the zeroth tilt projection with the reduced zeroth universal cover coordinate, so
\[\pr_0(u_m^\flat)=\overline{\alpha}_m.\]
Hence, $o_\alpha^\perf$ is dense inside $\widehat{A}_\alpha^\flat$, proving (i).
\end{proof}

\begin{proof}[Proof of Proposition \ref{prop:deperf_infty}]
Let $\ell$ be the closure of $k_H^\perf$ in $\Cb$.
Every reduced exponential period lies in $\widehat{K}_\infty^\flat$ by Proposition \ref{prop:explicit_isom_tilG_to_G}.
Therefore, the power series defining $k_H$, its perfection $k_H^\perf$, and the closure $\ell$ all lie in $\widehat{K}_\infty^\flat$; alternatively, $\Gal_{K_\infty}$ acts trivially on $\ell$, so the identity $(\Cb)^{\Gal_{K_\infty}}=\widehat{K}_\infty^\flat$ implies the same inclusion.
Fix a nonzero element $\alpha\in T(H)$.
The field $\ell$ is perfectoid and contains $\widehat{K}_\alpha^\flat$, which is also perfectoid by Lemma \ref{lem:deperf_alpha}.
The tilting equivalence applies to the inclusion $\ell\subset\Cb$ over $\widehat{K}_\alpha$, giving its untilt inside $C$
\[\ell^\#=\mathrm{Im}(\theta_{\Cb}:W(\ell^\circ)\to \OC)\left[\frac{1}{p}\right].\]
This formula defines a complete subfield $\ell^\#\subset C$ independent of $\alpha$, and $\ell^\#$ contains $\widehat{K}_\alpha$ for every such $\alpha$.
The fields $K_\alpha$ generate $K_\infty$, so completeness therefore gives $\widehat{K}_\infty\subset\ell^\#$, and tilting yields $\widehat{K}_\infty^\flat\subset\ell$.
Therefore, $\ell=\widehat{K}_\infty^\flat$, proving (i).

Let $m$ be the closure of the compositum of the fields $\kappa_{K}(\!(\alpha^\flat)\!)$ where $\alpha$ ranges over the nonzero elements of $V(H)$.
By Lemma \ref{lem:deperf_alpha}, $m$ contains every $\widehat{K}_\alpha^\flat$, so the same untilting argument gives $m=\widehat{K}_\infty^\flat$.
\end{proof}
\begin{corollary}\label{cor:deperf_cb}
Let $k_H^\alg$ denote the algebraic closure of $k_H$ inside $\Cb$.
Then $k_H^\alg$ is dense in $\Cb$.
\end{corollary}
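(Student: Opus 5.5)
The plan is to pass through the perfection and use that $\Cb$ is algebraically closed and complete. By Proposition \ref{prop:deperf_infty}(i), $k_H^\perf$ is dense in $\widehat{K}_\infty^\flat$. Since $k_H^\perf$ is purely inseparable over $k_H$, it already lies in $k_H^\alg$. Thus the closure $\overline{k_H^\alg}$ of $k_H^\alg$ in $\Cb$ contains $\widehat{K}_\infty^\flat$. It therefore suffices to show that this closure is all of $\Cb$.

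First, the closure of an algebraic closure is algebraically closed. Set $F=\overline{k_H^\alg}$; this is a complete nonarchimedean valued subfield of $\Cb$. To see that $F$ is algebraically closed, take a monic separable polynomial $g$ over $F$. Approximate its coefficients by a polynomial $g_0$ over $k_H^\alg$. All roots of $g_0$ lie in $k_H^\alg$. Krasner's lemma, in the form of continuity of roots, then gives a root of $g$ in $F$, since a root of $g$ in $\Cb$ is within any prescribed distance of some root of $g_0$ once $g_0$ is close enough to $g$. For inseparable $g$, use that $F$ is perfect: it is the closure of a perfect field, and $p$-th roots are continuous in characteristic $p$.

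Second, compare $F$ with $\Cb$. We have $\widehat{K}_\infty^\flat\subset F\subset\Cb$. By the tilting equivalence for the perfectoid field $\widehat{K}_\infty$ (Proposition \ref{prop:perfectoid_formal_group}), $\Cb$ is the completion of the separable closure of $\widehat{K}_\infty^\flat$. Concretely, $\Cb$ is the completion of the union of the tilts of finite extensions $\widehat{L}$, with $L/K_\infty$ finite inside $\overline{K}$, and each such tilt is a finite separable extension of $\widehat{K}_\infty^\flat$ inside $\Cb$. Every such finite separable extension is generated by roots of polynomials over $\widehat{K}_\infty^\flat\subset F$, so it lies in $F$ because $F$ is algebraically closed. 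Hence the separable closure of $\widehat{K}_\infty^\flat$ in $\Cb$ is contained in $F$, and taking completions gives $\Cb\subset F$.

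The main point requiring care is this last identification: that $(\widehat{K}_\infty^\flat)^\sep$ taken inside $\Cb$ is dense in $\Cb$. I would justify it by applying \cite[Theorem 3.7]{Scholze2012Perfectoid} to the perfectoid field $\widehat{K}_\infty$. Its algebraic closure is dense in $C$, so $C$ is the completed algebraic closure of $\widehat{K}_\infty$, and tilting matches this with the completed separable closure of $\widehat{K}_\infty^\flat$, which is $\Cb$. Equivalently, $\Cb$ is complete and algebraically closed, and any complete algebraically closed subfield containing $\widehat{K}_\infty^\flat$ untilts to a complete algebraically closed subfield of $C$ containing $\widehat{K}_\infty$. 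That untilted field contains $\overline{K}$ and hence equals $C$. Either route gives $F=\Cb$, i.e.\ $k_H^\alg$ is dense in $\Cb$.
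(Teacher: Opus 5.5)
Your proposal is correct and follows the paper's proof. The closure of $k_H^\alg$ is algebraically closed and contains $\widehat{K}_\infty^\flat$ by Proposition \ref{prop:deperf_infty}, and tilting then forces it to be all of $\Cb$. Your alternative justification, untilting a complete algebraically closed intermediate field that then contains $\overline{K}$, is exactly the paper's argument via \cite[Proposition 3.8]{Scholze2012Perfectoid}, and your primary route through density of the tilts of finite extensions of $\widehat{K}_\infty$ rests on the same tilting input.
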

\begin{proof}
Let $\ell$ be the closure of $k_H^\alg$ in $\Cb$.
The completion of an algebraically closed rank-one valued field is algebraically closed, so $\ell$ is algebraically closed.
Proposition \ref{prop:deperf_infty} gives $\widehat{K}_\infty^\flat\subset\ell\subset \Cb$.
Let $\ell^\sharp\subset C$ be the untilt corresponding to $\ell$ under tilting over $\widehat{K}_\infty$; then $\widehat{K}_\infty\subset\ell^\#\subset C$.
By \cite[Proposition 3.8]{Scholze2012Perfectoid}, $\ell^\#$ is algebraically closed.
Since $\ell^\#$ contains $K$, it contains $\overline{K}$, and completeness gives $\ell^\#=C$.
Hence, $\ell=\Cb$.
\end{proof}

\subsection{Comparison of the Galois groups of $K_\infty$ and $k_H$}
By Lemma \ref{lem:kh_stability}, $k_H$ is stable under $\Gal_K$.
Since $\Gal_{K_\infty}$ fixes every reduced exponential period and the coefficient field $\kappa_{K'}$, its action on $k_H$ is trivial, so the $\Gal_K$-action factors through $\Gamma=\Gal(K_\infty/K)$.
By fixing $k_H^\sep\subset\Cb$, restriction of the Galois action defines a homomorphism $\Gal_{K_\infty}\to\Gal_{k_H}$.
We identify the image of this homomorphism as a decomposition subgroup; we refer the reader to \cite[Section 5.2]{Engler2005Valueda} for basic facts about valued fields and decomposition groups.

\begin{theorem}\label{thm:decomposition}
Let $K/E$ be a finite unramified extension of $p$-adic local fields, and let $H$ be a one-dimensional formal $\cO_E$-module of finite height over $\cO_K$.
Let $w$ be the restriction of $v_{\Cb}$ to $k_H^\sep$.
Restriction induces an isomorphism of profinite groups
\[\Gal_{K_\infty}\xrightarrow{\sim} D_w\subset\Gal_{k_H}\]
\end{theorem}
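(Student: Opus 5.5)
The plan is to compare both groups with the group of continuous automorphisms of $\Cb$ fixing $\widehat{K}_\infty^\flat$, using tilting on one side and density (Corollary \ref{cor:deperf_cb}) on the other. Write $F=\widehat{K}_\infty^\flat$, and let $\Aut_{\cts}(\Cb/F)$ be the group of continuous field automorphisms of $\Cb$ fixing $F$.

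\emph{Step 1: the map is well defined and lands in $D_w$.} An element $\gamma\in\Gal_{K_\infty}$ acts on $\Cb$ isometrically, and it fixes $F=(\Cb)^{\Gal_{K_\infty}}$. By Proposition \ref{prop:deperf_infty} we have $k_H\subset F$, so $\gamma$ fixes $k_H$. It therefore permutes the elements of $\Cb$ that are separable algebraic over $k_H$, i.e.\ it preserves $k_H^\sep$. The restriction is an element of $\Gal_{k_H}$ preserving $w$, hence lies in $D_w$. Continuity follows because the stabilizer of a finite separable extension $k'\subset k_H^\sep$ of $k_H$ contains the open subgroup $\Gal_{K_\infty}\cap\Gal_{K''}$. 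Here $K''$ is a finite extension over which finitely many generators of $k'$ are fixed; they are fixed since they lie in $(\Cb)^{\Gal_{K''}}$, a perfectoid field by Ax--Sen--Tate type arguments, or more simply by continuity of the action on $\Cb$.

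\emph{Step 2: identify $\Gal_{K_\infty}$ with $\Aut_{\cts}(\Cb/F)$.} Since $\widehat{K}_\infty$ is perfectoid (Proposition \ref{prop:perfectoid_formal_group}), tilting gives $\Gal_{K_\infty}=\Gal_{\widehat{K}_\infty}\cong\Gal_{F}$. Moreover $\Cb$ is the completion of an algebraic closure of $F$, so $\Gal_F$ is identified with $\Aut_{\cts}(\Cb/F)$ by unique continuous extension. This identification is compatible with the action of $\Gal_{K_\infty}$ on $\Cb$.

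\emph{Step 3: identify $D_w$ with $\Aut_{\cts}(\Cb/F)$.} By Corollary \ref{cor:deperf_cb}, $k_H^\alg=(k_H^\sep)^\perf$ is dense in $\Cb$. An element $g\in D_w$ extends uniquely to the perfection $(k_H^\sep)^\perf$ and still preserves $w$, so it is an isometry. It therefore extends uniquely and continuously to the completion, which is $\Cb$. This extension fixes $k_H$, hence $k_H^\perf$, hence its closure, which is $F$ by Proposition \ref{prop:deperf_infty}(i). This gives a homomorphism $D_w\to\Aut_{\cts}(\Cb/F)$. It is injective, since $k_H^\sep\subset\Cb$. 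It is inverse to restriction: an element of $\Aut_{\cts}(\Cb/F)$ restricts into $D_w$ exactly as in Step 1, and is determined by that restriction, again by density of $(k_H^\sep)^\perf$.

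Composing Steps 2 and 3 shows that restriction $\Gal_{K_\infty}\to D_w$ is a bijective continuous homomorphism. A continuous bijection of profinite groups, one compact and the other Hausdorff (with $D_w$ closed in $\Gal_{k_H}$), is a homeomorphism, so it is an isomorphism of profinite groups.

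The main obstacle is Step 3: one has to make sure that the isometric extension of an abstract $g\in D_w$ from $k_H^\sep$ to $\Cb$ really is well defined and lands in $\Gal_F$. The point to watch is that preservation of $w$ is exactly what gives uniform continuity, and hence the extension to the completion. Density of $k_H^\alg$, rather than merely of $k_H^\sep$, is what requires passing through the perfection; this is harmless because Frobenius is an isometric bijection on the perfection, up to scaling the valuation by $p$.
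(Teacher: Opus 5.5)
Your proposal is correct and follows essentially the paper's route: tilting identifies $\Gal_{K_\infty}$ with $\Aut_\cts(\Cb/\widehat{K}_\infty^\flat)$, and the density results (Proposition \ref{prop:deperf_infty} and Corollary \ref{cor:deperf_cb}) let you match this group with the decomposition group by unique isometric extension, followed by the compact-to-Hausdorff argument. The only difference is cosmetic: the paper first transfers $D_w$ to $D_{w^\alg}\subset\Gal(k_H^\alg/k_H^\perf)$ using uniqueness of extensions along purely inseparable extensions, whereas you do the passage to the perfection inside Step 3. Also, the continuity claim in Step 1 is cleanest as follows: the $\Gal_{K_\infty}$-orbit of an element of $k'$ is finite, so continuity of $g\mapsto gx$ makes its stabilizer open; the detour through $K''$ is not needed.
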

\begin{proof}
Write $w^\alg$ for the restriction of $v_{\Cb}$ to the algebraic closure $k_H^\alg\subset\Cb$.
Uniqueness of extensions of a valuation along a purely inseparable extension identifies $D_{w^\alg}$ with $D_w$ under $\Gal(k_H^\alg/k_H^\perf)\cong \Gal(k_H^\sep/k_H)$.
Therefore, it suffices to identify the image of $\Gal_{K_\infty}\to\Gal(k_H^\alg/k_H^\perf)$ with $D_{w^\alg}$.

Since $K_\infty$ is henselian as an algebraic extension of the local field $K$, we have
\[\Gal_{K_\infty}\cong\Aut_\cts(C/\widehat{K}_\infty).\]
The tilting equivalence identifies $\Aut_\cts(C/\widehat{K}_\infty)$ with $\Aut_\cts(\Cb/\widehat{K}_\infty^\flat)$.
Restriction to $k_H^\alg$ induces an injective group homomorphism
\begin{equation*}
    \Aut_\cts(\Cb/\widehat{K}_\infty^\flat)\hookrightarrow \Gal(k_H^\alg/k_H^\perf)
\end{equation*}
Proposition \ref{prop:deperf_infty} and Corollary \ref{cor:deperf_cb} identify $\widehat{K}_\infty^\flat$ and $\Cb$ as the completions of $k_H^\perf$ and $k_H^\alg$ respectively, so restriction gives an isomorphism
\begin{equation*}
    \Aut_\cts(\Cb/\widehat{K}_\infty^\flat)\isomto D_{w^\alg}\subset \Gal(k_H^\alg/k_H^\perf)
\end{equation*}
where the inverse sends an element of $D_{w^\alg}$ to a unique extension to an isometry of $\Cb$ fixing $\widehat{K}_\infty^\flat$.
The resulting restriction map from $\Gal_{K_\infty}$ is continuous.
Since its source is compact and its target is Hausdorff, it is an isomorphism of topological groups.
\end{proof}

\begin{corollary}\label{cor:fullimage_equiv_hens}
Assume the hypotheses of Theorem \ref{thm:decomposition}.
The following two conditions are equivalent.
\begin{enumerate}
    \item The injective homomorphism $\Gal_{K_\infty}\to\Gal_{k_H}$ is an isomorphism of profinite groups.
    \item The valuation on $k_H$ induced by $\Cb$ is henselian.
\end{enumerate}
\end{corollary}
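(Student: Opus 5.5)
By Theorem \ref{thm:decomposition}, restriction already identifies $\Gal_{K_\infty}$ with the decomposition subgroup $D_w\subset\Gal_{k_H}$. So condition (i), that $\Gal_{K_\infty}\to\Gal_{k_H}$ is surjective, is equivalent to the equality $D_w=\Gal_{k_H}$. The corollary therefore reduces to a standard fact from valuation theory \cite[Section 5.2]{Engler2005Valueda}. Namely, for a valued field $(k,v)$ with separable closure $k^\sep$ and an extension $w$ of $v$ to $k^\sep$, the valuation $v$ is henselian if and only if $D_w=\Gal(k^\sep/k)$. I would apply this with $k=k_H$ and $v=w\vert_{k_H}$.

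I will spell out the two implications of that standard fact.

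\emph{(ii)$\Rightarrow$(i).} Suppose $v$ is henselian. Then $v$ has a unique extension to every algebraic extension, in particular to $k_H^\sep$. For $g\in\Gal_{k_H}$, the composite $w\circ g$ is another extension of $v$ to $k_H^\sep$, so $w\circ g=w$. Hence $g\in D_w$ for every $g$, giving $D_w=\Gal_{k_H}$.

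\emph{(i)$\Rightarrow$(ii).} Suppose $D_w=\Gal_{k_H}$. The extensions of $v$ to $k_H^\sep$ are exactly the conjugates $w\circ g$ with $g\in\Gal_{k_H}$, since the Galois group acts transitively on the extensions of a valuation in a normal extension. By assumption all of these conjugates equal $w$, so the extension of $v$ to $k_H^\sep$ is unique.

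It remains to pass from $k_H^\sep$ to $k_H^\alg$. Extensions of valuations along the purely inseparable extension $k_H^\alg/k_H^\sep$ are unique, as already used in the proof of Theorem \ref{thm:decomposition}. So $v$ extends uniquely to $k_H^\alg$, and hence to every algebraic extension of $k_H$. This unique-extension property is one of the standard equivalent definitions of henselian.

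None of these steps is hard. The only point requiring care is that the target of the injective homomorphism is $\Gal_{k_H}$ and the image is precisely $D_w$, which is what Theorem \ref{thm:decomposition} provides. An equivalent way to phrase the argument is that the fixed field of $D_w$ is the henselization of $(k_H,v)$, so $D_w=\Gal_{k_H}$ exactly when $k_H$ equals its own henselization.
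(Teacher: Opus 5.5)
Your proposal is correct and follows the paper's proof. Both use Theorem~\ref{thm:decomposition} to reduce (i) to $D_w=\Gal_{k_H}$, and then invoke the standard fact that this equality holds exactly when the valuation on $k_H$ is henselian. The paper states that last step as ``the fixed field of $D_w$ is the henselization,'' which is the reformulation you give at the end, and your unique-extension argument simply unpacks it.
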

After determining the structure of $k_H$ in Section \ref{sec:period_alg}, we will show that these conditions hold if and only if the absolute height of $H$ is one.
\begin{proof}
By Theorem \ref{thm:decomposition}, the condition (i) is equivalent to $D_w=\Gal_{k_H}$.
The fixed field of $D_w$ is the chosen henselization of $k_H$, which equals $k_H$ exactly when the valuation is henselian.
\end{proof}

We now construct a henselian deperfection $k_{H,K}\subset \widehat{K}_\infty^\flat$ whose absolute Galois group is $\Gal_{K_\infty}$.
\begin{definition}
We define
\[k_{H,K}=(k_H^\sep)^{D_w}\]
where $w$ is the restriction of $v_{\Cb}$, so $k_{H,K}$ is the henselization of $k_H$ associated with $w$.
\end{definition}
The inclusion $k_H\subset\widehat{K}_\infty^\flat$ of valued fields, with the latter henselian, induces the inclusion $k_{H,K}\subset \widehat{K}_\infty^\flat$ by the universal property of henselization.
\begin{corollary}
Assume the hypotheses of Theorem \ref{thm:decomposition}, and fix a separable closure $k_H^\sep$ in $\Cb$.
Then we have $k_{H,K}=k_H^\sep\cap \widehat{K}_\infty^\flat$.
\end{corollary}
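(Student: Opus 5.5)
We prove the equality $k_{H,K}=k_H^\sep\cap\widehat{K}_\infty^\flat$ by showing both inclusions, using Theorem \ref{thm:decomposition} to describe $D_w$ as the image of $\Gal_{K_\infty}$. The inclusion $k_{H,K}\subset k_H^\sep\cap\widehat{K}_\infty^\flat$ holds for two reasons. First, $k_{H,K}$ is the fixed field of $D_w$ inside $k_H^\sep$, so it lies in $k_H^\sep$. Second, $k_{H,K}\subset\widehat{K}_\infty^\flat$ was noted just before the statement, via the universal property of henselization. To be safe I would also give a direct argument. Take $x\in k_{H,K}$. Every $\gamma\in\Gal_{K_\infty}$ acts on $\Cb$, and its restriction to $k_H^\sep$ lies in $D_w$ by Theorem \ref{thm:decomposition}, so it fixes $x$. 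Hence $x\in(\Cb)^{\Gal_{K_\infty}}$. By the tilting equivalence this fixed field is $\widehat{K}_\infty^\flat$, as was already used in the proof of Proposition \ref{prop:deperf_infty}.

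For the reverse inclusion, let $x\in k_H^\sep\cap\widehat{K}_\infty^\flat$ and take any $g\in D_w$. By Theorem \ref{thm:decomposition} there is $\gamma\in\Gal_{K_\infty}$ whose action on $\Cb$ restricts to $g$ on $k_H^\sep$. The continuous action of $\gamma$ on $\Cb$ fixes $\widehat{K}_\infty^\flat$ pointwise, since $\gamma$ fixes $\widehat{K}_\infty$ and tilting is functorial. Therefore $g(x)=\gamma(x)=x$. Since $g\in D_w$ was arbitrary, $x\in(k_H^\sep)^{D_w}=k_{H,K}$.

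The one point needing care is compatibility: the action of $\Gal_{K_\infty}$ on $k_H^\sep\subset\Cb$ must be the restriction of its action on $\Cb$, and the identification $\Aut_\cts(C/\widehat{K}_\infty)\cong\Aut_\cts(\Cb/\widehat{K}_\infty^\flat)$ must fix $\widehat{K}_\infty^\flat$ pointwise. Both facts come directly from how the map $\Gal_{K_\infty}\to\Gal_{k_H}$ was defined in the proof of Theorem \ref{thm:decomposition}, namely by restricting the action on $\Cb$. I expect this bookkeeping to be the only real obstacle, and it is mild.
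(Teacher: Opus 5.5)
Your proof is correct and is essentially the paper's argument, which combines your two inclusions into the single identity $k_{H,K}=(k_H^\sep)^{D_w}=(\Cb)^{\Gal_{K_\infty}}\cap k_H^\sep$ (using that restriction maps $\Gal_{K_\infty}$ onto $D_w$) and then uses $(\Cb)^{\Gal_{K_\infty}}=\widehat{K}_\infty^\flat$. One small correction: that last identity is an Ax--Sen--Tate statement, and the paper cites it as such, rather than a formal consequence of the tilting equivalence. Your reverse inclusion only needs the trivial containment $\widehat{K}_\infty^\flat\subset(\Cb)^{\Gal_{K_\infty}}$, and your henselization argument for the forward inclusion avoids the identity altogether.
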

\begin{proof}
By Theorem \ref{thm:decomposition}, restriction of the action of $\Gal_{K_\infty}$ to the subfield $k_H^\sep\subset \Cb$ identifies with $D_w$.
Therefore,
\[k_{H,K}=(\Cb)^{\Gal_{K_\infty}}\cap k_H^\sep.\]
Ax-Sen-Tate's theorem implies $(\Cb)^{\Gal_{K_\infty}}=\widehat{K}_\infty^\flat$, so the assertion follows.
\end{proof}

Since $\widehat{K}_\infty^\flat$ is complete and hence henselian, the chosen henselization  $k_{H,K}$ embeds in $\widehat{K}_\infty^\flat$.

\begin{proposition}\label{prop:deperfection}
The field $k_{H,K}$ has the following properties:
\begin{enumerate}
    \item The $\Gal_K$-action on $\Cb$ preserves $k_{H,K}$, and induces an action of $\Gamma$.
    \item $k_{H,K}$ is a subfield of $\widehat{K}_\infty^\flat$, and $k_{H,K}^\perf$ is dense in $\widehat{K}_\infty^\flat$.
    \item Restriction gives a canonical isomorphism of profinite groups $\Gal_{k_{H,K}}\cong\Gal_{K_\infty}$.
\end{enumerate}
\end{proposition}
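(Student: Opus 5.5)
The three properties all follow from Theorem \ref{thm:decomposition}, Proposition \ref{prop:deperf_infty}, and the preceding corollary identifying $k_{H,K}=k_H^\sep\cap\widehat{K}_\infty^\flat$. I would prove them in the order (ii), (iii), (i).

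For (ii), the preceding corollary gives $k_{H,K}=k_H^\sep\cap\widehat{K}_\infty^\flat\subset\widehat{K}_\infty^\flat$. Since $k_H\subset k_{H,K}$, we have $k_H^\perf\subset k_{H,K}^\perf$. The field $\widehat{K}_\infty^\flat$ is perfect, so $k_{H,K}^\perf$ also lies in it. Density then follows from Proposition \ref{prop:deperf_infty}(i), which says $k_H^\perf$ is already dense.

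For (iii), $k_{H,K}$ is the fixed field of $D_w$ in $k_H^\sep$. Moreover $k_H^\sep$ is a separable closure of $k_{H,K}$, since $k_{H,K}/k_H$ is separable algebraic. Hence Galois theory gives $\Gal_{k_{H,K}}=\Gal(k_H^\sep/k_{H,K})=D_w$ as a closed subgroup of $\Gal_{k_H}$. Theorem \ref{thm:decomposition} says restriction $\Gal_{K_\infty}\to D_w$ is an isomorphism of profinite groups. Because $\Gal_{K_\infty}$ fixes $k_{H,K}\subset\widehat{K}_\infty^\flat$, this restriction is exactly the map $\Gal_{K_\infty}\to\Gal_{k_{H,K}}$ obtained by restricting the action on $\Cb$ to $k_H^\sep$. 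Canonicity comes from the choice $k_H^\sep\subset\Cb$ being induced by $\Cb$ itself.

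For (i), the main point is that $k_H^\sep\subset\Cb$ is $\Gal_K$-stable. Any $\gamma\in\Gal_K$ acts on $\Cb$ as a field automorphism preserving $k_H$ (Lemma \ref{lem:kh_stability}(ii)). It therefore maps the separable closure of $k_H$ inside $\Cb$, which is the set of elements separable algebraic over $k_H$, onto itself. It also preserves $\widehat{K}_\infty^\flat=(\Cb)^{\Gal_{K_\infty}}$, because $\Gal_{K_\infty}$ is normal in $\Gal_K$. So $\gamma$ preserves the intersection $k_{H,K}=k_H^\sep\cap\widehat{K}_\infty^\flat$. The subgroup $\Gal_{K_\infty}$ acts trivially on $\widehat{K}_\infty^\flat\supset k_{H,K}$, so the action factors through $\Gamma=\Gal_K/\Gal_{K_\infty}$.

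The only mildly delicate step is the Galois-stability in (i). Working directly with $D_w$ one would need $\gamma D_w\gamma^{-1}=D_w$, using that $\gamma$ preserves $v_{\Cb}$. The intersection description avoids this, so I do not expect a real obstacle; the argument is bookkeeping on top of Theorem \ref{thm:decomposition}.
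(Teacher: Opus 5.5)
Your argument is correct, and for (ii) and (iii) it is the paper's proof. Density comes from $k_H^\perf\subset k_{H,K}^\perf\subset\widehat{K}_\infty^\flat$ together with Proposition~\ref{prop:deperf_infty}. The identification $\Gal_{k_{H,K}}=D_w\cong\Gal_{K_\infty}$ comes from infinite Galois theory and Theorem~\ref{thm:decomposition}. You assert closedness of $D_w$ without comment; this is a standard fact, and it also follows from $D_w$ being the continuous image of the compact group $\Gal_{K_\infty}$.

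The only difference is in (i). The paper computes $\gamma(k_{H,K})=(k_H^\sep)^{\gamma D_w\gamma^{-1}}$ and uses $\gamma D_w\gamma^{-1}=D_{w\circ\gamma^{-1}}=D_w$, which holds because $\Gal_K$ acts isometrically on $\Cb$. You instead use the intersection description $k_{H,K}=k_H^\sep\cap\widehat{K}_\infty^\flat$. Both routes work. Note, though, that the conjugation step you call delicate is elementary, whereas your route depends on the preceding corollary and hence on Ax--Sen--Tate.
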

\begin{proof}
(i)
Let $\gamma\in\Gal_K$, viewed through its induced action on $k_H^\sep$.
The fixed field definition gives
\[\gamma(k_{H,K})=(k_H^\sep)^{\gamma D_w\gamma^{-1}}.\]
Conjugation gives $\gamma D_w\gamma^{-1}=D_{w\circ\gamma^{-1}}$.
The $\Gal_K$-action preserves $v_{\Cb}$, so $w\circ\gamma^{-1}=w$.
Therefore, we have
\[\gamma(k_{H,K})=(k_H^\sep)^{D_w}=k_{H,K}.\]
By Theorem \ref{thm:decomposition}, the $\Gal_{K_\infty}$-action induces an element of $D_w$, which fixes $k_{H,K}$ by definition, so the action factors through $\Gamma$.

(ii)
We already checked the inclusion $k_{H,K}\subset \widehat{K}_\infty^\flat$.
The inclusions 
\[k_H^\perf\subset k_{H,K}^\perf\subset\widehat{K}_\infty^\flat\]
and Proposition \ref{prop:deperf_infty} prove the density assertion.

(iii)
The decomposition subgroup $D_w$ is closed \cite[Lemma 5.2.1]{Engler2005Valueda}, so infinite Galois theory identifies $\Gal_{k_{H,K}}$ with $D_w$, which is further identified with $\Gal_{K_\infty}$ by Theorem \ref{thm:decomposition}.
\end{proof}

Let $L/K$ be a finite extension.
Let $L_\infty=K_\infty L$.
Its $p$-adic completion $\widehat{L}_\infty$ is a finite extension of the perfectoid field $\widehat{K}_\infty$, so is perfectoid.
Using the identification $\Gal_{K_\infty}\cong D_w$, we view $\Gal_{L_\infty}$ as a subgroup of $D_w$ and define 
\[k_{H,L}=(k_H^\sep)^{\Gal_{L_\infty}},\]
which is a finite separable extension of $k_{H,K}$ inside $\Cb$.
\begin{proposition}\label{prop:lHL}
Let $L/K$ be finite.
The field $k_{H,L}$ has the following properties.
\begin{enumerate}
    \item The $\Gal_L$-action preserves $k_{H,L}$ and factors through $\Gamma_L=\Gal(L_\infty/L)$.
    \item The field $k_{H,L}$ is a subfield of $\widehat{L}_\infty^\flat$, and its perfection is dense.
    \item Restriction gives an isomorphism of profinite groups $\Gal_{k_{H,L}}\cong\Gal_{L_\infty}$.
\end{enumerate}
\end{proposition}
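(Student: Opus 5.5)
The plan is to mirror the proof of Proposition \ref{prop:deperfection}, replacing $K_\infty$ by $L_\infty$ and $D_w$ by the closed subgroup $\Gal_{L_\infty}\subset\Gal_{K_\infty}\cong D_w$. Throughout, $\Gal_{L_\infty}$ is regarded inside $\Gal_{k_H}$ via the restriction map of Theorem \ref{thm:decomposition}. Since $L_\infty/K_\infty$ is finite, $\Gal_{L_\infty}$ is open in $\Gal_{K_\infty}$. Hence its image is an open subgroup of $D_w$, and so it is closed in $\Gal_{k_H}$. This already justifies the claim that $k_{H,L}$ is a finite separable extension of $k_{H,K}$.

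For (iii), I would apply infinite Galois theory to the closed subgroup $\Gal_{L_\infty}\subset\Gal_{k_H}$. This gives $\Gal(k_H^\sep/k_{H,L})=\Gal_{L_\infty}$. Composing with the isomorphism $\Gal_{K_\infty}\cong D_w$ of Theorem \ref{thm:decomposition}, this is precisely the restriction map, which is a homeomorphism because the groups are compact and Hausdorff.

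For (i), take $\gamma\in\Gal_L$. As in the proof of Proposition \ref{prop:deperfection}(i), $\gamma$ acts on $k_H^\sep\subset\Cb$, and restriction is compatible with conjugation, so $\gamma(k_{H,L})=(k_H^\sep)^{\gamma\Gal_{L_\infty}\gamma^{-1}}$. Since $L_\infty/L$ is Galois (it is the compositum of $L$ with the Galois extension $K_\infty/K$), the subgroup $\Gal_{L_\infty}$ is normal in $\Gal_L$. Therefore $\gamma$ preserves $k_{H,L}$. Elements of $\Gal_{L_\infty}$ fix $k_{H,L}$ by definition, so the action factors through $\Gamma_L$.

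For (ii), the action of $\Gal_{L_\infty}$ on $k_{H,L}$ is trivial, so $k_{H,L}\subset(\Cb)^{\Gal_{L_\infty}}$. This fixed field equals $\widehat{L}_\infty^\flat$ by Ax--Sen--Tate together with tilting, just as for $K_\infty$. Density is the main step. Let $\ell$ be the closure of $k_{H,L}^\perf$ in $\Cb$. It is perfectoid and contains $\widehat{K}_\infty^\flat$ by Proposition \ref{prop:deperfection}(ii), since $k_{H,K}\subset k_{H,L}$. It is also stable under $\Gal_{L_\infty}$ and contained in $\widehat{L}_\infty^\flat$.

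By the tilting equivalence over $\widehat{K}_\infty$, the intermediate perfectoid fields between $\widehat{K}_\infty^\flat$ and $\Cb$ fixed by the relevant groups correspond via Galois theory to closed subgroups. Concretely, $\ell$ is a closed perfectoid subfield with $\widehat{K}_\infty^\flat\subset\ell\subset\widehat{L}_\infty^\flat$, so it is a finite extension of $\widehat{K}_\infty^\flat$. It therefore corresponds to some open subgroup $U\supset\Gal_{L_\infty}$ of $\Gal_{K_\infty}$.

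Any $g\in U$ fixes $\ell$ and hence $k_{H,L}$. By (iii) and Galois theory for $k_H^\sep/k_{H,K}$, this forces $g\in\Gal_{L_\infty}$. Thus $U=\Gal_{L_\infty}$ and $\ell=\widehat{L}_\infty^\flat$.

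The step I expect to be delicate is the passage from ``$g$ fixes $\ell$'' to ``$g$ fixes $k_{H,L}$ as an element of $D_w$''. This is resolved by the compatibility of the action on $\Cb$ with restriction to $k_H^\sep$, which is built into Theorem \ref{thm:decomposition}. An alternative route to density is degree counting. One has $[k_{H,L}:k_{H,K}]=[\Gal_{K_\infty}:\Gal_{L_\infty}]=[\widehat{L}_\infty^\flat:\widehat{K}_\infty^\flat]$. Since $k_{H,K}^\perf$ is dense in $\widehat{K}_\infty^\flat$, the compositum $k_{H,L}\widehat{K}_\infty^\flat$ is a field of full degree inside $\widehat{L}_\infty^\flat$, and so it equals $\widehat{L}_\infty^\flat$. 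Density of $k_{H,L}^\perf$ then follows by approximating the coefficients of a $k_{H,L}$-basis.
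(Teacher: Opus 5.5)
Your proof is correct, but it is organized differently from the paper's. The paper's proof rests on one assertion: completion of perfection, $\ell\mapsto\widehat{\ell^{\perf}}$, gives a $\Gal_K$-equivariant bijection between finite separable extensions of $k_{H,K}$ inside $k_H^{\sep}$ and finite extensions of $\widehat{K}_\infty^\flat$ inside $\Cb$. From this it states that $k_{H,L}$ corresponds to $\widehat{L}_\infty^\flat$, which gives (ii). It then obtains (i) from equivariance of the bijection, and (iii) by matching fixing subgroups under $\Gal_{k_{H,K}}\cong\Gal_{K_\infty}$.

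You never set up this correspondence. You get (iii) directly from the definition of $k_{H,L}$ as the fixed field of the closed subgroup $\Gal_{L_\infty}\subset D_w$. You get (i) from normality of $\Gal_{L_\infty}$ in $\Gal_L$, via the same conjugation formula used for $k_{H,K}$. You prove (ii) by showing that the subgroup of $\Gal_{K_\infty}$ fixing the closure of $k_{H,L}^{\perf}$ is exactly $\Gal_{L_\infty}$, using (iii) and injectivity of $\Gal_{K_\infty}\to D_w$.

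In effect you prove the one instance of the correspondence that is actually needed, namely $\widehat{k_{H,L}^{\perf}}=\widehat{L}_\infty^\flat$. The paper asserts this without spelling it out. Its packaging is shorter and gives the whole Galois-equivariant dictionary of finite extensions at once, while yours is more self-contained.

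Your alternative degree-counting route is not an independent proof. The claim that $k_{H,L}\widehat{K}_\infty^\flat$ has full degree over $\widehat{K}_\infty^\flat$ is a linear-disjointness statement. Proving it requires the same fixing-subgroup argument as your main route.
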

\begin{proof}
With the chosen embeddings, completion of perfection identifies finite separable extensions $\ell/k_{H,K}$ inside $k_H^\sep$ with finite extensions of $\widehat{K}_\infty^\flat$ inside $\Cb$, sending $\ell$ to $\widehat{\ell^\perf}$.
Since the $\Gal_K$-action preserves the valuation, it commutes with perfection and completion, so this correspondence is $\Gal_K$-equivariant.
The field $k_{H,L}$ corresponds to $\widehat{L}_\infty^\flat$, so $\widehat{k_{H,L}^\perf}=\widehat{L}_\infty^\flat$, proving (ii).
Equivariance implies that $\Gal_L$ preserves $k_{H,L}$, and the fixed field definition shows that $\Gal_{L_\infty}$ acts trivially on it.
Hence, the action factors through $\Gamma_L$, proving (i).
The correspondence between finite extensions of $K_\infty$ and finite extensions of $\widehat{K}_\infty^\flat$ sends $L_\infty$ first to its completion and then to $\widehat{L}_\infty^\flat$.
Under the isomorphism $\Gal_{k_{H,K}}\cong \Gal_{K_\infty}$, the open subgroup fixing $k_{H,L}$ is $\Gal_{L_\infty}$, which proves (iii).
\end{proof}

\newpage

\section{$F$-dynamical systems and \'{e}tale $\varphi$-modules}\label{sec:fds}
The endomorphism $\varphi_E$ on $R_H$ constructed in Section \ref{subsec:model} need not lift the $q_E$-power Frobenius modulo $\pi_E$.
In this section, we introduce $F$-dynamical systems, replacing this condition with the weaker contracting condition modulo $\pi_E$ together with a Frobenius action on the residue field.
For a flat $F$-dynamical system, we prove an equivalence between continuous $\cO_E$-representations of the absolute Galois group of a residue field and \'{e}tale $\varphi$-modules over a flat $F$-dynamical system.
We first treat contracting systems in characteristic $p$, then lift the equivalence over $\cO_E$.
Finally, we verify the required hypotheses for the pair $(R_H,\varphi_E)$.
\subsection{\'{E}tale $\varphi$-modules over fields of characteristic $p$}
Let $R$ be a ring and let $\varphi$ be a ring endomorphism of $R$.
A \textit{$\varphi$-module} over $R$ is a finitely generated $R$-module $M$ equipped with an additive endomorphism $\varphi_M$ satisfying $\varphi_M(rm)=\varphi(r)\varphi_M(m)$.
Equivalently, it is equipped with an $R$-linear map
\[\varphi_M^L:\varphi^*M\coloneqq M\otimes_{R,\varphi}R\to M,\quad m\otimes r\mapsto r\varphi_M(m)\]
We call $M$ \textit{\'{e}tale} if its linearization $\varphi_M^L$ is an isomorphism.
A morphism of $\varphi$-modules is an $R$-linear map $f:M\to N$ satisfying $f\circ\varphi_M=\varphi_N\circ f$.
We write $\phimod_R$ for the category of \'{e}tale $\varphi$-modules over $R$.

Let $k$ be a field containing $\Fq$, fix a separable closure $k^\sep$, and write $\Gal_k=\Gal(k^\sep/k)$.
Throughout this subsection, $\Rep_{\Fq}(\Gal_k)$ denotes the category of finite-dimensional $\Fq$-vector spaces with continuous $\Gal_k$-action.
For $V\in \Rep_{\Fq}(\Gal_k)$, define
\[\DD_k(V)\coloneqq(V\otimes_{\Fq}k^\sep)^{\Gal_k}\]
where $\Gal_k$ acts diagonally, and equip $\DD_k(V)$ with the semilinear endomorphism induced by $\id_V\otimes\Frob_q$.
Galois descent gives the canonical isomorphism
\begin{equation}\label{isom:functor_D}
    \DD_k(V)\otimes_kk^\sep\cong V\otimes_{\Fq}k^\sep.
\end{equation}
Since the linearization of $\id_V\otimes\Frob_q$ is invertible, (\ref{isom:functor_D}) shows by faithfully flat descent that $\DD_k(V)$ is \'{e}tale.
Therefore, we obtain a functor
\[\DD_k:\Rep_{\Fq}(\Gal_k)\to\textup{$\varphi$-Mod}_{k}\]
where the endomorphism of $k$ is $\Frob_q$.

The original representation $V$ can be recovered from $\DD_k(V)$.
Taking $\varphi$-fixed vectors in (\ref{isom:functor_D}), and using $(k^\sep)^{\Frob_q=\id}=\Fq$, gives
\begin{align*}
    (\DD_k(V)\otimes k^\sep)^{\varphi=\id}
    \cong (V\otimes_{\Fq}k^\sep)^{\varphi=\id}
    \cong V.
\end{align*}

For $M\in \textup{$\varphi$-Mod}_{k}$, define
\[\VV_k(M)\coloneqq(M\otimes_kk^\sep)^{\varphi=\id}\]
with the $\Gal_k$-action induced by its action on $k^\sep$.
\begin{theorem}[{\cite[Proposition 4.1.1]{Katz1973padic}}]\label{thm:equiv_phimod_field}
Let $k$ be a field containing $\Fq$.
The functors
\[\DD_k:\Rep_{\Fq}(\Gal_k)\to\textup{$\varphi$-Mod}_{k},\quad \VV_k:\textup{$\varphi$-Mod}_{k}\to\Rep_{\Fq}(\Gal_k)\]
are mutually quasi-inverse equivalences.
\end{theorem}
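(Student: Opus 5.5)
We prove that $\DD_k$ and $\VV_k$ are mutually quasi-inverse. The computation preceding the statement already gives a natural isomorphism $\VV_k\circ\DD_k\cong\id$. So what remains is to show, for every étale $\varphi$-module $M$ over $k$, that:
\begin{itemize}
\item the natural map $\VV_k(M)\otimes_{\Fq}k^\sep\to M\otimes_kk^\sep$ is an isomorphism;
\item $\VV_k(M)$ is finite-dimensional with continuous $\Gal_k$-action.
\end{itemize}
Galois descent, in the form of the isomorphism (\ref{isom:functor_D}) read backwards via Hilbert 90 for $\GL_d(k^\sep)$, then gives $\DD_k(\VV_k(M))\cong M$ compatibly with $\varphi$.

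Injectivity of the first map is the standard argument. Suppose a minimal-length relation $\sum_i v_i\otimes a_i=0$ holds, with the $v_i\in\VV_k(M)$ linearly independent over $\Fq$ and $a_1=1$. Apply $\varphi$ and subtract; this gives a shorter relation with coefficients $a_i^q-a_i$. Minimality forces $a_i^q=a_i$, so each $a_i\in\Fq$. But then the original relation contradicts the linear independence of the $v_i$.

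Consequently $\dim_{\Fq}\VV_k(M)\le d:=\dim_kM$. Continuity of the action follows once we know finiteness: every element is defined over a finite extension of $k$, so it is fixed by an open subgroup.

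The main obstacle is surjectivity, that is, producing $d$ independent $\varphi$-fixed vectors. Choose a basis of $M$, and write $\varphi(e_j)=\sum_iA_{ij}e_i$ with $A\in\GL_d(k)$; this matrix is invertible exactly because $M$ is étale. A fixed vector is a column $x\in(k^\sep)^d$ satisfying $Ax^{(q)}=x$, where $x^{(q)}$ applies the $q$-power map entrywise. Equivalently, $x^{(q)}=A^{-1}x$.

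This is a system of $d$ polynomial equations $x_i^q-\sum_jB_{ij}x_j=0$ with $B=A^{-1}$. The algebra
\[
S=k[x_1,\dots,x_d]/(x_i^q-\textstyle\sum_jB_{ij}x_j)
\]
is finite of dimension $q^d$, since the monomials $x^\alpha$ with $0\le\alpha_i<q$ form a basis. It is also étale over $k$. To see this, compute the Jacobian matrix: it is $-B$, because the derivative of $x_i^q$ vanishes in characteristic $p$, and $B$ is invertible. Hence $S\otimes_kk^\sep\cong(k^\sep)^{q^d}$, so the system has exactly $q^d$ solutions in $(k^\sep)^d$.

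These solutions form an $\Fq$-vector space, because the equations are $\Fq$-linear. A vector space with $q^d$ elements has dimension $d$. Combined with the injectivity already shown, the map $\VV_k(M)\otimes_{\Fq}k^\sep\to M\otimes_kk^\sep$ is an injection between $k^\sep$-spaces of the same dimension $d$, hence an isomorphism.

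Finally, the functors must be shown to be quasi-inverse on morphisms as well as objects. Both constructions are functorial, and the comparison maps above are natural in $V$ and in $M$, so full faithfulness follows from the two natural isomorphisms.
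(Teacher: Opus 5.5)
Your proof is correct and follows the route the paper indicates: $\VV_k\circ\DD_k\cong\id$ from the preceding computation, then the comparison isomorphism of Lemma~\ref{lem:functor_V_field}, then $\Gal_k$-invariants. You prove that lemma by the standard minimal-relation argument for injectivity and by counting the $q^d$ points of the finite \'{e}tale $k$-algebra $S$, as in Katz and Fontaine, and for the last step Hilbert 90 is not needed: taking $\Gal_k$-invariants of the $\varphi$- and $\Gal_k$-equivariant comparison isomorphism and using $(M\otimes_kk^\sep)^{\Gal_k}=M\otimes_k(k^\sep)^{\Gal_k}=M$ already gives $\DD_k(\VV_k(M))\cong M$.
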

The standard proof uses the following statement (see the proof of \cite[Proposition 1.2.6]{Fontaine1990Representations}).
\begin{lemma}\label{lem:functor_V_field}
Let $k$ be a field containing $\Fq$.
For every $M\in\textup{$\varphi$-Mod}_{k}$, multiplication induces an isomorphism
\[\VV_k(M)\otimes_{\Fq}k^\sep\cong M\otimes_kk^\sep.\]
\end{lemma}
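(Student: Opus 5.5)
The plan is to reduce the statement to the case $k=k^\sep$ and then use Galois descent. After base change, $N\coloneqq M\otimes_kk^\sep$ is an étale $\varphi$-module over $k^\sep$ of some dimension $d$. Here $\varphi_N$ is $\Frob_q$-semilinear with invertible linearization, and $\VV_k(M)=N^{\varphi=\id}$. So it suffices to prove two things over a separably closed field. First, the natural map $N^{\varphi=\id}\otimes_{\Fq}k^\sep\to N$ is injective. Second, $\dim_{\Fq}N^{\varphi=\id}=d$. Together these make it an isomorphism, and since the map is $\Gal_k$-equivariant, it is the desired identification.

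For injectivity, I will use the standard minimal-relation argument. Suppose $\Fq$-linearly independent vectors $v_1,\dots,v_r\in N^{\varphi=\id}$ become dependent over $k^\sep$. Take a relation $\sum a_iv_i=0$ of minimal length, normalized so that $a_1=1$. Applying $\varphi$ gives $\sum a_i^qv_i=0$. Subtracting the two relations gives a shorter relation, so $a_i^q=a_i$ for every $i$, i.e. $a_i\in\Fq$. This contradicts the $\Fq$-independence of the $v_i$. In particular $\dim_{\Fq}N^{\varphi=\id}\le d$.

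For the dimension count, which I expect to be the main step, fix a $k^\sep$-basis $e_1,\dots,e_d$ of $N$ and write $\varphi(e_j)=\sum_i a_{ij}e_i$. The matrix $A=(a_{ij})$ is invertible because $\varphi_N^L$ is an isomorphism. A vector $x=\sum x_ie_i$ is fixed by $\varphi$ exactly when $A\,x^{(q)}=x$, where $x^{(q)}$ denotes $x$ with each coordinate raised to the $q$-th power. Equivalently, $x^{(q)}=A^{-1}x$.

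This is a system of $d$ polynomial equations $X_i^q=\sum_j b_{ij}X_j$ with $B=A^{-1}$. It defines the finite $k^\sep$-algebra
\[S=k^\sep[X_1,\dots,X_d]/\bigl(X_i^q-\textstyle\sum_j b_{ij}X_j\bigr).\]
Because the leading terms are $X_i^q$, the algebra $S$ has dimension exactly $q^d$ over $k^\sep$, with monomial basis $X^\mu$, $0\le\mu_i<q$. Its module of differentials is generated by the $dX_i$ subject to $\sum_j b_{ij}\,dX_j=0$. Since $B$ is invertible, all $dX_i$ vanish, so $S$ is étale over $k^\sep$. As $k^\sep$ is separably closed, $S\cong(k^\sep)^{q^d}$. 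Hence there are exactly $q^d$ solutions in $(k^\sep)^d$, i.e. $\#N^{\varphi=\id}=q^d$.

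The fixed points form an $\Fq$-vector space, so $\dim_{\Fq}N^{\varphi=\id}=d$. By the injectivity step, the map $N^{\varphi=\id}\otimes_{\Fq}k^\sep\to N$ is an injection between $k^\sep$-spaces of equal dimension $d$, hence an isomorphism. This proves the lemma.
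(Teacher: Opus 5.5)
Your proof is correct and is essentially the standard argument the paper defers to, namely the proof of \cite[Proposition 1.2.6]{Fontaine1990Representations}: the equations $x^{(q)}=A^{-1}x$ cut out a finite \'{e}tale $k^\sep$-algebra of dimension exactly $q^d$, which gives $\dim_{\Fq}N^{\varphi=\id}=d$, and the minimal-relation argument gives injectivity. The dimension count holds because the leading monomials $X_i^q$ are pairwise coprime, so the defining polynomials form a Gr\"obner basis; the mention of Galois descent is unnecessary, since the statement is already about $N=M\otimes_kk^\sep$ over $k^\sep$.
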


\subsection{Contracting systems in characteristic $p$}
\begin{definition}
Let $I$ be an ideal of a ring $R$.
An endomorphism $\phi:R\to R$ preserving $I$ is called \textit{$I$-adically contracting} if $\phi^N(I)\subset I^2$ for some $N\ge1$.
For a local ring $(R,\frakm_R)$, we call $\phi$ \textit{contracting} if it is $\frakm_R$-adically contracting.
\end{definition}

Contraction forces every \'{e}tale $\varphi$-module to be free.

\begin{lemma}\label{lem:structure_thm_contracting}
Let $Q$ be a complete Noetherian local ring equipped with a contracting endomorphism $\varphi$.
Every \'{e}tale $\varphi$-module over $Q$ is finite free as a $Q$-module.
\end{lemma}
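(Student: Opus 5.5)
We will show that the Fitting ideals of $M$ are stable under the Frobenius pullback in a way that contraction forces to be trivial. Let $M$ be an étale $\varphi$-module over $Q$, so $\varphi^*M\cong M$ as $Q$-modules. Fitting ideals commute with base change, so for every $j$ we obtain
\[\Fit_j(M)=\Fit_j(\varphi^*M)=\varphi(\Fit_j(M))Q.\]
Iterating gives $\Fit_j(M)=\varphi^N(\Fit_j(M))Q$ for every $N\ge1$. The aim is to conclude that each Fitting ideal $I=\Fit_j(M)$ is either $0$ or $Q$. Once this holds, let $r$ be the smallest index with $\Fit_r(M)\neq0$. Then $\Fit_r(M)=Q$ and $\Fit_{r-1}(M)=0$, which is the standard criterion for a finitely generated module over a local ring to be free of rank $r$. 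Alternatively, one can argue that $M\otimes k$ has dimension $r$, so $M$ is generated by $r$ elements and the presentation relations vanish.

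The key step is to prove that an ideal $I\subset\frakm_Q$ satisfying $I=\varphi^N(I)Q$ must be zero. Choose $N$ with $\varphi^N(\frakm_Q)\subset\frakm_Q^2$. Since $\varphi$ is a local endomorphism, $\varphi^{N}(\frakm_Q^s)\subset\frakm_Q^{2s}$. Suppose $I\subset\frakm_Q^s$ for some $s\ge1$. Then
\[I=\varphi^N(I)Q\subset\varphi^N(\frakm_Q^s)Q\subset\frakm_Q^{2s}.\]
By induction $I\subset\frakm_Q^{2^k}$ for all $k$, and Krull's intersection theorem for the Noetherian local ring $Q$ gives $I=0$. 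If instead $I\not\subset\frakm_Q$, then $I=Q$. So every Fitting ideal is $0$ or $Q$, as required.

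The main obstacle is to make sure that the base-change identity $\Fit_j(\varphi^*M)=\varphi(\Fit_j(M))Q$ is applied correctly to $\varphi^*M=M\otimes_{Q,\varphi}Q$. This follows by tensoring a finite presentation $Q^a\to Q^b\to M\to0$ along $\varphi$: the result is a presentation of $\varphi^*M$ whose matrix is obtained by applying $\varphi$ to the entries, so its minors are the images of the original minors under $\varphi$. The only other things to check are that the contraction hypothesis propagates to powers of $\frakm_Q$, which uses that $\varphi$ is a ring homomorphism, and the final freeness criterion from Fitting ideals over a local ring. Completeness of $Q$ is not needed beyond it being Noetherian local.
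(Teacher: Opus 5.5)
Your proposal is correct and follows essentially the same route as the paper: étaleness plus base change of Fitting ideals gives $\Fit_j(M)=\varphi(\Fit_j(M))Q$. Contraction together with $\frakm_Q$-adic separatedness then forces every proper Fitting ideal to vanish, and the criterion $\Fit_r(M)=Q$, $\Fit_{r-1}(M)=0$ yields freeness of rank $r$ (your remark that only Noetherian locality, not completeness, is used is also accurate).
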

The proof uses Fitting ideals (compare \cite[Proposition 1.2.3]{Emerton2004introduction} and \cite[Corollary 4.9]{Du2025Multivariable}).
\begin{proof}
Fix $M\in\textup{$\varphi$-Mod}_{Q}$.
Let $\Fit_i(M)$ denote the $i$-th Fitting ideal of $M$, and write $\Fit_{-1}(M)=0$.
\'{E}taleness and compatibility of Fitting ideals with base change give
\[\Fit_i(M)=\varphi(\Fit_i(M))Q.\]
If $J=\Fit_i(M)$ is a proper ideal, then $J\subset\frakm_Q$, and for every $r\ge1$
\[J=\varphi^r(J)Q\subset\varphi^r(\frakm_Q)Q.\]
Choose $N$ with $\varphi^N(\frakm_Q)\subset\frakm_Q^2$.
Then $J\subset\frakm_Q^{2^s}$ for every $s\ge0$, so $J=0$ by $\frakm_Q$-adic separatedness.
Therefore, every Fitting ideal of $M$ is either 0 or $Q$.
Let $r$ be the least integer with $\Fit_r(M)=Q$; then $\Fit_{r-1}(M)=0$, so \cite[\href{https://stacks.math.columbia.edu/tag/07ZD}{Tag 07ZD}]{stacks-project} shows that $M$ is locally free of rank $r$, hence free because $Q$ is local.
\end{proof}

\begin{definition}
A \textit{contracting system over $\Fq$} is a complete Noetherian local $\Fq$-algebra $Q$ equipped with a contracting $\Fq$-algebra endomorphism $\varphi$ inducing $\Frob_q$ on its residue field.
\end{definition}
\begin{remark}
Contracting endomorphisms also occur in the study of local algebraic dynamics; see \cite{Avramov2006Homology} and \cite{Majidi-Zolbanin2013Entropy}.
Let $Q$ be a Noetherian local ring and let $\varphi$ be a contracting endomorphism of finite length.
Under this finite-length hypothesis, \cite[Theorem 13.3]{Avramov2006Homology} gives the equivalence between regularity of $Q$ and flatness of $\varphi$, extending Kunz's criterion for Frobenius to contracting endomorphisms.
\end{remark}
To extend $\DD_k(-)=(-\otimes_{\Fq}k^\sep)^{\Gal_k}$ to a contracting system $(Q,\varphi)$, we replace $k^\sep$ with the completed strict henselization of $Q$, with its compatible Galois and Frobenius actions.
\begin{lemma}\label{lem:henselian_equiv_cat}
Let $(R,\frakm_R)$ be a complete local ring with residue field $k$.
Reduction modulo $\frakm_R$ induces an equivalence between finite \'{e}tale $R$-algebras and finite \'{e}tale $k$-algebras.
\end{lemma}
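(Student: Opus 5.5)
The plan is to reduce to the henselian property of complete local rings together with the standard description of finite \'{e}tale algebras by monogenic standard \'{e}tale pieces. Since $R$ is complete with respect to $\frakm_R$, it is henselian. I would either invoke the general statement that, for a henselian pair $(R,\frakm_R)$, base change $A\mapsto A\otimes_R k$ is an equivalence of finite \'{e}tale algebras (Stacks, Tag 09ZL), or give the argument directly in the following two steps.

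\emph{Full faithfulness.} Let $A,B$ be finite \'{e}tale over $R$. Then $\Hom_R(A,B)$ is identified with the set of sections of the finite \'{e}tale morphism $\mathrm{Spec}(A\otimes_R B)\to\mathrm{Spec}(B)$. Sections of a finite \'{e}tale cover correspond bijectively to idempotents cutting out a component mapping isomorphically onto the base. Now $B$ is a finite $R$-algebra over a henselian local ring, so $B$ is a finite product of henselian local rings. Also, idempotents of the finite $R$-algebra $A\otimes_R B$ lift uniquely from its reduction modulo $\frakm_R$, because it is $\frakm_R$-adically complete. Hence the sections correspond to sections of the reduction modulo $\frakm_R$, which gives bijectivity of $\Hom_R(A,B)\to\Hom_k(A/\frakm_R A,B/\frakm_R B)$.

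\emph{Essential surjectivity.} A finite \'{e}tale $k$-algebra is a product of finite separable field extensions, so it suffices to treat a single extension $k(\bar a)=k[x]/(\bar f)$ with $\bar f$ monic separable. Lift $\bar f$ to a monic $f\in R[x]$. Then $R[x]/(f)$ is finite free over $R$, and its discriminant is a unit because it reduces to the nonzero discriminant of $\bar f$. So $R[x]/(f)$ is finite \'{e}tale with the prescribed reduction. Products are preserved, so this finishes the argument.

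The main obstacle is the idempotent-lifting step in full faithfulness, i.e.\ using completeness correctly for the non-local finite algebra $A\otimes_R B$. I would handle it by noting that a finite $R$-module is $\frakm_R$-adically complete when $R$ is complete Noetherian, or more generally by Hensel's lemma applied to $e^2-e$, so idempotents lift uniquely modulo $\frakm_R$. If $R$ is not assumed Noetherian, I would instead cite the henselian-pair statement directly.
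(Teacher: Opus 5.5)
Your proposal is correct and follows the paper's route. The paper just notes that a complete local ring is henselian (Stacks, Tag 04GM) and cites the finite \'{e}tale lifting equivalence for henselian local rings (Tag 04GK), which is your first option; your two-step direct argument unpacks that citation, and its one glossed point is harmless: a lifted idempotent $e$ of $C=A\otimes_R B$ cuts out a factor $Ce$ that maps isomorphically onto $\mathrm{Spec}(B)$ exactly when its reduction does, because the rank of the finite \'{e}tale $B$-algebra $Ce$ is locally constant and every connected component of $\mathrm{Spec}(B)$ meets the closed fiber.
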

\begin{proof}
The ring $R$ is henselian by \cite[\href{https://stacks.math.columbia.edu/tag/04GM}{Tag 04GM}]{stacks-project}, so the assertion follows from the finite \'{e}tale lifting equivalence \cite[\href{https://stacks.math.columbia.edu/tag/04GK}{Tag 04GK}]{stacks-project}.
\end{proof}

Fix a contracting system $(Q,\varphi)$ with residue field $k$.
Fix $k^\sep$, let $Q^\sh$ be the corresponding strict henselization of $Q$, and let $Q^\sep=\widehat{Q^\sh}$ with completion taken for $\frakm_QQ^\sh$.
This is a complete Noetherian local ring, faithfully flat over $Q$, with the maximal ideal $\frakm_QQ^\sep$ and residue field $k^\sep$.
The endomorphism $\varphi$ extends uniquely to a continuous endomorphism $\varphi^\sep$ of $Q^\sep$ inducing $\Frob_q$ on $k^\sep$.
It commutes with the natural continuous $\Gal_k$-action and is contracting, so $(Q^\sep,\varphi^\sep)$ is again a contracting system over $\Fq$.

For $V\in \Rep_{\Fq}(\Gal_k)$, we define
\[\DD_Q(V)\coloneqq (V\otimes_{\Fq}Q^\sep)^{\Gal_k}\]
using the diagonal Galois action.
Since $\varphi^\sep$ commutes with $\Gal_k$, the map $\id_V\otimes\varphi^\sep$ induces a $\varphi$-semilinear endomorphism of $\DD_Q(V)$.

\begin{proposition}\label{prop:functor_D_contracting}
Let $(Q,\varphi)$ be a contracting system over $\Fq$ with residue field $k$.
The assignment $V\mapsto \DD_Q(V)$ defines a functor
\[\DD_Q:\Rep_{\Fq}(\Gal_k)\to \textup{$\varphi$-Mod}_{Q}.\]
The canonical map
\[\DD_Q(V)\otimes_QQ^\sep\to V\otimes_{\Fq}Q^\sep\]
is an isomorphism compatible with Frobenius and $\Gal_k$.
\end{proposition}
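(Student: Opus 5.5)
The plan is to reduce to the finite-level quotients $Q/\frakm_Q^r$ and use Galois descent for finite \'{e}tale covers there, passing to the limit at the end. Since $V$ is finite and the action is continuous, it factors through $\Gal(k'/k)$ for some finite Galois extension $k'/k$ inside $k^\sep$. By Lemma \ref{lem:henselian_equiv_cat}, $k'$ lifts to a finite \'{e}tale local $Q$-algebra $Q'\subset Q^\sh\subset Q^\sep$. This $Q'$ is Galois over $Q$ with group $G=\Gal(k'/k)$, and $Q'$ is complete because it is finite over the complete ring $Q$. Write $Q^\sep$ as the completion of the filtered union of such $Q'$. Then $Q^\sep/\frakm_Q^r Q^\sep=\varinjlim Q'/\frakm_Q^rQ'$, and each $Q'/\frakm_Q^r$ is a finite \'{e}tale Galois cover of $Q/\frakm_Q^r$.

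The key step is to compute the invariants. We have $(V\otimes_{\Fq}Q^\sep)^{\Gal_k}=(V\otimes_{\Fq}Q')^{G}$. To see this, pass to the quotients mod $\frakm_Q^r$ and take the limit; invariants commute with inverse limits. At each finite level, an element of $V\otimes Q^\sep/\frakm^r$ fixed by $\Gal_{k'}$ lies in $V\otimes (Q^\sep/\frakm^r)^{\Gal_{k'}}=V\otimes Q'/\frakm^r$, since $\Gal_{k'}$ acts trivially on $V$. The identity $(Q^\sep/\frakm_Q^rQ^\sep)^{\Gal_{k'}}=Q'/\frakm_Q^rQ'$ follows from Galois theory for finite \'{e}tale covers of the henselian ring $Q'/\frakm^r$. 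Finite Galois descent along $Q\to Q'$ (here $Q'\otimes_QQ'\cong\prod_G Q'$) then shows the following: $\DD_Q(V)=(V\otimes Q')^G$ is a finitely generated $Q$-module, and $\DD_Q(V)\otimes_QQ'\to V\otimes_{\Fq}Q'$ is an isomorphism. Base changing along $Q'\to Q^\sep$ gives the displayed isomorphism. This map commutes with $\id_V\otimes\varphi^\sep$ and with the diagonal $\Gal_k$-action by construction.

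Next comes étaleness. The linearization of $\id\otimes\varphi^\sep$ on $V\otimes Q^\sep$ is an isomorphism, because $V\otimes Q^\sep$ is free with a $\varphi^\sep$-stable $\Fq$-basis coming from $V$. Through the isomorphism above, its base change along $Q\to Q^\sep$ is the base change of $\varphi^L_{\DD_Q(V)}$, after identifying $\varphi^*\DD_Q(V)\otimes_QQ^\sep$ with $(\varphi^\sep)^*(\DD_Q(V)\otimes Q^\sep)$. Faithful flatness of $Q\to Q^\sep$ then shows that $\varphi^L_{\DD_Q(V)}$ is an isomorphism, so $\DD_Q(V)$ is étale. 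Functoriality is clear, since a $\Gal_k$-map $V\to W$ induces compatible maps on invariants.

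I expect the main obstacle to be the invariants identity. It needs care with completion, specifically that $\Gal_{k'}$-invariants of $Q^\sep$ recover $Q'$ and not something larger, and with the fact that the $\Gal_k$-action on $Q^\sep$ is continuous only for the $\frakm$-adic topology. My first approach is to work modulo $\frakm_Q^r$, where everything is an honest ind-finite-\'{e}tale Galois situation over the Artinian-like henselian base $Q/\frakm_Q^r$, and only then take the limit.
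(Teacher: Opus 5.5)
Your proposal is correct and follows the paper's proof. You factor the action through a finite quotient $\Gal(k'/k)$, identify $\DD_Q(V)$ with $(V\otimes_{\Fq}Q')^{\Gal(k'/k)}$ using $(Q^\sep)^{\Gal_{k'}}=Q'$, apply finite \'etale Galois descent, and check \'etaleness after faithfully flat base change. The differences are cosmetic. You reprove the invariants identity inline by working modulo $\frakm_Q^r$, which is exactly the argument of Lemma~\ref{lem:galois_invariants}. You also verify \'etaleness over $Q^\sep$ instead of over $Q'$.
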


\begin{lemma}\label{lem:galois_invariants}
Let $(R,\frakm)$ be a complete Noetherian local ring with residue field $k$.
Let $S=\widehat{R^\sh}$ with natural $\Gal_k$-action.
For any finitely generated $R$-module $N$, there is a canonical isomorphism
\[N\isomto (N\otimes_RS)^{\Gal_k},\quad n\mapsto n\otimes1.\]
\end{lemma}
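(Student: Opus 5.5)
The plan is to reduce to finite levels and use Galois descent along finite \'etale covers. Write $R^\sh=\varinjlim_{k'} R_{k'}$, where $k'$ runs over finite Galois subextensions of $k^\sep/k$. Here $R_{k'}$ is the finite \'etale local $R$-algebra with residue field $k'$ supplied by Lemma \ref{lem:henselian_equiv_cat}. The action of $\Gal(k'/k)$ on $R_{k'}$ comes from the same equivalence, and $R_{k'}$ is a Galois cover of $R$ with group $\Gal(k'/k)$, since $k'\otimes_k k'\cong\prod_{\Gal(k'/k)}k'$ lifts. Then $S$ is the $\frakm$-adic completion of this colimit. The invariants are taken for the natural action of $\Gal_k$ on $N\otimes_RS$ through the second factor.

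First I treat a finite level. For a finite Galois cover $R\to R_{k'}$ with group $G'=\Gal(k'/k)$, faithfully flat Galois descent gives $(N\otimes_RR_{k'})^{G'}=N$. Concretely, the sequence $0\to N\to N\otimes R_{k'}\rightrightarrows N\otimes R_{k'}\otimes_R R_{k'}$ is exact by faithful flatness, and $R_{k'}\otimes_RR_{k'}\cong\prod_{G'}R_{k'}$, so the equalizer is exactly the $G'$-invariants. Taking the colimit over $k'$ gives $(N\otimes_RR^\sh)^{\Gal_k}=N$, because every element of $N\otimes R^\sh$ already lies at a finite level and is fixed by the corresponding open subgroup.

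The main obstacle is passing to the completion $S$. My plan is to work modulo $\frakm^r$. Since $N$ is finitely generated and $S$ is the completion of the Noetherian local ring $R^\sh$ with $\frakm R^\sh$ its maximal ideal, I have $N\otimes_RS=\varprojlim_r N\otimes_R R^\sh/\frakm^rR^\sh$. Moreover $N\otimes_RS/\frakm^rS=(N/\frakm^rN)\otimes_{R/\frakm^r}(R/\frakm^r)^\sh$, and the $\Gal_k$-action is compatible with these reductions. Invariants commute with inverse limits, so
\[(N\otimes_RS)^{\Gal_k}=\varprojlim_r\bigl((N/\frakm^rN)\otimes_{R}R^\sh\bigr)^{\Gal_k}.\]
Applying the previous step to the finitely generated module $N/\frakm^rN$ over $R$, each term equals $N/\frakm^rN$. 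Then $\varprojlim_r N/\frakm^rN=N$ by completeness of $R$ and finite generation of $N$, via Artin--Rees.

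It remains to check that the resulting identification is the map $n\mapsto n\otimes1$. This holds because every step is induced by $n\mapsto n\otimes 1$. I also need to verify that $N\otimes_RS$ is $\frakm$-adically complete, so that it equals the inverse limit above. This follows because $S$ is Noetherian and $N\otimes_RS$ is a finitely generated $S$-module.
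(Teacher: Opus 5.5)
Your proposal is correct and is essentially the paper's argument: Galois descent along the finite \'etale covers $R_{k'}$, passage to the filtered union to get the invariants of $N\otimes_RR^\sh$, and then an inverse limit over the quotients $N/\frakm^rN$ using that $N\otimes_RS$ is $\frakm S$-adically complete. The paper phrases the descent directly over $R/\frakm^a$ via $S/\frakm_S^a\cong R^\sh/\frakm^aR^\sh$, which is the same thing. In your colimit step, note that it is the injectivity of $N\otimes_RR_{k'}\to N\otimes_RR^\sh$ (from flatness of $R_{k'}\to R^\sh$) that lets you treat a $\Gal_k$-invariant element as a $\Gal(k'/k)$-invariant element at finite level.
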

\begin{proof}
For $a\ge1$, set $R_a=R/\frakm^a R$ and $S_a=S/\frakm_S^a\cong R^\sh/\frakm^a$.
Finite \'{e}tale descent followed by passage to the filtered union gives
\[N_a\cong (N_a\otimes_{R_a}S_a)^{\Gal_k}\]
for every finite $R_a$-module $N_a$.
Since $N\otimes_RS$ is complete, taking inverse limits with $N_a=N/\frakm^aN$ proves the assertion.
\end{proof}
\begin{proof}[Proof of Proposition \ref{prop:functor_D_contracting}]
Since $V$ is finite-dimensional and its action is continuous, there is a finite Galois extension $k'/k$ such that $\Gal_{k'}$ acts trivially on $V$.
Let $Q'\subset Q^\sep$ be the finite \'{e}tale local $Q$-algebra with residue field $k'$.
Lemma \ref{lem:galois_invariants} gives a canonical isomorphism $(Q^\sep)^{\Gal_{k'}}=Q'$.
Since $\Gal_{k'}$ acts trivially on $V$, we obtain
\begin{align*}
    \DD_Q(V)
    =\left(\left(V\otimes_{\Fq}Q^\sep\right)^{\Gal_{k'}}\right)^{\Gal(k'/k)}
    =\left(V\otimes_{\Fq}Q'\right)^{\Gal(k'/k)}
\end{align*}
Finite \'{e}tale Galois descent shows that $\DD_Q(V)$ is finite projective, hence free, over $Q$, and gives the canonical isomorphism
\[\DD_Q(V)\otimes _QQ'\xrightarrow{\sim} V\otimes_{\Fq}Q'.\]
After faithfully flat base change, the Frobenius linearization is that of $\id_V\otimes\varphi_{Q'}$ and is invertible.
Therefore, $\DD_Q(V)$ is an \'{e}tale $\varphi$-module over $Q$, and further base change to $Q^\sep$ gives the asserted comparison.
\end{proof}

For $M\in\textup{$\varphi$-Mod}_Q$, set
\[\VV_Q(M)\coloneqq\left(M\otimes_QQ^\sep\right)^{\varphi=\id}.\]
The action of $\Gal_k$ on the second factor preserves $\VV_Q(M)$ because it commutes with Frobenius.

\begin{proposition}\label{prop:functor_V_contracting}
Let $(Q,\varphi)$ be a contracting system over $\Fq$ with residue field $k$.
The assignment $M\mapsto \VV_Q(M)$ defines a functor
\[\VV_Q:\textup{$\varphi$-Mod}_{Q}\to\Rep_{\Fq}(\Gal_k).\]
For every $M\in \textup{$\varphi$-Mod}_{Q}$, the canonical map
\[\VV_Q(M)\otimes_{\Fq}Q^\sep\to M\otimes_QQ^\sep\]
is an isomorphism.
\end{proposition}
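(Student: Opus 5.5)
The plan is to reduce to the residue field via contraction, using Lemma \ref{lem:functor_V_field} there, and then lift along $Q^\sep\to k^\sep$ by a successive approximation that converges because $\varphi$ contracts.

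\textbf{Reduction to free modules and matrices.} By Lemma \ref{lem:structure_thm_contracting}, $M$ is free of some rank $r$. Choose a basis $e_1,\cdots,e_r$ and let $A\in M_r(Q)$ be the matrix of $\varphi_M$, so $\varphi_M(e)=eA$. Étaleness means $A\in\GL_r(Q)$. Writing $S=Q^\sep$ and $\bar S=k^\sep=S/\frakm_QS$, an element of $M\otimes_QS$ is a row vector $x\in S^r$. The condition $\varphi=\id$ becomes
\[A\,\varphi^\sep(x)=x.\]

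\textbf{Step 1: reduction mod $\frakm_Q$ is injective on $\VV_Q(M)$.} Suppose $x\in\frakm_Q^aS^r$ is $\varphi$-fixed. Then $x=A\varphi^{\sep,N}(\cdots)$, and iterating the equation $N$ times puts $x$ in $\varphi^{\sep,N}(\frakm_Q^a)S\subset\frakm_Q^{2a}S$. Hence $x\in\bigcap_a\frakm_Q^aS=0$ by separatedness.

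\textbf{Step 2: every fixed vector of $\bar M=M\otimes_Q k$ lifts.} Given $\bar x\in\bar S^r$ with $\bar A\,\Frob_q(\bar x)=\bar x$, take any lift $x_0\in S^r$ and iterate
\[x_{j+1}=A\,\varphi^\sep(x_j).\]
The difference $x_{j+1}-x_j=A\varphi^\sep(x_j-x_{j-1})$ starts in $\frakm_QS^r$. After every $N$ steps, its ideal of coefficients is squared, so the sequence converges $\frakm_Q$-adically in the complete ring $S$ to a fixed vector lifting $\bar x$. By Step 1, the lift is unique. Therefore reduction gives a $\Gal_k$-equivariant bijection $\VV_Q(M)\cong\VV_k(\bar M)$. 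This identification uses that $\varphi^\sep$ induces $\Frob_q$ on $k^\sep$ and that $\bar M$ is an étale $\varphi$-module over $k$. By Theorem \ref{thm:equiv_phimod_field} and Lemma \ref{lem:functor_V_field}, $\VV_Q(M)$ is then an $\Fq$-vector space of dimension $r$. Continuity of the action is inherited from $\VV_k(\bar M)$, since the bijection is equivariant. So $\VV_Q$ is a functor to $\Rep_{\Fq}(\Gal_k)$.

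\textbf{Step 3: the comparison map is an isomorphism.} The map $\VV_Q(M)\otimes_{\Fq}S\to M\otimes_QS$ is a map of free $S$-modules of the same rank $r$. Its reduction modulo $\frakm_QS$ is the map $\VV_k(\bar M)\otimes_{\Fq}k^\sep\to\bar M\otimes_kk^\sep$, which is an isomorphism by Lemma \ref{lem:functor_V_field}. Nakayama's lemma over the local ring $S$ makes the map surjective, and a surjection between free modules of equal finite rank is an isomorphism.

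\textbf{Main obstacle.} The key point is the convergence and uniqueness in Steps 1–2. It must be checked that contraction passes to $S$: $\varphi^{\sep,N}(\frakm_QS)\subset\frakm_Q^2S$. This holds because $\frakm_S=\frakm_QS$ and $\varphi^\sep$ extends $\varphi$. Granting that, the estimate $\varphi^{\sep,N}(\frakm_Q^aS)\subset\frakm_Q^{2a}S$ follows, and both steps go through.
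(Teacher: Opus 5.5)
Your proposal is correct and follows essentially the same route as the paper. In both, contraction on $Q^\sep$ identifies $\VV_Q(M)$ with $\VV_k(M\otimes_Qk)$, Lemma \ref{lem:functor_V_field} handles the residue field, and a Nakayama/equal-rank argument gives the comparison isomorphism. The only difference is packaging: the paper obtains your Steps 1--2 from Lemma \ref{lem:contract_to_resfield}, where the series $-\sum_j\varphi_M^j$ inverts $\varphi-\id$ on $\frakm_Q M$ and the snake lemma finishes. Your iteration $x_{j+1}=A\,\varphi^\sep(x_j)$ is exactly that geometric series written out in a basis.
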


The following lemma is a key observation; contraction removes the maximal-ideal contribution to both fixed vectors and coinvariants, so both are detected on the residue field.

\begin{lemma}\label{lem:contract_to_resfield}
Let $(Q,\varphi)$ be a contracting system over $\Fq$ with residue field $k$.
For every $\varphi$-module $M$ over $Q$ reduction induces isomorphisms of $\Fq$-vector spaces
\[M^{\varphi=\id}\to (M\otimes_Qk)^{\varphi=\id},\quad \coker(\varphi_M-\id)\to \coker(\varphi_{M\otimes_Qk}-\id).\]
\end{lemma}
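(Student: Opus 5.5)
The plan is to reduce everything to the submodule $\frakm_QM$ and show that $\varphi_M-\id$ is bijective on it. There is a short exact sequence of $\Fq$-vector spaces with additive endomorphisms
\[0\to \frakm_QM\to M\to M\otimes_Qk\to 0,\]
and $\varphi_M$ preserves $\frakm_QM$ because $\varphi(\frakm_Q)\subset\frakm_Q$ ($\varphi$ is local, inducing $\Frob_q$ on $k$). Applying the snake lemma to the vertical maps $\varphi-\id$ gives a six-term exact sequence
\[0\to(\frakm_QM)^{\varphi=\id}\to M^{\varphi=\id}\to (M\otimes_Qk)^{\varphi=\id}\to\coker(\varphi-\id\vert_{\frakm_QM})\to\coker(\varphi_M-\id)\to\coker(\varphi_{M\otimes k}-\id)\to0.\]
So both assertions follow once we know that $\varphi-\id$ is bijective on $N\coloneqq\frakm_QM$.

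Note that no étaleness is assumed, so we use only the semilinearity $\varphi_M(am)=\varphi(a)\varphi_M(m)$ and contraction. Choose $N_0$ with $\varphi^{N_0}(\frakm_Q)\subset\frakm_Q^2$. Iterating gives $\varphi^{sN_0}(\frakm_Q)\subset\frakm_Q^{2^s}$, since $\varphi^{N_0}(\frakm_Q^j)\subset\frakm_Q^{2j}$. Hence for $x=\sum a_im_i\in\frakm_QM$ with $a_i\in\frakm_Q$,
\[\varphi_M^{sN_0}(x)=\sum\varphi^{sN_0}(a_i)\varphi_M^{sN_0}(m_i)\in\frakm_Q^{2^s}M.\]
More generally, $\varphi_M^j(\frakm_Q^rM)\subset\frakm_Q^rM$ for all $j$, and $\varphi_M^{sN_0}(\frakm_Q^rM)\subset\frakm_Q^{r2^s}M$. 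Thus $\varphi_M$ is topologically nilpotent on $N$ for the $\frakm_Q$-adic topology.

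Now $M$ is finitely generated over a complete Noetherian local ring, so $N$ is $\frakm_Q$-adically complete and separated (Artin–Rees/Krull). The series
\[-\sum_{j\ge0}\varphi_M^j\]
therefore converges on $N$: each block of $N_0$ consecutive terms lies in $\frakm_Q^{2^s}M$ and these exponents tend to infinity. The sum defines an additive inverse of $\varphi_M-\id$ on $N$, since the telescoping identity $(\varphi_M-\id)\sum_{j<J}\varphi_M^j=\varphi_M^J-\id$ holds and $\varphi_M^J\to0$ pointwise on $N$. Injectivity also follows directly: if $\varphi_M(x)=x$ with $x\in N$, then $x=\varphi_M^{sN_0}(x)\in\bigcap_s\frakm_Q^{2^s}M=0$.

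The main obstacle is ensuring the convergence argument is legitimate. We need closedness and completeness of $\frakm_QM$ inside $M$, and we need the uniform estimate on $\varphi_M^j$ for all $j$, not just multiples of $N_0$. The latter is handled by the stability $\varphi_M(\frakm_Q^rM)\subset\frakm_Q^rM$ noted above. After that, the snake lemma concludes immediately, and the resulting maps are precisely the reduction maps.
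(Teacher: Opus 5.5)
Your proposal is correct and follows the paper's proof essentially step for step. The paper likewise applies the snake lemma to $\varphi-\id$ on $0\to\frakm_QM\to M\to M\otimes_Qk\to0$. It then shows that $\varphi_M-\id$ is bijective on $\frakm_QM$ via the geometric series $-\sum_{j\ge0}\varphi_M^j$, using the contraction estimate $\varphi_M^{Ns}(\frakm_QM)\subset\frakm_Q^{2^s}M$ and $\frakm_Q$-adic completeness of $\frakm_QM$.
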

\begin{proof}
We apply $\varphi-\id$ to the short exact sequence of $\Fq$-vector spaces
\begin{center}
    \begin{tikzcd}
    0\arrow[r]&\frakm_Q\cdot M\arrow[r]&M\arrow[r]&M\otimes_Q k\arrow[r]&0
    \end{tikzcd}
\end{center}
to obtain the following commutative diagram with exact rows.
\begin{center}
    \begin{tikzcd}
    0\arrow[r]&\frakm_Q\cdot M\arrow[r]\arrow[d,"\varphi-\id"]&M\arrow[r]\arrow[d,"\varphi-\id"]&M\otimes_Q k\arrow[r]\arrow[d,"\varphi-\id"]&0\\
    0\arrow[r]&\frakm_Q\cdot M\arrow[r]&M\arrow[r]&M\otimes_Q k\arrow[r]&0
    \end{tikzcd}
\end{center}
Choose $N\ge1$ with $\varphi^N(\frakm_Q)\subset\frakm_Q^2$.
Then $\varphi_M^{Ns}(\frakm_QM)\subset\frakm_Q^{2^s}M$ for every $s\ge0$.
As a finite module over the complete Noetherian ring $Q$, the module $\frakm_Q \cdot M$ is $\frakm_Q$-adically complete.
The convergent series $-\sum_{j=0}^\infty\varphi_M^j$ therefore defines an additive endomorphism of $\frakm_QM$, and the geometric series identity shows that it is inverse to $\varphi_M-\id$.
Therefore, 
\[\varphi-\id:\frakm_Q\cdot M\to\frakm_Q\cdot M\]
is bijective.
The snake lemma now gives both asserted isomorphisms.
\end{proof}
\begin{proof}[Proof of Proposition \ref{prop:functor_V_contracting}]
Since $(Q^\sep,\varphi^\sep)$ is a contracting system, the base change $M\otimes_QQ^\sep$ is an \'{e}tale $\varphi$-module over it.
Lemma \ref{lem:contract_to_resfield} gives a natural $\Gal_k$-equivariant isomorphism
\begin{align*}
    \VV_Q(M)\xrightarrow{\sim} (M\otimes_Qk^\sep)^{\varphi=\id}
    =\VV_k(M\otimes_Qk)
\end{align*}
so $\VV_Q(M)$ is a finite-dimensional $\Fq$-representation by Theorem \ref{thm:equiv_phimod_field}.
Therefore, $\VV_Q$ is naturally isomorphic to $\VV_k\circ(-\otimes_Qk)$, as expressed by the following commutative diagram.
\begin{center}
\begin{tikzcd}
    \textup{$\varphi$-Mod}_{Q}\arrow[rd,"\VV_Q"]\arrow[d,"-\otimes_Qk"]\\
    \textup{$\varphi$-Mod}_{k}\arrow[r,"\VV_k"]&\Rep_{\Fq}(\Gal_k)
\end{tikzcd}
\end{center}
Consider the canonical map
\[\VV_Q(M)\otimes_{\Fq}Q^\sep\to M\otimes_QQ^\sep.\]
Its source is finite free because $\VV_Q(M)$ is finite-dimensional, and its target is finite free by Lemma \ref{lem:structure_thm_contracting}.
Therefore, it suffices to show that its reduction
\[\VV_Q(M)\otimes_{\Fq}k^\sep\to (M\otimes_Qk)\otimes_kk^\sep\]
is an isomorphism.
Under the preceding identification $\VV_Q(M)\cong \VV_k(M\otimes_Qk)$, this is exactly the isomorphism of Lemma \ref{lem:functor_V_field}, which proves the assertion.
\end{proof}

Lemma \ref{lem:contract_to_resfield} also gives the following analogue of the Artin-Schreier exact sequence.

\begin{lemma}\label{lem:SES_contracting}
Let $(Q,\varphi)$ be a contracting system over $\Fq$.
Then there is a $\Gal_k$-equivariant short exact sequence of $\Fq$-vector spaces
\begin{center}
    \begin{tikzcd}
    0\arrow[r]&\Fq\arrow[r]&Q^\sep\arrow[r,"\varphi^\sep-\id"]&Q^\sep\arrow[r]&0
    \end{tikzcd}
\end{center}
\end{lemma}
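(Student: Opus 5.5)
The plan is to deduce the sequence from Lemma \ref{lem:contract_to_resfield}, applied to the rank-one \'{e}tale $\varphi$-module $M=Q^\sep$ over the contracting system $(Q^\sep,\varphi^\sep)$. Here $\varphi_M=\varphi^\sep$, and its linearization $Q^\sep\otimes_{Q^\sep,\varphi^\sep}Q^\sep\to Q^\sep$ is the identity under the obvious identification. The lemma then says that reduction modulo $\frakm_Q Q^\sep$ induces isomorphisms
\[(Q^\sep)^{\varphi^\sep=\id}\isomto (k^\sep)^{\Frob_q=\id},\qquad \coker(\varphi^\sep-\id\mid Q^\sep)\isomto\coker(\Frob_q-\id\mid k^\sep).\]

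On the residue field everything is classical. We have $(k^\sep)^{\Frob_q=\id}=\Fq$, since these are the roots of $X^q-X$. The map $x\mapsto x^q-x$ is surjective on $k^\sep$, since for any $a\in k^\sep$ the polynomial $X^q-X-a$ is separable (its derivative is $-1$) and so has a root in $k^\sep$. Hence the kernel of $\varphi^\sep-\id$ on $Q^\sep$ maps isomorphically onto $\Fq$. Since $\Fq\subset Q$ is fixed by $\varphi$ (which is an $\Fq$-algebra map), this kernel is exactly the image of $\Fq\to Q^\sep$. Likewise the cokernel vanishes, so $\varphi^\sep-\id$ is surjective. This gives exactness at all three spots.

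For $\Gal_k$-equivariance, recall that $\Gal_k$ acts trivially on $\Fq$ and that the action on $Q^\sep$ is by ring automorphisms commuting with $\varphi^\sep$ (noted in the construction of $Q^\sep$). Hence both maps in the sequence are equivariant.

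The only point needing care is that Lemma \ref{lem:contract_to_resfield} really applies. Two things must be checked. First, $(Q^\sep,\varphi^\sep)$ is a contracting system with residue field $k^\sep$ and maximal ideal $\frakm_Q Q^\sep$; this was asserted when $Q^\sep$ was introduced. Second, the lemma only needs $M$ to be a finite module, so that $\frakm M$ is complete; here $M=Q^\sep$ is free of rank one over itself. Beyond verifying these hypotheses, there is no real obstacle, since the contraction argument (inverting $\varphi-\id$ on $\frakm_{Q^\sep}$ by a geometric series) has already done the work.
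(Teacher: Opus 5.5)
Your proposal is correct and is essentially the paper's own proof. You apply Lemma \ref{lem:contract_to_resfield} to $Q^\sep$ as a $\varphi$-module over the contracting system $(Q^\sep,\varphi^\sep)$, then use $(k^\sep)^{\Frob_q=\id}=\Fq$ and the surjectivity of $x\mapsto x^q-x$ on $k^\sep$, which follows from the separability of $X^q-X-a$; your added remark that $\Fq\subset Q^\sep$ already lies in the kernel, so the kernel is exactly $\Fq$, is a welcome bit of precision.
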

\begin{proof}
Apply Lemma \ref{lem:contract_to_resfield} to $Q^\sep$ as a $\varphi$-module over itself.
Its fixed vectors reduce isomorphically to $\Fq$, and its coinvariants vanish because $X^q-X-\alpha$ is separable and has a root in $k^\sep$ for every $\alpha\in k^\sep$.
Therefore,
\[\ker(\varphi^\sep-\id)=\Fq,\quad \coker(\varphi^\sep-\id)=0.\]
\end{proof}

\begin{theorem}\label{equivalence_contracting}
Let $(Q,\varphi)$ be a contracting system over $\Fq$ with residue field $k$.
The functors $\DD_Q$ and $\VV_Q$ are mutually quasi-inverse equivalences between $\Rep_{\Fq}(\Gal_k)$ and $\textup{$\varphi$-Mod}_{Q}$.
There is a commutative diagram
\begin{center}
\begin{tikzcd}
    \Rep_{\Fq}(\Gal_k)\arrow[r,"\DD_Q"]\arrow[rd,"\DD_k"']&\textup{$\varphi$-Mod}_{Q}\arrow[d,"-\otimes_Qk"]\\
    &\textup{$\varphi$-Mod}_{k}
\end{tikzcd}
\end{center}
consisting of equivalences of categories.
\end{theorem}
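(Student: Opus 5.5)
The plan is to show first that the triangle commutes up to natural isomorphism. Then, since $\DD_k$ is an equivalence by Theorem \ref{thm:equiv_phimod_field}, it suffices to prove that $-\otimes_Qk$ is an equivalence and that $\VV_Q$ is quasi-inverse to $\DD_Q$.

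For commutativity, let $V\in\Rep_{\Fq}(\Gal_k)$ and reduce the isomorphism of Proposition \ref{prop:functor_D_contracting} modulo $\frakm_Q$. This gives a Frobenius- and $\Gal_k$-equivariant isomorphism
\[(\DD_Q(V)\otimes_Qk)\otimes_kk^\sep\cong V\otimes_{\Fq}k^\sep.\]
Taking $\Gal_k$-invariants and using Galois descent for $k^\sep/k$ identifies $\DD_Q(V)\otimes_Qk$ with $\DD_k(V)$, naturally in $V$. Concretely, the map $\DD_Q(V)\to(V\otimes Q^\sep)^{\Gal_k}\to (V\otimes k^\sep)^{\Gal_k}$ factors through $\DD_Q(V)\otimes_Qk$ and becomes an isomorphism after $\otimes_kk^\sep$.

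Next I prove the two round-trip isomorphisms.

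\emph{Recovering $V$.} Start from $\DD_Q(V)\otimes_QQ^\sep\cong V\otimes_{\Fq}Q^\sep$ and take $\varphi$-fixed points. Lemma \ref{lem:SES_contracting} gives $(Q^\sep)^{\varphi=\id}=\Fq$, so $(V\otimes_{\Fq}Q^\sep)^{\varphi=\id}=V$, using a basis of $V$. This yields $\VV_Q(\DD_Q(V))\cong V$, and it is $\Gal_k$-equivariant.

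\emph{Recovering $M$.} Start from Proposition \ref{prop:functor_V_contracting}, which gives $\VV_Q(M)\otimes_{\Fq}Q^\sep\cong M\otimes_QQ^\sep$. Take $\Gal_k$-invariants. By Lemma \ref{lem:galois_invariants}, the right side becomes $M$, and the left side is $\DD_Q(\VV_Q(M))$ by definition. The isomorphism is compatible with $\varphi$, so $\DD_Q\VV_Q\cong\id$.

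Naturality of all these maps is routine, so $\DD_Q$ and $\VV_Q$ are mutually quasi-inverse equivalences. Since $\DD_Q$ and $\DD_k$ are then equivalences and the triangle commutes, the functor $-\otimes_Qk$ is an equivalence as well.

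The main subtlety is the step using Lemma \ref{lem:galois_invariants}. That lemma concerns invariants of $M\otimes_QQ^\sep$ under a Galois action on the second factor, and I need this to match the Galois action transported through $\VV_Q(M)\otimes Q^\sep$, which is diagonal there. The isomorphism of Proposition \ref{prop:functor_V_contracting} is $\Gal_k$-equivariant for exactly these actions, because $\Gal_k$ acts on $\VV_Q(M)\subset M\otimes Q^\sep$ through $Q^\sep$. So the identification holds, but this is the point to check carefully.
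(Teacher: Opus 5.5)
Your proposal is correct and follows the paper's proof. The two round-trip isomorphisms come exactly as there: take $\varphi$-invariants of the comparison in Proposition \ref{prop:functor_D_contracting} using Lemma \ref{lem:SES_contracting}, and take $\Gal_k$-invariants of the comparison in Proposition \ref{prop:functor_V_contracting} using Lemma \ref{lem:galois_invariants}. The only difference is that you verify the triangle directly by reducing the $\DD_Q$-comparison modulo $\frakm_Q$, whereas the paper gets it implicitly from the identification $\VV_Q\simeq\VV_k\circ(-\otimes_Qk)$ established in the proof of Proposition \ref{prop:functor_V_contracting}.
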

\begin{proof}
Let $V\in \Rep_{\Fq}(\Gal_k)$.
Proposition \ref{prop:functor_D_contracting} gives the canonical Frobenius and $\Gal_k$-equivariant isomorphism
\[\DD_Q(V)\otimes_QQ^\sep\cong V\otimes_{\Fq}Q^\sep.\]
Taking Frobenius fixed vectors gives
\[\VV_Q(\DD_Q(V))\cong (V\otimes_{\Fq}Q^\sep)^{\varphi=\id}.\]
By Lemma \ref{lem:SES_contracting}, the map $v\mapsto v\otimes1$ induces a natural $\Gal_k$-equivariant isomorphism
\[V\to (V\otimes_{\Fq}Q^\sep)^{\varphi=\id}\]
so $\VV_Q\circ\DD_Q\simeq\id$.

Now fix $M\in\textup{$\varphi$-Mod}_{Q}$.
Proposition \ref{prop:functor_V_contracting} gives the canonical Frobenius and $\Gal_k$-equivariant isomorphism
\[\VV_Q(M)\otimes_{\Fq}Q^\sep\to M\otimes_QQ^\sep.\]
Taking Galois invariants yields
\[\DD_Q(\VV_Q(M))\cong (M\otimes_QQ^\sep)^{\Gal_k}.\]
By Lemma \ref{lem:galois_invariants}, the canonical map
\[M\to (M\otimes_QQ^\sep)^{\Gal_k}\]
is an isomorphism.
It is natural in $M$ and Frobenius equivariant, so $\DD_Q\circ\VV_Q\simeq\id$.
\end{proof}

\subsection{Definition of $F$-dynamical systems}
Fix a finite extension $E/\Qp$, write $\Fq$ for its residue field, and choose a uniformizer $\pi_E$.

\begin{definition}
An \textit{$F$-dynamical system over $\cO_E$} is a pair $(R,\varphi)$ consisting of a complete Noetherian local $\cO_E$-algebra $R$ and a local $\cO_E$-algebra endomorphism $\varphi$ satisfying the following conditions.
\begin{enumerate}
    \item The structure map $\cO_E\to R$ is faithfully flat.
    Equivalently, $\pi_E$ is a nonzerodivisor and belongs to $\frakm_R$.
    \item The induced endomorphism $\overline{\varphi}$ of $R/(\pi_E)$ is contracting, and the induced endomorphism of $k=R/\frakm_R$ is the $q$-power Frobenius.
\end{enumerate}
We call $(R,\varphi)$ flat if $\varphi:R\to R$ is flat.
\end{definition}
\begin{example}\label{ex:fds}
Let $k$ be a perfect field containing $\Fq$.
Then $(W_{\cO_E}(k),\varphi_E)$ is a flat $F$-dynamical system over $\cO_E$.
In fact, $W_{\cO_E}(k)$ is a complete discrete valuation ring, faithfully flat over $\cO_E$, its quotient by $\pi_E$ is $k$, and $\varphi_E$ lifts $x\mapsto x^q$.
\end{example}

\begin{remark}
The terminology is inspired by the ``local algebraic dynamical system'' of \cite{Majidi-Zolbanin2013Entropy}, which consists of a Noetherian local ring together with a local endomorphism of finite length.
\end{remark}

\begin{lemma}\label{lem:fds_extension_fields}
Let $(R,\varphi)$ be an $F$-dynamical system over $\cO_E$ with residue field $k$.
For an algebraic separable extension $k'/k$, let $R'$ be  the corresponding ind-finite \'{e}tale local $R$-algebra, and put $R_{k'}=\widehat{R'}$ where completion is taken at $\frakm_RR'$.
Then the following are true.
\begin{enumerate}
    \item There is a unique endomorphism $\varphi_{k'}:R_{k'}\to R_{k'}$, extending $\varphi$ and inducing $x\mapsto x^q$ on $k'$.
    The pair $(R_{k'},\varphi_{k'})$ is an $F$-dynamical system over $\cO_E$.
    \item If $\varphi$ is flat (resp. finite), then $\varphi_{k'}$ is flat (resp. finite).
\end{enumerate}
\end{lemma}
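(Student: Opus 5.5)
The plan is to reduce everything to statements about finite étale local $R$-algebras and then pass to ind-limits and completions. First, write $k'=\bigcup_j k_j$ as a filtered union of finite separable subextensions $k_j/k$. By Lemma \ref{lem:henselian_equiv_cat}, each $k_j$ lifts uniquely to a finite étale local $R$-algebra $R_j$, and $R'=\varinjlim R_j$. Since each $R_j$ is finite free over $R$, it is again complete Noetherian local. The completion $R_{k'}$ of $R'$ at $\frakm_RR'$ is Noetherian and complete. To see this, note that $R_{k'}/\frakm_R^a R_{k'}\cong R'/\frakm_R^aR'$ is flat over $R/\frakm_R^a$ with residue field $k'$, and $\frakm_R R_{k'}$ is finitely generated. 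This is the standard argument, as in \cite{stacks-project} for completed strict henselizations, where we simply replace $k^\sep$ by $k'$. Flatness over $R$ follows by the local flatness criterion, so $\pi_E$ stays a nonzerodivisor in $R_{k'}$ and lies in the maximal ideal. Hence condition (1) holds.

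Next, to construct $\varphi_{k'}$, I work at finite level. Consider $\varphi^*R_j=R_j\otimes_{R,\varphi}R$. It is finite étale over $R$, with special fiber $k_j\otimes_{k,\Frob_q}k$. The $R$-algebra maps $\varphi^*R_j\to R'$ correspond, by the finite étale equivalence (rigidity of étale maps into henselian rings), to $k$-algebra maps $k_j\otimes_{k,\Frob_q}k\to k'$. Equivalently, they correspond to $\Frob_q$-semilinear maps $k_j\to k'$. Among these there is the distinguished one $x\mapsto x^q$. Uniqueness at each level makes these maps compatible in $j$, so they glue to $\varphi':R'\to R'$ extending $\varphi$. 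Since $\varphi'$ is local and preserves $\frakm_RR'$, it extends continuously to $R_{k'}$. Uniqueness of $\varphi_{k'}$ comes from two facts: any continuous extension is determined on the dense subring $R'$, and on each $R_j$ it is pinned down by its residue map. For contraction, observe that $R_{k'}/(\pi_E)$ is the completed ind-étale extension of $Q=R/(\pi_E)$, and its maximal ideal is $\frakm_Q R_{k'}/(\pi_E)$. Then $\overline{\varphi}_{k'}^N(\frakm_QR_{k'})\subset \overline{\varphi}^N(\frakm_Q)R_{k'}\subset\frakm_Q^2R_{k'}$, so condition (2) holds.

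For (2), I expect flatness to be the main obstacle, because completion intervenes. The plan is to use the factorization $R\xrightarrow{\varphi}R\to R_{k'}$, which is flat when $\varphi$ is flat, together with the identification $R_{k'}\cong R_{k'}\widehat{\otimes}_{R,\varphi}R$ (completed base change). This identification is the statement that the relative Frobenius is an isomorphism up to completion, and it holds because $k'\otimes_{k,\Frob_q}k\to k'$ restricted to each $k_j$ is an isomorphism of étale $k$-algebras onto the appropriate factor. More carefully, I would check flatness of $\varphi_{k'}$ modulo $\frakm_R^a$ for each $a$ and apply the local flatness criterion for Noetherian complete local rings. For each $R_j$, the map $R_j\otimes_{R,\varphi}R\to R'$ is étale, and $\varphi_{k'}$ is the composite of a flat base change of $\varphi$ with this étale map; therefore it is flat. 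Completion preserves flatness in the Noetherian setting.

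Finiteness works the same way. If $\varphi$ is finite, then $R_j\otimes_{R,\varphi}R$ is finite over $R$, and hence over $R_j$. Moreover, $R_{k'}/\frakm_{R_{k'}}$ is finite over $k'$ via $\Frob_q$ because $\varphi$ induces a finite map on residue fields. So $R_{k'}/\varphi_{k'}(\frakm)R_{k'}$ is finite over $k'$, and the complete Nakayama lemma gives finiteness of $\varphi_{k'}$.
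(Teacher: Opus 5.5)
Your argument is correct in substance and mostly follows the paper.

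\textbf{Part (i).} You match the paper: lift $\Frob_q$ on each finite stage $R_j$ by the henselian lifting property, glue to $\varphi'$ on $R'$, extend to the completion, and read off contraction from $\frakm_{R_{k'}}=\frakm_RR_{k'}$. You also justify that $R_{k'}$ is Noetherian, which the paper leaves implicit. Your uniqueness argument in fact covers all extensions, not only continuous ones, since any endomorphism extending the local map $\varphi$ sends $\frakm_R^aR_{k'}$ into itself.

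\textbf{Flatness in (ii).} You rest on the same key fact as the paper: $k_j\otimes_{k,\Frob_q}k\to k_j$, $a\otimes b\mapsto a^qb$, is an isomorphism. The paper packages this more cleanly. The map $\theta\colon R'\otimes_{R,\varphi}R\to R'$, $a\otimes r\mapsto\varphi'(a)r$, is a filtered colimit of maps $R_j\otimes_{R,\varphi}R\to R_j$. Each of these is an isomorphism of finite \'{e}tale $R$-algebras, checked on residue fields. So $\varphi'$ is the base change of $\varphi$ along $R\to R'$ followed by an isomorphism, and it inherits flatness and finiteness at once; both then pass to the completion.

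Your write-up has three imprecisions here:
\begin{enumerate}
\item You call the comparison map \'{e}tale. It is really an isomorphism onto $R_j$ followed by the ind-\'{e}tale inclusion $R_j\to R'$.
\item You conflate $\varphi'|_{R_j}$ with $\varphi_{k'}$.
\item You leave implicit the passage from flatness of each $\varphi'|_{R_j}$ to flatness of $\varphi'$. This holds because $R_j\to R'$ is flat.
\end{enumerate}
The ``modulo $\frakm_R^a$'' plan is unnecessary.

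\textbf{Finiteness in (ii).} Here you genuinely differ: you apply complete Nakayama directly to $\varphi_{k'}$. This works, but the step from ``$k'$ is finite over itself via $\Frob_q$'' to ``$R_{k'}/\varphi(\frakm_R)R_{k'}$ is finite over $k'$'' needs two facts you do not state.
\begin{enumerate}
\item $R_{k'}/\varphi(\frakm_R)R_{k'}\cong(R/\varphi(\frakm_R)R)\otimes_RR_{k'}$ has finite length with all subquotients $k'$. This follows because $R/\varphi(\frakm_R)R$ has finite length when $\varphi$ is finite, and $R\to R_{k'}$ is flat with $\frakm_RR_{k'}$ maximal.
\item $[k':k'^q]=[k:k^q]<\infty$. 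This uses that $k'/k$ is separable algebraic, so $k'=k\cdot k'^q$ with $k$ and $k'^q$ linearly disjoint over $k^q$.
\end{enumerate}
The paper's base-change route gets finiteness without this residue-degree computation. Yours works directly on the complete ring and avoids appealing to preservation of finiteness under completion.
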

\begin{proof}
(i)
Write $R'=\varinjlim_\ell R_\ell$, where $\ell$ runs through the finite subextensions of $k'/k$ and $R_\ell$ is the corresponding finite \'{e}tale local $R$-algebra.
At each finite stage, the equivalence of Lemma \ref{lem:henselian_equiv_cat} uniquely lifts $\Frob_q:\ell\to\ell$ to a $\varphi$-semilinear map $R_\ell\to R_\ell$.
These maps are compatible and define $\varphi':R'\to R'$.
Since $\varphi'$ is local, it extends uniquely to a local endomorphism $\varphi_{k'}$ of $R_{k'}$.
The maps $R\to R'$ and $R'\to R_{k'}$ are faithfully flat (the latter because $R$ is Noetherian), and so is their composite.
It remains to verify the contraction of $R_{k'}/(\pi_E)$.
Choose $N$ with $\varphi^N(\frakm_R)\subset\frakm_R^2+\pi_ER$.
Since $\frakm_{R_{k'}}=\frakm_R R_{k'}$, we obtain $\varphi_{k'}^N(\frakm_{R_{k'}})\subset \frakm_{R_{k'}}^2+\pi_ER_{k'}$.

(ii)
It suffices to prove the assertion for $\varphi':R'\to R'$, because flatness is preserved on completing a local homomorphism of Noetherian local rings, and a finite local homomorphism remains finite after completion.
Factor $\varphi'$ as 
\[R'\xrightarrow{a\mapsto a\otimes1} R'\otimes_{R,\varphi}R\xrightarrow{\theta}R'\]
where $\theta(a\otimes r)=\varphi'(a)r$.
The first map is the base change of $\varphi$, so it is flat (resp. finite).
It remains to show that $\theta$ is an isomorphism.
The map $\theta$ is the filtered colimit of the corresponding maps for finite subextensions $\ell/k$ of $k'/k$, so it suffices to treat $k'/k$ finite.
Both rings are finite \'{e}tale over $R$ through the second tensor factor and the given structure map respectively.
By Lemma \ref{lem:henselian_equiv_cat}, it suffices to show that the induced map 
\[k'\otimes_{k,\Frob_q}k\to k',\quad a\otimes b\mapsto a^qb\] 
is an isomorphism.
This is an isomorphism because relative Frobenius is an isomorphism on \'{e}tale algebras in characteristic $p$.
\end{proof}

\begin{proposition}\label{prop:key_ses}
Let $(R,\varphi)$ be an $F$-dynamical system over $\cO_E$ with residue field $k$, and choose a separable closure $k^\sep$ of $k$.
Let $(\BR,\varphi^\sep)$ be the $F$-dynamical system obtained from $k^\sep/k$ by Lemma \ref{lem:fds_extension_fields}.
There is a canonical $\Gal_k$-equivariant short exact sequence of $\cO_E$-modules
\begin{center}
\begin{tikzcd}
0\arrow[r]&\cO_E\arrow[r]&\BR\arrow[r,"\varphi^\sep-\id"]&\BR\arrow[r]&0.
\end{tikzcd}
\end{center}
\end{proposition}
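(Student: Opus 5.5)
The plan is to reduce modulo $\pi_E^n$, apply the characteristic $p$ result (Lemma \ref{lem:SES_contracting}) at the bottom layer, climb by dévissage, and then pass to the inverse limit using completeness. Write $Q^\sep=\BR/(\pi_E)$. By Lemma \ref{lem:fds_extension_fields}, $(\BR,\varphi^\sep)$ is an $F$-dynamical system whose reduction $(Q^\sep,\overline{\varphi}^\sep)$ is a contracting system over $\Fq$ with residue field $k^\sep$. The maximal ideal is $\frakm_R\BR$ and the residue field $k^\sep$ is separably closed, so $Q^\sep$ is its own completed strict henselization. Lemma \ref{lem:SES_contracting} therefore applies directly to $Q^\sep$ and yields the exact sequence
\[0\to\Fq\to Q^\sep\xrightarrow{\varphi-\id}Q^\sep\to0.\]
This sequence is $\Gal_k$-equivariant, since $\varphi^\sep$ commutes with $\Gal_k$.

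Next, for each $n\ge1$, I would prove by induction that
\[0\to\cO_E/\pi_E^n\to\BR/\pi_E^n\xrightarrow{\varphi^\sep-\id}\BR/\pi_E^n\to0\]
is exact. Since $\pi_E$ is a nonzerodivisor on $\BR$ (flatness over $\cO_E$ is preserved by Lemma \ref{lem:fds_extension_fields}), multiplication by $\pi_E^{n}$ gives a short exact sequence $0\to\BR/\pi_E\to\BR/\pi_E^{n+1}\to\BR/\pi_E^n\to0$. The operator $\varphi^\sep-\id$ is compatible with these maps because $\varphi^\sep$ is $\cO_E$-linear. The snake lemma then gives a six-term sequence
\[0\to\Fq\to\ker_{n+1}\to\cO_E/\pi_E^n\to0\to\coker_{n+1}\to0.\]
So $\coker_{n+1}=0$ and $\ker_{n+1}$ is an extension of $\cO_E/\pi_E^n$ by $\Fq$ of cardinality $q^{n+1}$. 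The constant map $\cO_E/\pi_E^{n+1}\to\ker_{n+1}$ is injective, because $\cO_E\to\BR$ is faithfully flat so $\cO_E/\pi_E^{n+1}\to\BR/\pi_E^{n+1}$ is injective. Comparing cardinalities, it is an isomorphism.

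Finally, I would take the inverse limit over $n$. The ring $\BR$ is complete Noetherian local with $\pi_E\in\frakm$, hence $\pi_E$-adically complete, so $\BR=\varprojlim\BR/\pi_E^n$. Inverse limits are left exact, which gives exactness on the left and in the middle with kernel $\varprojlim\cO_E/\pi_E^n=\cO_E$. For surjectivity, the transition maps between the kernel systems $\cO_E/\pi_E^{n}$ are surjective, so the system satisfies Mittag-Leffler and $\varprojlim^1$ vanishes; this is the one point needing care. Hence the limit of the surjections $\varphi^\sep-\id$ on $\BR/\pi_E^n$ is surjective on $\BR$. Galois equivariance holds at each finite level and therefore passes to the limit.

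The main obstacle I expect is verifying that Lemma \ref{lem:SES_contracting} really applies to $Q^\sep$ itself. This requires identifying $\BR/\pi_E$ with the completed strict henselization of a contracting system over $\Fq$ (namely $R/\pi_E$), with the extended endomorphism and residue Frobenius matching. The plan is to check this through the uniqueness in Lemma \ref{lem:fds_extension_fields}, using the fact that completion and strict henselization commute with reduction modulo $\pi_E$.
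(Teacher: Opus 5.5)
Your argument is correct and is essentially the paper's: you reduce to separably closed residue field, apply Lemma~\ref{lem:SES_contracting} to $\BR/\pi_E\BR$, and lift through the $\pi_E$-adic filtration using that $\pi_E$ is a nonzerodivisor and that $\BR$ is $\pi_E$-adically complete; your snake-lemma cardinality count and Mittag-Leffler step simply replace the paper's explicit peeling-off of constants and its convergent successive approximation. The identification you flag as the main obstacle is not needed: as your first paragraph already observes, $\BR/\pi_E\BR$ is itself a contracting system with separably closed residue field, hence its own completed strict henselization, so Lemma~\ref{lem:SES_contracting} applies to it directly.
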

\begin{proof}
Lemma \ref{lem:fds_extension_fields} allows us to apply the defining properties of an $F$-dynamical system to $\BR$.
Replacing $(R,\varphi)$ by $(\BR,\varphi^\sep)$, we may assume that $k$ is separably closed.
The structure map $\cO_E\to R$ is injective and $R$ is $\pi_E$-torsion free because it is faithfully flat over $\cO_E$.
Put $\overline{R}=R/(\pi_E)$, which is a contracting system over $\Fq$.
Consider the following commutative diagram
\begin{center}
    \begin{tikzcd}
    0\arrow[r]&\ker(\varphi-\id)\arrow[r]\arrow[d]&R\arrow[r,"\varphi-\id"]\arrow[d]&R\arrow[d]\\
    0\arrow[r]&\Fq\arrow[r]&\overline{R}\arrow[r,"\overline{\varphi}-\id"]&\overline{R}\arrow[r]&0
    \end{tikzcd}
\end{center}
By Lemma \ref{lem:SES_contracting}, $\overline{\varphi}-\id:\overline{R}\to\overline{R}$ is surjective with kernel $\Fq$.
If $\varphi(x)=x$, then its image $\overline{x}\in\overline{R}$ belongs to $\ker({\varphi}-\id)=\Fq$.
Choose a lift $a_0\in\cO_E$ of $\overline{x}$; then $x=a_0+\pi_E x_1$ for a unique element $x_1\in R$.
Since $\varphi$ fixes $\cO_E$ and $\pi_E$ is a nonzerodivisor, $\varphi(x_1)=x_1$.
Iterating the process gives $a_i\in\cO_E$ with $x-\sum_{i=0}^{N-1}\pi_E^ia_i\in\pi_E^NR$.
Completeness of $\cO_E$ and separatedness of $R$ yield $x=\sum_{i=0}^\infty\pi_E^ia_i\in\cO_E$.
Therefore,
\[\ker(\varphi-\id:R\to R)=\cO_E.\]
To prove surjectivity, fix $y\in R$.
Surjectivity of $\overline{\varphi}-\id$ gives $x_0,y_1\in R$ such that $y=(\varphi-\id)(x_0)+\pi_E y_1$.
Inductively choose $x_i,y_{i+1}\in R$ with $y_i=(\varphi-\id)(x_i)+\pi_Ey_{i+1}$, starting from $y_0=y$.
The series $x=\sum_{i=0}^\infty\pi_E^ix_i$ converges in $R$, and continuity and $\cO_E$-linearity give $(\varphi-\id)(x)=y$.
This proves surjectivity and completes the proof.
\end{proof}

\subsection{From representations to $\varphi$-modules}
Write $\Rep_{\cO_E}(\Gal_k)$ for the category of finitely generated $\cO_E$-modules with a continuous action of $\Gal_k$, where the modules carry the $\pi_E$-adic topology.
We first construct a functor $\DD_R:\Rep_{\cO_E}(\Gal_k)\to\phimod_R$ without assuming flatness of $\varphi$.

Let $(R,\varphi)$ be an $F$-dynamical system over $\cO_E$ with residue field $k$.
For $V\in \Rep_{\cO_E}(\Gal_k)$, define
\[\DD_R(V)\coloneqq(V\otimes_{\cO_E}R^\sep)^{\Gal_k}\]
using the diagonal Galois action and the semilinear endomorphism induced by $\id\otimes\varphi^\sep$.

\begin{proposition}\label{prop:functor_D_fds}
Let $(R,\varphi)$ be an $F$-dynamical system over $\cO_E$ with residue field $k$.
The preceding construction defines a functor
\[\DD_R:\Rep_{\cO_E}(\Gal_k)\to\textup{$\varphi$-Mod}_{R}.\]
For every $\cO_E$-representation $V$, the multiplication map
\[\DD_R(V)\otimes_RR^\sep\cong V\otimes_{\cO_E}R^\sep\]
is an isomorphism compatible with $\varphi$ and $\Gal_k$.
\end{proposition}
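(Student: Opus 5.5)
The strategy is to reduce to the finite-level descent argument of Proposition \ref{prop:functor_D_contracting}, exploiting the fact that continuity forces the action to factor through a finite quotient at each $\pi_E$-adic level.

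\textbf{Torsion case.} First assume $\pi_E^aV=0$ for some $a\ge1$. Continuity for the discrete topology on the finite module $V$ gives a finite Galois extension $k'/k$ with $\Gal_{k'}$ acting trivially on $V$. Let $R'\subset R^\sep$ be the finite \'{e}tale local $R$-algebra with residue field $k'$, as in Lemma \ref{lem:fds_extension_fields}. Lemma \ref{lem:galois_invariants}, applied to the finitely generated $R$-module $R'$ (or rather to $R$ itself and then to $R'$ with $\widehat{R'^{\sh}}=R^\sep$), gives $(R^\sep)^{\Gal_{k'}}=R'$. Since $V\otimes_{\cO_E}R^\sep$ is a finite direct sum of copies of quotients $R^\sep/\pi_E^{b}$, applying Lemma \ref{lem:galois_invariants} to $V\otimes_{\cO_E}R'$ as a finitely generated $R'$-module yields
\[(V\otimes_{\cO_E}R^\sep)^{\Gal_{k'}}=V\otimes_{\cO_E}R'.\]
Consequently $\DD_R(V)=(V\otimes_{\cO_E}R')^{\Gal(k'/k)}$. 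Finite \'{e}tale Galois descent along $R\to R'$ then shows that $\DD_R(V)$ is finitely generated and that $\DD_R(V)\otimes_RR'\to V\otimes_{\cO_E}R'$ is an isomorphism. Base change to $R^\sep$ gives the claimed comparison. The Frobenius $\id\otimes\varphi^\sep$ commutes with $\Gal_k$, so it preserves invariants. Its linearization becomes, after the faithfully flat base change $R\to R^\sep$, the linearization of $\id_V\otimes\varphi^\sep$ on $V\otimes R^\sep$. This is an isomorphism because $\varphi^{\sep*}R^\sep\cong R^\sep$. Hence $\DD_R(V)$ is \'{e}tale. Functoriality is clear.

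\textbf{General case.} For general finitely generated $V$, write $V=\varprojlim_a V/\pi_E^aV$ with each $V_a=V/\pi_E^aV$ torsion and continuous. Since $V$ is finitely presented over $\cO_E$ and $R^\sep$ is $\pi_E$-adically complete and Noetherian, we have $V\otimes R^\sep=\varprojlim_a V_a\otimes R^\sep$. Invariants commute with inverse limits, so $\DD_R(V)=\varprojlim_a\DD_R(V_a)$.

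The main obstacle is controlling the transition maps: one must show $\DD_R(V_{a+1})\to\DD_R(V_a)$ is surjective and that $\DD_R(V_a)=\DD_R(V)/\pi_E^a$. I would handle this using the isomorphism $\DD_R(V_a)\otimes_RR^\sep\cong V_a\otimes R^\sep$ together with faithful flatness of $R\to R^\sep$. Since $V_{a+1}\otimes R^\sep\to V_a\otimes R^\sep$ is surjective with kernel $(\pi_E^aV_{a+1})\otimes R^\sep$, faithfully flat descent gives the corresponding surjectivity and an identification $\DD_R(V_{a+1})\otimes_R R/\pi_E^a\cong\DD_R(V_a)$, again by checking after base change. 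The inverse limit of this compatible system of finitely generated $R/\pi_E^a$-modules with surjective transitions is a finitely generated $R$-module $D$ with $D/\pi_E^aD=\DD_R(V_a)$. This uses $\pi_E$-adic completeness of $R$: lift generators of $\DD_R(V_1)$ and apply Nakayama.

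Then $D\otimes_RR^\sep\cong\varprojlim_a(\DD_R(V_a)\otimes R^\sep)\cong V\otimes R^\sep$, since $D\otimes_R R^\sep$ is already $\pi_E$-adically complete, $D$ being finite over Noetherian $R$. The \'{e}taleness of $D$ follows as in the torsion case by faithfully flat base change.
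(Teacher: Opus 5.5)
Your proof is correct and follows the paper's route: the torsion case goes by finite \'{e}tale Galois descent along the local algebra $R'$ with residue field $k'$, and the general case writes $\DD_R(V)=\varprojlim_a\DD_R(V/\pi_E^aV)$ and controls the transition maps using the torsion comparison isomorphism and faithful flatness of $R\to R^\sep$. The only difference is in the limit step: you obtain $\DD_R(V_{a+1})/\pi_E^a\cong\DD_R(V_a)$ directly by descent and then invoke the standard lemma on inverse systems of finite modules over a $\pi_E$-adically complete Noetherian ring, whereas the paper proves $\DD_R(V)/\pi_E^n\cong\DD_R(V/\pi_E^nV)$ from the same right-exact sequences by explicit Mittag-Leffler arguments on the kernels $K_m$ and $L_m$, and then applies topological Nakayama.
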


\begin{proof}
If $\pi_E^nV=0$ for some $n$, then $V$ is finite, so continuity implies that its $\Gal_k$-action factors through $\Gal(k'/k)$ for some finite Galois extension $k'$ of $k$.
Let $R'/R$ be the finite \'{e}tale algebra corresponding to $k'/k$.
The identities $(R^\sep/\pi_E^m R^\sep)^{\Gal_{k'}}=R'/\pi_E^mR'$ imply
\[(V\otimes_{\cO_E}R^\sep)^{\Gal_{k'}}=V\otimes_{\cO_E}R'.\]
Finite Galois descent therefore gives
\[\DD_R(V)\otimes_RR'\cong V\otimes_{\cO_E}R'.\]
By base change to $R^\sep$, we obtain the isomorphism 
\begin{equation}\label{isom:V_torsion}
\DD_R(V)\otimes_RR^\sep\to V\otimes_{\cO_E}R^\sep
\end{equation}
for every $\pi_E$-power torsion representation.
Since $R\to R^\sep$ is faithfully flat and $R^\sep$ is flat over $\cO_E$, (\ref{isom:V_torsion}) shows that $\DD_R$ is exact on torsion representations.
Its values are \'{e}tale by the linearization calculation.

Now let $V$ be an arbitrary finitely generated $\cO_E$-representation of $\Gal_k$.
The complete Noetherian local ring $R^\sep$ is also $\pi_E$-adically complete, so
\[V\otimes_{\cO_E}R^\sep\cong \varprojlim_n \bigl((V/\pi_E^nV)\otimes_{\cO_E}R^\sep\bigr).\]
Taking $\Gal_k$-invariants commutes with inverse limits, so
\begin{align*}
    \DD_R(V)
    \cong\varprojlim_n \DD_R(V/\pi_E^nV)
\end{align*}

Fix $n\ge1$, let $m\ge n$, and write $D_m=\DD_R(V/\pi_E^mV)$.
The natural maps give compatible right exact sequences
\begin{center}
\begin{tikzcd}
    D_{m+1}\arrow[r,"\times\pi_E^n"]\arrow[d]&D_{m+1}\arrow[r]\arrow[d]&D_n\arrow[d,equal]\arrow[r]&0\\
    D_m\arrow[r,"\times\pi_E^n"]&D_m\arrow[r]&D_n\arrow[r]&0
\end{tikzcd}
\end{center}
Torsion case exactness makes these rows exact and transition maps $D_{m+1}\to D_m$ surjective.
Set $K_m\coloneqq\ker(D_m\to D_n)$.
The transition maps $K_{m+1}\to K_m$ are surjective, so $\{K_m\}$ satisfies the Mittag-Leffler condition.
Taking inverse limits gives an exact sequence
\[0\to \varprojlim_m K_m\to \varprojlim_m D_m\to D_n\to 0.\]
Set $L_m\coloneqq\ker(D_m\to K_m)$ where the map is multiplication by $\pi_E^n$.
If we write $W_m=(V/\pi_E^mV)[\pi_E^n]$, then exactness of $\DD_R$ on torsion representations implies 
\[L_m=\DD_R(W_m),\quad\mathrm{im}(L_{m'}\to L_m)=\DD_R(\mathrm{im}(W_{m'}\to W_m)).\]
For fixed $m$, $W_m$ is a finite set so the images inside $W_m$ stabilize, so the images inside $L_m$ also stabilize, showing the Mittag-Leffler condition for $L_m$.
Therefore, we obtain another short exact sequence 
\[0\to \varprojlim_m L_m\to \varprojlim_m D_m\to \varprojlim_m K_m\to 0.\]
Therefore,
\begin{equation}\label{isom:mod_and_V}
    \DD_R(V)/(\pi_E^n)\cong\DD_R(V/\pi_E^nV)
\end{equation}

Put $\overline{R}=R/(\pi_E)$.
Compatibility of strict henselization and completion with quotients gives a canonical $\Gal_k$ and $\varphi$ equivariant isomorphism $(R^\sep)/(\pi_E)\cong(\overline{R})^\sep$.
Therefore, 
\begin{align*}
    \DD_R(V/\pi_EV)
    \cong \left((V/\pi_EV)\otimes_{\Fq}(\overline{R})^\sep\right)^{\Gal_k}
    =\DD_{\overline{R}}(V/\pi_EV).
\end{align*}
Combining this with (\ref{isom:mod_and_V}) yields
\begin{equation}\label{isom:D_and_Dbar}
    \DD_R(V)/(\pi_E)\cong\DD_{\overline{R}}(V/(\pi_E)).
\end{equation}
By Proposition \ref{prop:functor_D_contracting} and (\ref{isom:D_and_Dbar}), $\DD_R(V)/(\pi_E)$ is finite over $R/(\pi_E)$.
Equation (\ref{isom:mod_and_V}) and the inverse limit description show that $\DD_R(V)$ is $\pi_E$-adically complete, so topological Nakayama implies that it is finitely generated over $R$.
The map
\begin{equation}\label{isom:key_D}
    \DD_R(V)\otimes_RR^\sep\to V\otimes_{\cO_E}R^\sep
\end{equation}
is a homomorphism of finitely generated $R^\sep$-modules.
By (\ref{isom:mod_and_V}), its reduction modulo $\pi_E^m$ is the torsion comparison (\ref{isom:V_torsion}) for $V/\pi_E^mV$, so it is an isomorphism for every $m$.
Both modules are $\pi_E$-adically complete, so (\ref{isom:key_D}) is an isomorphism.
It remains to verify \'{e}taleness.
Consider the Frobenius linearization
\[\varphi^L_{\DD_R(V)}:\DD_R(V)\otimes_{R,\varphi}R\to \DD_R(V).\]
It suffices to show that this map becomes an isomorphism after the faithfully flat base change $R\to R^\sep$.
Using (\ref{isom:key_D}) and compatibility of the endomorphisms, the base change is identified with
\[(V\otimes_{\cO_E}R^\sep)\otimes_{R^\sep,\varphi^\sep}R^\sep\to V\otimes_{\cO_E}R^\sep,\]
which sends $(v\otimes r)\otimes r'\mapsto v\otimes\varphi^\sep(r)r'$.
Its inverse is $v\otimes s\mapsto (v\otimes1)\otimes s$, which is well-defined because $\varphi^\sep$ fixes $\cO_E$.
Therefore, $\DD_R(V)$ is \'{e}tale.
\end{proof}
The construction of $\DD_R$ commutes with reduction modulo $\pi_E$.
\begin{corollary}
Let $(R,\varphi)$ be an $F$-dynamical system over $\cO_E$.
Write $\overline{R}=R/(\pi_E)$.
The following diagram, where vertical arrows are reductions modulo $\pi_E$, commutes up to the canonical natural isomorphism (\ref{isom:D_and_Dbar})
\begin{center}
\begin{tikzcd}
    \Rep_{\cO_E}(\Gal_k)\arrow[r,"\DD_R"]\arrow[d]&\textup{$\varphi$-Mod}_{R}\arrow[d]\\
    \Rep_{\Fq}(\Gal_k)\arrow[r,"\DD_{\overline{R}}"]&\textup{$\varphi$-Mod}_{\overline{R}}
\end{tikzcd}
\end{center}
\end{corollary}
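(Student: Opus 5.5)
This follows directly from the constructions in the proof of Proposition \ref{prop:functor_D_fds}. Going clockwise, $V\mapsto\DD_R(V)\mapsto\DD_R(V)/\pi_E\DD_R(V)=\DD_R(V)\otimes_R\overline{R}$. Going counterclockwise, $V\mapsto V/\pi_EV\mapsto\DD_{\overline{R}}(V/\pi_EV)$. The isomorphism (\ref{isom:D_and_Dbar}) compares the two: it is the composite of (\ref{isom:mod_and_V}) for $n=1$ with the identification $\DD_R(V/\pi_EV)\cong\DD_{\overline{R}}(V/\pi_EV)$ coming from $R^\sep/(\pi_E)\cong\overline{R}^\sep$. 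What remains is to check that this composite is natural in $V$ and is an isomorphism of $\varphi$-modules over $\overline{R}$.

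For naturality, I will describe each map concretely. The first map is induced by the reduction $V\otimes_{\cO_E}R^\sep\to(V/\pi_EV)\otimes_{\cO_E}R^\sep$ on $\Gal_k$-invariants, with kernel $\pi_E\DD_R(V)$ by (\ref{isom:mod_and_V}). The second is induced by the canonical isomorphism $(V/\pi_EV)\otimes_{\cO_E}R^\sep\cong(V/\pi_EV)\otimes_{\Fq}\overline{R}^\sep$. A morphism $f:V\to V'$ in $\Rep_{\cO_E}(\Gal_k)$ acts by $f\otimes\id$ at every stage, and all these maps are induced by maps on the coefficient rings or on $V$ alone. So the required squares commute on the ambient tensor products, and hence also on the invariants.

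For Frobenius compatibility, every ambient map intertwines $\id\otimes\varphi^\sep$ with $\id\otimes\overline{\varphi}^\sep$. This holds because the isomorphism $R^\sep/(\pi_E)\cong\overline{R}^\sep$ is $\varphi$-equivariant: both sides carry the unique lift of $\overline{\varphi}$ inducing $\Frob_q$ on $k^\sep$, by the uniqueness in Lemma \ref{lem:fds_extension_fields}. The same maps are also $R$-linear when $\DD_R(V)/\pi_E$ is viewed as an $\overline{R}$-module, so the composite is a morphism in $\textup{$\varphi$-Mod}_{\overline{R}}$.

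The only step needing care is the identification $R^\sep/(\pi_E)\cong\overline{R}^\sep$ as $\Gal_k$- and $\varphi$-equivariant rings. Both are completions of strict henselizations along compatible ideals, and strict henselization commutes with quotients. Equivariance then follows from the uniqueness of lifts of finite \'{e}tale structures given by Lemma \ref{lem:henselian_equiv_cat}. Everything else is formal bookkeeping.
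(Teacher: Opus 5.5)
Your proposal is correct and is essentially the paper's argument: the paper gives no separate proof and records the corollary as an immediate consequence of the isomorphism (\ref{isom:D_and_Dbar}) from the proof of Proposition \ref{prop:functor_D_fds}, which is exactly the composite you describe, namely (\ref{isom:mod_and_V}) for $n=1$ followed by the equivariant identification $R^\sep/(\pi_E)\cong\overline{R}^\sep$. Your checks of naturality and $\varphi$-compatibility fill in what the paper leaves implicit. One small correction: the uniqueness of the Frobenius on $\overline{R}^\sep$ comes from the statement for contracting systems over $\Fq$ in the construction of $Q^\sep$, not from Lemma \ref{lem:fds_extension_fields}, which is formulated for $F$-dynamical systems over $\cO_E$.
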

Taking Frobenius invariants after extension to $R^\sep$ recovers the original representation.
\begin{proposition}\label{prop:recover_rep_from_D_fds}
Let $(R,\varphi)$ be an $F$-dynamical system over $\cO_E$ with residue field $k$.
For $V\in\Rep_{\cO_E}(\Gal_k)$, there is a canonical $\Gal_k$-equivariant isomorphism
\[V\isomto (\DD_R(V)\otimes_RR^\sep)^{\varphi=\id}.\]
\end{proposition}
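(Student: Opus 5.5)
The argument has two steps: transport the problem to $V\otimes_{\cO_E}R^\sep$ using Proposition \ref{prop:functor_D_fds}, then compute Frobenius invariants there with Proposition \ref{prop:key_ses}.

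By Proposition \ref{prop:functor_D_fds}, the multiplication map
\[\DD_R(V)\otimes_RR^\sep\isomto V\otimes_{\cO_E}R^\sep\]
is an isomorphism compatible with $\varphi$ and $\Gal_k$. On the right, $\varphi$ acts as $\id_V\otimes\varphi^\sep$. Taking $\varphi$-fixed vectors therefore gives a $\Gal_k$-equivariant isomorphism
\[(\DD_R(V)\otimes_RR^\sep)^{\varphi=\id}\cong (V\otimes_{\cO_E}R^\sep)^{\id\otimes\varphi^\sep=\id}.\]
It remains to show that $v\mapsto v\otimes1$ identifies $V$ with the right-hand side. This map is natural and $\Gal_k$-equivariant, so the composite with the inverse of the comparison is the canonical map in the statement.

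For this, tensor the short exact sequence of Proposition \ref{prop:key_ses},
\[0\to\cO_E\to R^\sep\xrightarrow{\varphi^\sep-\id}R^\sep\to0,\]
with $V$ over $\cO_E$. The functor $V\otimes_{\cO_E}-$ is right exact. The left end is governed by $\mathrm{Tor}_1^{\cO_E}(V,R^\sep)$, which vanishes because $R^\sep$ is flat over $\cO_E$: it is faithfully flat over $R$, which is faithfully flat over $\cO_E$. We therefore get an exact sequence
\[0\to V\to V\otimes_{\cO_E}R^\sep\xrightarrow{\id\otimes\varphi^\sep-\id}V\otimes_{\cO_E}R^\sep\to 0.\]
Exactness at the first two terms says precisely that $V\to(V\otimes_{\cO_E}R^\sep)^{\varphi=\id}$ is an isomorphism, which proves the proposition.

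The only point needing care is this flatness of $R^\sep$ over $\cO_E$. Concretely, it means $\pi_E$ is a nonzerodivisor on $R^\sep$, and this holds because $(R^\sep,\varphi^\sep)$ is itself an $F$-dynamical system by Lemma \ref{lem:fds_extension_fields}. Equivalently, one can decompose $V\cong\cO_E^r\oplus\bigoplus_j\cO_E/\pi_E^{n_j}$ as an $\cO_E$-module, ignoring the Galois action since we are only computing kernels of an $\cO_E$-linear map. The free part reduces directly to the kernel computation $\ker(\varphi^\sep-\id)=\cO_E$. For a summand $\cO_E/\pi_E^{n}$, we need $\ker(\varphi^\sep-\id)$ on $R^\sep/\pi_E^n$ to equal $\cO_E/\pi_E^n$; this follows from the snake lemma applied to multiplication by $\pi_E^n$ on the sequence of Proposition \ref{prop:key_ses}, using surjectivity of $\varphi^\sep-\id$ and that $\pi_E$ is a nonzerodivisor on $R^\sep$.
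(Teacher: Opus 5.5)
Your proposal is correct and follows the paper's proof exactly: transport to $V\otimes_{\cO_E}R^\sep$ via the comparison isomorphism of Proposition \ref{prop:functor_D_fds}, then tensor the sequence of Proposition \ref{prop:key_ses} with $V$, using flatness of $R^\sep$ over $\cO_E$ to keep it exact. Your closing remark about the explicit decomposition $V\cong\cO_E^r\oplus\bigoplus_j\cO_E/\pi_E^{n_j}$ is a valid cross-check but is not needed once flatness is established.
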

\begin{proof}
Proposition \ref{prop:functor_D_fds} gives
\[(\DD_R(V)\otimes_RR^\sep)^{\varphi=\id}\isomto (V\otimes_{\cO_E}R^\sep)^{\varphi=\id}.\]
By Proposition \ref{prop:key_ses}, there is a short exact sequence
\begin{center}
    \begin{tikzcd}
    0\arrow[r]&\cO_E\arrow[r]&R^\sep\arrow[r,"\varphi-\id"]&R^\sep\arrow[r]&0
    \end{tikzcd}
\end{center}
Since $R^\sep$ is flat over $\cO_E$, tensoring this sequence with $V$ remains exact and gives
\begin{center}
    \begin{tikzcd}
    0\arrow[r]&V\arrow[r,"v\mapsto v\otimes 1"]&V\otimes_{\cO_E}R^\sep\arrow[r,"\id\otimes\varphi-\id"]&V\otimes_{\cO_E}R^\sep\arrow[r]&0
    \end{tikzcd}
\end{center}
The first arrow therefore identifies $V$ with the required kernel.
All maps are natural and $\Gal_k$-equivariant.
\end{proof}

\subsection{From $\varphi$-modules to representations}
Let $(R,\varphi)$ be an $F$-dynamical system over $\cO_E$ with residue field $k$.
\begin{definition}
For $M\in\phimod_R$, we define
\[\VV_R(M)\coloneqq (M\otimes_R\BR)^{\varphi=\id}.\]
The action of ${\Gal_k}$ on $\BR$ commutes with $\varphi$, and therefore induces an $\cO_E$-linear action on $\VV_R(M)$.
\end{definition}

\begin{lemma}[{\cite[Proposition 1.1.5]{Fontaine1990Representations}}]\label{lem:flat_implies_abelian}
Let $R$ be a Noetherian ring with a flat endomorphism $\varphi$.
Then $\phimod_R$ is an abelian category.
\end{lemma}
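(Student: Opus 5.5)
We will show that kernels and cokernels, formed in the category of $R$-modules, of a morphism of étale $\varphi$-modules are again étale $\varphi$-modules. Then $\phimod_R$ is a full subcategory of the category of $R$-modules with a semilinear endomorphism that is closed under kernels and cokernels (and finite direct sums, which is clear). Abelianness then follows formally: the image and coimage agree because they agree as $R$-modules.

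Let $f:M\to N$ be a morphism in $\phimod_R$, and put $K=\ker f$, $C=\coker f$, $I=\mathrm{im} f$. Since $R$ is Noetherian, $K$, $C$ and $I$ are finitely generated $R$-modules. They inherit semilinear endomorphisms because $f\circ\varphi_M=\varphi_N\circ f$. The key input is that the base-change functor $\varphi^*=-\otimes_{R,\varphi}R$ is exact, since $\varphi$ is flat. Applying $\varphi^*$ to
\[0\to K\to M\to N\to C\to 0\]
gives an exact sequence
\[0\to\varphi^*K\to\varphi^*M\to\varphi^*N\to\varphi^*C\to 0.\]
The linearizations $\varphi^L$ give a morphism from this sequence to the original one, and the squares commute by naturality of $\varphi^L$ with respect to $\varphi$-equivariant maps. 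The two middle vertical maps $\varphi_M^L$ and $\varphi_N^L$ are isomorphisms by étaleness.

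The five lemma now does the rest, applied after splitting the four-term sequence into the two short exact sequences $0\to K\to M\to I\to 0$ and $0\to I\to N\to C\to 0$, together with their $\varphi^*$-versions, which are exact by flatness. From the second sequence, $\varphi_I^L$ is injective because it is the restriction of the injective map $\varphi_N^L$. From the first sequence, $\varphi_I^L$ is surjective because $\varphi^*M\to M\to I$ is surjective. So $\varphi_I^L$ is an isomorphism, and the five lemma applied to each short exact sequence shows that $\varphi_K^L$ and $\varphi_C^L$ are isomorphisms.

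The only delicate point is the exactness of $\varphi^*$, which is precisely where flatness of $\varphi$ is used. Without it, $\varphi^*K\to\varphi^*M$ need not be injective and the argument for kernels fails. With that in hand, the remaining verifications of the abelian-category axioms are routine.
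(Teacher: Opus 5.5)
Your argument is correct. Flatness makes $\varphi^*$ exact, the linearizations give a morphism of exact sequences, showing $\varphi_I^L$ is an isomorphism lets the five lemma give \'{e}taleness of the kernel and cokernel, and Noetherianity supplies finite generation. The paper does not reprove this lemma; it cites Fontaine's Proposition 1.1.5, whose proof is essentially this same diagram chase.
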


\begin{proposition}\label{prop:key_isom_V}
Let $(R,\varphi)$ be a flat $F$-dynamical system over $\cO_E$.
Fix $M\in\phimod_R$.
Then $\VV_R(M)$ is finitely generated over $\cO_E$, and the canonical map
\[\VV_R(M)\otimes_{\cO_E}\BR\to M\otimes_R\BR\]
is an isomorphism compatible with $\varphi$ and ${\Gal_k}$.
Moreover, for every $n\ge1$, the natural map
\[\VV_R(M)/\pi_E^n\VV_R(M)\to \VV_R(M/\pi_E^nM)\]
is an isomorphism.
\end{proposition}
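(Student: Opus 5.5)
We first treat $\pi_E$-power torsion modules by dévissage and then pass to the limit. Since $\varphi$ is flat, Lemma \ref{lem:flat_implies_abelian} shows that $\phimod_R$ is abelian. Hence for $M\in\phimod_R$ the modules $\pi_E^iM/\pi_E^{i+1}M$, $M[\pi_E]$ and $M/\pi_E^nM$ are again étale $\varphi$-modules, and short exact sequences of $R$-modules built from them are exact in $\phimod_R$. For an étale $\varphi$-module $N$ killed by $\pi_E$, i.e.\ an étale $\varphi$-module over the contracting system $\overline{R}=R/(\pi_E)$, we use $(R^\sep)/(\pi_E)\cong \overline{R}^\sep$. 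Proposition \ref{prop:functor_V_contracting} then gives that $\VV_R(N)=\VV_{\overline{R}}(N)$ is finite-dimensional and that $\VV_R(N)\otimes_{\Fq}\overline{R}^\sep\to N\otimes_{\overline R}\overline{R}^\sep$ is an isomorphism. The key additional input is Lemma \ref{lem:contract_to_resfield} applied over $\overline{R}^\sep$: we get $\coker(\varphi-\id)$ on $N\otimes\overline R^\sep$ isomorphic to $\coker(\varphi-\id)$ on $N\otimes k^\sep$. The latter vanishes by Lemma \ref{lem:functor_V_field} together with the surjectivity of $x\mapsto x^q-x$ on $k^\sep$.

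Next, for a torsion $M$ we show by induction on the length of the $\pi_E$-adic filtration three things: $M\otimes_R R^\sep\xrightarrow{\varphi-\id}M\otimes_RR^\sep$ is surjective, $\VV_R(M)$ is finite, and the comparison map is an isomorphism. Take an exact sequence $0\to M'\to M\to M''\to0$ in $\phimod_R$. Flatness of $R\to R^\sep$ keeps it exact after base change, and the snake lemma for $\varphi-\id$ gives the exact sequence $0\to\VV(M')\to\VV(M)\to\VV(M'')\to\coker(M')=0$, together with surjectivity for $M$. So $\VV_R$ is exact on torsion objects. The comparison maps fit into a map of short exact sequences, because $\VV_R(-)\otimes_{\cO_E}R^\sep$ is exact by flatness of $R^\sep$ over $\cO_E$, and the five lemma finishes the induction.

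For general $M$, take $n\ge1$ and apply $\varphi-\id$ and the snake lemma to $0\to M\xrightarrow{\pi_E^n}M\to M/\pi_E^nM\to0$ when $M$ is $\pi_E$-torsion free. In general we first split off $M[\pi_E^\infty]$, which is a torsion étale submodule by abelianness. Since $M\otimes R^\sep$ is $\pi_E$-adically complete, the surjectivity of $\varphi-\id$ on $M\otimes R^\sep$ follows from the torsion case by successive approximation, exactly as in Proposition \ref{prop:key_ses}. The snake lemma then gives $\VV_R(M)/\pi_E^n\cong\VV_R(M/\pi_E^nM)$. Because $\VV_R(M)=\varprojlim_n\VV_R(M/\pi_E^nM)$ is an inverse limit of finite modules with surjective transitions and $\VV_R(M)/\pi_E$ is finite, topological Nakayama shows that $\VV_R(M)$ is finitely generated over $\cO_E$. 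The comparison map is a map of finitely generated $\pi_E$-adically complete $R^\sep$-modules. It is an isomorphism modulo every $\pi_E^n$ by the torsion case, so it is an isomorphism. Compatibility with $\varphi$ and $\Gal_k$ is clear.

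The main obstacle is handling $\pi_E$-torsion inside a non-torsion $M$ in the reduction-mod-$\pi_E^n$ statement. Multiplication by $\pi_E^n$ is not injective there, so the snake lemma produces an extra term from $M[\pi_E^n]$. We plan to deal with it by using the sequence $0\to M_{\rm tors}\to M\to M/M_{\rm tors}\to0$ in $\phimod_R$ and checking that the torsion part contributes compatibly. This is where flatness of $\varphi$, via abelianness, is really used.
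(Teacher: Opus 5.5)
Your proof is correct, and its torsion part is essentially the paper's d\'{e}vissage. The paper obtains surjectivity of $\varphi-\id$ on $M[\pi_E]\otimes_R\BR$ from the base-case identification with $\VV_R(M[\pi_E])\otimes_{\Fq}\BR/\pi_E\BR$ and Lemma \ref{lem:SES_contracting}, rather than through Lemma \ref{lem:contract_to_resfield}; this difference is cosmetic.

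The genuine difference is the passage to non-torsion $M$. The paper never examines $\coker(\varphi-\id)$ on $M\otimes_R\BR$ for non-torsion $M$. It puts $A_m=\VV_R(M/\pi_E^mM)$ and applies exactness of $\VV_R$ on torsion objects to the right exact sequence $M/\pi_E^mM\xrightarrow{\pi_E^n}M/\pi_E^mM\to M/\pi_E^nM\to0$. Finiteness of the $A_m$ (Mittag-Leffler) then gives $(\varprojlim_m A_m)/\pi_E^n\cong A_n$, which handles torsion in $M$ automatically. You instead prove by successive approximation, as in Proposition \ref{prop:key_ses}, that $\varphi-\id$ is surjective on $N\otimes_R\BR$, and then use the snake lemma. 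This buys a slightly stronger intermediate result: exactness of $\VV_R$ on all of $\phimod_R$ at this stage. The paper only obtains that afterwards, in Proposition \ref{prop:functor_V}, from the comparison isomorphism.

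With that in hand, the ``main obstacle'' in your last paragraph is not really one. Your approximation argument uses only two inputs: surjectivity modulo $\pi_E$ (the base case for $N/\pi_EN$) and $\pi_E$-adic completeness of $N\otimes_R\BR$. It never uses torsion-freeness, so it holds for every $N\in\phimod_R$. Now apply the snake lemma to $0\to M[\pi_E^n]\to M\to\pi_E^nM\to0$, whose terms are all \'{e}tale by abelianness. This shows that $\VV_R(M)\to\VV_R(\pi_E^nM)$ is onto. Its composite with the injection $\VV_R(\pi_E^nM)\to\VV_R(M)$ is multiplication by $\pi_E^n$, so that injection has image $\pi_E^n\VV_R(M)$. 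The snake lemma for $0\to\pi_E^nM\to M\to M/\pi_E^nM\to0$ then identifies $\VV_R(M/\pi_E^nM)$ with $\VV_R(M)/\pi_E^n\VV_R(M)$, with no torsion-free case needed.

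Your planned split also works. Since $M/M_{\mathrm{tors}}$ is $\pi_E$-torsion-free, the sequence $0\to M_{\mathrm{tors}}\to M\to M/M_{\mathrm{tors}}\to0$ stays exact modulo $\pi_E^n$. Also $\VV_R(M/M_{\mathrm{tors}})\subset(M/M_{\mathrm{tors}})\otimes_R\BR$ is $\pi_E$-torsion-free, so the corresponding sequence of $\VV_R$'s stays exact modulo $\pi_E^n$ as well. The five lemma, together with the torsion case, then concludes. Either way, this step should be written out rather than left as a plan.
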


\begin{proof}
We first prove the canonical isomorphism for modules annihilated by $\pi_E$, using Proposition \ref{prop:functor_V_contracting} for the contracting system $R/\pi_ER$.
Flatness of $\varphi$ then allows induction on the power of $\pi_E$ annihilating the module.
Finally, we pass to inverse limits, proving finite generation and compatibility with reduction along the way.

First suppose that $\pi_EM=0$.
Put $\overline{R}=R/\pi_ER$.
Then $M$ is an \'{e}tale $\varphi$-module over $\overline{R}$.
Since $\overline{R}^\sep$ and $R^\sep/\pi_ER^\sep$ are naturally identified, the canonical map
\[\VV_R(M)\to \VV_{\overline{R}}(M)\]
is an isomorphism.
Proposition \ref{prop:functor_V_contracting} gives a canonical isomorphism
\[\VV_{\overline{R}}(M)\otimes_{\Fq}\overline{R}^\sep\cong M\otimes_{\overline{R}}\overline{R}^\sep,\]
which implies the assertion.

Next consider modules annihilated by a power of $\pi_E$.
Assume the assertion holds for modules annihilated by $\pi_E^N$, and let $M$ satisfy $\pi_E^{N+1}M=0$.
Set 
\[M'=M[\pi_E]=\ker(\pi_E:M\to M),\quad M''=M/M'.\]
Then $\pi_E^NM''=0$.
By Lemma \ref{lem:flat_implies_abelian} and \textcolor{Cerulean}{flatness of $\varphi$}, both $M'$ and $M''$ are \'{e}tale $\varphi$-modules.
Flatness of $R^\sep$ over $R$ gives the following commutative diagram with exact rows.
\begin{center}
    \begin{tikzcd}
    0\arrow[r]&M'\otimes_R\BR\arrow[d,"\varphi-\id"]\arrow[r]&M\otimes_R\BR\arrow[d,"\varphi-\id"]\arrow[r]&M''\otimes_R\BR\arrow[d,"\varphi-\id"]\arrow[r]&0\\
    0\arrow[r]&M'\otimes_R\BR\arrow[r]&M\otimes_R\BR\arrow[r]&M''\otimes_R\BR\arrow[r]&0
    \end{tikzcd}
\end{center}
The base case identifies $M'\otimes_RR^\sep$ with $\VV_R(M')\otimes_{\Fq}(R^\sep/\pi_ER^\sep)$.
Since $\varphi-\id$ is surjective on $R^\sep/\pi_ER^\sep$ by Lemma \ref{lem:SES_contracting}, the left vertical map is surjective.
The snake lemma gives
\[0\to \VV_R(M')\to\VV_R(M)\to\VV_R(M'')\to0.\]
Tensoring this sequence with the flat $\cO_E$-module $R^\sep$ gives the upper row of the following commutative diagram.
\begin{center}
    \begin{tikzcd}
        0\arrow[r]&\VV_R(M')\otimes_{\cO_E}\BR\arrow[d]\arrow[r]&\VV_R(M)\otimes_{\cO_E}\BR\arrow[d]\arrow[r]&\VV_R(M'')\otimes_{\cO_E}\BR\arrow[d]\arrow[r]&0\\
        0\arrow[r]&M'\otimes_R\BR\arrow[r]&M\otimes_R\BR\arrow[r]&M''\otimes_R\BR\arrow[r]&0
    \end{tikzcd}
\end{center}
The outer vertical maps are isomorphisms by the base case and the induction hypothesis, so the middle vertical map is also an isomorphism.
This proves the canonical isomorphism for every $M$ annihilated by a power of $\pi_E$.
By faithful flatness of $\cO_E\to R^\sep$, the canonical isomorphism implies that $\VV_R(M)$ is finitely generated over $\cO_E$.
It also implies that $\VV_R$ is exact on modules annihilated by powers of $\pi_E$ because $R^\sep$ is flat over $R$.

We now pass to an arbitrary $M$ by $\pi_E$-adic completion.
Taking $\varphi$-invariants commutes with inverse limits, so
\[\varprojlim_n\VV_R(M/\pi_E^nM)\cong \left(\varprojlim_n((M/\pi_E^nM)\otimes_R\BR)\right)^{\varphi=\id}\]
The finite $R^\sep$-module $M\otimes_RR^\sep$ is $\pi_E$-adically complete, so the inverse limit inside the parentheses is $M\otimes_RR^\sep$.
Therefore, 
\begin{equation}\label{eq:limit_V}
    \VV_R(M)\cong\varprojlim_n\VV_R(M/\pi_E^nM).
\end{equation}
Fix $n\ge1$.
We claim that the canonical map
\begin{equation}\label{eq:mod_pin_V}
    \VV_R(M)/\pi_E^n\VV_R(M)\cong\VV_R(M/\pi_E^nM)
\end{equation}
is an isomorphism.
Put $A_m=\VV_R(M/\pi_E^mM)$.
The transition maps $A_{m+1}\to A_m$ are surjective.
Indeed, $\VV_R$ is exact on modules annihilated by powers of $\pi_E$, and $M/\pi_E^{m+1}\to M/\pi_E^m$ is surjective.
For $m>n$, the same exactness of $\VV_R$ gives
\[A_m\xrightarrow{\times\pi_E^n}A_m\to A_n\to0.\]
Each $A_m$ is finite, so the inverse systems of kernels in these sequences satisfy the Mittag-Leffler condition.
Passing to inverse limits, as in Proposition \ref{prop:functor_D_fds}, gives
\[(\varprojlim_m A_m)/\pi_E^n(\varprojlim_m A_m)\isomto A_n\]
which proves (\ref{eq:mod_pin_V}) by (\ref{eq:limit_V}).

Equations (\ref{eq:mod_pin_V}) and (\ref{eq:limit_V}) show that $\VV_R(M)$ is $\pi_E$-adically complete.
Its reduction modulo $\pi_E$ is finite by the characteristic $p$ case, so topological Nakayama implies that it is finitely generated over $\cO_E$.
Both sides of the canonical map
\[\VV_R(M)\otimes_{\cO_E}\BR\to M\otimes_R\BR\]
are finite $R^\sep$-modules and hence $\pi_E$-adically complete, so it suffices to prove the mod $\pi_E^n$ reduction is an isomorphism for every $n\ge1$.
By (\ref{eq:mod_pin_V}), the reduction is the canonical map for $M/\pi_E^nM$, which is an isomorphism by the torsion case.
\end{proof}

We obtain the following structure theorem for underlying modules of \'{e}tale $\varphi$-modules.
\begin{corollary}\label{cor:etphimod_structure}
Let $(R,\varphi)$ be a flat $F$-dynamical system over $\cO_E$.
For every $M\in\phimod_R$, there is an isomorphism of $R$-modules
\[M\cong R^{\oplus r}\oplus \bigoplus_{i=1}^n R/\pi_E^{e_i}R.\]
\end{corollary}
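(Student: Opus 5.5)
The plan is to transport the structure of $\VV_R(M)$, a finitely generated $\cO_E$-module, back to $M$ using the isomorphism of Proposition \ref{prop:key_isom_V} and faithfully flat descent along $R\to\BR$. Since $\cO_E$ is a discrete valuation ring, we can write
\[\VV_R(M)\cong\cO_E^{\oplus r}\oplus\bigoplus_{i=1}^n\cO_E/\pi_E^{e_i}.\]
Proposition \ref{prop:key_isom_V} then gives an isomorphism of $\BR$-modules
\[M\otimes_R\BR\cong (\BR)^{\oplus r}\oplus\bigoplus_i \BR/\pi_E^{e_i}\BR.\]
The task is to descend this decomposition to $R$.

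The first step is to invariantly describe the decomposition type of $M$ in terms of the modules $\pi_E^jM$ and $M[\pi_E^j]$, or their quotients, in a way that commutes with flat base change.

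\begin{itemize}
\item Let $t=\max e_i$. The torsion submodule $M_{\mathrm{tors}}=M[\pi_E^t]$ satisfies $M_{\mathrm{tors}}\otimes_R\BR=(M\otimes_R\BR)_{\mathrm{tors}}$ by flatness.
\item By Lemma \ref{lem:flat_implies_abelian}, $M_{\mathrm{tors}}$ and $M/M_{\mathrm{tors}}$ are again \'etale $\varphi$-modules.
\item $M/M_{\mathrm{tors}}$ becomes free of rank $r$ after the faithfully flat base change. Since flatness and finite presentation descend along faithfully flat maps, it is finite projective, hence free over the local ring $R$.
\end{itemize}

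This reduces us to two tasks: splitting the extension $0\to M_{\mathrm{tors}}\to M\to R^r\to 0$, which is automatic since the quotient is free, and handling the torsion part.

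For the torsion part $T=M_{\mathrm{tors}}$, apply the same idea to the graded pieces. For each $j$, the modules $\pi_E^{j}T/\pi_E^{j+1}T$ are \'etale $\varphi$-modules over $\overline{R}=R/\pi_E R$, again using abelianness from Lemma \ref{lem:flat_implies_abelian} and the fact that multiplication by $\pi_E$ is a $\varphi$-equivariant endomorphism. By Lemma \ref{lem:structure_thm_contracting} they are free over $\overline{R}$. Their ranks are read off after base change and match the counts $\#\{i:e_i>j\}$.

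With this, lift bases as follows.

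\begin{enumerate}
\item Choose elements $m_1,\dots,m_n\in T$ ordered by decreasing $e_i$. Choose them so that for each $j$ the elements $\pi_E^{j}m_i$ with $e_i>j$ reduce to an $\overline{R}$-basis of $\pi_E^jT/\pi_E^{j+1}T$.
\item To make this possible, take the images of the $m_i$ with $e_i>j$ in $T/\pi_E T$. Work compatibly with the filtration $T\supset\pi_ET\supset\cdots$, where each piece is free modulo the next. This is essentially the standard proof of the structure theorem over a DVR, run over $R$, with $\overline{R}$-freeness of the graded pieces replacing field-linear algebra.
\item Define $\bigoplus_i R/\pi_E^{e_i}\to T$ by sending $1\mapsto m_i$.
\item Surjectivity follows from Nakayama's lemma, since the map is surjective on each graded piece.
\item Injectivity follows because the map is an isomorphism on every graded piece $\pi_E^j(-)/\pi_E^{j+1}(-)$. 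The source graded pieces are free over $\overline{R}$ of the same ranks. One can alternatively check injectivity after base change to $\BR$ by comparing lengths or Fitting ideals.
\end{enumerate}

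The main obstacle is the choice of lifts in step 1. Over a general local ring, the freeness of each $\pi_E^jT/\pi_E^{j+1}T$ does not automatically give compatible bases. My first attempt would be to use the ``socle'' pieces $T[\pi_E]\cap\pi_E^{j}T$, modulo $T[\pi_E]\cap\pi_E^{j+1}T$; these are also \'etale over $\overline{R}$, hence free by Lemma \ref{lem:structure_thm_contracting}. I would choose bases top-down:

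\begin{itemize}
\item start with $\pi_E^{t-1}T$;
\item lift generators through $\pi_E^{j}$;
\item extend by direct-summand complements, which exist because quotients of free modules by free direct summands that are themselves free split.
\end{itemize}

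The point to verify is that every inclusion among these free $\overline{R}$-modules is a split injection. This follows because the cokernels are themselves \'etale $\varphi$-modules, hence free by Lemma \ref{lem:structure_thm_contracting}.
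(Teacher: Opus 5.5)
Your route is different from the paper's. The paper writes $\VV_R(M)\cong\cO_E^{\oplus r}\oplus\bigoplus_i\cO_E/\pi_E^{e_i}$ and transports this to $M\otimes_R\BR$ via Proposition \ref{prop:key_isom_V}. It then descends the isomorphism $M\otimes_R\BR\cong N\otimes_R\BR$, with $N=R^{\oplus r}\oplus\bigoplus_iR/\pi_E^{e_i}R$, by the general fact that isomorphism of finitely generated modules descends along a faithfully flat local map of Noetherian local rings (Lemma \ref{lem:ff_isom}). You instead prove a DVR-style structure theorem over $R$ directly. Your key input is that canonical filtrations are $\varphi$-stable, so by Lemma \ref{lem:flat_implies_abelian} their subquotients are \'etale over $\overline{R}$ and hence free by Lemma \ref{lem:structure_thm_contracting}. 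This is more self-contained, and your treatment of the free part $M/M_{\mathrm{tors}}$ and the splitting is fine.

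The gap is in the torsion part, in your steps 1 and 3. The map $\bigoplus_iR/\pi_E^{e_i}R\to T$, $1\mapsto m_i$, is well defined only if $\pi_E^{e_i}m_i=0$. Your step 1 condition (the $\pi_E^jm_i$ with $e_i>j$ reduce to bases of the graded pieces) does not force this. For example, take $T=R/\pi_E^2R\oplus R/\pi_ER$ with coordinatewise Frobenius and $e_1=2$, $e_2=1$. Then $m_1=(1,0)$ and $m_2=(1,1)$ satisfy your condition, but $\pi_Em_2=(\pi_E,0)\neq0$.

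The socle filtration you mention at the end is exactly the repair, but it must be used for this purpose and then tied to the graded pieces. Set $S_j=T[\pi_E]\cap\pi_E^jT$; the quotients $S_j/S_{j+1}$ are \'etale over $\overline{R}$, hence free. For each $e$, lift a basis of $S_{e-1}/S_e$ to elements of $S_{e-1}\subset\pi_E^{e-1}T$, and write them as $\pi_E^{e-1}m_i$ with $e_i=e$. Then $\pi_E^{e_i}m_i=0$ automatically. Next use the exact sequences
\[0\to S_j/S_{j+1}\to\pi_E^jT/\pi_E^{j+1}T\xrightarrow{\ \pi_E\ }\pi_E^{j+1}T/\pi_E^{j+2}T\to0.\]
By descending induction on $j$, they show that $\{\pi_E^jm_i:e_i>j\}$ is an $\overline{R}$-basis of $\pi_E^jT/\pi_E^{j+1}T$. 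Since $R$ is $\pi_E$-torsion free, the graded pieces of the source are free on the corresponding elements. So the map is an isomorphism on every graded piece of the finite $\pi_E$-adic filtration, hence an isomorphism; no further splitting statements are needed.

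Your suggested alternative, comparing lengths or Fitting ideals after base change to $\BR$, does not work. $\BR/\pi_E^e\BR$ has infinite length whenever $\dim R>1$, which is the case of interest (e.g. $R_{H,K}$ with $h>1$). There, an injective map between isomorphic modules need not be surjective.
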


\begin{proof}
Since $\VV_R(M)$ is finite over the DVR $\cO_E$, there is an isomorphism $\VV_R(M)\cong \cO_E^{\oplus r}\oplus \bigoplus_{i=1}^n \cO_E/(\pi_E^{e_i})$ for some $r$ and $1\le e_i<\infty$.
Proposition \ref{prop:key_isom_V} therefore gives an isomorphism 
\[M\otimes_R\BR\cong (\BR)^{\oplus r}\oplus\bigoplus_{i=1}^n \BR/\pi_E^{e_i}\BR.\]
\begin{lemma}[{\cite[Proposition 2.5.8]{Grothendieck1967Elements}}]\label{lem:ff_isom}
Let $R\to R'$ be a faithfully flat homomorphism of Noetherian local rings.
Let $M$ and $N$ be finitely generated $R$-modules such that $M\otimes_{R}R'\cong N\otimes_{R}R'$ as $R'$-modules.
Then $M$ and $N$ are isomorphic as $R$-modules.
\end{lemma}
Apply Lemma \ref{lem:ff_isom} to $M$ and $R^{\oplus r}\oplus \bigoplus_{i=1}^n R/(\pi_E^{e_i})$ to obtain the required isomorphism.
\end{proof}

\begin{proposition}\label{prop:functor_V}
Let $(R,\varphi)$ be a flat $F$-dynamical system over $\cO_E$.
The assignment $M\mapsto \VV_R(M)$ defines an exact functor
\[\VV_R:\phimod_R\to\Rep_{\cO_E}(\Gal_k).\]
\end{proposition}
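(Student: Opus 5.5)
The functor $\VV_R$ has to satisfy three things: its values lie in $\Rep_{\cO_E}(\Gal_k)$, it is functorial, and it is exact. Functoriality is formal. A morphism $f:M\to N$ of \'{e}tale $\varphi$-modules induces $f\otimes\id:M\otimes_R\BR\to N\otimes_R\BR$. This map commutes with $\varphi$ and with $\Gal_k$, so it restricts to a $\Gal_k$-equivariant $\cO_E$-linear map on $\varphi$-invariants. Finite generation of $\VV_R(M)$ over $\cO_E$ is already part of Proposition \ref{prop:key_isom_V}. What remains is continuity and exactness.

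\textbf{Continuity.} Since $\VV_R(M)$ is finitely generated, its $\pi_E$-adic topology is given by the quotients $\VV_R(M)/\pi_E^n\VV_R(M)$. By Proposition \ref{prop:key_isom_V} these quotients are $\VV_R(M/\pi_E^nM)$. It therefore suffices to show that $\Gal_k$ acts on each finite set $\VV_R(M/\pi_E^nM)$ through a finite quotient. I will prove this by induction on the power of $\pi_E$ killing the module.

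For $\pi_E$-torsion modules, Proposition \ref{prop:functor_V_contracting} and Lemma \ref{lem:contract_to_resfield} identify $\VV_R(M)$ equivariantly with $\VV_k(M\otimes_Rk)$, which is continuous by Theorem \ref{thm:equiv_phimod_field}. In general the value is a finite $\cO_E$-module, so continuity is equivalent to each element having an open stabilizer.

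To get open stabilizers, fix $x\in\VV_R(M/\pi_E^n)$. Write $x$ as a finite sum of elements $m\otimes s$ with $s\in\BR$. Each such $s$ is a limit of elements of finite \'{e}tale subalgebras $R_\ell$. This is the main obstacle: stabilizers of elements of the completion $\BR$ need not be open. I would handle it as follows.
\begin{enumerate}
\item Use the torsion case of Proposition \ref{prop:key_isom_V} to write $M/\pi_E^n\otimes\BR\cong\VV\otimes_{\cO_E}\BR$, where $\VV$ is finite.
\item Use the d\'evissage short exact sequence $0\to\VV_R(M')\to\VV_R(M)\to\VV_R(M'')\to0$ from that proof.
\item Observe that an extension of continuous finite discrete modules is continuous. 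Indeed, $\Gal_k$ acts on the finite set $\VV_R(M)$, and the kernel of the action on $\VV_R(M')\oplus\VV_R(M'')$ acts on $\VV_R(M)$ through the group $\Hom(\VV_R(M''),\VV_R(M'))$.
\item Note that this last map is a continuous cocycle, because $\Gal_k$ acts continuously on $\BR$ with its $\frakm$-adic topology. Open subgroups fix $\BR/\frakm^a$ modulo each finite level, by the construction via the finite \'{e}tale $R_\ell$.
\end{enumerate}
Alternatively, and more simply, one can argue with the $\frakm$-adic topology on $M\otimes\BR$. The action of $\Gal_k$ on $\BR/\frakm^a\BR=R^\sh/\frakm^a$ factors through finite quotients on each finitely generated piece. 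Moreover, $\VV_R(M/\pi_E^n)$ embeds in $(M/\pi_E^n)\otimes\BR/\frakm^a$ for large $a$, since it is finite and $M\otimes\BR$ is separated. This gives open stabilizers directly.

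\textbf{Exactness.} Let $0\to M_1\to M_2\to M_3\to0$ be an exact sequence in $\phimod_R$; this category is abelian by Lemma \ref{lem:flat_implies_abelian}. Tensoring with $\BR$ keeps it exact by flatness. Applying the isomorphism of Proposition \ref{prop:key_isom_V}, the result is the sequence $\VV_R(M_i)\otimes_{\cO_E}\BR$, which is therefore exact. Since $\cO_E\to\BR$ is faithfully flat (Lemma \ref{lem:fds_extension_fields}), the sequence $0\to\VV_R(M_1)\to\VV_R(M_2)\to\VV_R(M_3)\to0$ is exact.
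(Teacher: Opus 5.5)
Your direct continuity argument is exactly the paper's proof, and so is your exactness argument. The direct argument is the ``alternative'': embed the finite set $\VV_R(M/\pi_E^nM)$ into $(M/\pi_E^nM)\otimes_R\BR/\frakm^a\BR$ for large $a$ using separatedness, then use $\BR/\frakm^a\BR\cong R^\sh/\frakm_R^aR^\sh$ to get open stabilizers. The exactness argument is faithfully flat base change along $\cO_E\to\BR$ combined with Proposition \ref{prop:key_isom_V}.

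Drop the d\'{e}vissage route. Its step 3 is false as stated: $\Gal_k$ can admit discontinuous homomorphisms to $\Fp$, and such a homomorphism makes an extension of two trivial finite modules discontinuous. So that route depends entirely on the continuity claim in step 4, which is precisely what the direct argument proves.
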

\begin{proof}
By Proposition \ref{prop:key_isom_V}, $\VV_R(M)$ is finitely generated over $\cO_E$.
Exactness may be checked after the faithfully flat base change $\cO_E\to \BR$, where Proposition \ref{prop:key_isom_V} identifies $\VV_R(-)\otimes_{\cO_E}R^\sep$ with the exact functor $-\otimes_RR^\sep$.

It remains to show that, for every $n\ge1$, the action of ${\Gal_k}$ on $\VV_R(M)/\pi_E^n\VV_R(M)$ has open stabilizers.
From (\ref{eq:mod_pin_V}), we may replace $M$ by $M/\pi_E^nM$ and therefore assume that $\pi_E^nM=0$.
Then $V\coloneqq \VV_R(M)$ is a finite set, embedded in the finite $R^\sep$-module $N\coloneqq M\otimes_RR^\sep$.
Let $\frakm$ be the maximal ideal of $R^\sep$.
Since $N$ is $\frakm$-adically complete and $V$ is a finite set, there exists $c\ge1$ such that $V\cap\frakm^cN=0$.
Therefore, reduction induces a $\Gal_k$-equivariant injection $V\to N/\frakm^cN$.
Write $R^\sh$ for the ind-(finite \'{e}tale) local $R$-algebra whose completion is $R^\sep$.
Since
\[R^\sep/\frakm^c\cong R^\sh/\frakm_R^c R^\sh\]
every element of $N/\frakm^cN$ is represented using finitely many coefficients from a finite \'{e}tale stage and therefore has an open stabilizer in $\Gal_k$.
The equivariant injection $V\to N/\frakm^cN$ therefore shows that every element of $V$ has an open stabilizer, proving continuity.
\end{proof}
\subsection{The equivalence and its compatibility with base change}
\begin{theorem}\label{thm:FDS_cat_equiv}
Let $(R,\varphi)$ be a flat $F$-dynamical system over $\cO_E$ with residue field $k$.
Then
\[\DD_R:\Rep_{\cO_E}(\Gal_k)\to\textup{$\varphi$-Mod}_R\]
is an equivalence with a quasi-inverse $\VV_R$.
\end{theorem}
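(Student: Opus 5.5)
The plan is to assemble the natural isomorphisms $\VV_R\circ\DD_R\simeq\id$ and $\DD_R\circ\VV_R\simeq\id$ from the preceding results, in the same way as Theorem \ref{equivalence_contracting} in characteristic $p$. By Proposition \ref{prop:functor_D_fds}, $\DD_R$ is a functor $\Rep_{\cO_E}(\Gal_k)\to\phimod_R$. Since $(R,\varphi)$ is flat, Proposition \ref{prop:functor_V} shows that $\VV_R$ is a functor $\phimod_R\to\Rep_{\cO_E}(\Gal_k)$, with continuity of the Galois action already verified there.

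For the first composite, fix $V\in\Rep_{\cO_E}(\Gal_k)$. By definition, $\VV_R(\DD_R(V))=(\DD_R(V)\otimes_R R^\sep)^{\varphi=\id}$. Proposition \ref{prop:recover_rep_from_D_fds} gives a canonical $\Gal_k$-equivariant isomorphism from $V$ to this module. It is induced by $v\mapsto v\otimes 1$ composed with the inverse of the comparison isomorphism of Proposition \ref{prop:functor_D_fds}, so it is natural in $V$. Hence $\id\simeq\VV_R\circ\DD_R$.

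For the second composite, fix $M\in\phimod_R$. Proposition \ref{prop:key_isom_V} gives a $\varphi$- and $\Gal_k$-equivariant isomorphism
\[\VV_R(M)\otimes_{\cO_E}R^\sep\isomto M\otimes_R R^\sep.\]
Taking $\Gal_k$-invariants, the left side becomes $\DD_R(\VV_R(M))$. Lemma \ref{lem:galois_invariants} applies because $R^\sep=\widehat{R^\sh}$ and $M$ is finitely generated over the complete Noetherian local ring $R$, so the right side is $(M\otimes_R R^\sep)^{\Gal_k}\cong M$ via $m\mapsto m\otimes1$. Both identifications are natural in $M$. The map $m\mapsto m\otimes 1$ intertwines $\varphi_M$ with $\varphi_M\otimes\varphi^\sep$, and the comparison map is Frobenius-equivariant, so the composite $M\cong\DD_R(\VV_R(M))$ is an isomorphism of \'etale $\varphi$-modules. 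Hence $\DD_R\circ\VV_R\simeq\id$.

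The only point needing care is bookkeeping: checking that the isomorphisms are natural and compatible with Frobenius, and that Lemma \ref{lem:galois_invariants} applies to arbitrary finitely generated, possibly torsion, $M$, which it does as stated. The substantive inputs are all earlier results: the exact sequence of Proposition \ref{prop:key_ses}, and Proposition \ref{prop:key_isom_V}, which is where flatness is used. No new obstacle is expected.
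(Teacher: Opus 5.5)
Your proposal is correct and matches the paper's proof essentially verbatim. The paper likewise obtains $\VV_R\circ\DD_R\simeq\id$ from Proposition \ref{prop:recover_rep_from_D_fds}, and $\DD_R\circ\VV_R\simeq\id$ by taking $\Gal_k$-invariants in the comparison isomorphism of Proposition \ref{prop:key_isom_V} and then applying Lemma \ref{lem:galois_invariants}. Your extra checks on naturality, Frobenius compatibility, and the well-definedness of $\VV_R$ via Proposition \ref{prop:functor_V} only make explicit what the paper leaves implicit.
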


\begin{proof}
Proposition \ref{prop:recover_rep_from_D_fds} gives a natural isomorphism $\VV\circ\DD\simeq\id$.

Now fix $M\in\phimod_R$.
Taking $\Gal_k$-invariants in the canonical isomorphism of Proposition \ref{prop:key_isom_V} gives
\[\DD(\VV(M))\cong(M\otimes_R\BR)^{\Gal_k}.\]
By Lemma \ref{lem:galois_invariants}, the canonical map
\[M\to (M\otimes_R\BR)^{\Gal_k},\quad m\mapsto m\otimes1\]
is an isomorphism, so the assertion follows.
\end{proof}

We next record compatibility with change of the base ring.
Let $(R,\varphi_R)$ and $(S,\varphi_S)$ be flat $F$-dynamical systems over $\cO_E$.
Let $f:R\to S$ be a local $\cO_E$-algebra homomorphism satisfying $f\circ\varphi_R=\varphi_S\circ f$.
Base change along $f$ defines a functor
\[f^*:\textup{$\varphi$-Mod}_R\to \textup{$\varphi$-Mod}_S,\quad(M,\varphi_M)\mapsto (M\otimes_RS,m\otimes s\mapsto \varphi_M(m)\otimes\varphi_S(s))\]
\begin{proposition}\label{prop:fds_compatibiliy}
Let $f:(R,\varphi_R)\to (S,\varphi_S)$ be a local $\varphi$-equivariant homomorphism of flat $F$-dynamical systems over $\cO_E$ inducing $k\hookrightarrow \ell$ on residue fields.
Assume that $\ell/k$ is algebraic separable, and choose an embedding $\ell\hookrightarrow k^\sep$ over $k$.
Then there is a natural isomorphism
\[\VV_S\circ f^*\simeq \Res\circ\VV_R\]
where $\Res$ denotes the restriction of representations via $\Gal_\ell\subset\Gal_k$, so the following diagram commutes up to natural isomorphism
\begin{center}
\begin{tikzcd}
    \textup{$\varphi$-Mod}_R\arrow[r,"f^*"]\arrow[d,"\VV_R"]&\textup{$\varphi$-Mod}_S\arrow[d,"\VV_S"]\\
    \Rep_{\cO_E}(\Gal_k)\arrow[r,"\Res"]&\Rep_{\cO_E}(\Gal_\ell)
\end{tikzcd}
\end{center}
\end{proposition}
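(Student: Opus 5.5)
The plan is to construct a map of period rings $R^\sep\to S^\sep$ compatible with Frobenius and with the Galois actions, and then compare $\varphi$-invariants on both sides.

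First, the embedding $\ell\hookrightarrow k^\sep$ identifies $k^\sep$ with a separable closure of $\ell$. It therefore also identifies $\Gal_\ell$ with the closed subgroup $\Gal(k^\sep/\ell)\subset\Gal_k$. Let $R^\sh$ and $S^\sh$ be the strict henselizations attached to $k^\sep$. Each finite subextension $k'/k$ of $k^\sep$ gives a finite \'{e}tale local $R$-algebra $R_{k'}$. The base change $R_{k'}\otimes_R S$ is finite \'{e}tale over $S$, and its reduction is $k'\otimes_k\ell$. The chosen embedding $\ell\hookrightarrow k^\sep$ picks out a factor $k'\ell\subset k^\sep$ of this reduction. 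By Lemma \ref{lem:henselian_equiv_cat}, that factor lifts to a local $R$-algebra map $R_{k'}\to S_{k'\ell}$. These maps are compatible as $k'$ varies, so in the colimit they give a local homomorphism $R^\sh\to S^\sh$ over $f$. On residue fields it is the identity of $k^\sep$. It is local, so it extends to the completions and gives $f^\sep:R^\sep\to S^\sep$.

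Next I would check the equivariance properties. For $\varphi$: both composites $\varphi_S^\sep\circ f^\sep$ and $f^\sep\circ\varphi_R^\sep$ are $\varphi_R$-semilinear maps $R^\sep\to S^\sep$ extending $\varphi_S\circ f=f\circ\varphi_R$, and both induce $x\mapsto x^q$ on $k^\sep$. The uniqueness argument in Lemma \ref{lem:fds_extension_fields}, which works at each finite \'{e}tale stage via Lemma \ref{lem:henselian_equiv_cat} and then passes to completion, shows they coincide. The same uniqueness argument, applied to $g\in\Gal_\ell$, shows that $f^\sep\circ g=g\circ f^\sep$: both are lifts over $f$ of the same automorphism of $k^\sep$.

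With $f^\sep$ in hand, for $M\in\phimod_R$ we have $f^*M\otimes_S S^\sep=M\otimes_R R^\sep\otimes_{R^\sep}S^\sep$. This gives a natural $\Gal_\ell$-equivariant, $\cO_E$-linear map
\[\VV_R(M)=(M\otimes_RR^\sep)^{\varphi=\id}\to (f^*M\otimes_SS^\sep)^{\varphi=\id}=\VV_S(f^*M).\]
To see it is an isomorphism, apply Proposition \ref{prop:key_isom_V} to both sides:
\[\VV_R(M)\otimes_{\cO_E}R^\sep\cong M\otimes_RR^\sep.\]
Base changing this along $f^\sep$ gives a $\varphi$-equivariant isomorphism
\[\VV_R(M)\otimes_{\cO_E}S^\sep\cong f^*M\otimes_SS^\sep.\]
Now take $\varphi$-invariants. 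Since $\VV_R(M)$ is finitely generated over $\cO_E$ and $S^\sep$ is $\cO_E$-flat, tensoring the sequence of Proposition \ref{prop:key_ses} for $S$ with $\VV_R(M)$ stays exact. Hence the invariants of the left side are $\VV_R(M)$, as in the proof of Proposition \ref{prop:recover_rep_from_D_fds}. Composing these identifications recovers the map above, so that map is an isomorphism. Naturality in $M$ is clear from the construction.

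The main obstacle is the construction of $f^\sep$ and its compatibility with $\varphi$. The real issue is whether $k'\otimes_k\ell$ picks out the correct factor consistently, and whether the Frobenius lifts are unique on the completion rather than only on the henselization. I would handle both by working modulo $\frakm^c$ at finite \'{e}tale stages, where Lemma \ref{lem:henselian_equiv_cat} gives uniqueness, and then passing to the limit.
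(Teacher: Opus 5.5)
Your proposal is correct and follows the paper's proof. You construct $R^\sep\to S^\sep$ from uniqueness of finite \'{e}tale lifts over $f$, and the same uniqueness gives compatibility with $\varphi$ and $\Gal_\ell$. You then base change the isomorphism of Proposition \ref{prop:key_isom_V} along this map and take $\varphi$-invariants. You also make explicit, via Proposition \ref{prop:key_ses} for $S$, the identification $(\VV_R(M)\otimes_{\cO_E}S^\sep)^{\varphi=\id}=\VV_R(M)$, which the paper leaves implicit.
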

\begin{proof}
With the chosen embedding $\ell\subset k^\sep$, we may use $k^\sep$ as a separable closure of both $k$ and $\ell$.
Functoriality of finite \'{e}tale lifts gives a unique local map $R^\sh\to S^\sh$ over $f$ inducing the identity on the common residue field $k^\sep$.
After completion, this gives a local map $R^\sep\to S^\sep$, and uniqueness of the lift shows that it commutes with $\varphi$ and is $\Gal_{\ell}$-equivariant.

Fix $M\in \textup{$\varphi$-Mod}_R$.
Proposition \ref{prop:key_isom_V} gives a $\varphi$ and $\Gal_k$ equivariant isomorphism
\[(M\otimes_RR^\sep)^{\varphi=\id}\otimes_{\cO_E}R^\sep\cong M\otimes_RR^\sep.\]
Extending scalars to $S^\sep$ gives a $\varphi$ and $\Gal_\ell$ equivariant isomorphism
\[(M\otimes_RR^\sep)^{\varphi=\id}\otimes_{\cO_E}S^\sep\cong M\otimes_RS^\sep.\]
Taking $\varphi$-invariants yields a natural $\Gal_\ell$-equivariant isomorphism 
\[\Res(\VV_R(M))\isomto\VV_S(M\otimes_RS)\]
as desired.
\end{proof}

\subsection{$F$-dynamical systems associated to formal groups}
Let $K/E$ be finite unramified, put $q=q_E$ and $f=[K:E]$.
Let $H$ be a one-dimensional formal $\cO_E$-module of finite height over $\cO_K$.
Retain the notation $K'$, $P$, $d$, $R_H$ and the coordinates of Section \ref{subsec:model}, and write $G$ for the special fiber of $H$.
\begin{proposition}\label{prop:bh_fds}
Under the above hypotheses, $(R_H,\varphi\coloneqq \varphi_E)$ is a flat $F$-dynamical system over $\cO_E$.
\end{proposition}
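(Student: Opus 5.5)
Four things need checking: $R_H$ is a complete Noetherian local ring faithfully flat over $\cO_E$; $\varphi$ is local and induces $x\mapsto x^q$ on $k_H$; $\overline\varphi$ on $R_H/(\pi_E)$ is contracting; and $\varphi$ is flat. The first two are formal. $\cA_H\cong\cO_{K'}\llbracket t_{k,i}\rrbracket$ is a regular Noetherian domain, so the localization $(\cA_H)_{\frp_H}$ is Noetherian local and its $\frp_H$-adic completion $R_H$ is complete Noetherian local and flat over it. Since $\pi_E\in\frp_H$ and $\cA_H$ is $\pi_E$-torsion free, $\pi_E$ is a nonzerodivisor in $\frakm_{R_H}$, so $\cO_E\to R_H$ is faithfully flat. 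The map $\varphi$ preserves $\frp_H$ and satisfies $\varphi^{-1}(\frp_H)=\frp_H$, so it extends to a local endomorphism. On residue fields it is compatible via $\overline\xi_H$ with $\varphi_E$ on $\OCb$, which is $x\mapsto x^{q}$; hence it induces $\Frob_q$ on $k_H\subset\Cb$.

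\emph{Contraction.} Set $\overline{R}=R_H/(\pi_E)$, the completion of the localization of $\overline{\cA}=\kappa_{K'}\llbracket t_{k,i}\rrbracket$ at $\overline{\frp}=\frp_H/(\pi_E)$. Its maximal ideal is generated by generators of $\overline{\frp}$, so it suffices to find $N$ with $\overline\varphi^N(\overline\frp)\subset\overline\frp^2$ in $\overline{\cA}_{\overline\frp}$. The key point is that $\overline\xi_H\circ\varphi=\Frob_q\circ\overline\xi_H$, so for $x\in\overline\frp$ we get $\overline\varphi(x)\in\overline\frp$; that alone does not give squares, so I will use the explicit formula. Modulo $\pi_E$, $\varphi$ shifts $t_{k,i}\mapsto t_{k+1,i}$, and the last variable goes to $\sum_H[-c_\ell]_H(t_{f\ell,i})$. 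Here $c_\ell\in\frakm_E$ for $\ell<d$, so modulo $\pi_E$ each $[c_\ell]_H$ becomes a power series in $X^{q}$ with no linear term, since the reduction of $[\pi_E]$ has zero derivative. Writing $\overline\varphi^{df}(t_{0,i})=g_i$, I expect $g_i$ to lie in the ideal generated by $q$-th powers of the $t$'s, so $\overline\varphi^{df}$ maps every variable into $\mathfrak n^{q}\subset\mathfrak n^2$ for $\mathfrak n=(t_{k,i})$.

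To transfer this to $\overline\frp$, I would work with generators of the form $x-$(something in the image of Frobenius). The cleanest route is to take $N=df$ and argue as follows. First, $\overline\varphi^{N}(\overline{\cA})$ lies in the subring $\kappa_{K'}\llbracket t^{q}\rrbracket$-ish image; more precisely, $\overline\varphi^N(y)\equiv \Phi(y)$ modulo high powers, where $\Phi$ is a Frobenius-like map. Then for $x\in\overline\frp$, factor $\overline\varphi^N(x)$ as a $q$-th power expression in elements of $\overline\frp$. This step is the main obstacle: the honest identity needed is that $\overline\varphi^N$ agrees with a $q$-power-type map up to terms in $\overline\frp^2$. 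If that fails, the fallback is Lemma-style Fitting arguments directly on $\overline R$. Possibly the cleaner statement is that $\overline\varphi^{N}$ factors through $y\mapsto y^q$ composed with a ring map preserving $\overline\frp$, which immediately gives $\overline\frp\mapsto\overline\frp^{q}$.

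\emph{Flatness.} $\varphi$ on $\cA_H\cong\cO_{K'}\llbracket t\rrbracket$ is a local endomorphism of a regular local ring. I will show it is finite: modulo the maximal ideal $(\pi_E,t)$, the images of the variables generate an ideal primary to the maximal ideal, because the $t_{k+1,i}$ and the images of the $t_{df-1,i}$ are $[-c_0]_H(t_{0,i})$ plus other terms, and $c_0\neq0$ is a unit times a power of $\pi_E$. So $\cA_H/\varphi(\frakm)\cA_H$ has finite length, and $\varphi$ is quasi-finite and hence finite by completeness. A finite local map between regular local rings of the same dimension is flat by miracle flatness. Flatness is then preserved by localizing at $\frp_H$, since $\varphi^{-1}(\frp_H)=\frp_H$, and by completing, which gives flatness of $\varphi$ on $R_H$.
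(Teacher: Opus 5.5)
Your verification of the formal axioms and your flatness argument are essentially the paper's. For flatness you show finiteness of $\varphi$ on $\cA_H$ via $\cA_H/\varphi(\frakm_H)\cA_H\cong\kappa_{K'}\llbracket t_{0,i}\rrbracket/([-c_0]_G(t_{0,i}))$ and complete Nakayama, then apply miracle flatness, then localize at $\frp_H$ and complete.

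The gap is the contraction axiom, which you explicitly leave open. Your one concrete claim is that $\overline{\varphi}^{df}$ maps every variable into $\mathfrak{n}^q$ for $\mathfrak{n}=(t_{k,i})$. That concerns the closed point of $\overline{\cA}=\kappa_{K'}\llbracket t_{k,i}\rrbracket$. But $R_H/(\pi_E)$ is the completion at the non-maximal prime $\overline{\frp}$, and contraction at $\mathfrak{n}$ gives no information there. For example, on $\kappa\llbracket x,y\rrbracket$ the endomorphism $x\mapsto x^p$, $y\mapsto xy$ satisfies $\varphi(\mathfrak{n})\subseteq\mathfrak{n}^2$ and preserves $(y)$. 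Yet $\varphi^N(y)=x^{1+p+\cdots+p^{N-1}}y\notin(y)^2$ after localizing at $(y)$, where $x$ becomes a unit. Your ``Frobenius-like map'' and ``$q$-th power expression in elements of $\overline{\frp}$'' are never made precise. The Fitting-ideal fallback cannot help either, since Fitting arguments of that type take contraction as input.

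The missing argument is short and has two steps.

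\emph{Step 1.} Show $\overline{\varphi}^{df}(\overline{\cA})\subseteq\overline{\cA}^{\,p}=\kappa_{K'}\llbracket t_{k,i}^p\rrbracket$.
\begin{itemize}
\item For $c_\ell\in\frakm_E$, the reduced endomorphism $[-c_\ell]_G$ has zero linear term, so it lies in $\kappa_{K'}\llbracket T^p\rrbracket$.
\item Since $\kappa_{K'}$ is perfect, each $[-c_\ell]_G(t_{f\ell,i})$ is a $p$-th power in $\overline{\cA}$.
\item Their $G$-sum $\overline{\varphi}^{df}(t_{0,i})$ is then also a $p$-th power, because the coefficients of $G$ lie in the perfect field $\kappa_K$.
\item Hence $\overline{\varphi}^{df}(t_{k,i})=\overline{\varphi}^{k}(\overline{\varphi}^{df}(t_{0,i}))$ is again a $p$-th power.
\item $\overline{\varphi}$ acts bijectively on $\kappa_{K'}$, and $\overline{\cA}^{\,p}$ is a closed subring, so continuity gives the inclusion.
\end{itemize}

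\emph{Step 2.} For $x\in\overline{\frp}$, write $\overline{\varphi}^{df}(x)=b^p$ with $b\in\overline{\cA}$. Since $\overline{\varphi}$ preserves $\overline{\frp}$, we have $b^p\in\overline{\frp}$. As $\overline{\frp}$ is prime, $b\in\overline{\frp}$. Equivalently, $\overline{\xi}_H(b)^p=\Frob_q^{df}(\overline{\xi}_H(x))=0$ in the reduced ring $\OCb$. Therefore $\overline{\varphi}^{df}(\overline{\frp})\subseteq\overline{\frp}^{\,p}\subseteq\overline{\frp}^{\,2}$, which then localizes and completes as you intended.

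This is exactly what your final ``possibly'' sentence needs. Step 1 shows that $\overline{\varphi}^{df}$ lands in $p$-th powers. Primality of $\overline{\frp}$ is what forces the $p$-th roots of elements of $\overline{\varphi}^{df}(\overline{\frp})$ back into $\overline{\frp}$. Your proposal supplies neither step.
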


Applying Theorem \ref{thm:FDS_cat_equiv} to $(R_H,\varphi)$ gives the following equivalence.
\begin{corollary}\label{cor:phimod_bh}
The functor $\DD_{R_H}$ gives an equivalence
\[\DD_{R_H}:\Rep_{\cO_E}(\Gal_{k_H})\simeq\textup{$\varphi$-Mod}_{R_H}\]
with a quasi-inverse $\VV_{R_H}$.
\end{corollary}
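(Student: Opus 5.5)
The corollary is Theorem \ref{thm:FDS_cat_equiv} applied to the pair $(R_H,\varphi)$, with $\varphi=\varphi_E$. All of the substantive input is the preceding Proposition \ref{prop:bh_fds}, which says that $(R_H,\varphi_E)$ is a flat $F$-dynamical system over $\cO_E$. I will take it as given, as the statement allows.

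First I identify the data that Theorem \ref{thm:FDS_cat_equiv} needs. The ring $R_H$ is the $\frp_H$-adic completion of $(\cA_H)_{\frp_H}$. Since $\cA_H$ is a power series ring over $\cO_{K'}$, it is Noetherian, so $R_H$ is a complete Noetherian local ring. Its maximal ideal is $\frp_H R_H$, and its residue field is $\Frac(\cA_H/\frp_H)=\Frac(o_H)=k_H$. Hence the residue field $k$ in Theorem \ref{thm:FDS_cat_equiv} is exactly $k_H$. Fix the separable closure $k_H^\sep\subset\Cb$ used earlier. Lemma \ref{lem:fds_extension_fields} then produces $R_H^\sep$ together with its endomorphism $\varphi^\sep$ and a commuting action of $\Gal_{k_H}$. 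The functors $\DD_{R_H}$ and $\VV_{R_H}$ are the functors $\DD_R$ and $\VV_R$ of Section \ref{sec:fds}, specialized to this system.

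Next I check the hypotheses of each step. Proposition \ref{prop:functor_D_fds} needs only that $(R_H,\varphi)$ is an $F$-dynamical system; it shows that $\DD_{R_H}$ lands in $\phimod_{R_H}$. Proposition \ref{prop:recover_rep_from_D_fds} gives $\VV_{R_H}\circ\DD_{R_H}\simeq\id$. The flatness part of Proposition \ref{prop:bh_fds} is what activates Propositions \ref{prop:key_isom_V} and \ref{prop:functor_V}. These show that $\VV_{R_H}$ lands in $\Rep_{\cO_E}(\Gal_{k_H})$, so the continuity of the Galois action is included. Combined with Lemma \ref{lem:galois_invariants}, they give $\DD_{R_H}\circ\VV_{R_H}\simeq\id$. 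Together these statements are exactly Theorem \ref{thm:FDS_cat_equiv}.

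The only real obstacle sits in Proposition \ref{prop:bh_fds} itself, mainly in the contraction of $R_H/(\pi_E)$ and the flatness of $\varphi_E$. Since that proposition is assumed here, the remaining point is bookkeeping: the residue field of $R_H$ must be identified with $k_H$, so that the Galois group appearing in the equivalence is $\Gal_{k_H}$ and not that of some other field.
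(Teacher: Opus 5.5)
Your proposal is correct and matches the paper, which obtains this corollary by applying Theorem~\ref{thm:FDS_cat_equiv} to $(R_H,\varphi_E)$ once Proposition~\ref{prop:bh_fds} has established that this pair is a flat $F$-dynamical system over $\cO_E$. Your identification of the residue field as $\Frac(\cA_H/\frp_H)=\Frac(o_H)=k_H$ is exactly the bookkeeping needed for the Galois group in the equivalence to be $\Gal_{k_H}$.
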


We first record a criterion for constructing an $F$-dynamical system by localization and completion.
\begin{lemma}\label{lem:construction_fds}
Let $S$ be a Noetherian $\cO_E$-algebra that is faithfully flat over $\cO_E$.
Let $f:S\to S$ be an $\cO_E$-algebra endomorphism, and let $\frp\subset S$ be a prime ideal containing $\pi_E$, subject to the following conditions.
\begin{enumerate}
    \item The endomorphism $f$ induces $\Frob_q$ on $S/\frp$.
    \item With $\overline{S}=S/(\pi_E)$ and $\overline{\frp}=\frp/(\pi_E)$, $\overline{f}^N(\overline{\frp})\subset \overline{\frp}^2$ for some $N\ge1$.
\end{enumerate}
Let $R=(S_\frp)^\wedge$ where the completion is taken with respect to $\frp S_\frp$.
The endomorphism induced by $f$, denoted $\varphi$, makes $(R,\varphi)$ an $F$-dynamical system over $\cO_E$.
\end{lemma}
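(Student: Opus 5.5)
The plan is to check the defining conditions of an $F$-dynamical system for $(R,\varphi)$ one at a time.

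\emph{Basic properties of $R$.} Since $S$ is Noetherian, $S_\frp$ is a Noetherian local ring. Its $\frp S_\frp$-adic completion $R$ is therefore a complete Noetherian local ring with maximal ideal $\frp R$ and residue field $\kappa(\frp)=\Frac(S/\frp)$. Condition (i) gives $f(\frp)\subset\frp$: if $x\in\frp$, then $f(x)$ reduces to $\overline{x}^q=0$ in $S/\frp$. It also gives $f(S\setminus\frp)\subset S\setminus\frp$, because $f(s)$ reduces to $\overline{s}^q\neq0$. Hence $f^{-1}(\frp)=\frp$, and $f$ induces a local endomorphism of $S_\frp$ that is continuous for the $\frp$-adic topology. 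It therefore extends uniquely to a local endomorphism $\varphi$ of $R$. This $\varphi$ is $\cO_E$-linear, and on the residue field $\kappa(\frp)$ it is $x\mapsto x^q$. Indeed it is $\Frob_q$ on $S/\frp$ by (i), and a field endomorphism agreeing with $\Frob_q$ on a subring extends that way to the fraction field.

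\emph{Faithful flatness over $\cO_E$.} By the paper's reformulation, it suffices that $\pi_E\in\frakm_R$ and that $\pi_E$ is a nonzerodivisor on $R$. The first holds because $\pi_E\in\frp$. For the second, $S$ is flat over $\cO_E$, so $\pi_E$ is a nonzerodivisor on $S$. Localization is flat, so the same holds on $S_\frp$. Completion of a Noetherian local ring is flat, so $0\to S_\frp\xrightarrow{\pi_E}S_\frp$ remains exact after tensoring with $R$.

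\emph{Contraction modulo $\pi_E$ (the main step).} We have
\[\overline{R}=R/\pi_ER\cong(\overline{S}_{\overline{\frp}})^\wedge,\]
the completion of $\overline{S}_{\overline{\frp}}$ at its maximal ideal, because completion commutes with quotients by finitely generated ideals for Noetherian rings. Its maximal ideal is $\frakm_{\overline{R}}=\overline{\frp}\,\overline{R}$. Write $\overline{\varphi}$ for the induced endomorphism.
\begin{enumerate}
\item By (ii), $\overline{\varphi}^N$ sends each generator of $\overline{\frp}$ (taking finitely many generators from $\overline{S}$) into $\overline{\frp}^2\subset\frakm_{\overline{R}}^2$.
\item Any element of $\frakm_{\overline{R}}$ is a finite combination $\sum a_ig_i$ with $a_i\in\overline{R}$ and $g_i$ these generators. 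Since $\overline{\varphi}^N(a_i)\in\overline{R}$, we get $\overline{\varphi}^N(\sum a_ig_i)=\sum\overline{\varphi}^N(a_i)\overline{\varphi}^N(g_i)\in\frakm_{\overline{R}}^2$, because $\frakm_{\overline{R}}^2$ is an ideal.
\end{enumerate}
The only subtlety I expect here is that one needs finite generation of $\frakm_{\overline{R}}$ by images of elements of $\overline{S}$. This holds since $\overline{S}$ is Noetherian and $\frakm_{\overline{R}}=\overline{\frp}\,\overline{R}$.
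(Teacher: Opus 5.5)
Your proposal is correct and follows essentially the same route as the paper. You use condition (i) to get $f^{-1}(\frp)=\frp$, which gives a local endomorphism of $S_\frp$ that extends to $R$. You obtain faithful flatness because localization and completion preserve $\pi_E$-torsion-freeness, and you reduce contraction to $(\overline{S}_{\overline{\frp}})^\wedge$. The only difference is in the last step: where the paper says the containment "localizes and extends by continuity," you make it explicit by writing $\frakm_{\overline{R}}=\overline{\frp}\,\overline{R}$ with finitely many generators from $\overline{S}$ and using multiplicativity of $\overline{\varphi}^N$.
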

\begin{proof}
The ring $R$ is Noetherian and complete local.
Localization and completion preserve $\pi_E$-torsion freeness, and $\pi_E$ lies in its maximal ideal, so $\cO_E\to R$ is faithfully flat.
Since $f(s)$ reduces to $\overline{s}^q$ in the domain $S/\frp$, $f(s)\in\frp$ if and only if $s\in\frp$, so $f^{-1}(\frp)=\frp$.
Therefore, $f$ induces a local endomorphism of $S_\frp$, and a continuous endomorphism $\varphi$ on $R$, which induces $\Frob_q$ on the residue field $\Frac(S/\frp)$.

The identification $R/(\pi_E)\cong (S_\frp/(\pi_E))^\wedge$ reduces the contraction condition to the induced endomorphism of $S_\frp/(\pi_E)$.
Since $S_\frp/(\pi_E)\cong \overline{S}_{\overline{\frp}}$, the containment $\overline{f}^N(\overline{\frp})\subset\overline{\frp}^2$ localizes and then extends by continuity to the completion, proving the required contraction.
\end{proof}

\begin{proof}[Proof of Proposition \ref{prop:bh_fds}]
The abstract power series description of $\cA_{H}$ shows that it is regular.
Therefore, $R_H=((\cA_H)_{\frp_H})^\wedge$ is also regular.
Retain the notation $\overline{\xi}_H:\cA_{H}\to \OCb$ for the reduced period homomorphism, and let $\frp_H=\ker(\overline{\xi}_H)$.
We first apply Lemma \ref{lem:construction_fds} to prove that $(R_H,\varphi)$ is an $F$-dynamical system, and then prove flatness.
The power series model makes $\cA_H$ faithfully flat over $\cO_E$, and equivariance of $\overline{\xi}_H$ gives Frobenius on $\cA_H/\frp_H$.
It remains to prove contraction at $\frp_H$ modulo $\pi_E$.
Put 
\[\kappa'=\kappa_{K'},\quad A_0=\cA_H/(\pi_E).\]
The induced endomorphism $\overline{\varphi}$ acts on $\kappa'$ by $\Frob_q$.
Write $\overline{\frp}_H=\frp_H/(\pi_E)\subset A_0$.
The induced homomorphism, still denoted $\overline{\xi}_H:A_0\to \OCb$, has kernel $\overline{\frp}_H$.
We must find $N\ge1$ such that $\varphi^N(\overline{\frp}_H)\subset\overline{\frp}_H^2$.
In fact, we will show $\overline{\varphi}^{fd}(\overline{\frp}_H)\subset \overline{\frp}_H^p$.

Let us write $P(X)=X^d+\sum_{\ell=0}^{d-1}c_\ell X^\ell$ with $c_\ell\in\frakm_E$.
Choose formal coordinates $t_{k,i}$ of $\cA_H$.
The defining Frobenius relation of Section \ref{subsec:model} gives
\begin{equation*}
    \overline{\varphi}^{fd}(t_{0,i})
    = \sideset{}{_G}\sum_{\ell=0}^{d-1}[-c_\ell]_G(t_{f\ell,i}).
\end{equation*}
Since $c_i\in\frakm_E$, the reduced endomorphism $[-c_i]_G$ has derivative zero and hence lies inside $\kappa'\llbracket T^p\rrbracket$.
Since $\kappa'$ is perfect, each term $[-c_\ell]_G(t_{f\ell,i})$ is a $p$-th power in $A_0$, and their formal group sum is again a $p$-th power in $A_0$.
For $0\le k\le fd-1$, we have $t_{k,i}=\varphi^k(t_{0,i})$, so
\[\varphi^{fd}(t_{k,i})=\varphi^k(\varphi^{fd}(t_{0,i}))\in A_0^p.\]
Since $A_0^p=\kappa'\llbracket t_{k,i}^p\rrbracket$ is closed, continuity and the preceding computation give $\overline{\varphi}^{fd}(A_0)\subset A_0^p$.
Let $a\in\overline{\frp}_H$.
Choose $b\in A_0$ with $\varphi^{fd}(a)=b^p$.
Equivariance gives
\[\overline{\xi}_H(b^p)=\Frob_q^{fd}(\overline{\xi}_H(a))=0.\]
Since $\OCb$ is reduced, $b\in\overline{\frp}_H$.
Therefore, 
\[\overline{\varphi}^{fd}(\overline{\frp}_H)\subset \overline{\frp}_H^p\]
proving contraction and therefore the $F$-dynamical system axioms.

We now prove that $\varphi:R_H\to R_H$ is flat.
Since $\cA_H$ is regular local, finiteness of its local endomorphism $\varphi$ will imply flatness by \cite[Theorem 23.1]{Matsumura1987Commutative}.
Since $\varphi^{-1}(\frp_H)=\frp_H$, flatness on $\cA_H$ passes to the local endomorphism of $(\cA_H)_{\frp_H}$ and then to its completion $R_H$.
Therefore, it suffices to prove finiteness on $\cA_{H}$.
For a local homomorphism of Noetherian complete local rings $A\to B$, if $B/\frakm_AB$ is finite-dimensional over $\kappa_A$, complete Nakayama implies that $B$ is finite over $A$ \cite[\href{https://stacks.math.columbia.edu/tag/031D}{Tag 031D}]{stacks-project}.
Choosing a $\Lambda_H$-basis $\alpha_1,\cdots,\alpha_n$ of $T(H)$ gives an isomorphism
\[\cA_{H}\cong\cO_{K'}\llbracket t_{k,i}:0\le k\le fd-1,1\le i\le n\rrbracket\]
and
\[\varphi(t_{k,i})=t_{k+1,i}\,\textup{ ($0\le k<fd-1$)},\quad \varphi(t_{fd-1,i})=\sideset{}{_H}\sum_{\ell=0}^{d-1} [-c_\ell]_H(t_{f\ell,i}).\]
Since $K'/K$ is unramified, the maximal ideal of $\cA_H$ is $\frakm_H=(\pi_E,t_{k,i})$.
Modulo $\varphi(\frakm_H)\cA_H$, all $t_{k,i}$ with $k\ge1$ vanish, and the remaining relations give an isomorphism
\[\frac{\cA_{H}}{\varphi(\frakm_{H})\cA_{H}}\cong\frac{\kappa'\llbracket t_{0,1},\cdots,t_{0,n}\rrbracket}{\left([-c_0]_G(t_{0,1}),\cdots, [-c_0]_G(t_{0,n})\right)}.\]
Since $c_0\neq0$ and $G$ has finite height, the endomorphism $[-c_0]_G$ is nonzero.
Write $[-c_0]_G(T)=T^Nu(T)$ with $u(0)\neq0$.
Then the quotient above is isomorphic to 
\[\frac{\kappa'\llbracket t_{0,1},\cdots,t_{0,n}\rrbracket}{(t_{0,1}^{N},\cdots,t_{0,n}^{N})}\cong \frac{\kappa'[t_{0,1},\cdots,t_{0,n}]}{(t_{0,1}^{N},\cdots,t_{0,n}^{N})}.\]
This quotient has $\kappa'$-dimension $N^n$, so complete Nakayama proves that $\varphi$ on $\cA_H$ is finite.
Regularity gives flatness, which passes to $R_H$ and completes the proof.
\end{proof}

The equivalence above concerns representations of $\Gal_{k_H}$.
In the next section, we pass to the coefficient ring whose residue field is $k_{H,L}$, use the Galois group comparison of Section \ref{sec:perfectoid}, and then incorporate the action of $\Gamma_L$.

\newpage
\section{$(\varphi,\Gamma)$-modules associated to formal groups}\label{sec:phigamma}
Combining the perfectoid theory studied in Section \ref{sec:perfectoid} with the framework of $F$-dynamical systems in Section \ref{sec:fds}, we first establish an equivalence
\[\Rep_{\cO_E}(\Gal_{L_\infty})\simeq \phimod_{R_{H,L}}.\]
We then incorporate the action of $\Gamma_L=\Gal(L_\infty/L)$ which fits into the exact sequence
\[1\to\Gal_{L_\infty}\to\Gal_L\to\Gamma_L\to1.\]
The main point is to specify a continuity condition that makes the resulting category equivalent to continuous $\cO_E$-representations of $\Gal_L$.

\subsection{The rings $R_{H,L}$ and representations of $\Gal_{L_\infty}$}

By Proposition \ref{prop:bh_fds}, the pair $(R_H,\varphi_E)$ is a flat $F$-dynamical system over $\cO_E$, where $R_H=((\cA_H)_{\frp_H})^\wedge$ has residue field $k_H$.
Fix a finite extension $L/K$, and set $L_\infty=K_\infty L$.
Proposition \ref{prop:lHL} provides a finite separable extension $k_{H,L}/k_{H,K}$ and a natural isomorphism of profinite groups
\[\Gal_{L_\infty}\isomto \Gal_{k_{H,L}}.\]
The extension $k_{H,L}/k_H$ is algebraic separable.
Let $R_{H,L}'$ denote the ind-(finite \'{e}tale) local $R_H$-algebra with residue field $k_{H,L}$, and put $R_{H,L}=(R_{H,L}')^\wedge$ where the completion is taken with respect to the maximal ideal.
By Lemma \ref{lem:fds_extension_fields}, $\varphi_E$ extends uniquely to $R_{H,L}$ with $q_E$-power action on its residue field, and $(R_{H,L},\varphi_E)$ is again a flat $F$-dynamical system over $\cO_E$. 
By construction, $R_{H,L}$ is a finite \'{e}tale local algebra over $R_{H,K}$ corresponding to the finite separable extension $k_{H,L}/k_{H,K}$.
With the fixed embedding $k_H^\sep\subset\Cb$, the completed strict henselization of $R_{H,L}$ identifies with $R_H^\sep$.
Lemma \ref{lem:galois_invariants} implies
\[(R_H^\sep)^{\Gal_{L_\infty}}=R_{H,L}.\]
Applying Theorem \ref{thm:FDS_cat_equiv} and the preceding identification of absolute Galois groups gives the following equivalence.
\begin{corollary}\label{cor:linfty_equiv}
There is an equivalence
\[\DD_{H,L}:\Rep_{\cO_E}(\Gal_{L_\infty})\simeq\textup{$\varphi_E$-Mod}_{R_{H,L}},\quad V\mapsto (V\otimes_{\cO_E}R_{H}^\sep)^{\Gal_{L_\infty}}\]
where $\Gal_{L_\infty}$ acts on $R_{H}^\sep$ through its identification with $\Gal_{k_{H,L}}$.
The quasi-inverse is 
\[\VV_{H,L}(M)=(M\otimes_{R_{H,L}}R_{H}^\sep)^{\varphi=\id}.\]
\end{corollary}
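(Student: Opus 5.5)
The plan is to deduce the corollary from Theorem \ref{thm:FDS_cat_equiv} applied to the flat $F$-dynamical system $(R_{H,L},\varphi_E)$, and then to rewrite its functors in terms of $R_H^\sep$ and $\Gal_{L_\infty}$.

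First, flatness and the $F$-dynamical system axioms for $(R_{H,L},\varphi_E)$ follow from Proposition \ref{prop:bh_fds} together with Lemma \ref{lem:fds_extension_fields}, applied to the algebraic separable extension $k_{H,L}/k_H$. The residue field of $R_{H,L}$ is $k_{H,L}$. Theorem \ref{thm:FDS_cat_equiv} therefore gives an equivalence $\DD_{R_{H,L}}:\Rep_{\cO_E}(\Gal_{k_{H,L}})\simeq\phimod_{R_{H,L}}$ with quasi-inverse $\VV_{R_{H,L}}$, both defined using $R_{H,L}^\sep$.

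Second, I identify $R_{H,L}^\sep$ with $R_H^\sep$ compatibly with all the structure. Both rings are completions of strict henselizations taken with respect to the fixed $k_H^\sep\subset\Cb$, and $k_H^\sep$ is also a separable closure of $k_{H,L}$. Since $R_{H,L}$ is ind-(finite \'etale) over $R_H$, the uniqueness of finite \'etale lifts (Lemma \ref{lem:henselian_equiv_cat}) shows that the local $R_H$-algebra map $R_{H,L}^\sh\to R_H^\sh$ inducing the identity on $k_H^\sep$ is an isomorphism. Uniqueness of the Frobenius extension in Lemma \ref{lem:fds_extension_fields} then shows that this isomorphism, and its completion $R_{H,L}^\sep\cong R_H^\sep$, intertwines the two extensions of $\varphi_E$. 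The same uniqueness makes it equivariant for $\Gal_{k_{H,L}}\subset\Gal_{k_H}$.

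Third, I transport along the isomorphism $\Gal_{L_\infty}\isomto\Gal_{k_{H,L}}$ of Proposition \ref{prop:lHL}(iii). This turns $\Rep_{\cO_E}(\Gal_{k_{H,L}})$ into $\Rep_{\cO_E}(\Gal_{L_\infty})$; continuity is preserved because the map is a homeomorphism of profinite groups. The formula for $\DD_{H,L}$ is then exactly $\DD_{R_{H,L}}$ composed with this identification. The formula for $\VV_{H,L}$ follows because the $\varphi$-invariants are unchanged when $R_{H,L}^\sep$ is replaced by $R_H^\sep$.

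The only real check is the second step. There one has to verify that the Galois action of $\Gal_{L_\infty}$ on $R_H^\sep$, obtained through $D_w$, coincides with the one induced by functoriality of strict henselization. The fixed-ring identity $(R_H^\sep)^{\Gal_{L_\infty}}=R_{H,L}$ from Lemma \ref{lem:galois_invariants} is consistent with this and supports the identification.
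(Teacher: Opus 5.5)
Your proposal follows the paper's own argument. The paper also obtains the flat $F$-dynamical system $(R_{H,L},\varphi_E)$ from Proposition \ref{prop:bh_fds} and Lemma \ref{lem:fds_extension_fields}. It then identifies the completed strict henselization of $R_{H,L}$ with $R_H^\sep$ using the fixed $k_H^\sep\subset\Cb$, and applies Theorem \ref{thm:FDS_cat_equiv} together with $\Gal_{L_\infty}\cong\Gal_{k_{H,L}}$ from Proposition \ref{prop:lHL}.

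One statement in your second step needs correcting. $R_{H,L}$ is the \emph{completion} of the ind-(finite \'etale) algebra $R_{H,L}'$; it is not itself ind-(finite \'etale) over $R_H$. So it does not map into $R_H^{\sh}$, and the uncompleted strict henselizations are not isomorphic. The natural map runs $R_H^{\sh}\to R_{H,L}^{\sh}$, and it becomes an isomorphism only after completion.

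To prove this, work modulo $\frakm_{R_H}^a$. There $R_{H,L}/\frakm_{R_H}^aR_{H,L}=R_{H,L}'/\frakm_{R_H}^aR_{H,L}'$ is ind-(finite \'etale) over the henselian Artinian ring $R_H/\frakm_{R_H}^a$, so the two strict henselizations agree modulo $\frakm_{R_H}^a$. Passing to the limit gives $R_{H,L}^\sep\cong R_H^\sep$. Uniqueness of \'etale lifts makes this isomorphism compatible with $\varphi_E$ and with $\Gal_{k_{H,L}}$.

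The compatibility you flag at the end needs no separate check: in the statement, $\Gal_{L_\infty}$ acts on $R_H^\sep$ by definition through its identification with $\Gal_{k_{H,L}}$.
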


\begin{example}[Lubin-Tate case]\label{ex:lubintate}
Take $E=K$, and let $H$ be a Lubin-Tate formal group over $\cO_K$ associated with a uniformizer $\pi_K$.
The minimal polynomial is $P(X)=X-\pi_K$, so $d=1$ and $\cA_H\cong\cO_K\llbracket T\rrbracket$.
If $\alpha$ is an $\cO_K$-basis of $T(H)$ and $t=\overline{\tau}_H(\alpha)$, evaluation identifies $o_H$ with $\Fq\llbracket t\rrbracket$ and $k_H$ with $\Fq(\!(t)\!)$.
The induced valuation is a positive multiple of the $t$-adic valuation, so $k_H$ is complete and hence $k_{H,K}=k_H$.
Therefore, $\frp_H=(\pi_K)$ and
\[R_{H,K}\cong\left(\cO_{K}\llbracket T\rrbracket\left[\tfrac{1}{T}\right]\right)_{(\pi_K)}^\wedge,\quad \varphi_K(T)=[\pi_K]_H(T)\]
The map $R_{H,K}\to W_{\cO_E}(C^\flat)$ sending $T\mapsto \tau_H(\alpha)$ is injective because its reduction modulo $\pi_K$ embeds $\Fq(\!(t)\!)$ into $\Cb$, the target is $\pi_K$-torsion-free, and the source is $\pi_K$-adically separated.
By Example \ref{ex:tau_LT}, its image is the usual Lubin-Tate base ring \cite[Section 1.3]{Ren2009Galois}.

For $K=E=\Qp$ and $\Gmhat$, this gives Fontaine's cyclotomic coefficient ring with $T\mapsto[\epsilon]-1$ and $\varphi(T)=(T+1)^p-1$ \cite[Section 3.2.2]{Fontaine1990Representations}.
\end{example}

\subsection{The main equivalence}
Fix $k_H^\sep\subset\Cb$.
For every $g\in\Gal_K$, the automorphism of $R_H$ from Section \ref{subsec:model} and the compatible action on $k_H^\sep$ induce a unique semilinear automorphism of the strict henselization, and hence of $R_H^\sep$.
Uniqueness gives the group action and commutation with $\varphi_E$.
The subgroup $\Gal_L$ preserves $R_{H,L}\subset R_H^\sep$.
Since $\Gal_{L_\infty}$ acts trivially on $R_H$ and $k_{H,L}$, it acts trivially on $R_{H,L}$.
The action therefore factors through $\Gamma_L=\Gal(L\infty/L)$.
To extend Corollary \ref{cor:linfty_equiv} to Galois representations of $L$, we need to specify a continuity condition on the $\Gamma$-action.

\begin{lemma}\label{lem:rhsep_wcb}
For the chosen embedding $k_H^\sep\subset\Cb$, the period homomorphism extends uniquely to a local $\cO_E$-algebra homomorphism
\[R_H^\sep\to W_{\cO_E}(\Cb)\]
inducing that embedding on residue fields.
This homomorphism is equivariant for $\Gal_K$ and $\varphi_E$.
\end{lemma}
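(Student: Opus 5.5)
The extension will be built in three stages: from $R_H$ to its strict henselization $R_H^\sh$ by finite \'{e}tale lifting, then to the completion $R_H^\sep$ by continuity. Uniqueness at each stage will give equivariance.

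The starting point is the local homomorphism $\iota:R_H\to W_{\cO_E}(\Cb)$ constructed in Section \ref{subsec:model}. It is local because $\frp_H$ maps into $(\pi_E)$, and on residue fields it induces the inclusion $k_H\subset\Cb$. The target $W_{\cO_E}(\Cb)$ is a complete discrete valuation ring with residue field $\Cb$, hence henselian.

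Write $R_H^\sh=\varinjlim_\ell R_\ell$, where $\ell$ runs over the finite separable subextensions of $k_H^\sep/k_H$ and $R_\ell$ is the corresponding finite \'{e}tale local $R_H$-algebra. For each $\ell$, the base change $R_\ell\otimes_{R_H}W_{\cO_E}(\Cb)$ is finite \'{e}tale over the henselian ring $W_{\cO_E}(\Cb)$. By Lemma \ref{lem:henselian_equiv_cat} (applied to the complete local ring $W_{\cO_E}(\Cb)$), its $W_{\cO_E}(\Cb)$-algebra sections correspond bijectively to the $\Cb$-algebra sections of $\ell\otimes_{k_H}\Cb$. Equivalently, $R_H$-algebra maps $R_\ell\to W_{\cO_E}(\Cb)$ over $\iota$ correspond bijectively to $k_H$-embeddings $\ell\to\Cb$. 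We take the embedding given by the fixed inclusion $k_H^\sep\subset\Cb$. Uniqueness makes these lifts compatible as $\ell$ grows, so they assemble into a map $R_H^\sh\to W_{\cO_E}(\Cb)$. This map is local because the maximal ideal $\frakm_{R_H}R_H^\sh$ goes into $(\pi_E)$.

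For the completion step: the image of $\frakm_{R_H}$ lies in $\pi_EW_{\cO_E}(\Cb)$, so $\frakm_{R_H}^r$ maps into $\pi_E^r W_{\cO_E}(\Cb)$. The target is $\pi_E$-adically complete, so the map extends continuously to $R_H^\sep=\widehat{R_H^\sh}$. Any local extension must agree with this one: it is determined on $R_H^\sh$ by the uniqueness above, and on the completion by continuity. Here continuity is automatic, since a local map sends $\frakm^r$ into $\frakm^r=(\pi_E^r)$. This proves uniqueness.

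For equivariance, fix $g\in\Gal_K$. Both $g\circ\tilde\iota\circ g^{-1}$ and $\tilde\iota$ are local extensions of $\iota$, using that $\iota$ is $\Gal_K$-equivariant and that $g$ acts on $R_H^\sep$ compatibly with its action on $k_H^\sep\subset\Cb$. Both induce the given embedding on residue fields, so uniqueness forces them to agree. For Frobenius, compare $\varphi_E\circ\tilde\iota$ and $\tilde\iota\circ\varphi_E^\sep$. Both are local maps $R_H^\sep\to W_{\cO_E}(\Cb)$ extending $\varphi_E\circ\iota=\iota\circ\varphi_E$ and inducing $x\mapsto x^{q_E}$ composed with the inclusion on residue fields. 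A version of the uniqueness statement in which $\iota$ is replaced by an arbitrary local map into a henselian ring shows they coincide.

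The main obstacle is the bookkeeping in this uniqueness argument, which is applied to semilinear twists. One must check that the relevant finite \'{e}tale lifting (Lemma \ref{lem:henselian_equiv_cat}) applies to maps over $\varphi_E\circ\iota$, which need not be the original $\iota$. This works because the lifting statement holds for any local homomorphism from $R_H$ into a henselian local ring.
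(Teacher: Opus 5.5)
Your proposal is correct and follows essentially the same route as the paper: you lift the chosen residue embeddings uniquely through the finite \'{e}tale local $R_H$-algebras using henselianity of $W_{\cO_E}(\Cb)$, pass to the strict henselization and then to the completion because $\frakm_{R_H}$ maps into $(\pi_E)$, and deduce $\Gal_K$- and $\varphi_E$-equivariance from uniqueness. Your explicit remark that the uniqueness of lifts must be applied over the twisted map $\varphi_E\circ\iota$, rather than over $\iota$ itself, is exactly the bookkeeping that the paper's phrase ``uniqueness of the \'{e}tale lifts'' leaves implicit.
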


\begin{proof}
Since $K/E$ is unramified, there is a canonical isomorphism $ W_{\cO_E}(\Cb)\cong W_{\cO_K}(\Cb)$.
The period homomorphism therefore gives a local $\cO_K$-algebra map
\[R_H\to W_{\cO_E}(\Cb)\]
which is equivariant for $\Gal_K$ and $\varphi_E$.
Put $R=R_H$, and let $k'$ range over finite subextensions of $k_H^\sep/k_H$.
Let $R_{k'}$ be the finite \'{e}tale local $R$-algebra with residue field $k'$.
It is complete, and Lemma \ref{lem:fds_extension_fields} supplies its compatible endomorphism $\varphi_E$.
Since $W_{\cO_E}(\Cb)$ is henselian, the chosen embedding $k'\hookrightarrow\Cb$ lifts uniquely to an $R$-algebra homomorphism $R_{k'}\to W_{\cO_E}(\Cb)$.
These lifts are compatible with inclusions of finite subextensions.
The compatible maps give a homomorphism from the strict henselization of $R$ to $W_{\cO_E}(\Cb)$.
Its maximal ideal maps into the ideal $(\pi_E)$, so it extends uniquely to its completion $R_H^\sep$.
Uniqueness of the \'{e}tale lifts shows that the maps commute with $\varphi_E$ and with the action of $\Gal_K$, and these identities persist after completion.
\end{proof}

Put $B=W_{\cO_E}(\Cb)$.
Using the unique expansion $b=\sum_{i=0}^\infty\pi_E^i[x_i]$, identify the underlying set of $B$ with $\prod_{i=0}^\infty \Cb$, and give it the product topology where each factor has its valuation topology.
We call this topology on $B$ the \textit{weak topology}.
For $n\ge1$, we give $B/\pi_E^nB$ the topology transported from $(\Cb)^n$ by the first $n$ Teichm\"{u}ller coordinates.
It is also the quotient topology from $B$.
More generally, give a finite $B$-module $N$ the quotient topology from a surjection $B^r\to N$.
This topology, called \textit{weak topology}, is independent of the chosen generators, and every $B$-linear map between finite $B$-modules is continuous.
We use these topologies throughout the subsection.

\begin{lemma}\label{lem:padic_weak_topologies}
The weak topology on $B$ induces the $\pi_E$-adic topology on its subring $\cO_E$.
For every $n\ge1$, the subspace topology on $\cO_E/(\pi_E^n)\subset B/\pi_E^nB$ is discrete.
\end{lemma}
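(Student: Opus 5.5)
The plan is to compute the Teichmüller expansion of an element of $\cO_E$ inside $B=W_{\cO_E}(\Cb)$ explicitly. Every $a\in\cO_E$ has a unique expansion $a=\sum_{i\ge0}\pi_E^i[a_i]$ with $a_i$ in the set of Teichmüller representatives of $\Fq$. Indeed, $W_{\cO_E}(\Fq)=\cO_E$ sits inside $B$ via $\Fq\subset\Cb$, and Teichmüller lifts are compatible with this inclusion. Hence, under the identification $B\cong\prod_{i\ge0}\Cb$, the subset $\cO_E$ corresponds exactly to $\prod_{i\ge0}\Fq\subset\prod_{i\ge0}\Cb$, via $a\mapsto(a_i)_i$.

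For the first assertion, I would use that $\Fq$ is a finite subset of $\Cb$, so the subspace topology it inherits from the valuation topology is discrete. The induced topology on $\cO_E$ is therefore the product topology on $\prod_i\Fq$ with discrete factors. Under the digit bijection, this is exactly the $\pi_E$-adic topology: the basic open sets fixing $a_0,\dots,a_{n-1}$ are precisely the cosets $a+\pi_E^n\cO_E$. The only thing to check carefully is that fixing the first $n$ Teichmüller coordinates is the same as the congruence modulo $\pi_E^n$. This follows because $\sum_{i\ge n}\pi_E^i[x_i]\in\pi_E^nB$, and conversely the Teichmüller expansion of an element of $\pi_E^nB$ has vanishing first $n$ coordinates, by uniqueness of expansions.

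For the second assertion, $B/\pi_E^nB$ carries the topology of $(\Cb)^n$ via the first $n$ coordinates. The image of $\cO_E/(\pi_E^n)$ is the finite set $\Fq^n\subset(\Cb)^n$, and a finite subset of a Hausdorff space is discrete. I would also record injectivity of $\cO_E/(\pi_E^n)\to B/\pi_E^nB$, which follows from the same digit description, or alternatively from the flatness of $B$ over $\cO_E$ together with $\pi_E^nB\cap\cO_E=\pi_E^n\cO_E$.

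I expect no real obstacle. The only point to state carefully is that the Teichmüller expansion in $B$ restricts to the standard expansion in $W_{\cO_E}(\Fq)=\cO_E$, which is functoriality of ramified Witt vectors and Teichmüller lifts along $\Fq\hookrightarrow\Cb$ (Appendix \ref{app:witt}).
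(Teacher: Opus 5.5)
Your proposal is correct and follows essentially the same route as the paper: both arguments rest on the finiteness of $\cO_E/(\pi_E^n)$ inside the Hausdorff space $B/\pi_E^nB\cong(\Cb)^n$, together with the fact that the weak topology on $B$ is the product (equivalently, inverse-limit) topology over the first $n$ Teichm\"{u}ller coordinates. The only difference is cosmetic: you prove the first assertion directly by identifying $\cO_E$ with $\prod_{i\ge0}\Fq\subset\prod_{i\ge0}\Cb$ via Teichm\"{u}ller digits, whereas the paper deduces it from the discreteness of the finite quotients through the inverse-limit description.
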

\begin{proof}
Fix $n\ge1$.
The space $B/\pi_E^nB\cong(\Cb)^n$ is Hausdorff because $\Cb$ is Hausdorff.
The finite subset $\cO_E/(\pi_E^n)$ therefore has the discrete subspace topology.
The weak topology on $B$ is the inverse limit topology for the weak topologies on $B/(\pi_E^n)$.
The induced topology on $\cO_E$ is the initial topology for the maps $\cO_E\to\cO_E/(\pi_E^n)$, with discrete targets, and hence is its $\pi_E$-adic topology.
For a finite $\cO_E$-module $V$, write $V\cong \cO_E^{\oplus r}\oplus(\oplus_i\cO_E/\pi_E^{e_i})$.
The natural map
\[V\to V\otimes_{\cO_E}B,\quad v\mapsto v\otimes 1\]
is the direct sum of the embeddings $\cO_E\to B$ and $\cO_E/\pi_E^{e_i}\to B/\pi_E^{e_i}$, which is a topological embedding.
\end{proof}

We now define the category of \'{e}tale $(\varphi,\Gamma_L)$-modules over $R_{H,L}$ by specifying a continuity condition of the $\Gamma_L$-action.
The period homomorphism $R_{H}^\sep\to W_{\cO_E}(\Cb)$ equips scalar extensions with a topology reflecting the valuation on $\Cb$; this gives a uniform continuity condition for all finite modules including both free and torsion modules.

\begin{definition}\label{def:phigammamod}
A \textit{$(\varphi,\Gamma_L)$-module} over $R_{H,L}$ is a finitely generated $R_{H,L}$-module $M$ with a $\varphi_E$-semilinear map $\varphi_M$ and a semilinear $\Gamma_L$-action commuting with $\varphi_M$, such that the diagonal $\Gal_L$-action on $M\otimes_{R_{H,L}}B$ is continuous for the finite module weak topology specified above.

Such a module $M$ is \textit{\'{e}tale} if its underlying $\varphi$-module is \'{e}tale.
Write $\textup{$(\varphi_E,\Gamma_L)$-Mod}_{R_{H,L}}$ for the category of \'{e}tale $(\varphi,\Gamma)$-modules, with $R_{H,L}$-linear morphisms commuting with $\varphi_M$ and the $\Gamma_L$-action.
\end{definition}

\begin{theorem}\label{thm:phigamma_equiv}
Let $K/E$ be a finite unramified extension of $p$-adic local fields, let $H$ be a one-dimensional formal $\cO_E$-module of finite height over $\cO_K$, and let $L/K$ be finite.
With the actions described below, the functors
\[\DD_{H,L}(V)=(V\otimes_{\cO_E}R_H^\sep)^{\Gal_{L_\infty}},\quad \VV_{H,L}(M)=(M\otimes_{R_{H,L}}R_H^\sep)^{\varphi=\id}\]
are mutually quasi-inverse equivalences
\[\Rep_{\cO_E}(\Gal_L)\simeq\textup{$(\varphi_E,\Gamma_L)$-Mod}_{R_{H,L}}.\]
\end{theorem}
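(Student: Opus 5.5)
The plan is to upgrade the equivalence of Corollary \ref{cor:linfty_equiv} by keeping track of the extra $\Gal_L$-action throughout, and to use the weak topology on $B=W_{\cO_E}(\Cb)$ (via Lemma \ref{lem:rhsep_wcb}) only to control continuity.

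\textbf{Step 1: $\DD_{H,L}$ lands in $(\varphi_E,\Gamma_L)$-modules.} Let $V\in\Rep_{\cO_E}(\Gal_L)$. Let $\Gal_L$ act diagonally on $V\otimes_{\cO_E}R_H^\sep$. Since $\Gal_{L_\infty}$ is normal in $\Gal_L$, this action preserves $\DD_{H,L}(V)$ and induces a semilinear $\Gamma_L$-action. This action commutes with $\varphi$ because the Galois action on $R_H^\sep$ commutes with $\varphi_E$. Étaleness comes from Corollary \ref{cor:linfty_equiv}.

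For continuity, I will use the comparison $\DD_{H,L}(V)\otimes_{R_{H,L}}R_H^\sep\cong V\otimes_{\cO_E}R_H^\sep$ from Proposition \ref{prop:functor_D_fds}. By Lemma \ref{lem:rhsep_wcb}, base change to $B$ gives a $\Gal_L$-equivariant isomorphism
\[\DD_{H,L}(V)\otimes_{R_{H,L}}B\cong V\otimes_{\cO_E}B.\]
This isomorphism is $B$-linear, hence a homeomorphism for the weak topologies. So it suffices that $\Gal_L$ acts continuously on $V\otimes_{\cO_E}B$. Decompose $V$ as a finite $\cO_E$-module into free and cyclic torsion summands. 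Then $V\otimes B$ is a finite product of copies of $B$ and $B/\pi_E^e$. Continuity of the diagonal action follows from two facts: $\Gal_K$ acts continuously on $\Cb$, hence on each Teichmüller coordinate, and $V$ is continuous for the $\pi_E$-adic topology. Working modulo $\pi_E^n$, the action on $V/\pi_E^n$ factors through a finite quotient, and the Witt addition polynomials are continuous.

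\textbf{Step 2: $\VV_{H,L}$ lands in $\Rep_{\cO_E}(\Gal_L)$.} Let $M$ be an étale $(\varphi_E,\Gamma_L)$-module. Let $\Gal_L$ act diagonally on $M\otimes_{R_{H,L}}R_H^\sep$, with $\Gamma_L$ acting on $M$ and $\Gal_L$ acting on $R_H^\sep$. This is a well-defined semilinear action because $\Gal_L$ acts on $R_{H,L}$ through $\Gamma_L$ compatibly with its action on $R_H^\sep$. It commutes with $\varphi$, so it preserves $\VV_{H,L}(M)$. Restricted to $\Gal_{L_\infty}$, it is the action from Corollary \ref{cor:linfty_equiv}. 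Finite generation follows from Proposition \ref{prop:key_isom_V}.

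\textbf{Main obstacle: continuity of the $\Gal_L$-action on $\VV_{H,L}(M)$.} By Proposition \ref{prop:key_isom_V} I reduce to $\pi_E^nM=0$, where $\VV_{H,L}(M)$ is finite. Consider the composite
\[\VV_{H,L}(M)\to M\otimes_{R_{H,L}}R_H^\sep\to M\otimes_{R_{H,L}}B.\]
After identifying $M\otimes R_H^\sep\cong \VV\otimes_{\cO_E}R_H^\sep$, this composite is $v\mapsto v\otimes1$ in $\VV\otimes_{\cO_E}B$. By Lemma \ref{lem:padic_weak_topologies} it is therefore a $\Gal_L$-equivariant topological embedding of a discrete finite set. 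Continuity of the $\Gal_L$-action on $M\otimes B$, assumed in Definition \ref{def:phigammamod}, then makes each orbit map $g\mapsto gv$ continuous into a discrete image. Hence stabilizers are open.

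The delicate point is that the embedding must be equivariant for the full $\Gal_L$, not just for $\Gal_{L_\infty}$. This needs the equivariance in Lemma \ref{lem:rhsep_wcb} and that the $\Gamma_L$-action on $M$ is compatible with it. I expect the main care to lie in this step and in the analogous topological check in Step 1.

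\textbf{Step 3: the functors are quasi-inverse.} I take the isomorphisms $V\cong\VV(\DD(V))$ and $M\cong\DD(\VV(M))$ from Theorem \ref{thm:FDS_cat_equiv} and Corollary \ref{cor:linfty_equiv}. Each is given by a canonical map, $v\mapsto v\otimes1$ or $m\mapsto m\otimes1$, between modules carrying diagonal $\Gal_L$-actions, and these maps are visibly $\Gal_L$-equivariant. Full faithfulness then follows: a morphism of underlying objects that is $\Gal_{L_\infty}$-equivariant and commutes with $\Gamma_L$ (respectively, with $\Gal_L$) corresponds to one that is $\Gal_L$-equivariant (respectively, $\Gamma_L$-equivariant), because all the identifications are natural and equivariant.
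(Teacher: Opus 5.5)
Your proposal is correct and follows the paper's proof essentially step for step. Both use the diagonal $\Gal_L$-actions, base change of the comparison isomorphisms along $R_H^{\sep}\to B$ (Lemma \ref{lem:rhsep_wcb}), continuity on $V\otimes_{\cO_E}B$ from the coordinatewise action on $\Cb$, and the embedding $v\mapsto v\otimes 1$ with Lemma \ref{lem:padic_weak_topologies} for continuity on $\VV_{H,L}(M)$; the only minor difference is that you first reduce to $\pi_E^nM=0$ to work with a discrete finite set, whereas the paper applies the finite-module form of Lemma \ref{lem:padic_weak_topologies} directly to identify the subspace topology with the $\pi_E$-adic topology.
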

\begin{proof}
Corollary \ref{cor:linfty_equiv} gives an equivalence
\[\DD_{H,L}:\Rep_{\cO_E}(\Gal_{L_\infty})\simeq\textup{$\varphi_E$-Mod}_{R_{H,L}},\quad V\mapsto (V\otimes_{\cO_E}R_H^\sep)^{\Gal_{L_\infty}}\]
with quasi-inverse $\VV_{H,L}$.
The diagonal $\Gal_L$-action on $V\otimes_{\cO_E}R_H^\sep$ preserves its $\Gal_{L_\infty}$-invariants and induces the required semilinear $\Gamma_L$-action on $\DD_{H,L}(V)$.
Conversely, the diagonal $\Gal_L$-action on $M\otimes_{R_{H,L}}R_H^\sep$ preserves its Frobenius fixed points and defines the action on $\VV_{H,L}(M)$.
We verify continuity below.
The comparison maps from Section \ref{sec:fds} are equivariant for these actions.

Fix $V\in\Rep_{\cO_E}(\Gal_L)$.
Proposition \ref{prop:functor_D_fds}, applied to $V\vert_{\Gal_{L_\infty}}$, gives the natural comparison isomorphism 
\[\DD_{H,L}(V)\otimes_{R_{H,L}}R_H^\sep\cong V\otimes_{\cO_E}R_H^\sep.\]
The multiplication description shows that it commutes with $\varphi_E$ and the diagonal action of $\Gal_L$.
Base change along $R_H^\sep\to B$ gives a $\Gal_L$-equivariant $B$-linear isomorphism
\[\DD_{H,L}(V)\otimes_{R_{H,L}}B\cong V\otimes_{\cO_E}B\]
which is a homeomorphism for the finite module weak topologies.
The action of $\Gal_L$ on $\Cb$ is continuous, so its coordinate-wise action on $B$ is continuous for the weak topology.
Together with continuity on the finite $\cO_E$-module $V$, this implies continuity of the diagonal action on $V\otimes_{\cO_E}B$, and hence on $\DD_{H,L}(V)\otimes_{R_{H,L}}B$.

Conversely, let $M$ be an \'{e}tale $(\varphi,\Gamma)$-module over $R_{H,L}$.
By Proposition \ref{prop:key_isom_V} and base change along $R_H^\sep\to B$, the canonical map gives a $\Gal_L$-equivariant isomorphism
\[\VV_{H,L}(M)\otimes_{\cO_E}B\isomto M\otimes_{R_{H,L}}B\]
which is a homeomorphism for the finite module weak topologies.
Definition \ref{def:phigammamod} makes the action on the right-hand side continuous, and the isomorphism transports the continuity to the left-hand side.
Since $B$ is faithfully flat over $\cO_E$, the natural equivariant map
\[\VV_{H,L}(M)\to \VV_{H,L}(M)\otimes_{\cO_E}B,\quad v\mapsto v\otimes1\]
is injective.
The finite module consequence of Lemma \ref{lem:padic_weak_topologies} identifies the subspace topology with the $\pi_E$-adic topology.
Restricting the continuous action to this topological subspace proves that $\Gal_L$ acts continuously on $\VV_{H,L}(M)$.
The natural unit and counit of the equivalence in Corollary \ref{cor:linfty_equiv} commute with the diagonal actions by their construction.
Therefore, $\VV_{H,L}\circ\DD_{H,L}$ is naturally isomorphic to the identity on $\Rep_{\cO_E}(\Gal_L)$, and $\DD_{H,L}\circ\VV_{H,L}$ is naturally isomorphic to the identity on $\textup{$(\varphi_E,\Gamma_L)$-Mod}_{R_{H,L}}$, proving the theorem.
\end{proof}

\begin{example}\label{ex:tatemodule}
Let $K/E$ be a finite unramified extension, and let $H$ be a one-dimensional formal $\cO_E$-module of finite height over $\cO_K$.
Its Tate module $T(H)$ is a finite free $\cO_E$-module with a continuous action of $\Gal_K$, defining an object in the category $\Rep_{\cO_E}(\Gal_K)$.
Since the action of $\Gal_{K_\infty}$ is trivial on $T(H)$, we have
\begin{align*}
    \DD_{H,K}(T(H))
    &=(T(H)\otimes_{\cO_E}R_H^\sep)^{\Gal_{K_\infty}}\\
    &\cong T(H)\otimes_{\cO_E}R_{H,K}
\end{align*}
where $\Gamma$ acts diagonally on $T(H)\otimes_{\cO_E}R_{H,K}$ and $\varphi$ acts by $\id\otimes\varphi_E$.
The finite free $\cA_H$-module $M_H\coloneqq T(H)\otimes_{\cO_E}\cA_H$ has commuting semilinear actions of $\varphi_E$ and $\Gamma$, and satisfies
\[M_H\otimes_{\cA_H}R_{H,K}\cong \DD_{H,K}(T(H)).\]
\end{example}
Example \ref{ex:tatemodule} raises the question of whether a similar descent property holds for lattices in general crystalline representations of $\Gal_K$.
More precisely, given a $\Gal_K$-stable $\cO_E$-lattice $T$ in a crystalline $E$-representation, does $\DD_{H,K}(T)$ admit a finite free model over $\cA_H$, or over another natural smaller base ring determined by $H$ with compatible Frobenius and $\Gamma$-actions?
Such models could provide a step toward the geometric interpretation discussed in Section \ref{subsec:proofstrategy}.
This question is also motivated by the theory of Wach modules in the cyclotomic setting \cite{Wach1996Representations,Berger2002Representationsa} and by the construction of Kisin-Ren in the Lubin-Tate setting \cite[Corollary 3.3.8]{Ren2009Galois}.

\subsection{Intrinsic continuity for finite free modules}
For an \'{e}tale $\varphi$-module which is finite free over $R_{H,L}$ or $R_{H,L}/(\pi_E^n)$ for some $n\ge1$, we now express Definition \ref{def:phigammamod} as continuity of the $\Gamma_L$-action on the module itself.

Let $f:X\to Y$ be a map from a set $X$ to a topological space $Y$.
The \textit{initial topology} on $X$ induced by $f$ is the coarsest topology making $f$ continuous.
If $f$ is injective, this is the subspace topology under the identification $X\simeq f(X)$.

\begin{definition}
Fix a finite extension $L/K$ and put $B=W_{\cO_E}(\Cb)$.
We give $R_{H,L}$ the initial topology induced by $R_{H,L}\to B$ where $B$ is endowed with the weak topology.
For each $n\ge1$, give $R_{H,L}/(\pi_E^n)$ the initial topology induced by $R_{H,L}/(\pi_E^n)\to B/(\pi_E^n)$.
A finite free module over either ring is given the product topology in a basis.
These module topologies are independent of the basis, since multiplication by a matrix over either topological ring is continuous.
We call all these topologies \textit{weak topologies}.
\end{definition}
\begin{lemma}\label{lem:initial_top_group_action}
Let a topological group $G$ act on a set $X$ and a topological space $Y$, and let $f:X\to Y$ be equivariant.
If the action map $G\times Y\to Y$ is continuous, then the action map $G\times X\to X$ is continuous for the initial topology induced by $f$.
\end{lemma}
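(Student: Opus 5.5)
The plan is to factor the action map on $X$ through the one on $Y$, and then invoke the universal property of the initial topology. The key fact is that, for any topological space $Z$, a map $g:Z\to X$ is continuous for the initial topology induced by $f$ if and only if $f\circ g:Z\to Y$ is continuous. This is immediate from the definition: the open sets of $X$ are exactly the preimages $f^{-1}(U)$ with $U\subset Y$ open, and $g^{-1}(f^{-1}(U))=(f\circ g)^{-1}(U)$.

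Write $a_X:G\times X\to X$ and $a_Y:G\times Y\to Y$ for the action maps, and give $G\times X$ the product topology. Equivariance of $f$ says exactly that
\[f\circ a_X=a_Y\circ(\id_G\times f).\]
I will apply the criterion with $Z=G\times X$ and $g=a_X$. It then suffices to show that $a_Y\circ(\id_G\times f)$ is continuous. The map $\id_G\times f:G\times X\to G\times Y$ is continuous, since $f$ is continuous for the initial topology and products of continuous maps are continuous for the product topologies. The map $a_Y$ is continuous by hypothesis, so the composite is continuous, and therefore $a_X$ is continuous.

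I do not expect any genuine obstacle. The only point to be careful about is that the criterion is applied with the product topology on $G\times X$ built from the initial topology on $X$, which is the topology the statement intends. Injectivity of $f$ is not needed.
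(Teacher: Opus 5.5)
Your proposal is correct and is essentially the paper's proof: both use the equivariance identity $f\circ a_X=a_Y\circ(\id_G\times f)$, observe that the right-hand side is continuous, and conclude by the universal property of the initial topology. You additionally make explicit that $\id_G\times f$ is continuous for the product topology, which the paper leaves implicit.
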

\begin{proof}
Let $\alpha_X$ and $\alpha_Y$ denote the action maps.
By equivariance, the following diagram commutes.
\begin{center}
    \begin{tikzcd}
    G\times X\arrow[r,"\alpha_X"]\arrow[d,"\id_G\times f"']&X\arrow[d,"f"]\\
    G\times Y\arrow[r,"\alpha_Y"]&Y
    \end{tikzcd}
\end{center}
The composite $G\times X\xrightarrow{\id_G\times f} G\times Y\xrightarrow{\alpha_Y} Y$ is continuous.
The defining property of the initial topology now implies that $\alpha_X$ is continuous.
\end{proof}

\begin{proposition}
Let $M$ be an \'{e}tale $\varphi$-module over $R_{H,L}$ equipped with a semilinear $\Gamma_L$-action commuting with $\varphi_M$.
Assume that $M$ is a finite free module over $R_{H,L}$ or $R_{H,L}/(\pi_E^n)$ for some $n\ge1$, and give $M$ and its scalar extension to $B$ their weak topologies.
The following continuity conditions are equivalent.
\begin{enumerate}
    \item The diagonal action of $\Gal_L$ on $M\otimes_{R_{H,L}}B$ is continuous.
    \item The $\Gamma_L$-action on $M$ is continuous for its weak topology.
\end{enumerate}
\end{proposition}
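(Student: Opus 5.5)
Fix a basis $e_1,\dots,e_r$ of $M$ over $A$, where $A$ is $R_{H,L}$ or $R_{H,L}/(\pi_E^n)$. Put $\overline{B}=B$ or $B/\pi_E^nB$ accordingly. Then $M\otimes_{R_{H,L}}B\cong \overline{B}^{\,r}$ with basis $e_i\otimes 1$, and the finite module weak topology on the target is the product of the weak topologies on $\overline{B}$. The map
\[\iota:M\to M\otimes_{R_{H,L}}B,\quad m\mapsto m\otimes 1\]
is, in these coordinates, the product of $r$ copies of $A\to\overline{B}$. By definition the weak topology on $M$ is therefore exactly the initial topology induced by $\iota$. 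The map $\iota$ is $\Gal_L$-equivariant for the action on $M$ through $\Gamma_L$, because the period map $R_H^\sep\to B$ of Lemma \ref{lem:rhsep_wcb} is $\Gal_K$-equivariant.

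\emph{(i)$\Rightarrow$(ii).} Apply Lemma \ref{lem:initial_top_group_action} with $G=\Gal_L$, $X=M$ and $Y=M\otimes B$. This gives continuity of $\Gal_L\times M\to M$. Then I must descend from $\Gal_L$ to the quotient $\Gamma_L$. The map $\Gal_L\times M\to\Gamma_L\times M$ is a quotient map, since it is open and surjective (product of an open quotient map of groups with the identity). The action of $\Gamma_L$ is the induced map, so it is continuous.

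\emph{(ii)$\Rightarrow$(i).} Write $g(\sum e_i\otimes b_i)=\sum_{i,j} a_{ji}(g)\,e_j\otimes g(b_i)$, where $g(e_i)=\sum_j a_{ji}(g)e_j$ with $a_{ji}(g)\in A$. Let $c_{ji}(g)$ be the image of $a_{ji}(g)$ in $\overline{B}$. The action map $\Gal_L\times\overline{B}^{\,r}\to\overline{B}^{\,r}$ is then a composite of three maps:
\begin{itemize}
\item $(g,b)\mapsto (c(g),g(b))$;
\item the coordinatewise Galois action on $\overline{B}$, which is continuous because the Teichm\"uller coordinates transform by the continuous action on $\Cb$;
\item matrix multiplication $\overline{B}^{r\times r}\times\overline{B}^{\,r}\to\overline{B}^{\,r}$.
\end{itemize}
Two things remain. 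First, $g\mapsto a_{ji}(g)\in A$ is continuous: it is the composite of $\Gal_L\to\Gamma_L$, the orbit map $\gamma\mapsto\gamma(e_i)$ (continuous by (ii)), and the coordinate projections. Composing with the continuous map $A\to\overline{B}$ makes $c_{ji}$ continuous.

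Second, and this is the main obstacle, multiplication $\overline{B}\times\overline{B}\to\overline{B}$ must be jointly continuous for the weak topology. My first attempt is to work modulo $\pi_E^m$. Addition and multiplication of Witt vectors are given, in each of the first $m$ Teichm\"uller coordinates, by polynomials with $\FF_q$-coefficients in $p$-power roots of the earlier coordinates. For $\cO_E$-Witt vectors one uses the $q$-power Witt polynomials, with the same structure. Since $\Cb$ is a perfect topological field, $p$-th roots are continuous, as is multiplication. Hence the ring operations on $B/\pi_E^m\cong(\Cb)^m$ are continuous. Passing to the inverse limit handles $B$ itself.

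With all three ingredients continuous, the composite action map $\Gal_L\times (M\otimes B)\to M\otimes B$ is continuous, which gives (i).
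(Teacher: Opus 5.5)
Your proof is correct and follows the paper's argument. For (i)$\Rightarrow$(ii) you identify the weak topology on $M$ with the initial topology for $m\mapsto m\otimes 1$ and apply Lemma \ref{lem:initial_top_group_action}; for (ii)$\Rightarrow$(i) you use continuity of $R_{H,L}\to B$ (resp.\ $R_{H,L}/(\pi_E^n)\to B/(\pi_E^n)$) together with the Galois action on $B$. You also spell out two points the paper leaves implicit: that $\Gal_L\times M\to\Gamma_L\times M$ is a quotient map, and that the Witt ring operations on $B$ are jointly continuous for the weak topology.
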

\begin{proof}
Assume (i).
The map
\[j:M\to M\otimes_{R_{H,L}}B,\quad m\mapsto m\otimes1\]
is $\Gal_L$-equivariant where $\Gal_L$ acts on $M$ through $\Gamma_L$.
By definition, the topology on the base ring is initial with respect to $R_{H,L}\to B$ in the free case and $R_{H,L}/(\pi_E^n)\to B/(\pi_E^n)$ in the torsion case.
Choosing a basis identifies $j$ with a finite product of these ring maps, so the weak topology on $M$ is the initial topology induced by $j$.
Lemma \ref{lem:initial_top_group_action} makes the $\Gal_L$-action on $M$ continuous.
Since the continuous $\Gal_L$-action factors through $\Gamma_L$, it induces a continuous action of $\Gamma_L$, equipped with its quotient topology.

Conversely, assume (ii).
Since the maps $R_{H,L}\to B$ and $R_{H,L}/(\pi_E^n)\to B/(\pi_E^n)$ with $n\ge1$ are continuous, the diagonal action of $\Gal_L$ on $M\otimes_{R_{H,L}}B$ (resp. $M\otimes_{R_{H,L}/(\pi_E^n)}B/(\pi_E^n)$) is continuous.
Therefore, the condition (i) follows.
\end{proof}

\newpage

\section{Independence of periods and the structure of the base ring}\label{sec:period_alg}
In this section, we closely analyze the abstract period algebra $\cA_H$ and its reduced image $o_H$ which were constructed in Section \ref{subsec:model} and served as a crucial ingredient for constructing the base ring in Theorem \ref{thm:phigamma_equiv}.
We first prove that the reduced exponential periods attached to an $E_H$-basis of $V(H)$ are formally independent.
This identifies $o_H$ with a power series ring and determines regular parameters for $R_H$ and $R_{H,L}$.
We then prove formal independence over $\cO_{\breve{K}}$ of a finite family of Frobenius iterates of the exponential periods.
When $E=K$, this identifies the abstract period algebra $\cA_H$ with its image in $\AinfK$.

We will use the following notation throughout the section
\[h_0=\mathrm{ht}(H),\quad h=\mathrm{ht}_{\cO_E}(H),\quad n=\dim_{E_H}(V(H))\]
which denote the ordinary $p$-height, relative $\cO_E$-height, and the absolute height of $H$ respectively.
Furthermore, we denote
\[f=[K:E],\quad b=[E_H:E],\quad d=\deg(P),\quad q_E=\#\kappa_E.\]
\subsection{Independence of reduced periods and the period field}
Let $H$ be a one-dimensional formal group of finite height over $\cO_K$, and let $G$ be its special fiber.
Using the exponential period map $\widetilde{\tau}_H$, we write
\[\overline{\tau}_H=\red\circ \widetilde{\tau}_H:V(H)\to H(\AinfK)\to G(\OCb)\]
and call it the reduced exponential period map.
The independence statement holds for every one-dimensional formal group of finite height, without the unramified coefficient hypothesis. 
Set $K'=KE_H$, let $\kappa'$ be its residue field, and let $n$ denote the absolute height of $H$.
\begin{theorem}\label{thm:bH_injective}
Let $H$ be a one-dimensional formal group of finite height over $\cO_K$, and let $G$ be its special fiber.
Let $\alpha_1,\cdots,\alpha_n$ be an $E_H$-basis of $V(H)$, and consider the reduced period map $\overline{\tau}_H$.
Then evaluation defines an injective homomorphism
\[\epsilon:\Fpbar\llbracket t_1,\cdots,t_n\rrbracket\to\OCb,\quad t_i\mapsto \overline{\tau}_H(\alpha_i).\]
\end{theorem}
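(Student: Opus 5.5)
Suppose a nonzero $F\in\Fpbar\llbracket t_1,\cdots,t_n\rrbracket$ satisfies $F(\overline{\tau}_1,\cdots,\overline{\tau}_n)=0$, where $\overline{\tau}_i=\overline{\tau}_H(\alpha_i)$. I want to reach a contradiction by letting inertia act. The action must be compatible with evaluation, so I first reduce to the case where the coefficients of $F$ lie in a finite field. If $F\ne 0$ but $F(\overline\tau)=0$, the set of such relations is an ideal $\mathfrak a$ of $\Fpbar\llbracket t\rrbracket$. This ideal is the extension of the kernel $\mathfrak a_0$ over $\kappa'\llbracket t\rrbracket$. That holds because the completed tensor product $\Fpbar\hat\otimes_{\kappa'}o$ maps injectively to $\OCb$ for any domain $o\subset\OCb$ over $\kappa'$, using the linear disjointness of $\Fpbar$ and a perfect completed field over the finite field $\kappa'$. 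I expect this reduction to be a routine step (via Artin's independence of characters), though it must be stated carefully. After it, I may take $F\in\kappa'\llbracket t\rrbracket$ nonzero.

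Next I use Galois equivariance. For $\gamma\in I_K\cap\Gal_{K'}$, $\gamma$ fixes $\kappa'$ and acts $E_H$-linearly on $V(H)$ through $\rho(\gamma)\in\GL_n(E_H)$. Equivariance of $\overline{\tau}_H$ (Corollary \ref{cor:tau_equivariant}) together with the formal $\Lambda_H$-module structure of $G$ gives
\[\gamma(\overline{\tau}_i)=\textstyle\sum_{G,j}[\rho(\gamma)_{ji}]_G(\overline{\tau}_j).\]
Here one may scale the basis so that $\alpha_i\in T(H)$, and restrict to a small open subgroup where $\rho(\gamma)\in 1+\pi^N M_n(\Lambda_H)$. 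Therefore $\mathfrak a_0$ is stable under the substitutions $t_i\mapsto\sum_{G,j}[g_{ji}]_G(t_j)$ for all $g$ in the image $U=\rho(I_K\cap\Gal_{K'})$. By Theorem \ref{thm:openimage}, $U$ is open in $\GL_n(E_H)$. In particular, $U$ contains the diagonal elements $\mathrm{diag}(1,\dots,1+\lambda,\dots,1)$ for all $\lambda\in\pi^N\Lambda_H$, and these act on $t_i$ alone by $t_i\mapsto t_i+_G[\lambda]_G(t_i)$. This is the ``one variable at a time'' transformation.

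The key step is to show that a nonzero proper ideal of $\kappa'\llbracket t\rrbracket$ cannot be stable under all these small transformations. One could try to take $F$ of minimal order, but the decisive argument is probably this: consider the prime $\mathfrak a_0=\ker$, which is stable under the action. The map $o_H$-side gives $\dim\kappa'\llbracket t\rrbracket/\mathfrak a_0\ge 1$, and I would argue that the fixed ideal is stable under the whole closed group generated by these substitutions. That group contains the automorphisms $t_i\mapsto [u]_G(t_i)$ for $u\in 1+\pi^N\Lambda_H$, acting separately on each variable. Consider the leading homogeneous form (initial form) $\mathrm{in}(F)$ under the weight where $t_i$ has weight 1. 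Each automorphism scales $t_i$ linearly by $\bar u=1$ in the residue field, which gives no information. So instead I would use the derivation obtained as the limit $\lambda\to 0$. As $\lambda=\pi^m\to0$, $[\lambda]_G(t_i)$ becomes a high Frobenius-type power $c\,t_i^{Q^m}+\cdots$, since $[\pi]_G(t)=g(t^Q)$. Hence $F(\dots,t_i+_G[\pi^m]_G(t_i),\dots)-F$ has leading term $\partial_{t_i}F\cdot c\,t_i^{Q^m}$ plus higher order. This shows that $\mathfrak a_0$ is stable under $t_i^{Q^m}\partial_{t_i}$ modulo ever higher order terms, and with $\mathfrak a_0$ closed I would deduce $\partial_{t_i}F\in\mathfrak a_0$ for every $i$. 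Iterating this, a minimal-order element has all partials in $\mathfrak a_0$, hence all partials equal to zero, hence it is a $p$-th power $F=G^p$ (using that $\kappa'$ is perfect). Primality then gives $G\in\mathfrak a_0$, of smaller order, which is a contradiction. I expect making this ``derivative'' extraction precise to be the main obstacle: controlling the error terms and the order bookkeeping when dividing by $t_i^{Q^m}$. I would first attempt it via a Hasse-derivative and initial-form comparison, choosing $m$ large relative to $\mathrm{ord}(F)$.
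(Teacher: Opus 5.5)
Your strategy matches the paper's. You use the open-image theorem to get $\gamma_{r,i}\in I_K$ with $\rho(\gamma_{r,i})=\Id+p^rE_{i,i}$, Taylor-expand the relation in the $i$-th variable with increment $\delta_r(t_i)=[1+p^r]_G(t_i)-t_i$, and let $r\to\infty$. The increment has $t_i$-order $p^{rh_0}$ and is nonzero at $x_i=\overline{\tau}_H(\alpha_i)$.

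Two side remarks first. The reduction to coefficients in $\kappa'$ is unnecessary, and as written it is not justified. The coefficients of $F$ may generate an infinite extension of $\kappa'$, so Artin's lemma alone does not suffice; you would need a topological form of linear disjointness. You can skip the step entirely: inertia fixes $\Fpbar\subset\OCb$ pointwise, so $\gamma(g(\mathbf x))=g(\gamma\mathbf x)$ holds directly for $g\in\Fpbar\llbracket t_1,\cdots,t_n\rrbracket$ and $\gamma\in I_K$.

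The error-term bookkeeping you worry about also disappears if you divide by $\delta_r$ itself rather than by its leading monomial. Every term of $\sum_{k\ge1}\delta_r(x_i)^k\,\partial^{[k]}_{t_i}g(\mathbf x)=0$ is divisible by $\delta_r(x_i)\neq0$. Hence $v(\partial^{[1]}_{t_i}g(\mathbf x))\ge p^{rh_0}v(x_i)\to\infty$. In your ideal-theoretic language this is primality of $\mathfrak a_0$, $\delta_r\notin\mathfrak a_0$, and Krull's intersection theorem.

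\textbf{The genuine gap is the final descent.} You pass from ``every first partial $\partial_{t_i}F$ lies in $\mathfrak a_0$'' to ``a minimal-order $F\in\mathfrak a_0\setminus\{0\}$ has all partials zero, hence is a $p$-th power.'' This does not follow in characteristic $p$, because a nonzero partial can have \emph{larger} order than $F$, and then minimality gives no contradiction.

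For example, take $F=t_1^p+t_1t_2^N$ with $N>p$ and $p\nmid N$. It has order $p$, while $\partial_{t_1}F=t_2^N$ and $\partial_{t_2}F=Nt_1t_2^{N-1}$ are nonzero of order at least $N$. Minimality only forces the \emph{initial form} of $F$ to be a $p$-th power, not $F$ itself. First-order partials cannot detect enough here, since they all kill $\Fpbar\llbracket t_1^p,\cdots,t_n^p\rrbracket$.

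The fix, which is what the paper does, is to extract \emph{all} Hasse derivatives from the same Taylor identity. Once $\partial^{[1]}_{t_i}g,\dots,\partial^{[s-1]}_{t_i}g\in\frp$, divide by $\delta_r(x_i)^s$ and let $r\to\infty$ to get $\partial^{[s]}_{t_i}g\in\frp$. Composing over the different variables gives $\partial^{[\mathbf k]}g\in\frp$ for every multi-index $\mathbf k$. Since $\epsilon$ reduces modulo $\frakm_{\Cb}$ to the augmentation, the constant term $\partial^{[\mathbf k]}g(\mathbf 0)$ vanishes, and this constant term is exactly the $\mathbf t^{\mathbf k}$-coefficient of $g$. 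So $g=0$, with no minimal-order or $p$-th-root argument needed.
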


\begin{remark}
Caruso proved a related formal independence result for periods associated with the cyclotomic and Kummer towers \cite{Caruso2013Representations}.
Choose compatible primitive roots of unity $\zeta_{p^n}$, put $\epsilon=(1,\zeta_p,\zeta_{p^2},\cdots)\in\OCb$, and set $\eta=\epsilon-1\in\OCb$.
The field of norms of $\Qp(\zeta_{p^\infty})/\Qp$ is $\Fp(\!(\eta)\!)$.
Likewise, a compatible sequence of $p^n$-th roots of $p$ defines $p^\flat\in\Cb$, and the field of norms of the Kummer tower $\Qp(p^{1/p^\infty})/\Qp$ is identified with $\Fp(\!(p^\flat)\!)$ (see \cite[Section 2.1]{Breuil1999application}).
For odd $p$, Caruso's formal independence result \cite[Proposition 1.7]{Caruso2013Representations} gives an injective map
\[\Fpbar\llbracket t_1,\,\,t_2\rrbracket \to\OCb,\quad t_1\mapsto\eta, t_2\mapsto p^\flat.\]
The proof below adapts Caruso's use of Galois translates and Hasse derivatives to obtain arbitrarily large bounds for the valuations of potential relations.
\end{remark}

\begin{proof}[Proof of Theorem \ref{thm:bH_injective}]
Put $x_i=\overline{\tau}_H(\alpha_i)\in\frakm_{\Cb}\setminus\{0\}$.
Evaluation at $\mathbf{x}=(x_1,\cdots,x_n)$ defines a continuous map
\begin{equation}\label{eq:substitution}
    \epsilon:\Fpbar\llbracket t_1,\cdots,t_n \rrbracket \to \OCb,\quad g\mapsto g(\mathbf{x})
\end{equation}
Let $\frp=\ker(\epsilon)$.
For $g\in \Fpbar\llbracket t_1,\cdots,t_n \rrbracket$, the Hasse derivatives $\partial^{[\mathbf{k}]}g$ are defined by the formal identity
\[g(\mathbf{t}+\mathbf{d})=\sum_{\mathbf{k}\in\NN^n}(\partial^{[\mathbf{k}]}g(\mathbf{t}))\mathbf{d}^{\mathbf{k}},\quad \partial^{[\mathbf{0}]}g=g.\]
We claim that for every $g\in\frp$, every $1\le i\le n$ and every $k\ge1$
\begin{equation}\label{eq:f_partialder}
    \partial_{t_i}^{[k]}g\in\frp
\end{equation}
Assume this derivative-stability statement for the moment.
Applying the claim successively in the different coordinates gives
\[\partial^{[\mathbf{k}]}g=\partial_{t_1}^{[k_1]}\cdots\partial_{t_n}^{[k_n]}g\in\frp\]
for every multi-index $\mathbf{k}$.
Reduction of $\epsilon$ modulo $\frakm_{\Cb}$ is the augmentation $\Fpbar\llbracket t_1,\cdots,t_n \rrbracket\to\Fpbar$, so $\partial^{[\mathbf{k}]}g(\mathbf{0})=0$ for all $\mathbf{k}$.
Since these are the coefficients of $g$, we conclude $g=0$.

It remains to prove (\ref{eq:f_partialder}).
Fix a coordinate $i$ and a relation $g\in\frp$.
By Theorem \ref{thm:openimage}, the image of $\rho:I_K\to\GL_{E_H}(V(H))$ is open.
Hence, there is an integer $N_0$ depending on $H$ and the chosen basis such that for every $r\ge N_0$ there is $\gamma_{r,i}\in I_K$ satisfying $\rho(\gamma_{r,i})=\Id+p^r E_{i,i}$.
By equivariance and the trivial action of inertia on the coefficients,
\begin{align*}
    g(x_1,\cdots,[p^r+1]_G(x_i),\cdots,x_n)=\gamma_{r,i}\cdot g(\mathbf{x})=0
\end{align*}
Put $\delta_r(t)=[p^r+1]_G(t)-t$.
Write $h_0=\mathrm{ht}(H)$.
Since $G(0,y)=y$ and the coefficient of $x$ in the linear part is 1, there is a unit $U(x,y)\in\kappa_K\llbracket x,y\rrbracket^\times$ with $U(0,0)=1$ such that $G(x,y)-y=xU(x,y)$.
Consequently,
\[\delta_r(t)=[p^r]_G(t)U([p^r]_G(t),t)\]
where the second factor is a unit in $\kappa_K\llbracket t\rrbracket$.
Since $G$ has height $h_0$, the order of $[p^r]_G(t)$ is $p^{rh_0}$, so 
\[\mathrm{ord}_t(\delta_r)=p^{rh_0}.\]
Its nonzero coefficients lie in a finite field and therefore have valuation zero.
Since $v(x_i)>0$, its leading term has strictly smaller valuation than the remaining terms, so we have 
\[\delta_r(x_i)\neq0,\quad v(\delta_r(x_i))=p^{rh_0}v(x_i).\]
Suppose that there were a least $s\ge1$ such that $\partial_{t_i}^{[s]}g(\mathbf{x})\neq0$.
Taylor expansion in the $i$-th variable, followed by evaluation, gives
\[\sum_{k=s}^\infty \delta_r(x_i)^k\partial_{t_i}^{[k]}g(\mathbf{x})=0.\]
The series converges because $v(\delta_r(x_i))>0$.
Dividing by the nonzero element $\delta_r(x_i)^s$ gives
\begin{equation}\label{ineq:valuation}
    \partial_{t_i}^{[s]}g(\mathbf{x})=-\sum_{k=s+1}^\infty \delta_r(x_i)^{k-s}\partial_{t_i}^{[k]}g(\mathbf{x}).
\end{equation}
Since the Hasse derivative evaluations are integral, it follows that
\[v(\partial_{t_i}^{[s]}g(\mathbf{x}))\ge v(\delta_r(x_i))=p^{rh_0}v(x_i).\]
The left-hand side is independent of $r$, whereas $p^{rh_0}v(x_i)\to\infty$ as $r\to\infty$, so it is a contradiction.
This proves (\ref{eq:f_partialder}) and hence injectivity of $\epsilon$.
\end{proof}

\begin{remark}
The proof uses only the following consequence of Theorem \ref{thm:openimage}: for every $i$ and sufficiently large $r$, the image of inertia contains $I_n+p^rE_{i,i}$.
In particular, it is enough that the inertia image contain an open subgroup of the diagonal torus in the chosen $E_H$-basis.
\end{remark}

Using the independence of reduced exponential periods, we can now describe $o_H$ explicitly and determine the structure of the base ring $R_{H,L}$.
Let $K/E$ be finite unramified, and let $H$ be a one-dimensional formal $\cO_E$-module of finite height over $\cO_K$, and let $G$ be its special fiber.
By Proposition \ref{prop:end_int_closed}, $\Lambda_H=\cO_{E_H}$ and the finite torsion free $\Lambda_H$-module $T(H)$ is free of rank $n$.

Recall that the choice of a basis identifies 
\[\cA_H\cong\cO_{K'}\llbracket t_{k,i}:0\le k\le fd-1,1\le i\le n\rrbracket,\quad [\alpha_i^{(k)}]\mapsto t_{k,i}\]
with notation in Section \ref{subsec:model}, and the period homomorphism identifies with
\[\xi_H:\cA_H\to\AinfK,\quad t_{k,i}\mapsto\varphi_E^k(\tau_i).\]
Write $\mathrm{red}:\AinfK\to\OCb$ for the reduction modulo $\pi_E$.
Then the composite
\[\red\circ \xi_H:\cA_H\to \OCb,\quad t_{k,i}\mapsto \Frob_{q_E}^k(\overline{\tau}_i)\]
factors through the formal group ring
\[\mathcal{B}_H=\kappa'\llbracket T(H) \rrbracket _{G,\Lambda_H}\cong \kappa'\llbracket t_1,\cdots,t_n\rrbracket\]
where the map $\cA_H\to\mathcal{B}_H$ reduces coefficients modulo $\pi_E$ and sends $t_{k,i}\mapsto t_i^{q_E^k}$.
Therefore, the subalgebra $o_H$ identifies with the image of the continuous homomorphism
\[\overline{\xi}:\mathcal{B}_H\to \OCb,\quad t_i\mapsto \overline{\tau}_i.\]

Applying Theorem \ref{thm:bH_injective} to a $\Lambda_H$-basis of $T(H)$ gives the following consequence.
\begin{corollary}\label{cor:inj}
Let $K/E$ be a finite unramified extension, and let $H$ be a one-dimensional formal $\cO_E$-module of finite height over $\cO_K$.
Then 
\[\overline{\xi}:\mathcal{B}_H\to\OCb,\quad t_i\mapsto \overline{\tau}_i\]
is injective, and hence identifies $\mathcal{B}_H$ with $o_H$.
\end{corollary}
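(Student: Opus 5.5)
The plan is to reduce Corollary \ref{cor:inj} to Theorem \ref{thm:bH_injective} by identifying $\mathcal{B}_H$ with a power series ring over $\kappa'$ and comparing it with $\Fpbar\llbracket t_1,\cdots,t_n\rrbracket$. First, Lemma \ref{lem:formal_group_ring} applies to the special fiber $G$, with its formal $\Lambda_H$-module structure, and to the chosen $\Lambda_H$-basis $\alpha_1,\cdots,\alpha_n$ of $T(H)$. It identifies $\mathcal{B}_H=\kappa'\llbracket T(H)\rrbracket_{G,\Lambda_H}$ with $\kappa'\llbracket t_1,\cdots,t_n\rrbracket$, sending $[\alpha_i]\mapsto t_i$. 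Under this identification $\overline{\xi}$ is the continuous $\kappa'$-algebra map $t_i\mapsto\overline{\tau}_H(\alpha_i)$. Here we use the universal property of formal group rings, together with the fact that $\overline{\tau}_H$ is $\Lambda_H$-linear for the $G$-law on $\OCb$.

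Next, since $\Lambda_H=\cO_{E_H}$ is a DVR with fraction field $E_H$ (Proposition \ref{prop:end_int_closed}), a $\Lambda_H$-basis of $T(H)$ is an $E_H$-basis of $V(H)$. So Theorem \ref{thm:bH_injective} applies and gives injectivity of $\epsilon:\Fpbar\llbracket t_1,\cdots,t_n\rrbracket\to\OCb$. To transfer this, I would show that $\overline{\xi}$ factors as the natural inclusion $\kappa'\llbracket t_1,\cdots,t_n\rrbracket\hookrightarrow\Fpbar\llbracket t_1,\cdots,t_n\rrbracket$ followed by $\epsilon$. The point to check is that $\kappa'\subset\OCb$ is embedded compatibly with some embedding $\kappa'\subset\Fpbar\subset\OCb$. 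This holds because the coefficient map $\cO_{K'}\to\AinfK\to\OCb$ lands in the Teichm\"uller lifts of the residue field $\Fpbar\subset\OCb$. Both maps are continuous and agree on the $t_i$ and on $\kappa'$, so they agree everywhere. Since the inclusion of power series rings is injective (coefficientwise), $\overline{\xi}$ is injective.

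Finally, $o_H$ was identified just before the statement with $\mathrm{im}(\overline{\xi})$, through the factorization of $\red\circ\xi_H$ via $\mathcal{B}_H$ with $t_{k,i}\mapsto t_i^{q_E^k}$. Injectivity then gives $\mathcal{B}_H\cong o_H$. The only mildly delicate step is the compatibility of coefficient embeddings in the factorization. Everything else is formal from Lemma \ref{lem:formal_group_ring} and Theorem \ref{thm:bH_injective}.
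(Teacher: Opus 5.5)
Your proposal is correct and follows the paper's own proof: identify $\mathcal{B}_H$ with $\kappa'\llbracket t_1,\cdots,t_n\rrbracket$ via the chosen $\Lambda_H$-basis, then view $\overline{\xi}$ as the restriction of the injective map $\epsilon$ of Theorem \ref{thm:bH_injective} to $\kappa'$-coefficients. The compatibility of coefficient embeddings that you flag is automatic, since any embedding of the finite field $\kappa'$ into $\OCb$ lands in the unique subfield $\Fpbar\subset\OCb$.
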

\begin{proof}
With the identification $\mathcal{B}_H\cong \kappa'\llbracket t_1,\cdots,t_n\rrbracket$, restricting the injective homomorphism $\epsilon$ in Theorem \ref{thm:bH_injective} to coefficients in $\kappa'$ gives $\overline{\xi}$.
Therefore, $\overline{\xi}$ is injective.
\end{proof}

\begin{corollary}\label{cor:description_kh}
Under the hypotheses of Corollary \ref{cor:inj}, put $K'=KE_H$, let $\kappa'$ be its residue field, and put $n=\dim_{E_H}V(H)=\mathrm{ht}(H)/[E_H:\Qp]$.
A choice of $\Lambda_H$-basis of $T(H)$ gives isomorphisms
\[o_H\cong \kappa'\llbracket t_1,\cdots,t_n \rrbracket,\quad k_H\cong\Frac(\kappa'\llbracket t_1,\cdots,t_n \rrbracket).\]
\end{corollary}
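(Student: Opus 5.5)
This follows almost directly from Corollary \ref{cor:inj} once the definitions of $o_H$ and $k_H$ are unwound. Recall from Section \ref{subsec:model} that $o_H$ is the image of the reduced period map $\overline{\xi}_H:\cA_H\to\OCb$, and $k_H=\Frac(o_H)$. The plan is to show that $\overline{\xi}_H$ factors through $\mathcal{B}_H$ with the same image. Then Corollary \ref{cor:inj} identifies that image with $\mathcal{B}_H\cong\kappa'\llbracket t_1,\cdots,t_n\rrbracket$, and it remains only to pass to fraction fields.

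The first step checks surjectivity of the reduction map $r:\cA_H\to\mathcal{B}_H$, which reduces coefficients from $\cO_{K'}$ to $\kappa'$ and sends $t_{k,i}\mapsto t_i^{q_E^k}$.
\begin{enumerate}
\item This map is well defined as a continuous homomorphism. The reason is that $\varphi_E^k(\tau_i)$ reduces to $\overline{\tau}_i^{\,q_E^k}$, so the twisted formal module law on $H_k$ reduces compatibly with $q_E^k$-th powers.
\item Its image is a closed subring. It contains $\kappa'$ and the images $r(t_{0,i})=t_i$.
\item Since $\kappa'\llbracket t_1,\cdots,t_n\rrbracket$ is topologically generated over $\kappa'$ by the $t_i$, the map $r$ is surjective. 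Compactness of the source, or completeness of the target, guarantees that the image is closed.
\end{enumerate}
Consequently
\[o_H=\mathrm{im}(\overline{\xi}_H)=\mathrm{im}(\overline{\xi}\circ r)=\mathrm{im}(\overline{\xi}).\]

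The second step applies Corollary \ref{cor:inj}: since $\overline{\xi}$ is injective, it gives an isomorphism $\mathcal{B}_H\isomto o_H$. Composing with the isomorphism $\mathcal{B}_H\cong\kappa'\llbracket t_1,\cdots,t_n\rrbracket$ from Lemma \ref{lem:formal_group_ring}, which depends on the chosen $\Lambda_H$-basis of $T(H)$, yields the first isomorphism. The ring $o_H$ is a domain inside $\OCb$, so the isomorphism extends uniquely to fraction fields. This gives $k_H\cong\Frac(\kappa'\llbracket t_1,\cdots,t_n\rrbracket)$.

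The only point needing care is the first step, namely that the formal group ring reduction is compatible with the twists. Concretely, $([\lambda]_H)^{\sigma^k}$ must reduce to the map $X\mapsto([\lambda]_G(X^{1/q_E^k}))^{q_E^k}$, which is what makes $[\alpha]\mapsto[\alpha]^{q_E^k}$ a morphism of formal modules. I expect this to follow from $\sigma$ acting as $q_E$-power Frobenius on $\kappa'$.
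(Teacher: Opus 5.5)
Your proposal is correct and is essentially the paper's argument: just before Corollary \ref{cor:inj}, the paper records that $\red\circ\xi_H$ factors through $\mathcal{B}_H$ via $t_{k,i}\mapsto t_i^{q_E^k}$, so that $o_H=\mathrm{im}(\overline{\xi})$, and the statement then follows from the injectivity in Corollary \ref{cor:inj} and passage to fraction fields. Your surjectivity check for $r$ is fine, and in fact immediate, since $r$ restricts to reduction modulo $\pi_E$ on $\cO_{K'}\llbracket t_{0,1},\cdots,t_{0,n}\rrbracket$; note, however, that ``completeness of the target'' alone would not make the image closed, and compactness of the source is the valid reason.
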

\begin{proof}
This follows from Corollary \ref{cor:inj}.
\end{proof}
\begin{remark}
For $n>1$, the field $k_H$ is not a characteristic $p$ local field.
Therefore, $k_H$ differs from the local field arising in the field of norms construction of \cite{Wintenberger1983corps}.
\end{remark}
\begin{remark}
By Corollary \ref{cor:description_kh}, we obtain an equation
\[\dim\left(\mathrm{im}(\overline{\xi})\right)=n\coloneqq \frac{\mathrm{ht}_{\cO_E}(H)}{[E_H:E]}\]
relating the Krull dimension of the subalgebra of $\OCb$ topologically generated by $\overline{\tau}_H$ and the absolute height of $H$.
\end{remark}

\begin{corollary}\label{cor:henselian_absheight_one}
Let $K/E$ be a finite unramified extension of $p$-adic local fields.
Let $H$ be a one-dimensional finite height formal $\cO_E$-module over $\cO_K$.
Then the following are equivalent.
\begin{enumerate}
    \item The natural injective homomorphism $\Gal_{K_\infty}\to\Gal_{k_H}$ is an isomorphism.
    \item The absolute height of $H$ is one.
\end{enumerate}
\end{corollary}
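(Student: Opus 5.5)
By Corollary \ref{cor:fullimage_equiv_hens}, condition (i) is equivalent to the valuation $w$ on $k_H$ induced from $\Cb$ being henselian. So the plan is to prove that $w$ is henselian if and only if $n=1$. Throughout we use the explicit description $k_H\cong\Frac(\kappa'\llbracket t_1,\cdots,t_n\rrbracket)$ of Corollary \ref{cor:description_kh}, where $t_i\mapsto\overline{\tau}_i$.

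\emph{Case $n=1$.} Here $k_H\cong\kappa'(\!(t_1)\!)$. The restriction of $v_{\Cb}$ to $k_H$ is a nontrivial valuation that is nonnegative on $\kappa'\llbracket t_1\rrbracket$ and positive on $t_1$. A discrete valuation ring has no rank one valuation other than its own, so this restriction must be the positive multiple $v_{\Cb}(\overline{\tau}_1)\cdot\mathrm{ord}_{t_1}$. The field $k_H$ is complete for this valuation, hence henselian, as in Example \ref{ex:lubintate}.

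\emph{Case $n\ge2$.} We show that $w$ is not henselian by writing down a separable polynomial that has a simple root in the residue field of $w$ but no root in $k_H$.
\begin{enumerate}
\item First pin down the valuation on polynomials in two of the variables. Set $a=v(\overline{\tau}_1)$ and $b=v(\overline{\tau}_2)$. For an element of $\kappa'[t_1,t_2]$, the value of $w$ is generally not the minimum over monomials, since cancellation can occur (for instance when $a/b$ is rational). To avoid this, I would use a Newton-polygon style argument on explicit binomials. Concretely, after possibly replacing $t_2$ by $t_2^{m}$ and $t_1$ by $t_1^{m'}$, we may assume $v(t_1)=v(t_2)$ whenever $a/b$ is rational; the irrational case is handled separately.
\item Consider $u=t_2/t_1$. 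When the valuations are equal, $u$ is a $w$-unit. Its residue $\bar u$ lies in the residue field of $\Cb$, which is $\Fpbar$. The point is that $\bar u=c$ is algebraic over $\kappa'$, but $u$ itself is transcendental over $\kappa'$ in a controlled way by Theorem \ref{thm:bH_injective}.
\item Take $F(X)=X^{r}-u$ with $r$ prime to $p$ and $\kappa'$ containing an $r$-th root of $c$. Reducing, $\bar F=X^r-c$ is separable with a simple root, so henselianity would produce a root $X\in k_H$, i.e.\ $t_2/t_1$ would be an $r$-th power in $\Frac(\kappa'\llbracket t_1,t_2,\dots\rrbracket)$. Since $t_1,t_2$ are distinct prime elements of the UFD $\kappa'\llbracket t_1,\dots,t_n\rrbracket$, this is impossible for $r\ge2$.
\item In the irrational case (valuations $\QQ$-linearly independent), use the same binomial construction with $X^r-(1+t_1^{\alpha}t_2^{-\beta})$ for suitable positive exponents chosen so that $t_1^{\alpha}t_2^{-\beta}$ has positive valuation. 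Again $\bar F=X^r-1$ has a simple root, while the UFD structure together with Eisenstein-type arguments along the prime $t_2$ rules out an $r$-th root in $k_H$.
\end{enumerate}

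The main obstacle is step 1: controlling $w$ on $k_H$, which is only known through its embedding in $\Cb$. I would circumvent this by only ever using the valuations of monomials and of $1+(\text{positive valuation})$. This requires only $w(t_i)>0$ and multiplicativity, so no analysis of cancellation is needed.
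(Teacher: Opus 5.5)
Your $n=1$ direction is the same as the paper's. For $n\ge 2$ your route is genuinely different, and the core idea is sound. The paper just cites Fried--Jarden: for $n\ge2$ the field $\Frac(\kappa'\llbracket t_1,\dots,t_n\rrbracket)$ is Hilbertian, and a Hilbertian field is not henselian for any nontrivial valuation. You instead exhibit an explicit element of $1+\frakm_w$ that has no $r$-th root in $k_H$.

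Your route is self-contained. It uses only $w(\overline{\tau}_i)>0$, multiplicativity of $w$, and factoriality of $\kappa'\llbracket t_1,\dots,t_n\rrbracket$. The paper's route is shorter and proves more: non-henselianity for every nontrivial valuation on $k_H$, not just $w$.

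As written, steps 1--3 contain two slips.
\begin{itemize}
\item After you replace $t_1,t_2$ by $t_1^{m'},t_2^{m}$, these are no longer prime. So $t_2^m/t_1^{m'}$ fails to be an $r$-th power only if $r\nmid m$ or $r\nmid m'$.
\item $\kappa'=\kappa_{K'}$ is fixed, so you cannot enlarge it to contain an $r$-th root of $c$. You would instead use that $c\in\Fpbar^\times$ has finite order $N$ prime to $p$, and take $r$ prime to $pN$. Then a power of $c$ is an $r$-th root of $c$ lying in the residue field of $w$.
\end{itemize}

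You don't actually need steps 1--3. Your ``irrational case'' never occurs, because the value group of $v_{\Cb}$ is $\QQ$. Moreover, your step-4 element works uniformly in all cases:
\begin{itemize}
\item Take $\beta=1$, choose $\alpha$ with $\alpha\, w(t_1)>w(t_2)$, and let $r\neq p$ be a prime.
\item Then $y=1+t_1^{\alpha}/t_2$ lies in $1+\frakm_w$, and $X^r-y$ reduces to $X^r-1$, which has the simple root $1$.
\item Henselianity would therefore give $x\in k_H$ with $x^r=y$.
\item But $t_2\nmid t_2+t_1^{\alpha}$ in the UFD $\kappa'\llbracket t_1,\dots,t_n\rrbracket$, since $t_1$ and $t_2$ are non-associate primes. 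Hence $\mathrm{ord}_{t_2}(y)=-1\notin r\ZZ$, a contradiction.
\end{itemize}
Used this way, the whole discussion of cancellation and equalizing valuations in step 1 becomes unnecessary.
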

\begin{proof}
By Corollary \ref{cor:fullimage_equiv_hens}, condition (i) is equivalent to henselianity of the induced valuation on $k_H$.
If the absolute height $n$ of $H$ is one, then $k_H\cong\kappa_{K'}(\!(t)\!)$ is a complete valued field because $v_{\Cb}$ induces a multiple of the $t$-adic valuation, and it is in particular henselian.
Conversely, suppose $n>1$.
Corollary \ref{cor:description_kh} implies that 
\[k_H\cong\Frac(\kappa_{K'}\llbracket t_1,\cdots,t_n\rrbracket)\]
which is not henselian by \cite[Theorem 15.4.6, Lemma 15.5.4]{Fried2008Field}.
\end{proof}

\subsection{Coefficient rings and regular parameters}
Let $K/E$ be a finite unramified extension, and let $H$ be a one-dimensional formal $\cO_E$-module of finite height over $\cO_K$.
Put $K'=KE_H$.
We now use the independence of the reduced periods to describe $R_H$ explicitly.
Define 
\[\cT_{H}\coloneqq\cO_{K'}\llbracket T(H) \rrbracket_{H,\Lambda_H}\cong\cO_{K'}\llbracket t_{0,1},\cdots,t_{0,n}\rrbracket\]
and view it as the closed $\cO_{K'}$-subalgebra of $\cA_H$.
Lemma \ref{lem:semilinear_action} gives a natural $\Gal_K$-action on $\cT_H$ and the period homomorphism
\[\cT_H\to\AinfK,\quad t_{0,i}\mapsto \tau_H(\alpha_i)\]
is $\Gal_K$-equivariant.
Its reduction modulo $\pi_E$ is injective by Corollary \ref{cor:inj}.
Since the target is $\pi_E$-torsion free, every element of the kernel is successively divisible by every power of $\pi_E$.
The $\pi_E$-adic separation of $\cT_H$ therefore proves injectivity.
Corollary \ref{cor:inj} identifies $\cT_H/(\pi_E)$ with $o_H$, and therefore the kernel of $\cT_H\to o_H$ is $(\pi_E)$.
Define $\cC_H=((\cT_{H})_{(\pi_E)})^\wedge$ where the completion is $\pi_E$-adic.
The localization $(\cT_H)_{(\pi_E)}$ is a discrete valuation ring with uniformizer $\pi_E$ and residue field $k_H$, so the same holds after the completion.
Since every element inverted in this localization maps to a unit of $W_{\cO_K}(\Cb)$, the period homomorphism extends to $\cC_H\to W_{\cO_K}(\Cb)$ and reduces to the embedding $k_H\to\Cb$.
The extended homomorphism is injective because its reduction modulo $\pi_E$ is injective, its target is $\pi_E$-torsion free, and $\cC_H$ is $\pi_E$-adically separated.

\begin{proposition}\label{prop:coefficient_ring}
In the preceding setup, let $n=\mathrm{rank}_{\Lambda_H}T(H)$, let $P$ be the polynomial used to construct $\cA_H$ in Section \ref{subsec:model}, let $d=\deg P$, and let $f=[K:E]$.
Choosing a $\Lambda_H$-basis of $T(H)$ gives an isomorphism of complete local $\cC_H$-algebras
\[\cC_H\llbracket s_{k,i}:1\le k\le df-1,1\le i\le n \rrbracket\isomto R_H,\quad s_{k,i}\mapsto t_{k,i}-t_{0,i}^{q_E^k}\]
In particular, $R_H$ is a complete regular local ring of dimension $1+n(df-1)$, and $\pi_E$ together with the elements $t_{k,i}-t_{0,i}^{q_E^k}$ forms a regular system of parameters.
\end{proposition}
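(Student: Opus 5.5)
The plan is to compare $R_H$ with a power series ring over $\cC_H$ through an explicit change of variables in $\cA_H$, and then to analyze the localization at $\frp_H$.

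First, change coordinates. In the coordinates of Section \ref{subsec:model}, set $s_{k,i}=t_{k,i}-t_{0,i}^{q_E^k}$ for $1\le k\le df-1$. The substitution $t_{k,i}\mapsto s_{k,i}+t_{0,i}^{q_E^k}$ is a continuous automorphism of the power series ring. It therefore gives an isomorphism
\[\cA_H\cong\cT_H\llbracket s_{k,i}:1\le k\le df-1,1\le i\le n\rrbracket,\]
with $\cT_H=\cO_{K'}\llbracket t_{0,1},\cdots,t_{0,n}\rrbracket$. The reduced period map sends $t_{k,i}\mapsto\overline{\tau}_i^{q_E^k}$, so each $s_{k,i}$ lies in $\frp_H$. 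The composite $\cA_H\to\cA_H/(s_{k,i})\cong\cT_H\to\OCb$ is the reduced period map of $\cT_H$. By Corollary \ref{cor:inj} and the discussion preceding the proposition, the kernel of that map is $(\pi_E)$. Hence
\[\frp_H=(\pi_E,s_{k,i}),\qquad \cA_H/\frp_H\cong o_H.\]
Moreover, $\pi_E,s_{1,1},\dots,s_{df-1,n}$ form a regular sequence in $\cA_H$, since the quotients are successively power series rings and then $o_H$, all of which are domains.

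Second, localize and complete. Let $S=\cT_H\setminus(\pi_E)$. Elements of $S$ lie outside $\frp_H$, so we get maps
\[\cT_H[S^{-1}]\llbracket s\rrbracket\to(\cA_H)_{\frp_H}^\wedge=R_H\quad\text{and}\quad \cC_H\llbracket s\rrbracket\to R_H .\]
The second map exists because $R_H$ is $\frp_H$-adically complete. In particular it is $\pi_E$-adically complete, so $(\cT_H)_{(\pi_E)}\to R_H$ extends to the $\pi_E$-adic completion $\cC_H$. Also the $s_{k,i}$ are topologically nilpotent in $R_H$, which gives the extension to power series.

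To prove this is an isomorphism, build the inverse. The ring $\cC_H\llbracket s\rrbracket$ is a complete local ring with maximal ideal $(\pi_E,s)$ and residue field $k_H$. The map $\cA_H=\cT_H\llbracket s\rrbracket\to\cC_H\llbracket s\rrbracket$ sends $\frp_H$ into that maximal ideal. An element $a\notin\frp_H$ has image in $k_H$ equal to its reduction, which is nonzero, so its image is a unit. The universal properties of localization and of $\frp_H$-adic completion then give $R_H\to\cC_H\llbracket s\rrbracket$. The two composites are continuous and are the identity on the dense subrings $(\cA_H)_{\frp_H}$ and $(\cT_H)_{(\pi_E)}[s]$, so they are the identity.

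The main obstacle is the topology comparison in this last step. One has to check that the $\frp_H$-adic topology on $(\cA_H)_{\frp_H}$ agrees with the $(\pi_E,s)$-adic topology on $\cC_H\llbracket s\rrbracket$, and that the elements $t_{0,i}$, units in $\cC_H$, cause no convergence issue. If this becomes delicate, the fallback is to compare graded rings. Since $\frp_H$ is generated by a regular sequence, $\mathrm{gr}_{\frp_H}$ of the localization is the polynomial ring $k_H[\overline{\pi}_E,\overline{s}]$, which matches $\mathrm{gr}$ of $\cC_H\llbracket s\rrbracket$. A complete filtered map inducing an isomorphism on associated graded rings is an isomorphism.

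Finally, $\cC_H$ is a DVR, so $\cC_H\llbracket s\rrbracket$ is regular local of dimension $1+n(df-1)$, with regular system of parameters $\pi_E$ together with the $s_{k,i}$.
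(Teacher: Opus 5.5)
Your proof is correct, but it takes a different route from the paper. The paper starts as you do, identifying $\frp_H=(\pi_E,s_{k,i})$. It then uses the regular sequence to get $\mathrm{ht}(\frp_H)=1+n(df-1)$, and concludes that $R_H$ is regular of that dimension with maximal ideal generated by $\pi_E$ and the $s_{k,i}$, so that $\pi_E\notin\frakm_{R_H}^2$. Finally it invokes Lemma \ref{lem:coef_ring}: the map $\cC_H\llbracket s\rrbracket\to R_H$ is surjective by successive approximation, and injective because a nonzero kernel in a domain of dimension $D$ would force $\dim R_H<D$.

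You instead construct the inverse $R_H\to\cC_H\llbracket s\rrbracket$ directly from the universal properties of localization and completion. The ingredients are the coefficientwise map $\cT_H\llbracket s\rrbracket\to\cC_H\llbracket s\rrbracket$ and the fact that elements outside $\frp_H$ have nonzero image in the residue field $k_H$. This avoids dimension theory entirely, and regularity of $R_H$ comes out as a consequence rather than an input. The paper's version is shorter once its general structure lemma is available.

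The step you flag as the main obstacle is not one, so you do not need the graded-ring fallback. Both composites are local homomorphisms of complete Noetherian local rings, hence continuous for the maximal-ideal-adic topologies. The only convergence check is that the canonical map $\cA_H\to R_H$ sends $\sum_I a_Is^I$ to $\sum_I a_Is^I$. This holds because $s^I\in\frp_H^{|I|}$, so the map is continuous for the $(s)$-adic topology on $\cT_H\llbracket s\rrbracket$. The $t_{0,i}$ cause no trouble because you never need $\cA_H\to\cC_H\llbracket s\rrbracket$ to be continuous for the $\frakm_{\cA_H}$-adic topology; that map is defined coefficientwise.
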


\begin{lemma}\label{lem:coef_ring}
Let $S$ be a complete discrete valuation ring with uniformizer $\pi$, and let $S\to R$ be a local homomorphism to a complete Noetherian regular local ring of dimension $D$.
Suppose that $S\to R$ induces an isomorphism of residue fields and that the image of $\pi$ belongs to $\frakm_R\setminus\frakm_R^2$.
Then $R$ is isomorphic to $S\llbracket X_1,\cdots,X_{D-1} \rrbracket$ as a complete local $S$-algebra.
\end{lemma}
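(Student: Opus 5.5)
Let $k$ denote the common residue field of $S$ and $R$. The plan is to extend $\pi$ to a regular system of parameters of $R$, build a map $S\llbracket X_1,\cdots,X_{D-1}\rrbracket\to R$ from it, and show it is an isomorphism by comparing dimensions.

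First, since $\pi\in\frakm_R\setminus\frakm_R^2$ and $R$ is regular of dimension $D$, the class of $\pi$ is nonzero in $\frakm_R/\frakm_R^2$, a $k$-vector space of dimension $D$. Choose $y_1,\cdots,y_{D-1}\in\frakm_R$ whose classes, together with that of $\pi$, form a basis of $\frakm_R/\frakm_R^2$. Then $\frakm_R=(\pi,y_1,\cdots,y_{D-1})$ by Nakayama. Since $R$ is complete and the $y_j$ are topologically nilpotent, the assignment $X_j\mapsto y_j$ defines a continuous local $S$-algebra homomorphism
\[\psi:A\coloneqq S\llbracket X_1,\cdots,X_{D-1}\rrbracket\to R.\]

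Next, $\psi$ is surjective. It induces an isomorphism on residue fields by hypothesis, and $\frakm_AR=(\pi,y_1,\cdots,y_{D-1})=\frakm_R$. Since $A$ is complete and $R$ is $\frakm_A$-adically separated, complete Nakayama (Stacks Tag 031D, as used in the proof of Proposition \ref{prop:bh_fds}) shows that $R$ is finite over $A$ and generated by $1$. Hence $\psi$ is surjective.

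Finally, $\psi$ is injective. Here $A$ is a regular local domain of dimension $1+(D-1)=D$, and $R\cong A/\ker\psi$ has dimension $D$. A nonzero ideal $\frp=\ker\psi$ in a domain has height at least $1$, which would force $\dim A/\frp\le D-1$; hence $\ker\psi=0$ and $\psi$ is an isomorphism of complete local $S$-algebras.

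The step needing the most care is surjectivity: the residue-field hypothesis is exactly what lets a single generator work in complete Nakayama. Without it one would first need a Cohen-type coefficient argument. The dimension count is routine.
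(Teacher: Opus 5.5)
Your proof is correct and follows essentially the same route as the paper: extend $\pi$ to a regular system of parameters, map $S\llbracket X_1,\cdots,X_{D-1}\rrbracket$ onto $R$, and deduce injectivity from the dimension count in the $D$-dimensional domain $S\llbracket X_1,\cdots,X_{D-1}\rrbracket$. The only difference is cosmetic: you get surjectivity by citing complete Nakayama, with $\frakm_A R=\frakm_R$ and Krull's intersection theorem supplying the needed separatedness, while the paper writes out the equivalent successive approximation modulo $\frakm_R^c$.
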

We call the image of $S$ a \textit{coefficient ring} for $R$ in this setting.
Its uniformizer is $\pi$, which need not equal $p$.

\begin{proof}
Let $\kappa$ be the common residue field and choose $x_1,\cdots,x_{D-1}\in\frakm_R$ such that the classes of $\pi,x_1,\cdots,x_{D-1}$ form a $\kappa$-basis of $\frakm_R/\frakm_R^2$.
Since $R$ is complete, substitution $X_i\mapsto x_i$ defines a continuous local $S$-algebra homomorphism
\[\alpha:S\llbracket X_1,\cdots,X_{D-1} \rrbracket \to R.\]
The map $\alpha$ induces an isomorphism on residue fields, and Nakayama's lemma gives $\frakm_R=(\pi,x_1,\cdots,x_{D-1})R$.
Since the residue field map is surjective and the images of $\pi,X_1,\cdots,X_{D-1}$ generate $\frakm_R$, successive approximation modulo $\frakm_R^c$ produces a preimage for every element of $R$.
Completeness of both rings therefore makes $\alpha$ surjective.
The source is a Noetherian domain of Krull dimension $D$, so a nonzero kernel would give $\dim R<D$.
Therefore, $\alpha$ is injective.
\end{proof}

\begin{proof}[Proof of Proposition \ref{prop:coefficient_ring}]
Put $s_{k,i}=t_{k,i}-t_{0,i}^{q_E^k}$ for $1\le k\le fd-1$ and $1\le i\le n$.
Under the coordinate change
\[\cA_H\cong\cO_{K'}\llbracket t_{0,1},\cdots,t_{0,n},s_{k,i}:1\le k\le fd-1,1\le i\le n\rrbracket\]
reduction annihilates $\pi_E$ and all $s_{k,i}$.
Corollary \ref{cor:inj} identifies the remaining quotient with $o_H$.
Therefore, 
\[\frp_H=(\pi_E,s_{k,i}).\]
In these coordinates, $\pi_E$ and $s_{k,i}$ form a regular sequence of length $n(df-1)+1$ in $\cA_H$ and generate $\frp_H$.
It follows that $\mathrm{ht}(\frp_H)=1+n(df-1)$.
Localization and completion therefore give a regular local ring $R_H$ of this dimension whose maximal ideal is generated by $\pi_E$ and the $s_{k,i}$.
These generators form a regular system of parameters, so $\pi_E\not\in\frakm_{R_H}^2$.
The inclusion $\cT_H\to \cA_H$ satisfies $\frp_H\cap \cT_H=(\pi_E)$, so localization and completion give a local homomorphism $\cC_H\to R_H$ inducing the identity on the identified residue fields $k_H$.
Applying Lemma \ref{lem:coef_ring} gives the isomorphism
\[\cC_H\llbracket s_{k,i} \rrbracket \isomto R_H,\quad s_{k,i}\mapsto t_{k,i}-t_{0,i}^{q_E^k}\]
so $\cC_H$ is a coefficient ring for $R_H$ in the stated sense.
\end{proof}

Let $L/K$ be a finite extension.
We denote by $\cC_{H,L}$ the completion of the ind-(finite \'{e}tale) local $\cC_H$-algebra corresponding to the separable extension $k_{H,L}/k_H$.
We now deduce the regularity and dimension of $R_{H,L}$.
\begin{proposition}\label{prop:R_regular}
With the notation of Proposition \ref{prop:coefficient_ring}, there is an isomorphism
\[R_{H,L}\cong \cC_{H,L}\llbracket s_{k,i}\rrbracket\]
of complete local $\cC_{H,L}$-algebras.
Consequently, $R_{H,L}$ is a complete regular local ring of dimension $1+n(df-1)$ with regular parameters $\pi_K$ and the $s_{k,i}$.
\end{proposition}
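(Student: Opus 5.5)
The plan is to transport the coordinate description of Proposition \ref{prop:coefficient_ring} along the ind-(finite \'{e}tale) extension corresponding to $k_{H,L}/k_H$, and then identify the result via Lemma \ref{lem:coef_ring}. Write $R'_{H,L}$ for the ind-(finite \'{e}tale) local $R_H$-algebra with residue field $k_{H,L}$, and $\cC'_{H,L}$ for the analogous ind-(finite \'{e}tale) local $\cC_H$-algebra. By Proposition \ref{prop:coefficient_ring} the map $\cC_H\to R_H$ is local and induces the identity on residue fields $k_H$. Lemma \ref{lem:henselian_equiv_cat} then applies at each finite stage $\ell/k_H$ inside $k_{H,L}$. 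The finite \'{e}tale local $\cC_H$-algebra $\cC_\ell$ with residue field $\ell$ base changes to a finite \'{e}tale $R_H$-algebra $\cC_\ell\otimes_{\cC_H}R_H$. This algebra is local, since its reduction is $\ell\otimes_{k_H}k_H=\ell$, so it is the finite \'{e}tale local $R_H$-algebra $R_\ell$. Passing to the colimit gives $R'_{H,L}\cong\cC'_{H,L}\otimes_{\cC_H}R_H$, compatibly with the inclusions of finite stages.

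Next, I construct a local $\cC_{H,L}$-algebra map
\[\cC_{H,L}\llbracket s_{k,i}\rrbracket\to R_{H,L},\quad s_{k,i}\mapsto t_{k,i}-t_{0,i}^{q_E^k}.\]
The map $\cC'_{H,L}\to R'_{H,L}$ is local and so extends to the completions, and the $s_{k,i}$ land in the maximal ideal. Rather than computing directly that the completion of the tensor product is a power series ring, I would verify the hypotheses of Lemma \ref{lem:coef_ring} for $S=\cC_{H,L}$ and $R=R_{H,L}$.

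First, $\cC_{H,L}$ is a complete DVR with uniformizer $\pi_E$. It is the completion of an ind-\'{e}tale local extension of the DVR $\cC_H$, hence Noetherian, and its maximal ideal is generated by $\pi_E$.

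Second, $R_{H,L}$ is regular of dimension $1+n(df-1)$. The map $R_H\to R_{H,L}$ is flat local, and $\frakm_{R_H}R_{H,L}$ is the maximal ideal, because the extension is unramified at each stage and this is preserved by completion. Hence $R_{H,L}$ is Noetherian with the same embedding dimension and Krull dimension as $R_H$, so it is regular. Its maximal ideal is generated by $\pi_E$ and the $s_{k,i}$.

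Third, the residue fields agree, both being $k_{H,L}$. Moreover $\pi_E\notin\frakm^2$, because $\pi_E$ is part of a regular system of parameters.

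Lemma \ref{lem:coef_ring} then gives an abstract isomorphism $R_{H,L}\cong\cC_{H,L}\llbracket X_1,\dots,X_{D-1}\rrbracket$. Its proof, run with the specific choice $x=s_{k,i}$, shows that the map constructed above is itself the isomorphism, since the classes of $\pi_E$ and the $s_{k,i}$ form a basis of $\frakm/\frakm^2$. Finally, $\pi_K$ is an associate of $\pi_E$ because $K/E$ is unramified, which gives the statement about regular parameters.

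The main obstacle is making the regularity and dimension claims for $R_{H,L}$ rigorous. This requires Noetherianity of the completion of an infinite ind-\'{e}tale extension, together with $\frakm_{R_H}R_{H,L}=\frakm_{R_{H,L}}$. I would handle both by citing the standard fact (also used implicitly in Lemma \ref{lem:fds_extension_fields}) that such completions are Noetherian and flat. Alternatively, $k_{H,L}/k_{H,K}$ is finite, so $R_{H,L}$ is finite \'{e}tale over $R_{H,K}$, and one could reduce to the henselization-type extension $R_{H,K}$, where $\frakm$-adic completion of the henselian local ring $R'$ is well behaved.
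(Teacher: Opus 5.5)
Your proposal is correct, and its first step, identifying the finite \'{e}tale local $R_H$-algebra $R_\ell$ with $\cC_\ell\otimes_{\cC_H}R_H$ via Lemma \ref{lem:henselian_equiv_cat}, is exactly the paper's. The routes diverge afterwards.

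\textbf{The paper's route.} Since $R_H=\cC_H\llbracket s_{k,i}\rrbracket$ and $\cC_\ell$ is finite free over $\cC_H$, the paper notes that $R_\ell\cong\cC_\ell\llbracket s_{k,i}\rrbracket$ at every finite stage. It then takes the union and completes with respect to $(\pi_K,s_{k,i})$, which gives $\cC_{H,L}\llbracket s_{k,i}\rrbracket$ directly. Modulo $(\pi_K,s_{k,i})^m$, both sides are $\cC_{H,L}[s_{k,i}]/(\pi_K,s_{k,i})^m$. Regularity and the dimension are then read off from this explicit form.

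\textbf{Your route.} You use the finite-stage identification only to build a comparison map. You then prove that $R_{H,L}$ is regular of dimension $1+n(df-1)$ abstractly, from the flat local map $R_H\to R_{H,L}$ whose closed fibre is the field $k_{H,L}$. Finally you rerun Lemma \ref{lem:coef_ring} with the specific choice $x=s_{k,i}$.

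This works. The inputs you flag are standard:
\begin{itemize}
\item Noetherianity of the completion, since it is complete for the finitely generated ideal $\frakm_{R_H}R_{H,L}$ with quotient the field $k_{H,L}$.
\item Flatness, via the local criterion applied to the quotients by $\frakm_{R_H}^m$.
\item The dimension formula $\dim B=\dim A+\dim B/\frakm_AB$ for flat local maps.
\end{itemize}

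\textbf{Comparison.} Your argument shows that regularity and the coefficient-ring structure pass along any flat local extension with trivial closed fibre, which is a more conceptual statement. The paper's computation is lighter: it needs none of these inputs and produces Noetherianity and regularity of $R_{H,L}$ as outputs. Your own first paragraph already gives $R_\ell\cong\cC_\ell\otimes_{\cC_H}\cC_H\llbracket s_{k,i}\rrbracket\cong\cC_\ell\llbracket s_{k,i}\rrbracket$. So the explicit completion was one line away, and it removes the obstacle you identified.
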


\begin{proof}
For every finite subextension $\ell/k_{H}$ of $k_{H,L}/k_H$, let $\cC_\ell$ be the finite \'{e}tale local $\cC_H$-algebra with residue field $\ell$.
Since $R_H=\cC_H\llbracket s_{k,i}\rrbracket$,
\[\cC_\ell\otimes_{\cC_H}R_H\cong \cC_\ell\llbracket s_{k,i}\rrbracket.\]
This is a finite \'{e}tale local $R_H$-algebra with residue field $\ell$, so it is the corresponding local $R_H$-algebra $R_\ell$ in the construction of $R_{H,L}$.
These identifications are compatible with inclusions of finite residue field extensions, so passing to the filtered union and completing with respect to $(\pi_K,s_{k,i})$ gives an isomorphism
\[\cC_{H,L}\llbracket s_{k,i}\rrbracket\isomto R_{H,L}.\]
The ring $\cC_{H,L}$ is a complete discrete valuation ring because the strict henselization of $\cC_H$ is a discrete valuation ring.
The assertion now follows.
\end{proof}

\subsection{Independence of Frobenius iterates of periods}\label{subsec:indep_model}
We now prove a mixed-characteristic independence theorem for exponential periods.
More precisely, the periods attached to an $E_H$-basis of $V(H)$ together with their first $h-1$ Frobenius iterates are formally independent over $\cO_K$.
When $E=K$, this theorem identifies the abstract period algebra $\cA_H$ with its image in $\AinfK$.
The proof uses the invertibility of an associated crystalline period matrix, which we establish first.
Let $\ell_H$ denote the normalized formal logarithm of $H$.
\begin{lemma}\label{lem:full_period_matrix}
Let $K/E$ be a finite unramified extension of $p$-adic local fields, let $H$ be a one-dimensional formal $\cO_E$-module of finite height over $\cO_K$.
Fix an $E$-basis $\alpha_1,\cdots,\alpha_m$ of $V(H)$ and write $\tau_i=\widetilde{\tau}_H(\alpha_i)$.
Then 
\[(\varphi_E^k(\ell_H(\tau_i)))_{0\le k\le m-1,1\le i\le m}\in\GL_m(\BcrisK).\]
\end{lemma}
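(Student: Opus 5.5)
The plan is to write the matrix as the matrix of an $E$-linear pairing between $V(H)$ and an $h$-dimensional Frobenius-stable piece of the Dieudonn\'{e} module, and to deduce invertibility from the crystalline comparison of Theorem \ref{thm:crystalline_comparison}. By Proposition \ref{prop:relation_to_padic_integration}, $\ell_H(\tau_i)=\langle[\ell_H],\alpha_i\rangle$. Write $q_E=p^{a}$, so that $\varphi_E=\varphi_p^{a}$ on $\BcrisK^+=\Bcris^+\otimes_{K_0}K$, using that $K/E$ is unramified so $\sigma_E$ makes sense on $K$. Iterating the Frobenius equivariance (iii) of the pairing gives
\[\varphi_E^k(\ell_H(\tau_i))=\langle \phi_E^k[\ell_H],\alpha_i\rangle,\qquad \phi_E\coloneqq\phi_p^{a}.\]
The matrix in the statement is therefore the Gram matrix of the pairing evaluated on the vectors $\phi_E^k[\ell_H]$, $0\le k\le m-1$, against the $E$-basis $(\alpha_i)$.

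First I will isolate the relevant piece of $D_K$. The formal $\cO_E$-action on $H$ makes $D_K$ a module over $E\otimes_{\Qp}K$. I take $D_E$ to be the eigencomponent on which $\cO_E$ acts through the given inclusion $E\subset K$. The class $[\ell_H]$ lies in $D_E$ because $\ell_H([a]_H(X))=a\,\ell_H(X)$. Since $K/E$ is unramified, $D_E$ is stable under the $\sigma_E$-semilinear bijection $\phi_E$. It has $K$-dimension $h_0/[E:\Qp]=m$, because $E\otimes_{\Qp}K\cong\prod_{E\hookrightarrow K}K$ and the components are permuted transitively by Frobenius. As a $\phi_E$-isocrystal, $D_E$ is isoclinic of slope $1/m$, inherited from $D$ being simple of slope $1/h_0$.

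Second, I will show that $[\ell_H],\phi_E[\ell_H],\dots,\phi_E^{m-1}[\ell_H]$ form a $K$-basis of $D_E$. The $K$-span $W$ of all iterates $\phi_E^k[\ell_H]$ is a nonzero $\phi_E$-stable subspace. Its dimension is first reached by the first $\dim W$ iterates, by the usual stabilization of Krylov spans, which works for semilinear maps as well. After extending scalars to $\breve{K}$, a $\phi_E$-stable subspace of an isocrystal that is isoclinic of slope $1/m$ must have dimension divisible by $m$, because its Newton slopes are integers divided by its dimension. Hence $W=D_E$, and the first $m$ iterates form a basis.

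Third, I need the restricted pairing $D_E\times V\to\BcrisK$ to be perfect, meaning that its Gram matrix with respect to any $K$-basis of $D_E$ and $E$-basis of $V$ lies in $\GL_m(\BcrisK)$. Theorem \ref{thm:crystalline_comparison}, together with the crystallinity of $V$ (Proposition \ref{prop:onedim_crystalline}), gives that
\[D_K\otimes_K\BcrisK\to\Hom_{\Qp}(V,\BcrisK)\]
is an isomorphism; this is the standard consequence of $\dim D_{\cris}=\dim V$. Both sides decompose compatibly under $E\otimes_{\Qp}K$, so the isomorphism restricts to an isomorphism of identity-embedding components
\[D_E\otimes_K\BcrisK\to\Hom_E(V,\BcrisK)\]
of free rank-$m$ modules. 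This is precisely the statement that the Gram matrix is invertible. Combined with the second step, a change of basis gives the claimed invertibility. I expect this eigencomponent bookkeeping to be the main obstacle, in particular checking that the $\Qp$-linear comparison respects the two $E$-structures and their components. If it becomes messy, my fallback is a determinant argument: compute the full $h_0\times h_0$ determinant, show it is a nonzero power of $t$ up to a unit, and factor it over the components.
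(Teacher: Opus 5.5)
Your proposal is correct and has the same architecture as the paper's proof. In both, the matrix is the Gram matrix of the integration pairing on $D_E\times V(H)$, the classes $\phi_E^k[\ell_H]$ with $0\le k\le m-1$ form a $K$-basis of $D_E$, and the crystalline comparison cut down to the identity-embedding component gives $D_E\otimes_K\BcrisK\isomto\Hom_E(V(H),\BcrisK)$.

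The one step you argue differently is the cyclicity of $[\ell_H]$. You use that $D_E$ is isoclinic of $\pi_E$-adic slope $1/m$, so every $\phi_E$-stable subspace has dimension divisible by $m$. The paper instead identifies $D_E$ with the piece $D_0$ of $D=\bigoplus_{i\in\ZZ/f_E\ZZ}D_i$. It deduces simplicity of $(D_0,\phi_p^{f_E})$ directly from simplicity of $D$, passing from a stable $W\subset D_0$ to the $\phi_p$-stable subspace $\bigoplus_j\phi_p^j(W)\subset D$. The paper's route reuses the simplicity of $D$ already established in the proof of Proposition \ref{prop:onedim_crystalline} and needs no slope theory over the ramified base. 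Yours needs the extra check that the slope of $\phi_p^{f_E}$ becomes $[E:\Qp]/h_0=1/m$ in the $\pi_E$-adic normalization.

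Your justification of $\dim_K D_E=m$ is not right as stated, although the conclusion is true.
\begin{itemize}
\item The algebra $E\otimes_{\Qp}K$ need not split as a product of copies of $K$; take $E=K$ not Galois over $\Qp$.
\item The natural Frobenius on $D_K$ is $\phi_E$, which preserves $D_E$. So it cannot permute the components transitively when there is more than one.
\end{itemize}
The argument works one level down. Write $E\otimes_{\Qp}K_0\cong\prod_{i\in\ZZ/f_E\ZZ}E\otimes_{E_0,\sigma^i}K_0$, a product of fields. Then $\phi_p$ maps the component of $D$ for the $i$-th factor semilinearly and bijectively onto the $(i+1)$-st. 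Hence $D$ is free of rank $m$ over $E\otimes_{\Qp}K_0$, so $D_K$ is free of rank $m$ over $E\otimes_{\Qp}K$, and $\dim_K D_E=m$. This is precisely the paper's decomposition with $D_0\cong D_E$.

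Finally, the eigencomponent bookkeeping you were worried about is harmless. Since $\langle\iota(a)d,\alpha\rangle=\langle d,a\alpha\rangle$, the comparison map is $E\otimes_{\Qp}\BcrisK$-linear. The idempotent of $E\otimes_{\Qp}K$ attached to the multiplication map $E\otimes_{\Qp}K\to K$ cuts out $D_E\otimes_K\BcrisK$ on one side and $\Hom_E(V(H),\BcrisK)$ on the other. So the determinant fallback is unnecessary.
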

\begin{proof}
By functoriality of Dieudonn\'{e} modules, endomorphisms by $\cO_E$ induce a ring homomorphism $\iota:E\to \End_{K_0}(D)$.
Define the relative Dieudonn\'{e} module by
\[D_E=\{d\in D_K:\iota(a)d=ad\textup{ for all $a\in E$}\}.\]
Since $K/E$ is unramified, $D_E$ is a vector space over $K$ with a canonical endomorphism $\phi_E$, and its dimension is $m$.
Set $f_E=[E_0:\Qp]$.
The action of $E_0$ via $\iota$ gives a decomposition
\[D=\bigoplus_{i\in\ZZ/f_E\ZZ} D_i,\quad D_i\coloneqq\{d\in D:\iota(a)d=\sigma^i(a)d\textup{ for all $a\in E_0$}\}\]
which satisfies $\phi_p(D_i)=D_{i+1}$.
Then $D_0$ is a vector space over $E$, and the unramified condition on $K/E$ makes it a vector space over $K$ of dimension $m$.
The resulting object $(D_0,\phi_p^{f_E})$ is called the relative Dieudonn\'{e} module \cite[Section 4.3.1]{Fargues2018Courbes}.
We claim that $(D_0,\phi_p^{f_E})$ is simple.
If $W\subset D_0$ is a nonzero $K$-subspace stable under $\phi_p^{f_E}$ then
\[\bigoplus_{j=0}^{f_E-1}\phi_p^j(W)\subset D\]
is a nonzero $\phi_p$-stable $K_0$-subspace.
Simplicity of $D$ forces the subspace to be the entire $D$, so we have $W=D_0$, proving simplicity of $(D_0,\phi_p^{f_E})$.
Moreover, $(D_0,\phi_p^{f_E})$ is canonically identified with $(D_E,\phi_E)$, showing simplicity of $D_E$.
Therefore, $\lambda=[\ell_H],\phi_E(\lambda),\cdots,\phi_E^{m-1}(\lambda)$ form a $K$-basis of $D_E$.

On the other hand, $V(H)$ is a crystalline representation by Proposition \ref{prop:onedim_crystalline}.
The crystalline comparison now gives a canonical isomorphism
\[D_K\otimes_K\BcrisK\isomto\Hom_{\Qp}(V(H),\BcrisK).\]
By functoriality in formal groups, it restricts to an isomorphism
\[D_E\otimes_K\BcrisK\isomto\Hom_{E}(V(H),\BcrisK).\]
Under the isomorphism, we have
\[\langle \phi_E^k\lambda,\alpha_i\rangle=\varphi_E^k\langle\lambda,\alpha_i\rangle=\varphi_E^k(\ell_H(\widetilde{\tau}_H(\alpha_i)))\]
by Proposition \ref{prop:relation_to_padic_integration}.
Choosing bases $\lambda=[\ell_H],\phi_E(\lambda),\cdots,\phi_E^{m-1}(\lambda)\in D_E$ and $\alpha_1,\cdots,\alpha_m\in V(H)$ now gives the desired invertible matrix by Proposition \ref{prop:relation_to_padic_integration}.
\end{proof}
\begin{corollary}\label{cor:period_matrix}
Let $K/E$ be a finite unramified extension, and let $H$ be a one-dimensional formal $\cO_E$-module of finite height over $\cO_K$.
Fix an $E_H$-basis $\alpha_1,\cdots,\alpha_n$ of $V(H)$, write $\tau_i=\widetilde{\tau}_H(\alpha_i)$, and set $\ell=\ell_H$.
Then
\[M=(\varphi_{E_H}^{k}(\ell(\tau_i)))_{0\le k\le n-1,1\le i\le n}\in\GL_n(\BcrisK).\]
\end{corollary}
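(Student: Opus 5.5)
The plan is to deduce the corollary from Lemma \ref{lem:full_period_matrix} by the same argument, run with $E_H$ in place of $E$. The first step is to enlarge the base field. Put $K'=KE_H$; it is unramified over $K$ by Proposition \ref{prop:absolute_end_finite}, hence over $E$. Over $\cO_{K'}$, the group $H$ becomes a formal $\cO_{E_H}$-module, since $\Lambda_H=\cO_{E_H}$ by Proposition \ref{prop:end_int_closed}. Moreover, $K'/E_H$ is unramified: both $K'/E$ and $E_H/E$ are unramified, so $K'/E_H$ is too.

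We then apply Lemma \ref{lem:full_period_matrix} to the pair $(K',E_H)$ with the formal $\cO_{E_H}$-module $H$ over $\cO_{K'}$. Here $V(H)$ has $E_H$-dimension $m=n$, and we use the basis $\alpha_1,\dots,\alpha_n$. Three points need checking.
\begin{enumerate}
\item The exponential period map is unchanged, because $\AinfK=\mathbb{A}_{\textup{inf},K'}$ (the Witt vectors over $\cO_K$ and over $\cO_{K'}$ of $\OCb$ agree after the unramified identification $W_{\cO_{K'}}(\OCb)\cong W_{\cO_K}(\OCb)\otimes_{\cO_K}\cO_{K'}$, and $\cO_{K'}\subset\AinfK$ via the Teichm\"uller lifts of $\kappa_{K'}$). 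The rigidity description of $\widetilde{\tau}_H$ therefore gives the same $\tau_i$.
\item The normalized logarithm $\ell_H$ is the same.
\item The relevant Frobenius is $\varphi_{E_H}=\varphi_E^{[\kappa_{E_H}:\kappa_E]}$, which is the Witt Frobenius lifting $q_{E_H}$-powers.
\end{enumerate}
The lemma then shows that the matrix is invertible over $\mathbb{B}_{\textup{cris},K'}=\BcrisK\otimes_K K'$.

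The remaining step is descent from $\mathbb{B}_{\textup{cris},K'}$ to $\BcrisK$. The entries of $M$ already lie in $\BcrisK$, since $\ell(\tau_i)\in\BcrisK^+$ and $\varphi_E$ preserves $\BcrisK$. The determinant $\det M\in\BcrisK$ is a unit in $\BcrisK\otimes_KK'$, a finite free extension, so taking norms shows it is a unit in $\BcrisK$. In fact $K'\subset\BcrisK$ already, because $K'/K$ is unramified and so $K'\subset W_{\cO_K}(\overline{\FF}_p)[1/\pi_K]\subset\BcrisK$. Hence the two rings coincide, or at least the inverse matrix has entries in $\BcrisK$.

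The main obstacle is verifying the hypotheses of Lemma \ref{lem:full_period_matrix} after base change, and in particular simplicity of the relative Dieudonn\'e module $D_{E_H}$ as an $n$-dimensional $K'$-space with $\phi_{E_H}$. The lemma's proof derives this from simplicity of $D$, via $D=\bigoplus_i D_i$ indexed by embeddings of the maximal unramified subfield of $E_H$. This decomposition applies verbatim once $E_H$ acts on $D\otimes_{K_0}K'_0$ through $\iota$. Since $\iota$ is defined over $K'$ rather than over $K$, I would redo that eigenspace argument over $K'_0$, using that $D\otimes_{K_0}K'_0$ remains isoclinic of slope $1/h_0$ and hence has no nonzero proper sub-isocrystal of smaller rank compatible with the $E_H$-action.
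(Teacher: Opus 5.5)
Your proposal is correct and follows the paper's proof: apply Lemma \ref{lem:full_period_matrix} with $K$ replaced by $K'=KE_H$ and $E$ by $E_H$, then use the canonical identification $\mathbb{B}_{\cris,K'}\cong\BcrisK$ coming from $K'/K$ unramified. Two side remarks need tightening: since $\cO_{K'}$ already lies in $\AinfK$, you have $\mathbb{A}_{\textup{inf},K'}=\AinfK$ and $\mathbb{B}_{\cris,K'}=\BcrisK$ outright, whereas the tensor products $\AinfK\otimes_{\cO_K}\cO_{K'}$ and $\BcrisK\otimes_KK'$ you wrote are products of $[K':K]$ copies, so the norm/descent step is superfluous; and nothing needs re-proving about simplicity, because the lemma is stated for an arbitrary finite unramified pair and the Dieudonn\'e module of a one-dimensional formal group of height $h_0$ is isoclinic of slope $1/h_0$, hence simple, over any base.
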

\begin{proof}
Apply Lemma \ref{lem:full_period_matrix} after replacing $K$ by $K'=KE_H$ and $E$ by $E_H$.
Since $K'/K$ is unramified, there is a canonical identification $\mathbb{B}_{\cris,K}\cong\mathbb{B}_{\cris,K'}$.
\end{proof}

Write $\breve{K}=\breve{E}$ for the completion of the common maximal unramified extension, and let $\sigma_E$ be its continuous automorphism lifting the $q_E$-power Frobenius on the residue field and fixing $E$.
\begin{theorem}\label{thm:injectivity_ainf}
Let $K/E$ be finite unramified.
Let $H$ be a one-dimensional formal $\cO_E$-module of finite height over $\cO_K$.
Let $h=\mathrm{ht}_{\cO_E}(H)$ and $n=\dim_{E_H}V(H)$, and choose an $E_H$-basis $\alpha_1,\cdots,\alpha_n$ of $V(H)$.
Write $\tau_i=\widetilde{\tau}_H(\alpha_i)$.
The continuous $\cO_{\breve{K}}$-algebra homomorphism
\[\xi_E:\cO_{\breve{K}}\llbracket t_{k,i}:0\le k\le h-1,1\le i\le n\rrbracket\to\AinfK,\quad t_{k,i}\mapsto \varphi_E^k(\widetilde{\tau}_H(\alpha_i))\]
is injective.
\end{theorem}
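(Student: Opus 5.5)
We will argue by contradiction: take a nonzero $g\in\ker\xi_E$ and linearize the inertia action, exactly as in the proof of Theorem \ref{thm:bH_injective}, but now using Hasse-type derivatives in characteristic zero and the crystalline period matrix of Corollary \ref{cor:period_matrix} to see the "derivative directions". First, $\ker\xi_E$ is a closed ideal $\frp$. Since $\AinfK$ is $\pi_E$-torsion free and the source is $\pi_E$-adically separated, we may divide out by $\pi_E$ and assume $g\notin\pi_E\cO_{\breve K}\llbracket t\rrbracket$. Note also that $\xi_E$ is equivariant for the inertia group $I_K$ acting on the variables through the formal $\Lambda_H$-module law: for $\gamma\in I_K$ with $\rho(\gamma)=A\in\GL_n(\Lambda_H)$, $\gamma$ sends $t_{k,i}$ to the $\sigma^k$-twisted series $F_{A,i}^{\sigma^k}(t_{k,1},\dots,t_{k,n})$, and fixes $\cO_{\breve K}$. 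It therefore preserves $\frp$, because $\gamma$ commutes with $\varphi_E$ and fixes $\breve K\cap\AinfK$-coefficients.

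The key step is to pass to the Lie algebra. By Theorem \ref{thm:openimage}, for $r\gg0$ and every $X\in M_n(\Lambda_H)$, there is $\gamma$ with $\rho(\gamma)=\exp(p^rX)$ (or $\Id+p^rX$). We work in $\BcrisK^+$ (or a suitable completion $\BdR^+$ where all series converge) and apply the logarithm $\ell=\ell_H$ coordinatewise. Set $u_{k,i}=\varphi_E^k(\ell(\tau_i))=\ell^{\sigma^k}(\varphi_E^k\tau_i)$. The action becomes linear on these: $\gamma(u_{k,i})=\sum_j \sigma^k(a_{ij})u_{k,j}$, where $a_{ij}$ are the entries of $\rho(\gamma)$. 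Rewrite $g(t)=0$ as a relation $G(u)=0$ with $G=g\circ(\ell^{\sigma^k})^{-1}$, which is a power series over $\breve K$ with bounded-growth denominators converging on the relevant periods. Differentiating $G(\exp(p^rX)\cdot u)=0$ in $r$, or more precisely using Taylor expansion and letting $r\to\infty$ as in the proof of Theorem \ref{thm:bH_injective}, gives for every $X$ the vanishing of the derivation
\[\sum_{k,i,j}\sigma^k(x_{ij})\,u_{k,j}\,\frac{\partial G}{\partial u_{k,i}}(u)=0.\]
Since $\Lambda_H\supset\cO_E$ and $\sigma$ runs through the $b=[E_H:E]$ embeddings, linear independence of characters lets us separate the sum according to the classes of $k$ modulo $b\cdot$(the order of $\sigma$ on $E_H$). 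The relevant grouping is $k=k_0+bj$, matching $\varphi_{E_H}=\varphi_E^{b}$, with $h=nb$.

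We expect the main obstacle to lie in this next step. We obtain, for each fixed residue class, the matrix identity $M^{T}\cdot(\partial G/\partial u)=0$, where $M$ is the matrix $(\varphi_{E_H}^{j}(u_{k_0,i}))$. For $k_0=0$ this is exactly the matrix of Corollary \ref{cor:period_matrix}, and for general $k_0$ it is its $\varphi_E^{k_0}$-twist. It is invertible in $\BcrisK$, hence in $\BdR$, so all first partials $\partial G/\partial u_{k,i}$ lie in $\frp$-image, i.e. vanish at $u$. Making this rigorous requires care. The limit argument has to produce an honest identity rather than only a valuation bound, so we will work $\theta$-adically in $\BdR^+$ and use that $\exp(p^rX)-\Id$ is small. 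We also have to check that the derivation identity transfers back to the $t$-variables, which holds because $d\ell/dt$ is a unit.

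Iterating, all partial derivatives of all orders of $g$ lie in $\frp$. Evaluating at the augmentation, using that $\xi_E$ followed by $\theta_K$ composed with reduction kills all $t_{k,i}$ up to a nilpotent ideal, shows that all Taylor coefficients of $g$ vanish, so $g=0$. In characteristic zero, ordinary derivatives after inverting $p$ suffice, and the $p$-divisibility from the first step is harmless.
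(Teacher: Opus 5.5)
Your architecture matches the paper's proof: inertia elements with $\rho(\gamma)=\Id+p^r\lambda E_{ji}$, linearization through $\ell_H$ with a unit chain-rule factor, Artin's lemma over $\Gal(E_H/E)$ grouping $k$ modulo $b$, invertibility of $\varphi_E^s$ of the period matrix of Corollary \ref{cor:period_matrix}, derivative-stability of $\ker\xi_E$, and Hasse derivatives plus $\pi_K$-divisibility of constant terms. The gap is in the analytic step you flag yourself. Both devices you propose for it fail.

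\textbf{The rewriting $G=g\circ(\ell^{\sigma^k})^{-1}$ is not available.} The exponential of $H$ inverts $\ell$ only near the origin, and the relevant periods are not there.
\begin{itemize}
\item If $\alpha_i\in V(H)\setminus T(H)$, then $\theta_K(\tau_i)$ is a nonzero torsion point. So $\exp_H(\ell(\tau_i))$, which does converge in $\BdR^+$ because $\ell(\tau_i)\in\Fil^1$, has $\theta$-image $0$ and is not $\tau_i$.
\item For $k\ge1$ and $h\ge2$, $\varphi_E^k(\ell(\tau_i))$ is in general not in $\Fil^1\BdR^+$, because $\phi_E^k\lambda\notin\Fil^1D_E=K\lambda$. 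So there is no $\Fil$-adic convergence for these variables, and $\exp$ need not converge at their $\theta$-images.
\end{itemize}

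\textbf{Working $\theta$-adically in $\BdR^+$ cannot produce the limit identity.}
\begin{itemize}
\item $p^r$ is a unit in $\BdR^+$ and does not tend to $0$ in the $\ker\theta$-adic topology.
\item $\varphi_E$ does not act on $\BdR^+$, yet you must control $\gamma\cdot\varphi_E^k(\tau_i)-\varphi_E^k(\tau_i)=\varphi_E^k(\gamma\tau_i-\tau_i)$.
\item Working $p$-adically in $\AinfK$ does not help either, since $[p^r]_H(\tau_j)$ is not $p$-adically small there.
\end{itemize}

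\textbf{The missing ingredient.} You need a $p$-torsion-free, $p$-adically separated ring containing $\AinfK$ and stable under $\varphi_E$, in which the periods are $p$-adically topologically nilpotent. Your mention of $\BcrisK^+$ points this way, but the quantitative lattice estimates are the real content. The paper uses $\AcrisK$:
\begin{itemize}
\item From $\xi^p\in p\Acris$ and $v_{\Cb}(\overline{\tau})\ge 1/(ep^{Nh_0})$ one gets $\tau^{N_1}\in p\AcrisK$ (Lemma \ref{lem:tau_acrisk}).
\item Hence $[p^r\lambda]_H(\tau_j)\in p^{r-s_1+1}\AcrisK$ and $[p^r\lambda]_H(\tau_j)-p^r\lambda\,\ell(\tau_j)\in p^{2r-2s_1+1}\AcrisK$ (Lemma \ref{lem:p^r_calculation}).
\item Since $\varphi_E$ preserves every $p^m\AcrisK$, these bounds survive applying $\varphi_E^k$.
\item One then Taylor-expands $g$ itself in the $t$-variables, with no exponential. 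The chain-rule factor appears as the unit $a_{k,i}=\varphi_E^k(\partial_YH(\tau_i,0))$ (Lemma \ref{lem:linearization}).
\item Dividing by $p^r$ and using $p$-adic separatedness of $\AcrisK$ gives the exact identity $\sum_k D_{k,i}\,\sigma_E^k(\lambda)\,\varphi_E^k(\ell(\tau_j))\,a_{k,i}=0$.
\end{itemize}
With this replacing your $\BdR^+$ step, the rest of your argument goes through as in the paper.
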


Write $h$ for the relative $\cO_E$-height of $H$, and $n=h/[E_H:E]$.
By Theorem \ref{thm:openimage}, the dimension of $\Gamma$ as an $E$-analytic Lie group is $n^2[E_H:E]=nh$.
Consequently, Theorem \ref{thm:injectivity_ainf} yields an equality
\[\dim(\mathrm{im}(\mathrm{\xi_E}))=nh+1=\dim_E(\Gamma)+1\]
relating the Krull dimension of the period subalgebra of $\AinfK$ to the dimension of $\Gamma$ as an $E$-analytic Lie group.

\begin{remark}
When $K/\Qp$ is unramified of degree $f$ and $H$ is a Lubin-Tate formal group, Theorem \ref{thm:injectivity_ainf} can be deduced from \cite[Corollary 3.7]{Berger2013Multivariable}.
In this case, the period homomorphism gives an injective homomorphism
\[\cO_{\breve{K}}\llbracket t_{k,1}:0\le k\le f-1\rrbracket\to\AinfK,\quad t_{k,1}\mapsto\varphi_{\Qp}^k(\tau).\]
The proof below adapts Berger's use of Artin's independence lemma \cite[Proposition 2.4]{Berger2013Multivariable} to handle the case that $\Lambda_H$ is strictly bigger than $\cO_E$.
\end{remark}

\begin{corollary}\label{cor:model_minimal}
Let $H$ be a one-dimensional formal $\cO_K$-module of finite height over $\cO_K$.
Then the period homomorphism
\[\xi_H:\cA_H\to\AinfK\]
is injective.
\end{corollary}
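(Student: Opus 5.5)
The plan is to deduce Corollary \ref{cor:model_minimal} from Theorem \ref{thm:injectivity_ainf} applied with $E=K$. Throughout, $f=[K:E]=1$, $\varphi_E=\varphi_K$, and $\cA_H\cong\cO_{K'}\llbracket t_{k,i}:0\le k\le d-1,1\le i\le n\rrbracket$ for a chosen $\Lambda_H$-basis $\alpha_1,\cdots,\alpha_n$ of $T(H)$. In these coordinates $\xi_H$ sends $t_{k,i}\mapsto\varphi_K^k(\tau_H(\alpha_i))$.

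The first step is to compare the number of variables. Here $d=\deg P$, where $P$ is the minimal polynomial of $\xi_{q_K}$ over $K$. By the Remark following Proposition \ref{prop:end_int_closed}, this polynomial is the degree-$h$ Eisenstein polynomial attached to $H$, so $d=h=\mathrm{ht}_{\cO_K}(H)$. If one prefers not to rely on that Remark, it is enough to know $d\le h$, which was shown in Section \ref{subsec:descr_image}, since $d$ divides the relative height. With either input, the variables $t_{k,i}$ of $\cA_H$, which have $0\le k\le d-1$, form a subset of the variables $t_{k,i}$, $0\le k\le h-1$, appearing in Theorem \ref{thm:injectivity_ainf}. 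A $\Lambda_H$-basis of $T(H)$ is also an $E_H$-basis of $V(H)$, so the theorem applies to this choice of $\alpha_i$.

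The second step is to compare coefficient rings. Because $K'=KE_H$ is unramified over $K$, we have $\cO_{K'}\subset\cO_{\breve K}$, and the structure map $\cO_{K'}\to\AinfK$ is the canonical embedding $\cO_{K'}\subset W_{\cO_K}(\overline{\kappa}_K)\subset\AinfK$. This embedding is compatible with the map $\cO_{\breve K}\to\AinfK$ used in Theorem \ref{thm:injectivity_ainf}. Consequently $\xi_H$ is the composite of
\[\cO_{K'}\llbracket t_{k,i}:k<d\rrbracket\hookrightarrow\cO_{\breve K}\llbracket t_{k,i}:k<h\rrbracket\xrightarrow{\xi_E}\AinfK.\]
The first map is injective, since $\cO_{K'}\to\cO_{\breve K}$ is injective and we are only adding variables. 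The second map is injective by Theorem \ref{thm:injectivity_ainf}.

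The only point that needs care is checking that the $\cO_{K'}$-algebra structure on $\cA_H$ matches the one induced from $\cO_{\breve K}$, that is, that the two embeddings $\cO_{K'}\to\AinfK$ agree. This holds because both are the unique $\cO_K$-algebra lift of the residue field embedding $\kappa_{K'}\subset\OCb$, and unramified extensions have unique such lifts. Beyond that, the corollary is a formal consequence of the theorem.
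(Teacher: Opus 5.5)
Your proposal is correct and matches the paper's proof: the paper also takes $E=K$, cites Cox's result that $\deg P=\mathrm{ht}_{\cO_K}(H)$ to get $d=h$, and views $\xi_H$ as the restriction of $\xi_K$ from Theorem \ref{thm:injectivity_ainf} to the subring with coefficients in $\cO_{K'}$. Your remark that $d\mid h$ (hence $d\le h$) already suffices is a harmless slight strengthening. It works because the variables of $\cA_H$ are then a subset of those in the theorem, so Cox's theorem is not needed.
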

Therefore, when $E=K$, the abstract model $\cA_H$ identifies with its image in the period ring $\AinfK$, and the canonical Frobenius $\varphi_K$ on $\AinfK$ restricts to the endomorphism $\varphi_K$ on $\cA_H$.
\begin{proof}
For a one-dimensional formal $\cO_K$-module over $\cO_K$, the minimal polynomial $P(X)$ of $\xi_{q_K}$ over $K$ has degree $\mathrm{ht}_{\cO_K}(H)$ \cite[Theorem 3.6.1]{Cox1974Formal}, so $d=h$.
Therefore, the period homomorphism $\xi_H$ is the restriction of $\xi_K$ to the power series subring with coefficients in $\cO_{K'}$, so it is injective by Theorem \ref{thm:injectivity_ainf}.
\end{proof}

For the remainder of the subsection, we prove Theorem \ref{thm:injectivity_ainf}.
The proof has three steps.
We first linearize sufficiently small inertia actions in a fixed $p$-adically complete crystalline lattice.
Artin independence and the crystalline comparison matrix then show that the kernel of evaluation is stable under every partial derivative.
Finally, Hasse derivatives and reduction modulo $\pi_K$ force every coefficient of a relation to vanish.

The actions of $\Gal_K$ and $\varphi_E$ on $\AinfK$ commute.
Since $K/E$ is unramified, the natural identification $\AinfK\cong W_{\cO_E}(\OCb)$ identifies $\varphi_E$ with the Witt lift of $x\mapsto x^{q_E}$.
This commutes with $\Gal_K$ by functoriality.
The restriction of $\varphi_E$ on $\cO_K$ acts as $\sigma_E\vert_K$.

For $1\le a\le n$ and $0\le k\le h-1$, put $\tau_a=\widetilde{\tau}_H(\alpha_a)$ and $\tau_{k,a}=\varphi_E^k(\tau_a)$.
Fix indices $1\le i,j\le n$.
By Theorem \ref{thm:openimage}, there is an integer $N_0$ which depends on the formal group $H$ and the basis $\alpha_1,\cdots,\alpha_n$ of $V(H)$ such that, for every $r\ge N_0$ and $\lambda\in\Lambda_H$, there is $\gamma=\gamma_{r,\lambda,i,j}\in I_K$ satisfying 
\[\rho(\gamma)=\Id+p^r\lambda E_{ji}.\]
With the chosen basis, this means
\[\gamma\cdot \alpha_a=\begin{cases}
\alpha_i+p^r\lambda \alpha_j&a=i\\ \alpha_a&a\neq i
\end{cases}\]
We first compute the first-order change of $\tau_{k,i}$ as $r\to\infty$, using a fixed $p$-adically separated crystalline lattice.

Define a crystalline lattice by $\AcrisK=\Acris\widehat{\otimes}_{\cO_{K_0}}\cO_K$.
This ring is $p$-adically separated and complete, and is $p$-torsion free.
Set $f_E=[E_0:\Qp]$.
Relative Frobenius extends to $\AcrisK$ by $\varphi_E(a\otimes c)=\varphi_p^{f_E}(a)\otimes\sigma_E(c)$, and it preserves $\AcrisK$, fixes $p$ and so preserves every $p^r\AcrisK$.

For $x\in\frakm_{\Cb}$ and $r\ge1$ such that $v_{\Cb}(x^r)\ge p$, 
\begin{equation}\label{eq:power_acrisk}
    [x]^r\in p\AcrisK.
\end{equation}
Put $\xi=[p^\flat]-p\in\ker\theta$.
Since $\xi^p\in p\Acris$, we have $[p^\flat]^p\in p\AcrisK$.
The assertion (\ref{eq:power_acrisk}) now follows because $v_{\Cb}(p^\flat)=1$.

\begin{lemma}\label{lem:tau_acrisk}
Put $e=e(K/\Qp)$, $h_0=\mathrm{ht}(H)$, and fix $N\ge0$.
Set 
\[N_1=p^{Nh_0+1+\lceil \log_p(e)\rceil}.\]
Let $\alpha\in p^{-N}T(H)$ and $\tau=\widetilde{\tau}_H(\alpha)$.
Then
\begin{equation}
    \tau^{r}\in p\AcrisK
\end{equation}
for every $r\ge N_1$.
\end{lemma}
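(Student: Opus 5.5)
The plan is to write $\tau$ as a Teichm\"uller lift of its reduction plus a $\pi_K$-multiple, and to control each piece separately. Since $\AinfK=W_{\cO_K}(\OCb)$, we can write
\[\tau=[\overline{\tau}]+\pi_K y,\qquad \overline{\tau}=\overline{\tau}_H(\alpha)\in\frakm_{\Cb},\ y\in\AinfK .\]
We view $\AinfK\subset\AcrisK$ and expand
\[\tau^r=\sum_{j=0}^{r}\binom{r}{j}[\overline{\tau}]^{\,r-j}\pi_K^j y^j .\]
For $j\ge e$, we have $\pi_K^e\in p\cO_K^\times$, so these terms already lie in $p\AcrisK$. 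For $j<e$, we will use (\ref{eq:power_acrisk}). It gives $[\overline{\tau}]^{r-j}\in p\AcrisK$ as soon as $(r-j)\,v_{\Cb}(\overline{\tau})\ge 1$, normalizing $v_{\Cb}(p^\flat)=1$. So everything reduces to a lower bound on $v_{\Cb}(\overline{\tau})$.

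Step 1 is to bound the valuation of reduced periods of honest Tate module elements. Let $\beta=p^N\alpha\in T(H)$. Under the isomorphism of Proposition \ref{prop:explicit_isom_tilG_to_G}, the zeroth tilt coordinate $\beta^{\flat,(0)}\in\cO_C/(\pi_K)$ is the reduction of the zeroth universal-cover coordinate $\beta_0=0$. Hence $v_{\Cb}(\beta^\flat)\ge v(\pi_K)=1/e$. If $\beta=0$ then $\tau$ is torsion-free zero and there is nothing to prove, so we may assume $\beta\neq0$.

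Step 2 is to divide by $p^N$ inside the formal group. By linearity of $\overline{\tau}_H$, we have $[p^N]_G(\overline{\tau})=\beta^\flat$. The series $[p^N]_G(t)$ has order $p^{Nh_0}$ with a leading coefficient of valuation zero. Exactly as in the proof of Theorem \ref{thm:bH_injective}, this gives $v(\beta^\flat)=p^{Nh_0}v(\overline{\tau})$, and therefore
\[v_{\Cb}(\overline{\tau})\ge \frac{1}{e\,p^{Nh_0}} .\]
Consequently $[\overline{\tau}]^{s}\in p\AcrisK$ whenever $s\ge e\,p^{Nh_0}$.

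Step 3 assembles the pieces. Since $p^{\lceil\log_p e\rceil}\ge e$, we get $N_1\ge e\,p^{Nh_0+1}$. So for $r\ge N_1$ and $j<e$ we have
\[r-j\ \ge\ e\,p^{Nh_0+1}-e\ \ge\ e\,p^{Nh_0},\]
and every term of the expansion lies in $p\AcrisK$. The binomial coefficients are integers and $\AcrisK$ is a ring, which concludes the argument.

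The main obstacle is Step 1: correctly identifying the tilt coordinate of $\beta^\flat$ with the reduction of $\beta_0$, and thus extracting the bound $v\ge 1/e$ from Proposition \ref{prop:explicit_isom_tilG_to_G}. The remaining work is bookkeeping of the exponents. That bookkeeping is where the extra factor $p$ in $N_1$ provides the slack needed to absorb the shift by $j<e$.
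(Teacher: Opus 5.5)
Your Steps 1 and 2 are correct. The decomposition $\tau=[\overline{\tau}]+\pi_K y$ and the bound $v_{\Cb}(\overline{\tau})\ge 1/(ep^{Nh_0})$ are exactly what the paper uses. Deriving $v_{\Cb}(\beta^\flat)\ge 1/e$ from Proposition \ref{prop:explicit_isom_tilG_to_G} is an equivalent substitute for the paper's use of $\theta_K(\tau_H(\beta))=0$.

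The gap is in how you apply (\ref{eq:power_acrisk}). That inclusion requires $v_{\Cb}(x^s)\ge p$, not $v_{\Cb}(x^s)\ge 1$, and the weaker threshold is false. Indeed, $\AcrisK/\pi_K\AcrisK\cong\Acris/p\Acris$, and the image of $\OCb$ in $\Acris/p\Acris$ is $\OCb/(p^\flat)^p$. So a Teichm\"uller element $[x]$ lies in $\pi_K\AcrisK$ (a fortiori in $p\AcrisK$) only if $v_{\Cb}(x)\ge p$. For example, $[p^\flat]\notin p\AcrisK$; only $[p^\flat]^p$ is. This also shows the factor $\pi_K^j$ with $j<e$ cannot lower the threshold.

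With the correct threshold, your estimate gives $[\overline{\tau}]^s\in p\AcrisK$ only for $s\ge ep^{Nh_0+1}$, not for $s\ge ep^{Nh_0}$ as you claim. The extra factor $p$ in $N_1$ is therefore used up, and nothing is left to absorb the shift by $j<e$. Concretely, take $e=p$ and $N=0$. Then $N_1=p^2=ep^{Nh_0+1}$. For $r=N_1$ and $j=1$ you only know $v_{\Cb}(\overline{\tau}^{\,r-1})\ge (p^2-1)/p<p$, so your argument does not place $\binom{r}{1}[\overline{\tau}]^{r-1}\pi_K y$ in $p\AcrisK$. Your expansion does go through verbatim for $r\ge N_1+e-1$, but that proves the lemma only with a worse constant.

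The missing idea is to use the binomial coefficients rather than discard them. Since $N_1$ is a power of $p$, we have $p\mid\binom{N_1}{j}$ for $0<j<N_1$. Hence $\tau^{N_1}\equiv[\overline{\tau}]^{N_1}+\pi_K^{N_1}y^{N_1}$ modulo $p\AinfK$.

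The first term lies in $p\AcrisK$ by (\ref{eq:power_acrisk}), because $N_1v_{\Cb}(\overline{\tau})\ge ep^{Nh_0+1}/(ep^{Nh_0})=p$. The second term lies in $p\AcrisK$ because $N_1\ge e$ gives $\pi_K^{N_1}\in p\cO_K$. Then $\tau^r=\tau^{N_1}\tau^{r-N_1}\in p\AcrisK$ for every $r\ge N_1$. This is precisely the paper's proof.
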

\begin{proof}
Let $\tau=\widetilde{\tau}_H(\alpha)$, and let $\overline{\tau}$ be its reduction modulo $\pi_K$.
Then we can write $\tau=[\overline{\tau}]+\pi_Kz$ with $z\in\AinfK$.
If $\alpha\in T(H)$, then we have $\theta_K(\tau)=0$, so $\overline{\tau}^\#\in \pi_K\cO_C$, showing that 
\[v_{\Cb}(\overline{\tau})=v_C(\overline{\tau}^\#)\ge v_C(\pi_K)=\frac{1}{e}.\]
Since $p^N\alpha\in T(H)$, the reduced exponential period of $p^N\alpha$ has valuation at least $1/e$.
The reduction of $[p^N]_H$ has order $p^{Nh_0}$ and nonzero leading coefficient in the residue field.
Therefore,
\[v_{\Cb}([p^N]_H(\overline{\tau}))=p^{Nh_0}v_{\Cb}(\overline{\tau})\ge\frac{1}{e}\]
and consequently $v_{\Cb}(\overline{\tau})\ge1/(ep^{Nh_0})$.
For $i\ge1$, we have
\[\tau^{p^i}-[\overline{\tau}]^{p^i}-\pi_K^{p^i}z^{p^i}\in p\AcrisK.\]
By (\ref{eq:power_acrisk}), the term $[\overline{\tau}]^{p^i}$ belongs to $p\AcrisK$ if $p^iv_{\Cb}(\overline{\tau})\ge p$, and the other term $\pi_K^{p^i}z^{p^i}$ belongs to $p\AcrisK$ if $p^i\ge e$.
These conditions are ensured by $p^i\ge ep^{Nh_0+1}$.
\end{proof}
The power series $[p]_H(T)$ has linear term $pT$, so its reduction modulo $p$ has order at least 2.
Therefore, for each $r\ge1$, the reduction of $[p^r]_H$ modulo $p$ has order at least $2^r$, and so
\[[p^r]_H(\tau)\in (\tau^{2^r},p)\subset\AcrisK.\]
Consequently, by putting $s_1=\lceil \log_2(N_1)\rceil$, Lemma \ref{lem:tau_acrisk} implies that, for every $r\ge s_1$,
\begin{equation}\label{eq:tau_pacris}
    [p^r]_H(\tau)\in p\AcrisK.
\end{equation}

Let $\ell=T+\sum_{m=2}^\infty b_mT^m$ denote the normalized formal logarithm of $H$.
Since the normalized invariant differential $\ell'(T)dT$ has integral coefficients, we have $mb_m\in \cO_K$ for every $m$.
Therefore, Lemma \ref{lem:tau_acrisk} implies
\[b_m\tau^m\in p^{\lfloor m/N_1\rfloor-v_p(m)}\AcrisK.\]
The exponent is uniformly bounded from below and diverges as $m\to\infty$, so $\ell(\tau)$ converges inside a fixed fractional lattice of $\AcrisK$.

\begin{lemma}\label{lem:p^r_calculation}
Set $s_1=\lceil \log_2(N_1)\rceil$.
For every $r\ge s_1$ and $\lambda\in\Lambda_H$,
\[p^r\lambda\ell(\tau)\in p^{r-s_1+1}\AcrisK,\quad [p^r\lambda]_H(\tau)-p^r\lambda\ell(\tau)\in p^{2r-2s_1+1}\AcrisK.\]
\end{lemma}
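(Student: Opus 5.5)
The plan is to shift everything by $[p^{s_1}]_H$ so that both assertions become statements about a single element of $p\AcrisK$. Then the first assertion follows from the logarithm series and the second from the formal exponential of $H$.

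\emph{Reduction.} Put $y=[p^{s_1}]_H(\tau)$; by (\ref{eq:tau_pacris}) we have $y\in p\AcrisK$. For $r\ge s_1$ write $k=r-s_1\ge0$ and $c=p^{k}\lambda$.
\begin{itemize}
\item The identity $\ell([a]_H(T))=a\,\ell(T)$ gives $p^r\lambda\ell(\tau)=c\,\ell(y)$.
\item Functoriality of the $\Lambda_H$-action gives $[p^r\lambda]_H(\tau)=[c]_H(y)$.
\end{itemize}
Both identities are identities of convergent series in $\AcrisK[1/p]$. Here $\lambda\in\Lambda_H=\cO_{E_H}\subset\cO_{K'}$, and $\cO_{K'}$ sits inside $\AcrisK$ because $K'/K$ is unramified and $\Acris$ contains $W(\Fpbar)$. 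The substitutions are legitimate because $\tau^m\to0$ $p$-adically by Lemma \ref{lem:tau_acrisk}, and the coefficients have bounded denominators. So the lemma reduces to showing
\[c\,\ell(y)\in p^{k+1}\AcrisK\quad\text{and}\quad [c]_H(y)-c\,\ell(y)\in p^{2k+1}\AcrisK\]
for $y\in p\AcrisK$.

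\emph{First assertion.} Write $\ell(y)=y+\sum_{m\ge2}b_my^m$. Since $mb_m\in\cO_K$ and $y^m\in p^m\AcrisK$, each term lies in $p^{m-v_p(m)}\AcrisK$. We have $m-v_p(m)\ge1$ for every $m\ge1$, and the exponent tends to infinity. Hence the series converges in $\AcrisK$, which is $p$-adically complete, and $\ell(y)\in p\AcrisK$. Multiplying by $c\in p^k\cO_{K'}$ gives $c\,\ell(y)\in p^{k+1}\AcrisK$.

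\emph{Second assertion.} Let $\exp_H=T+\sum_{m\ge2}e_mT^m$ be the compositional inverse of $\ell$. Since $\ell'(T)$ has integral coefficients, the standard argument via the differential equation shows $m!\,e_m\in\cO_K$. Hence
\[v_p(e_m)\ge -v_p(m!)\ge -\frac{m-1}{p-1}\ge -(m-1).\]
Set $Z=c\,\ell(y)\in p^{k+1}\AcrisK$. The formal identity $[c]_H(T)=\exp_H(c\,\ell(T))$ can be evaluated at $y$ because all series converge $p$-adically at these points. This gives
\[[c]_H(y)-Z=\sum_{m\ge2}e_mZ^m.\]
The $m$-th term lies in $p^{\,m(k+1)-(m-1)}\AcrisK=p^{\,mk+1}\AcrisK\subset p^{2k+1}\AcrisK$ for $m\ge2$. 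Its exponent grows linearly in $m$, so the series converges and the second inclusion follows.

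\emph{Main obstacle.} The delicate step is justifying these evaluations. One has to check that composing formal identities such as $\exp_H\circ(c\,\ell)=[c]_H$ remains valid after substituting elements of $p\AcrisK$ into series with non-integral coefficients. I would handle this by working in the $p$-adically complete ring $\AcrisK[1/p]$ with the lattice filtration $p^j\AcrisK$. The series involved converge uniformly on $p\AcrisK$, so the identity holds for truncations modulo each $p^j$ up to terms of arbitrarily large $p$-adic valuation, and then passes to the limit. The cruder bound $v_p(e_m)\ge-(m-1)$ is chosen deliberately so that $p=2$ needs no separate treatment.
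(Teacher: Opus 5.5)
Your first inclusion is essentially the paper's argument. For the second you take a genuinely different route through the formal exponential $\exp_H$, and that route has a hole at the boundary case $r=s_1$, i.e.\ $k=0$.

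\textbf{The gap at $k=0$.} There your exponent $mk+1$ is the constant $1$, so the bound $e_mZ^m\in p^{mk+1}\AcrisK$ gives no convergence at all. For $p\ge3$ the sharper estimate $v_p(e_m)\ge-(m-1)/(p-1)$ rescues this. For $p=2$, however, the exponential series need not converge on $2\AcrisK$. This already happens for $\Gmhat$ over $\ZZ_2$: there $e_m=1/m!$, and $v_2(2^m/m!)$ is the binary digit sum of $m$, which equals $1$ whenever $m$ is a power of $2$. Consequences:
\begin{enumerate}
\item The identity $[c]_H(y)=\exp_H(c\,\ell(y))$ is not available in this case.
\item Your closing claim that the series converge uniformly on $p\AcrisK$ is false for $\exp_H$.
\item Your remark that the crude bound lets $p=2$ go without separate treatment is backwards exactly here.
\end{enumerate}
The fix is immediate. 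For $k=0$ the assertion only says $[c]_H(y)-c\,\ell(y)\in p\AcrisK$. This follows from your first inclusion together with $[c]_H(y)\in p\AcrisK$, which holds because $[c]_H$ is integral without constant term and $y\in p\AcrisK$. For $k\ge1$ you have $Z\in p^2\AcrisK$, and your argument goes through as written.

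\textbf{Comparison with the paper.} The paper never inverts $\ell$. It notes that $z=[p^r\lambda]_H(\tau)=[p^{r-s_1}\lambda]_H([p^{s_1}]_H(\tau))$ already lies in $p^{r-s_1+1}\AcrisK$. This holds because $[p]_H$ has linear term $pT$, so it maps $p^j\AcrisK$ into $p^{j+1}\AcrisK$ for $j\ge1$, while $[\lambda]_H$ preserves each $p^j\AcrisK$. The paper then uses a single logarithm estimate: for $z\in p^j\AcrisK$ one has $\ell(z)\in p^j\AcrisK$ and $\ell(z)-z\in p^{2j-1}\AcrisK$. Combined with $\ell(z)=p^r\lambda\ell(\tau)$ and $j=r-s_1+1$, this gives both inclusions at once.

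This approach needs only convergence of $\ell$ on $p\AcrisK$, which holds for every $p$. It avoids the integrality estimate $m!e_m\in\cO_K$, the justification of $\exp_H\circ(c\,\ell)=[c]_H$ after substitution, and any case distinction at $r=s_1$ or $p=2$. You had all the ingredients: once you observe $[c]_H(y)\in p^{k+1}\AcrisK$, a slight refinement of the logarithm estimate you already used finishes the lemma. The detour through $\exp_H$ buys nothing here.
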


\begin{proof}
We begin by making a general observation about evaluating a formal logarithm.
Fix $r\ge1$.
If $z\in p^r\AcrisK$, then
\begin{equation}\label{eq:ell_estimate}
    \ell(z)-z\in p^{2r-1}\AcrisK,\quad \ell(z)\in p^r\AcrisK.
\end{equation}
In fact, for each $m\ge2$ we have
\[b_mz^m\in p^{rm-v_p(m)}\AcrisK.\]
An elementary argument shows that, for $r\ge1$ and $m\ge2$,
\[rm-v_p(m)\ge 2r-1\]
showing the first inclusion in (\ref{eq:ell_estimate}).
The second inclusion is a consequence of the first.

For every $\lambda\in\Lambda_H$, $[\lambda]_H\in\cO_{K'}\llbracket T\rrbracket$ and $\cO_{K'}\subset\AinfK\subset\AcrisK$ because $K'/K$ is unramified.
Since $[p]_H$ has linear term $pT$, for every $r\ge1$,
\[[\lambda]_H(p^r\AcrisK)\subset p^r\AcrisK,\quad [p]_H(p^r\AcrisK)\subset p^{r+1}\AcrisK.\]
For $r\ge s_1$, (\ref{eq:tau_pacris}) implies
\begin{equation}\label{eq:pnlambda_estimate}
    [p^r\lambda]_H(\tau)=[p^{r-s_1}\lambda]_H([p^{s_1}]_H(\tau))\in p^{r-s_1+1}\AcrisK.
\end{equation}
On the other hand, 
\begin{equation}\label{eq:ell_pn}
    \ell([p^r\lambda]_H(\tau))=p^r\lambda\ell(\tau).
\end{equation}

Fix $r\ge s_1$ and $\lambda\in\Lambda_H$.
By (\ref{eq:pnlambda_estimate}), we have
\[[p^r\lambda]_H(\tau)\,\,\in p^{r-s_1+1}\AcrisK.\]
The second inclusion in (\ref{eq:ell_estimate}) implies
\[\ell([p^r\lambda]_H(\tau))=p^r\lambda\ell(\tau)\,\,\in p^{r-s_1+1}\AcrisK\]
where we also used (\ref{eq:ell_pn}).
Applying the first inclusion in (\ref{eq:ell_estimate}) to $[p^r\lambda]_H(\tau)$ now gives
\[p^r\lambda\ell(\tau)-[p^r\lambda]_H(\tau)\in p^{2r-2s_1+1}\AcrisK,\]
showing the assertion.
\end{proof}

\begin{lemma}\label{lem:linearization}
For every $r\ge \max\{s_1,N_0\}$ and $\lambda\in\Lambda_H$, $\gamma=\gamma_{r,\lambda,i,j}$ satisfies
\[\gamma\cdot\tau_{k,i}- \tau_{k,i}\in p^{r-s_1+1}\AcrisK\]
and
\[\gamma\cdot\tau_{k,i}-\tau_{k,i}-p^r\sigma_E^k(\lambda)\varphi_E^k(\ell(\tau_j))a_{k,i}\in p^{2r-2s_1+1}\AcrisK.\]
Here, 
\[a_{k,i}=\varphi_E^k(a_i),\quad a_i\coloneqq \frac{\partial H}{\partial Y}(\tau_i,0).\]
Moreover, every $a_{k,i}$ is a unit of $\AinfK$.
\end{lemma}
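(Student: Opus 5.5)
We first reduce to $k=0$ and then apply $\varphi_E^k$. Since $\varphi_E$ commutes with $\Gal_K$, we have $\gamma\cdot\tau_{k,i}-\tau_{k,i}=\varphi_E^k(\gamma\cdot\tau_i-\tau_i)$. The endomorphism $\varphi_E$ preserves every $p^r\AcrisK$ and acts on $\Lambda_H\subset\cO_{K'}$ by $\sigma_E$. Hence both inclusions for general $k$ follow from the case $k=0$, with $\lambda$ replaced by $\sigma_E^k(\lambda)$ and $a_i$ by $a_{k,i}$. Likewise, $a_{k,i}$ is a unit as soon as $a_i$ is, because $\varphi_E$ is an automorphism of $\AinfK$.

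For $k=0$, we use $\Lambda_H$-linearity and equivariance of $\widetilde{\tau}_H$ (Corollary \ref{cor:tau_equivariant}). These give
\[\gamma\cdot\tau_i=\widetilde{\tau}_H(\alpha_i+p^r\lambda\alpha_j)=H\bigl(\tau_i,[p^r\lambda]_H(\tau_j)\bigr).\]
Put $z=[p^r\lambda]_H(\tau_j)$. By (\ref{eq:pnlambda_estimate}), applied to $\tau=\tau_j$ (here $\alpha_j\in p^{-N}T(H)$ for some fixed $N$, and $s_1$ is chosen for this $N$), we have $z\in p^{r-s_1+1}\AcrisK$. Expand $H(X,Y)=X+Y\,a(X)+Y^2 Q(X,Y)$, where $a(X)=\partial_YH(X,0)$ and $Q\in\cO_K\llbracket X,Y\rrbracket$. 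Substituting $X=\tau_i$ and $Y=z$ gives three facts:
\[\gamma\cdot\tau_i-\tau_i=z\,a(\tau_i)+z^2Q(\tau_i,z)\in p^{r-s_1+1}\AcrisK,\]
which is the first inclusion;
\[z^2Q\in p^{2r-2s_1+2}\AcrisK;\]
and, by Lemma \ref{lem:p^r_calculation},
\[z\,a(\tau_i)-p^r\lambda\ell(\tau_j)a(\tau_i)\in p^{2r-2s_1+1}\AcrisK.\]
Together these yield the second inclusion with $a_i=a(\tau_i)$.

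One point needs care: the evaluations $a(\tau_i)$ and $Q(\tau_i,z)$ must converge and lie in $\AcrisK$. Both $\tau_i$ and $z$ are topologically nilpotent in $\AinfK$ for the weak topology, and the series have coefficients in $\cO_K$. So $a(\tau_i)$ and $Q(\tau_i,z)$ make sense in $\AinfK\subset\AcrisK$. Multiplication by an element of $\AcrisK$ preserves $p^m\AcrisK$, which is all the estimates need.

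For the unit claim, note $a(X)=1+(\text{terms of positive degree})$, so $a(\tau_i)\in 1+\tau_i\AinfK$. Its reduction modulo $\pi_K$ is $1+\overline{\tau}_i(\cdots)$ with $v_{\Cb}(\overline{\tau}_i)>0$, which is a unit of $\OCb$. Since $\AinfK$ is $\pi_K$-adically complete, $a(\tau_i)$ is a unit.

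The main obstacle is bookkeeping the exponents. The quadratic term $z^2Q$ contributes $p^{2(r-s_1+1)}$, which is at least as good as $p^{2r-2s_1+1}$. So the binding error comes from Lemma \ref{lem:p^r_calculation}, and the exponents match exactly. We must also make sure the constant $s_1$ of Lemma \ref{lem:tau_acrisk} is chosen uniformly for $\tau_j$, by fixing $N$ with all $\alpha_a\in p^{-N}T(H)$.
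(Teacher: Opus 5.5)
Your proposal is correct and follows essentially the same route as the paper. Both arguments write $\gamma\cdot\tau_i=H(\tau_i,[p^r\lambda]_H(\tau_j))$ using equivariance and $\Lambda_H$-linearity, expand in the second variable, bound the quadratic tail via (\ref{eq:pnlambda_estimate}), and replace the linear term using Lemma \ref{lem:p^r_calculation}.

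The differences are cosmetic. You treat $k=0$ first and then apply $\varphi_E^k$. You factor the tail as $z^2Q(\tau_i,z)$, which cleanly avoids summing infinitely many terms inside $\AcrisK$. And you get the unit property from the constant term $1$ of $\partial_Y H(X,0)$ rather than from the identity $\ell'(X)\,\partial_YH(X,0)=1$.
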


\begin{proof}
Equivariance gives
\[\gamma\cdot\tau_{k,i}=\varphi_E^k(H(\tau_i,[p^r\lambda]_H(\tau_j))).\]
Therefore,
\[\gamma\cdot\tau_{k,i}- \tau_{k,i}=\varphi_E^k(a_{i})\varphi_E^k([p^r\lambda]_H(\tau_j))+\sum_{m=2}^\infty \varphi_E^k(a_i^{(m)})(\varphi_E^k([p^r\lambda]_H(\tau_j)))^m\]
where 
\[a_i^{(m)}\coloneqq \partial_Y^{[m]}H(\tau_i,0).\]
By the containment (\ref{eq:pnlambda_estimate}) in Lemma \ref{lem:p^r_calculation},
\[[p^r\lambda]_H(\tau_j)\in p^{r-s_1+1}\AcrisK.\]
Therefore,
\[\gamma\cdot\tau_{k,i}- \tau_{k,i}-\varphi_E^k(a_{i})\varphi_E^k([p^r\lambda]_H(\tau_j))\in p^{2r-2s_1+2}\AcrisK.\]
The second inclusion in Lemma \ref{lem:p^r_calculation} implies
\[\gamma\cdot\tau_{k,i}- \tau_{k,i}-\varphi_E^k(a_{i})\varphi_E^k(p^r\lambda\ell(\tau_j))\in p^{2r-2s_1+1}\AcrisK.\]
In particular, 
\[\gamma\cdot\tau_{k,i}- \tau_{k,i}\in p^{r-s_1+1}\AcrisK.\]

It remains to check that $a_{k,i}$ is a unit.
The identity $\ell(H(X,Y))=\ell(X)+\ell(Y)$ gives (see \cite[Proposition 1]{Honda1968Formal})
\[\ell'(X)\frac{\partial H}{\partial Y}(X,0)=1.\]
Therefore, evaluating the invertible power series $\frac{\partial H}{\partial Y}(X,0)$ at $\tau_i$ gives a unit $a_i\in\AinfK$.
Its $\varphi_E$-iterate $a_{k,i}$ is therefore a unit as well.
\end{proof}

\begin{proof}[Proof of Theorem \ref{thm:injectivity_ainf}]
Let $\boldsymbol{\tau}=(\tau_{k,r})_{0\le k\le h-1,1\le r\le n}$ so that $\xi_E(g)=g(\boldsymbol{\tau})$.
Let $g\in\ker(\xi_E)$, so $g(\boldsymbol{\tau})=0$.
Choose $N\ge0$ such that $p^N\alpha_i\in T(H)$ for all $1\le i\le n$.
Set $N_1=p^{Nh_0+1+\lceil \log_p(e)\rceil}$ and $s_1=\lceil \log_2(N_1)\rceil$ as in Lemma \ref{lem:tau_acrisk} and Lemma \ref{lem:p^r_calculation}.
For a sufficiently large $r$ and every $\lambda\in\Lambda_H$, choose $\gamma=\gamma_{r,\lambda,i,j}\in I_K$ as above.
Inertia fixes $\cO_{\breve{K}}$ pointwise, so equivariance and continuity of evaluation give
\[0=\gamma\cdot g(\boldsymbol{\tau})=g(\gamma\cdot\boldsymbol{\tau})\]
Put $D_{k,i}=(\partial g/\partial t_{k,i})(\boldsymbol{\tau})$.
Consider the Taylor expansion of $g(\gamma\cdot\boldsymbol{\tau})-g(\boldsymbol{\tau})$.
Every component of $\gamma\boldsymbol{\tau}-\boldsymbol{\tau}$ lies inside $p^{r-s_1+1}\AcrisK$ by Lemma \ref{lem:linearization}, so every term in Taylor expansion of degree at least two lies in $p^{2r-2s_1+2}\AcrisK$.
Therefore, we can approximate the Taylor series with linear terms; Lemma \ref{lem:linearization} gives a formula for the linearization modulo $p^{2r-2s_1+1}\AcrisK$, implying
\begin{align*}
    g(\gamma\cdot\boldsymbol{\tau})-g(\boldsymbol{\tau})-p^r\sum_{k=0}^{h-1} D_{k,i} \sigma_E^k(\lambda)\varphi_E^k(\ell(\tau_j))a_{k,i}\in p^{2r-2s_1+1}\AcrisK
\end{align*}
Since evaluation at $\boldsymbol{\tau}$ and $\gamma\cdot\boldsymbol{\tau}$ both vanish, division by $p^r$ in $\AcrisK[1/p]$ gives
\[\sum_{k=0}^{h-1}D_{k,i}\sigma_E^k(\lambda)\varphi_E^k(\ell(\tau_j))a_{k,i}\in p^{r-2s_1+1}\AcrisK.\]
The left-hand side is independent of $r$.
Since $\AcrisK$ is $p$-adically separated, letting $r\to\infty$ yields
\begin{equation}\label{eq:vanishing}
    \sum_{k=0}^{h-1}D_{k,i}\sigma_E^k(\lambda)\varphi_E^k(\ell(\tau_j))a_{k,i}=0
\end{equation}
for all $\lambda\in\Lambda_H$.
Put $b=[E_H:E]$.
The extension $E_H/E$ is unramified.
Since $h=\dim_EV(H)$ and $n=\dim_{E_H}V(H)$, we have $h=bn$.
The restrictions $\sigma_E^s\vert_{E_H}$ for $0\le s\le b-1$ are the distinct elements of $\Gal(E_H/E)$.
Grouping the terms according to $k\pmod{b}$, we obtain
\[\sum_{s=0}^{b-1}\left(\sum_{\substack{0 \le k \le h-1 \\ k \equiv s \;(\mathrm{mod}\,b)}}D_{k,i}\varphi_E^k(\ell(\tau_j))a_{k,i}\right)\sigma_E^s(\lambda)=0\]
By multiplying elements of $E_H$ by sufficiently large powers of a uniformizer of $E$, the identity extends to every $\lambda\in E_H$.
Independence of the distinct $E$-embeddings of $E_H$ (Artin's lemma) therefore gives 
\begin{equation}\label{eq:artin}
    \sum_{\substack{0 \le k \le h-1 \\ k \equiv s \;(\mathrm{mod}\,b)}}D_{k,i}\varphi_E^k(\ell(\tau_j))a_{k,i}=0
\end{equation}
for every $0\le s\le b-1$.

For $0\le s\le b-1$, we put
\[M_s=(\varphi_E^{s+rb}(\ell(\tau_j)))_{1\le j\le n,0\le r\le n-1},\quad \mathbf{v}_{s,i}=(D_{s+rb,i}a_{s+rb,i})_{0\le r\le n-1}^\intercal\]
Varying $j$ in (\ref{eq:artin}) gives a matrix equality
\[M_s\mathbf{v}_{s,i}=0.\]
By Corollary \ref{cor:period_matrix},
\[M_0=(\varphi_E^{rb}(\ell(\tau_j)))_{1\le j\le n,0\le r\le n-1}\in\GL_n(\BcrisK).\]
The endomorphism $\varphi_E$ extends to $\BcrisK$, acting on $K$ by $\sigma_E\vert_K$.
Hence, $M_s=\varphi_E^s(M_0)$ is invertible with inverse $\varphi_E^s(M_0^{-1})$.
It follows that
\[\mathbf{v}_{s,i}=0\]
for every $0\le s\le b-1$.
Therefore,
\[D_{k,i}a_{k,i}=0\]
for every $0\le k\le h-1$ and $1\le i\le n$.
Since each $a_{k,i}$ is a unit by Lemma \ref{lem:linearization},
\[D_{k,i}=0\]
for every $k$ and $i$.

We have proved that $\ker(\xi_E)$ is stable under every ordinary partial derivative $\partial_{k,i}$.
For a multi-index $I=(m_{k,i})$, let $D^{[I]}$ denote the Hasse derivative, so that 
\[\partial^I=I!D^{[I]},\quad I!=\prod_{k,i}m_{k,i}!.\]
Iterating differential stability gives
\[I!\xi_E(D^{[I]}g)=0.\]
Since $\AinfK$ is a domain, multiplication by $I!$ is injective.
Hence,
\[D^{[I]}g\in\ker(\xi_E)\]
for every $I$.
The composite
\[\cO_{\breve{K}}\llbracket t_{k,i}\rrbracket\xrightarrow{\xi_E}\AinfK\to\OCb/\frakm_{\Cb}\cong\Fpbar\]
sends every variable $t_{k,i}$ to zero, and reduces the coefficients modulo $\pi_K$.
Therefore, the constant term of every element of $\ker(\xi_E)$ belongs to $\pi_K\cO_{\breve{K}}$.
Since every $D^{[I]}g$ lies in the kernel, each coefficient $D^{[I]}g(\mathbf{0})$ is divisible by $\pi_K$.
Therefore, $g=\pi_Kg_1$ for some $g_1\in\cO_{\breve{K}}\llbracket t_{k,i}\rrbracket$.
Since $\AinfK$ is $\pi_K$-torsion free, $\xi_E(g_1)=0$.
Repeating this argument gives $g\in\pi_K^m\cO_{\breve{K}}\llbracket t_{k,i}\rrbracket$ for every $m$.
The coefficient ring $\cO_{\breve{K}}$ is $\pi_K$-adically separated, so $g=0$.
This proves the theorem.
\end{proof}

\newpage
\section{Potentially Lubin-Tate $(\varphi,\Gamma)$-modules}\label{sec:pot_lubintate}
Let $L/K$ be a finite unramified extension of degree $f$ and fix a uniformizer $\pi_K$ of $K$.
We study the formal $\cO_K$-module $H$ constructed below, whose base change to $\cO_L$ is a Lubin-Tate formal $\cO_L$-module for $\pi_K$.
Its torsion extension $K_\infty=K(H[p^\infty](\overline{K}))$ has Galois group $\Gamma=\Gal(K_\infty/K)\cong\cO_L^\times\rtimes\Gal(L/K)$.
The construction of Section \ref{sec:phigamma} gives a multivariable theory for $\cO_K$-representations of $\Gal_K$, while a one-variable base ring describes semilinear $\cO_L$-representations.
We compare these theories at the end of the section.
The comparison has two parts.
For the common Frobenius $\varphi_L=\varphi_K^f$, extension of scalars from the one-variable ring $\cC_H$ to the multivariable ring $R_H$ is an equivalence.
Passing from $\varphi_K$ to $\varphi_L$, however, corresponds to extending representation coefficients from $\cO_K$ to $\cO_L$, with its natural semilinear Galois action.

\subsection{Potentially Lubin-Tate formal groups}
Let $K/\Qp$ be a finite extension, and put $q=\#\kappa_K$.
We recall the Lubin-Tate construction \cite{Lubin1965Formal} before explaining its descent along an unramified extension.
Fix a uniformizer $\pi_K$ of $K$, and put
\[\cF_{\pi_K}=\{P(T)\in \cO_K\llbracket T\rrbracket_0: P'(0)=\pi_K,\,\, P(T)\equiv T^q\pmod{\pi_K}\}.\]
\begin{lemma}[{\cite[Lemma 1]{Lubin1965Formal}}]\label{lem:lubintate}
Let $P(T),Q(T)\in \cF_{\pi_K}$, and let $a_1,\cdots,a_n\in \cO_K$.
There is a unique series $F(X_1,\cdots,X_n)\in \cO_K\llbracket X_1,\cdots,X_n\rrbracket$ satisfying the following two conditions.
\begin{enumerate}
    \item $F(X_1,\cdots,X_n)\equiv a_1X_1+\cdots+a_nX_n\pmod{\deg 2}$
    \item $P(F(X_1,\cdots,X_n))=F(Q(X_1),\cdots,Q(X_n))$
\end{enumerate}
\end{lemma}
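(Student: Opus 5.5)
We construct $F$ by successive approximation in total degree, as in Lubin--Tate. Write $F=\sum_{m\ge1}F_m$, where $F_m$ is homogeneous of degree $m$, and set $F_1=a_1X_1+\cdots+a_nX_n$. Write $F_{\le m}=F_1+\cdots+F_m$. We will show by induction on $m$ that there is a unique $F_{\le m}$, extending the previously constructed one, with
\[P(F_{\le m}(X))\equiv F_{\le m}(Q(X_1),\cdots,Q(X_n))\pmod{\deg m+1}.\]
Passing to the limit then gives existence and uniqueness. Condition (i) fixes $F_1$. Degree-one agreement holds because $P'(0)=Q'(0)=\pi_K$, so both sides have linear term $\pi_K F_1$.

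For the inductive step, suppose $F_{\le m}$ has been constructed, and look for $F_{\le m+1}=F_{\le m}+F_{m+1}$. Since $P$ has linear term $\pi_K T$ and $Q(X_i)=\pi_K X_i+(\text{higher})$, adding a homogeneous term $F_{m+1}$ changes the left side by $\pi_K F_{m+1}$ and the right side by $\pi_K^{m+1}F_{m+1}$, in each case modulo degree $m+2$. Let $E_{m+1}$ denote the degree $m+1$ part of $P(F_{\le m})-F_{\le m}(Q(X))$. The condition at degree $m+1$ then reads
\[\pi_K(1-\pi_K^{m})F_{m+1}=-E_{m+1}.\]
Since $1-\pi_K^m$ is a unit in $\cO_K$, this has the unique solution $F_{m+1}=-E_{m+1}/\bigl(\pi_K(1-\pi_K^m)\bigr)$ in $K[X]$. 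This gives uniqueness at every stage.

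The main point is integrality: we must show that $E_{m+1}$ has coefficients in $\pi_K\cO_K$. Here we use the congruences $P(T)\equiv T^q$ and $Q(T)\equiv T^q \pmod{\pi_K}$. Reducing modulo $\pi_K$,
\[P(F_{\le m}(X))\equiv F_{\le m}(X)^q\equiv F_{\le m}^{\mathrm{Frob}}(X_1^q,\cdots,X_n^q),\qquad F_{\le m}(Q(X))\equiv F_{\le m}(X_1^q,\cdots,X_n^q).\]
Because $F_{\le m}$ has coefficients in $\cO_K$ and $\kappa_K=\Fq$, every coefficient $c$ satisfies $c^q\equiv c\pmod{\pi_K}$. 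Hence the two reductions agree, and $P(F_{\le m})-F_{\le m}(Q(X))\equiv 0\pmod{\pi_K}$ in all degrees, in particular in degree $m+1$. Therefore $F_{m+1}$ has integral coefficients, which completes the induction. Since the system is compatible, $F=\lim_m F_{\le m}$ satisfies (ii) in every degree. Moreover, any series satisfying (i) and (ii) must agree with $F_{\le m}$ modulo degree $m+1$ for every $m$, by the uniqueness at each step, so $F$ is unique.
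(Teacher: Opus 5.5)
Your proof is correct. It is the standard degree-by-degree successive approximation of Lubin--Tate: the correction term is forced by $\pi_K(1-\pi_K^m)F_{m+1}=-E_{m+1}$, and integrality comes from $P(F_{\le m})\equiv F_{\le m}(X_1^q,\cdots,X_n^q)\equiv F_{\le m}(Q(X))\pmod{\pi_K}$. The paper gives no proof of its own and simply cites \cite[Lemma 1]{Lubin1965Formal}, whose argument is exactly this one.
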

Fix $P\in\cF_{\pi_K}$.
Applying Lemma \ref{lem:lubintate} with $P=Q$, $n=2$, and the linear term $X+Y$ gives a unique series $H_P(X,Y)$ satisfying 
\[H_P(X,Y)\equiv X+Y\pmod{\deg 2},\quad P(H_P(X,Y))=H_P(P(X),P(Y)).\]
The series $H_P$ defines a one-dimensional formal $\cO_K$-module over $\cO_K$ with $[\pi_K]_{H_P}=P(T)$, whose isomorphism class is independent of the choice of $P\in\cF_{\pi_K}$ \cite[Theorem 1]{Lubin1965Formal}.
We call $H_P$ a \textit{Lubin-Tate formal group} associated to $\pi_K$.

Let $L/K$ be a finite unramified extension of degree $f$, so its residue field has cardinality $q_L=q^f$, and let $\pi_K$ be a uniformizer of $K$.
Choose an element $P(T)\in\cO_K\llbracket T\rrbracket_0$ with $P'(0)=\pi_K$ and $P(T)\equiv T^{q_L}\pmod{\pi_K}$.
Since $\pi_K$ is also a uniformizer of $L$, the Lubin-Tate construction (Lemma \ref{lem:lubintate} applied to $L$) produces a formal $\cO_L$-module $H_P$ associated to $\pi_K$.
\begin{lemma}\label{lem:potentially_lubintate}
With $L/K$ and $\pi_K$ fixed as above, the group law $H_P$ and its $\cO_K$-action are defined over $\cO_K$, and its isomorphism class over $\cO_K$ is independent of the choice of $P$.
\end{lemma}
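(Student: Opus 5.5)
The plan is to run the uniqueness argument of Lemma \ref{lem:lubintate} over $\cO_L$ and then use Galois descent. Since $L/K$ is unramified, $\Gal(L/K)$ acts on $\cO_L$ and coefficientwise on power series. The formal $\cO_L$-module $H_P$ comes from Lemma \ref{lem:lubintate} applied to $L$, with uniformizer $\pi_K$, $q_L=q^f$, and $P\in\cF^{L}_{\pi_K}$. It is characterized by uniqueness: $H_P$ is the unique series congruent to $X+Y$ in degree one with $P\circ H_P=H_P(P,P)$. For each $a\in\cO_L$, the series $[a]_{H_P}$ is the unique series congruent to $aT$ in degree one with $P\circ[a]=[a]\circ P$.

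Let $\sigma\in\Gal(L/K)$. Since $P$ has coefficients in $\cO_K$, we have $P^\sigma=P$. Applying $\sigma$ to the defining identities shows that $H_P^\sigma$ satisfies $H_P^\sigma\equiv X+Y$ and $P(H_P^\sigma)=H_P^\sigma(P,P)$. By uniqueness, $H_P^\sigma=H_P$, so $H_P$ has coefficients in $\cO_L^{\Gal(L/K)}=\cO_K$. Similarly, $[a]_{H_P}^\sigma=[\sigma(a)]_{H_P}$. Hence for $a\in\cO_K$ the series $[a]_{H_P}$ is $\sigma$-invariant, so it lies in $\cO_K\llbracket T\rrbracket$. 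This shows that the group law and its $\cO_K$-action are defined over $\cO_K$.

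For independence of $P$, take two choices $P,Q$ with coefficients in $\cO_K$. Lemma \ref{lem:lubintate} over $\cO_L$ with linear term $X$ gives a unique series $\theta\in\cO_L\llbracket X\rrbracket$ with $\theta\equiv X$ in degree one and $P\circ\theta=\theta\circ Q$. The standard Lubin--Tate argument shows that $\theta$ is an isomorphism of formal $\cO_L$-modules $H_Q\to H_P$: both $\theta(H_Q(X,Y))$ and $H_P(\theta(X),\theta(Y))$ solve the same uniqueness problem, and the same holds for the $[a]$. Applying $\sigma$ and using $P^\sigma=P$ and $Q^\sigma=Q$ gives $\theta^\sigma=\theta$ by uniqueness. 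So $\theta\in\cO_K\llbracket X\rrbracket$, and it is an isomorphism over $\cO_K$ compatible with the $\cO_K$-actions. Its inverse is also over $\cO_K$, since $(\theta^{-1})^\sigma=(\theta^\sigma)^{-1}$.

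There is no real obstacle. The only point to check is that uniqueness in Lemma \ref{lem:lubintate} is applied over $\cO_L$ with the $q_L$-power congruence, which $P$ and $Q$ satisfy, and that the coefficientwise action of $\sigma$ preserves all hypotheses of that lemma. This holds because $\sigma$ fixes $\pi_K$ and is compatible with reduction modulo $\pi_K$.
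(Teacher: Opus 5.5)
Your proposal is correct and follows essentially the same route as the paper. Both arguments apply the uniqueness in Lemma \ref{lem:lubintate} over $\cO_L$ together with $P^\sigma=P$ (and $Q^\sigma=Q$) to descend $H_P$, the endomorphisms $[a]$ for $a\in\cO_K$ (via $[a]^\sigma=[\sigma(a)]$), and the comparison isomorphism $\theta=[1]_{P,Q}$ to $\cO_K$. The only difference is cosmetic: the paper names the inverse as $[1]_{Q,P}$, whereas you obtain it from $(\theta^{-1})^\sigma=(\theta^\sigma)^{-1}$.
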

\begin{proof}
Let $\sigma\in\Gal(L/K)$ be the automorphism inducing $x\mapsto x^q$ on the residue field of $L$.
Since $P$ has coefficients in $\cO_K$, applying $\sigma$ to the coefficients of $H_P$ preserves its defining identities.
Uniqueness in Lemma \ref{lem:lubintate} therefore gives $H_P^\sigma=H_P$, so $H_P$ has coefficients in $\cO_K$.
Let $P$ and $Q$ be two series satisfying the conditions.
For every $a\in \cO_L$, Lemma \ref{lem:lubintate} gives a unique series $[a]_{P,Q}\in \cO_{L}\llbracket T\rrbracket_0$ satisfying
\[[a]_{P,Q}'(0)=a,\quad P\circ [a]_{P,Q}=[a]_{P,Q}\circ Q\]
The series $([a]_{P,Q})^\sigma$ defines a homomorphism $H_Q\to H_P$ \cite[Theorem 1 (8)]{Lubin1965Formal}.
Applying $\sigma$ to the defining identities and using uniqueness gives
\begin{equation}\label{eq:sigma}
    ([a]_{P,Q})^\sigma=[\sigma(a)]_{P,Q}.
\end{equation} 
If $a\in \cO_K$, then $[a]_{P,Q}$ has coefficients in $\cO_K$.
Taking $Q=P$ shows that the $\cO_K$-action on $H_P$ descends to $\cO_K$.
Finally, $[1]_{P,Q}$ gives an isomorphism between $H_P$ and $H_Q$ over $\cO_K$ with inverse $[1]_{Q,P}$.
\end{proof}

We call a group obtained in Lemma \ref{lem:potentially_lubintate} a \textit {potentially Lubin-Tate} formal group for $L/K$ associated to $\pi_K$.
Fix the generator $\sigma\in \Gal(L/K)$ lifting the $q$-power Frobenius.

\begin{lemma}\label{lem:pot_lubintate_galois_group}
Let $H$ be a potentially Lubin-Tate formal group over $\cO_K$ associated with $L/K$.
Set $K_\infty=K(H[p^\infty](\overline{K}))$.
There is an isomorphism of profinite groups
\[\Gal(K_\infty/K)\cong (\cO_{L})^\times\rtimes_{\iota} (\ZZ/f \ZZ),\quad \iota(i)(a)=\sigma^i(a)\]
whose construction depends on a choice of $\cO_L$-basis of $T(H)$.
\end{lemma}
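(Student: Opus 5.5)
We first analyze $T(H)$ as a module over the absolute endomorphism ring, then compute the image of $\Gal_K$ in its semilinear automorphism group. By the proof of Lemma \ref{lem:potentially_lubintate}, the series $[a]_{P,P}$ for $a\in\cO_L$ are endomorphisms of $H$ defined over $\cO_L$, so $\cO_L\subset\Lambda_H$. Proposition \ref{prop:absolute_end_finite}, together with the fact that $H$ has height $[L:\Qp]$, forces $\Lambda_H=\cO_L$ and $E_H=L$. The Tate module $T(H)$ is then free of rank one over $\cO_L$ (absolute height $n=1$). Fix an $\cO_L$-basis $\alpha$ of $T(H)$.

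For $g\in\Gal_K$, the discussion preceding Lemma \ref{lem:semilinear_action} shows that $g$ acts on $T(H)$ $\bar g$-semilinearly, where $\bar g=g\vert_L\in\Gal(L/K)$. Equivalently, by (\ref{eq:sigma}), we have $g([a]_H)=[\bar g(a)]_H$. Writing $g\alpha=u(g)\alpha$ with $u(g)\in\cO_L^\times$ (a unit, since $g$ is bijective on $T(H)$), we define
\[\Psi:\Gal_K\to\cO_L^\times\rtimes_\iota\ZZ/f\ZZ,\quad g\mapsto (u(g),i(g)),\]
where $\bar g=\sigma^{i(g)}$. Semilinearity gives the cocycle rule
\[gh\alpha=g(u(h)\alpha)=\sigma^{i(g)}(u(h))\,u(g)\,\alpha.\]
This is exactly multiplication in the semidirect product, so $\Psi$ is a homomorphism. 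It is continuous because the action on $T(H)$ and the map $g\mapsto g\vert_L$ are both continuous.

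Next we identify the kernel. The map $\Psi$ factors through $\Gal(K_\infty/K)$, since $L\subset K_\infty$: the field $K_\infty$ contains the values $[a]_H(x)$ at torsion points, and the action of $\Gal_{K_\infty}$ on $T(H)$ is trivial, which forces $\bar g=\id$ by semilinearity and faithfulness (a nonzero endomorphism acts nontrivially on $T(H)$). Conversely, if $\Psi(g)=1$, then $g$ fixes $T(H)$, hence fixes all torsion points, so $g\in\Gal_{K_\infty}$. The induced map $\Gal(K_\infty/K)\to\cO_L^\times\rtimes\ZZ/f\ZZ$ is therefore injective, and a closed embedding by compactness.

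The main obstacle is surjectivity. We restrict to $\Gal_L$. Over $\cO_L$, $H$ is a Lubin--Tate formal $\cO_L$-module for the uniformizer $\pi_K$, so Lubin--Tate theory \cite{Lubin1965Formal} gives that $\Gal_L\to\Aut_{\cO_L}(T(H))=\cO_L^\times$ is surjective. This rests on total ramification of degree $\#(\cO_L/\pi_K^m)^\times$ for the field of $\pi_K^m$-torsion points, which follows from the Eisenstein argument of Lemma \ref{lem:perfectoid_varpi}. Hence the image of $\Psi$ contains $\cO_L^\times\times\{0\}$. The image also maps onto $\ZZ/f\ZZ$, because $\Gal_K\to\Gal(L/K)$ is surjective. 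Together these show $\Psi$ is surjective onto the semidirect product. Finally, the construction depends on $\alpha$ only through conjugation by $\cO_L^\times$, since replacing $\alpha$ by $c\alpha$ replaces $u(g)$ by $\sigma^{i(g)}(c)c^{-1}u(g)$.
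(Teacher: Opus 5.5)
Your proof is correct, but it is organized differently from the paper's. The paper works with the extension $1\to\Gal(K_\infty/L)\to\Gal(K_\infty/K)\to\Gal(L/K)\to1$. It proceeds in four steps:
\begin{enumerate}
\item It shows $L\subset K(H[\pi_K](\overline{K}))$ by an annihilator argument at the first torsion level.
\item It identifies $\Gal(K_\infty/L)\cong\cO_L^\times$ by Lubin--Tate theory.
\item It builds an explicit splitting from the unique lift $\widetilde{\sigma}$ of $\sigma$ fixing the basis vector, and checks $\widetilde{\sigma}^f=1$.
\item It computes the conjugation $\widetilde{\sigma}\gamma_a\widetilde{\sigma}^{-1}=\gamma_{\sigma(a)}$.
\end{enumerate}

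You instead write down the map $g\mapsto(u(g),g\vert_L)$ directly. The semilinear cocycle identity $u(gh)=u(g)\sigma^{i(g)}(u(h))$ is exactly the multiplication law of the semidirect product. So the homomorphism property, the order of the lift of $\sigma$, and the conjugation formula all come for free. You then only need two checks:
\begin{itemize}
\item injectivity, from faithfulness on torsion points plus $L\subset K_\infty$, which you obtain from torsion-freeness of $T(H)$ rather than at level $\pi_K$;
\item surjectivity, from Lubin--Tate theory over $L$ plus surjectivity onto $\Gal(L/K)$.
\end{itemize}

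The two isomorphisms are mutually inverse: the paper's sends $(a,i)$ to $\gamma_a\widetilde{\sigma}^i$, and $\Psi(\gamma_a\widetilde{\sigma}^i)=(a,i)$.

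Your route makes the dependence on the basis transparent, since changing the basis is conjugation by $(c,0)$. It also matches the cocycle $\chi_\alpha$ used later in Section~\ref{subsec:plt_phigamma}. The paper's route singles out the distinguished order-$f$ lift $\widetilde{\sigma}$, which it reuses in Remark~\ref{rmk:tension_with_berger}. It also yields the slightly sharper fact $L\subset K(H[\pi_K](\overline{K}))$.
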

\begin{proof}
Put $K_1=K(H[\pi_K](\overline{K}))$.
Let $\alpha$ be a nonzero element of $H[\pi_K]$, a rank-one $\cO_L/(\pi_K)$-module.
For $g\in\Gal_{K_1}$ and $a\in\cO_L$,
\[[g(a)]_H(\alpha)=g([a]_H(\alpha))=[a]_H(\alpha)\]
so $[g(a)-a]_H(\alpha)=0$.
The annihilator of $\alpha$ in $\cO_L$ is $(\pi_K)$, so $g(a)-a\in\pi_K\cO_L$ for every $a\in\cO_L$.
Therefore, $g$ acts trivially on $\kappa_L$.
Since $L/K$ is unramified, $g$ acts trivially on $L$, proving the containment $L\subset K_1$.

Therefore, $K_\infty/L$ is the Lubin-Tate torsion extension associated to $\pi_K$.
Choose an $\cO_L$-basis $\varpi$ of $T(H)$.
Lubin-Tate theory identifies \cite[Theorem 2]{Lubin1965Formal}
\[\Gal(K_\infty/L)\to (\cO_{L})^\times,\quad \gamma(\varpi)=\chi(\gamma)\varpi.\]
Choose any lift $s\in\Gal(K_\infty/K)$ of $\sigma$, and write $s(\varpi)=u\varpi$ with $u\in\cO_L^\times$.
Let $\gamma_{u^{-1}}\in\Gal(K_\infty/L)$ act on $\varpi$ by $u^{-1}$, and put $\widetilde{\sigma}=\gamma_{u^{-1}}s$.
Then $\widetilde{\sigma}$ restricts to $\sigma$ and fixes $\varpi$, and it is the unique lift with these properties.
The automorphism $\widetilde{\sigma}^f$ fixes both $L$ and $\varpi$, so it is the identity.
Therefore, $\sigma\mapsto\widetilde{\sigma}$ defines a section of
\[1\to\Gal(K_\infty/L)\to \Gal(K_\infty/K)\to\Gal(L/K)\to 1.\]
This section expresses $\Gal(K_\infty/K)$ as a semidirect product of $\Gal(K_\infty/L)$ with $\Gal(L/K)$.
It remains to compute the action of $\widetilde{\sigma}$ on $\Gal(K_\infty/L)$ by conjugation.
For $a\in (\cO_{L})^\times$, let $\gamma_a\in \Gal(K_\infty/L)$ be characterized by $\gamma_a(\varpi)=a\varpi$.
By (\ref{eq:sigma}),
\begin{align*}
    \tsigma\gamma_a\tsigma^{-1}(\varpi)= \tsigma \gamma_a(\varpi)=\sigma(a)\varpi.
\end{align*}
Hence, $\tsigma\gamma_a\tsigma^{-1}=\gamma_{\sigma(a)}$, giving the asserted semidirect product action.
\end{proof}
\subsection{The multivariable theory over $R_H$}\label{subsec:plt_phigamma}
Let $L/K$ be a finite unramified extension of degree $f$, put $q=\#\kappa_K$, and let $H$ be a potentially Lubin-Tate formal group associated to $\pi_K$.
Set $K_\infty=K(H[p^\infty](\overline{K}))$ and $\Gamma=\Gal(K_\infty/K)$.
The construction gives $\cO_L\subset \Lambda_H$ and $\mathrm{ht}(H)=[L:\Qp]$.
Since $[E_H:\Qp]\le\mathrm{ht}(H)$, this forces $E_H=L$.
Therefore, $\Lambda_H=\cO_L$.
In summary, $H$ has $\cO_K$-height $f$, ordinary height $[L:\Qp]$, and absolute height one.

Fix an $\cO_L$-basis $\alpha$ of $T(H)$, and put $\tau=\tau_H(\alpha)$.
Since the $q$-power Frobenius of the special fiber satisfies $\xi_{q_K}^f=[\pi_K]$, its minimal polynomial over $K$ is the Eisenstein polynomial $X^f-\pi_K$.
By Corollary \ref{cor:model_minimal}, the canonical period homomorphism is injective, and in the coordinates determined by $\alpha$, is given by
\[\cA_H\cong\cO_L\llbracket t_{k,1}:0\le k\le f-1\rrbracket\hookrightarrow\AinfK,\quad t_{k,1}\mapsto \varphi_K^k(\tau).\]
The endomorphism $\varphi_K$ commutes with its $\Gamma$-action.
The endomorphism $\varphi_K$ restricts to $\sigma$ on $\cO_L$ and satisfies
\[\varphi_K(t_{k,1})=\begin{cases}
    t_{k+1,1}&0\le k\le f-2\\
    [\pi_K]_H(t_{0,1})&k=f-1
\end{cases}\]

Define $\chi_\alpha:\Gamma\to(\cO_L)^\times$ by $\gamma(\alpha)=\chi_\alpha(\gamma)\alpha$.
It satisfies $\chi_\alpha(\gamma\delta)=\chi_\alpha(\gamma)\gamma(\chi_\alpha(\delta))$ and its restriction to $\Gamma_L=\Gal(K_\infty/L)$ is the Lubin-Tate character.
For $\gamma\in\Gamma$, its action on $\cA_H$ restricts to $\gamma\vert_L$ on $\cO_L$ and satisfies
\[\gamma\cdot t_{k,1}=[\sigma^k(\chi_\alpha(\gamma))]_H(t_{k,1}).\]
The subring $o_H\subset\OCb$ is identified with $\FF_{q^f}\llbracket t\rrbracket$, where $t$ corresponds to $\overline{\tau}_H(\alpha)$.
Reduction of the period homomorphism gives
\[\cA_H\to o_H,\quad t_{k,1}\mapsto t^{q^k}\,(0\le k<f).\]
Writing $s_k=t_{k,1}-t_{0,1}^{q^k}$ for $1\le k\le f-1$, we obtain
\[\frp_H=\ker(\cA_H\to o_H)=(\pi_K,s_1,\cdots,s_{f-1}).\]
Recall that $R_H=((\cA_H)_{\frp_H})^\wedge$ with completion taken at the maximal ideal.
Proposition \ref{prop:coefficient_ring} gives an isomorphism of complete local rings
\[R_H\cong \left(\cO_L\llbracket t\rrbracket [\tfrac{1}{t}]\right)^\wedge\llbracket s_1,\cdots,s_{f-1}\rrbracket\]
where the completion is $\pi_K$-adic.
We will use the resulting coefficient ring inclusion in Section \ref{subsec:comparison_plt}.

The field $k_H=\Frac(o_H)\cong\FF_{q^f}(\!(t)\!)$ is complete for the valuation inherited from $\Cb$, which is a positive multiple of the $t$-adic valuation, so it is henselian.
Therefore, $k_{H,K}=k_H$, and the construction gives $R_{H,K}=R_H$.
Theorem \ref{thm:phigamma_equiv} now specializes as follows.
\begin{corollary}\label{cor:pLT}
There is an equivalence of categories
\[\DD_{H,K}:\Rep_{\cO_K}(\Gal_K)\simeq\textup{$(\varphi,\Gamma)$-Mod}_{R_H},\quad V\mapsto (V\otimes_{\cO_K}R_H^\sep)^{\Gal_{K_\infty}}.\]
\end{corollary}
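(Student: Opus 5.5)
This is a direct specialization of Theorem \ref{thm:phigamma_equiv}, so the plan is to verify its hypotheses and then identify the base ring. Take $E=K$ and $L=K$ there. We must check that $H$ is a one-dimensional formal $\cO_K$-module of finite height over $\cO_K$. This is Lemma \ref{lem:potentially_lubintate}: the group law $H_P$ and the $\cO_K$-action $[a]_{P,P}$ for $a\in\cO_K$ have coefficients in $\cO_K$. The height is finite, equal to $[L:\Qp]$, because $[\pi_K]_H=P\equiv T^{q^f}\pmod{\pi_K}$. The extension $K/E=K/K$ is trivially unramified. Theorem \ref{thm:phigamma_equiv} then yields the equivalence
\[\Rep_{\cO_K}(\Gal_K)\simeq\textup{$(\varphi_K,\Gamma)$-Mod}_{R_{H,K}}\]
via $V\mapsto (V\otimes_{\cO_K}R_H^\sep)^{\Gal_{K_\infty}}$.

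The remaining step is to identify $R_{H,K}$ with $R_H$. Recall that $R_{H,K}$ is the completion of the ind-(finite \'{e}tale) local $R_H$-algebra corresponding to $k_{H,K}/k_H$. So it suffices to show $k_{H,K}=k_H$, that is, that $k_H$ is henselian for $w$.

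\begin{itemize}
\item The absolute height is one, since $\Lambda_H=\cO_L$ and $\mathrm{ht}(H)=[L:\Qp]$, as noted just above the statement.
\item Corollary \ref{cor:description_kh} then gives $k_H\cong\FF_{q^f}(\!(t)\!)$, with $t=\overline{\tau}_H(\alpha)$.
\item The valuation $v_{\Cb}$ restricts to $v_{\Cb}(t)$ times the $t$-adic valuation, so $k_H$ is complete and hence henselian.
\item Equivalently, Corollary \ref{cor:henselian_absheight_one} gives $D_w=\Gal_{k_H}$.
\end{itemize}

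Hence the extension $k_{H,K}/k_H$ is trivial. The corresponding ind-\'{e}tale algebra is $R_H$ itself, which is already complete, so $R_{H,K}=R_H$. Likewise $R_{H,K}^\sep=R_H^\sep$, so the functor in Theorem \ref{thm:phigamma_equiv} is exactly the one in the statement.

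The only point needing care is the height-one identification $E_H=L$. Here $\cO_L\subset\Lambda_H$ holds because $H_P$ is a formal $\cO_L$-module over $\cO_L$. Combined with $[E_H:\Qp]\mid \mathrm{ht}(H)=[L:\Qp]$, this forces $E_H=L$ and $n=1$. Everything else is a formal application of the stated theorems, and I expect no real obstacle.
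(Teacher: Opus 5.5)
Your proposal is correct and matches the paper's argument: specialize Theorem \ref{thm:phigamma_equiv} with $E=L=K$, then use absolute height one to get $k_H\cong\FF_{q^f}(\!(t)\!)$, which is complete and hence henselian, so $k_{H,K}=k_H$ and $R_{H,K}=R_H$. Your extra checks of the hypotheses (descent of the $\cO_K$-structure via Lemma \ref{lem:potentially_lubintate}, finite height $[L:\Qp]$, and $E_H=L$) are exactly what the paper records in the paragraph preceding the corollary.
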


\subsection{The one-variable theory over $\cC_H$}
Keep the notation of the previous subsection; in particular $L/K$ is unramified of degree $f$ and $H$ is potentially Lubin-Tate.
We now describe continuous semilinear $\cO_L$-representations of $\Gal_K$ using a one-variable base ring and the Frobenius $\varphi_K^f$.

\begin{definition}
A \textit{semilinear $\cO_{L}$-representation of $\Gal_{K}$} is a finitely generated $\cO_{L}$-module $V$ with a continuous $\Gal_{K}$-action satisfying $g(av)=g(a)g(v)$ for $g\in\Gal_K$, $a\in\cO_L$ and $v\in V$.
Let $\Rep^{\mathrm{semi}}_{\cO_{L}}(\Gal_K)$ denote this category, with $\cO_L$-linear $\Gal_K$-equivariant maps as morphisms.
\end{definition}

Recall that $\cT_H=\cO_{L}\llbracket T(H)\rrbracket_{H,\cO_L}$.
The period homomorphism
\[\cT_H\to\AinfK,\quad [\alpha]\mapsto\tau_H(\alpha)\]
is injective and $\Gal_K$-equivariant.
Since $L\subset K_\infty$ and $\Gal_{K_\infty}$ acts trivially on $T(H)$, the $\Gal_K$-action on $\cT_H$ factors through $\Gamma$.
If $f>1$, then $\cT_H\cong\cO_L\llbracket t\rrbracket$ is not stable under $\varphi_K$ because, under the embedding $\cO_L\llbracket t\rrbracket\subset\cA_H,t\mapsto t_{0,1}$, we have $\varphi_K(t_{0,1})=t_{1,1}\not\in\cO_L\llbracket t_{0,1}\rrbracket$.
Nonetheless, the form of the minimal polynomial $P(X)=X^f-\pi_K$ implies
\[\varphi_K^f(t_{0,1})=[\pi_K]_H(t_{0,1})\]
so $\cT_H$ is stable under $\varphi_K^f$.
Retain the $\cO_L$-basis $\alpha$ of $T(H)$ and the cocycle $\chi_\alpha:\Gamma\to\cO_L^\times$ from Section \ref{subsec:plt_phigamma}.
Put $\varphi_L=\varphi_K^f$, and let $a\in \cO_L$.
The commuting actions of $\varphi_L$ and $\Gamma$ satisfy
\[\varphi_L(t)=[\pi_K]_H(t),\quad \varphi_L(a)=a\]
and
\[\gamma(t)=[\chi_\alpha(\gamma)]_H(t),\quad \gamma(a)=\gamma\vert_L(a).\]
Let $\cC_H=((\cT_H)_{(\pi_K)})^\wedge$ where the completion is $\pi_K$-adic.
Then
\[\cC_H\cong \left(\cO_L\llbracket t\rrbracket [\tfrac{1}{t}]\right)^\wedge\]
is a complete discrete valuation ring with uniformizer $\pi_K$ and residue field $k_H=\FF_{q^f}(\!(t)\!)$.
As in Example \ref{ex:lubintate}, the period map extends to an injection $\cC_H\hookrightarrow W_{\cO_L}(\Cb)\cong W_{\cO_K}(\Cb)$, identifying $\cC_H$ with the usual Lubin-Tate base ring for $L$ \cite{Ren2009Galois}.
Since $H$ is defined over $\cO_K$, the action of $\Gamma_L$ extends to $\Gamma\cong\Gamma_L\rtimes\Gal(L/K)$, with the splitting determined in Lemma \ref{lem:pot_lubintate_galois_group}.

\begin{proposition}\label{prop:equiv_plt}
Let $L/K$ be a finite unramified extension of $p$-adic local fields.
Let $H/\cO_K$ be a potentially Lubin-Tate group for $L/K$, and set $K_\infty=K(H[p^\infty](\overline{K}))$ and $\Gamma=\Gal(K_\infty/K)$.
The functor
\[\DD_{\cC_H}(V)=(V\otimes_{\cO_L}\cC_H^\sep)^{\Gal_{K_\infty}}\]
induces an equivalence
\[\DD_{\cC_H}:\Rep^{\mathrm{semi}}_{\cO_{L}}(\Gal_K)\isomto\textup{$(\varphi_L,\Gamma)$-Mod}_{\cC_H}\]
where the module category has the continuity condition of Definition \ref{def:phigammamod}.
\end{proposition}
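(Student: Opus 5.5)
We follow the same pattern as the proof of Theorem \ref{thm:phigamma_equiv}, but now with $\cO_L$-coefficients and the Frobenius $\varphi_L$. First we check that $(\cC_H,\varphi_L)$ is a flat $F$-dynamical system over $\cO_L$. Here $\cC_H$ is a complete discrete valuation ring with uniformizer $\pi_K$, which is also a uniformizer of $L$. The endomorphism $\varphi_L$ is $\cO_L$-linear, since $\varphi_L(a)=a$. Modulo $\pi_K$ it sends $t\mapsto t^{q^f}$, so on the residue field $k_H=\FF_{q^f}(\!(t)\!)$ it is the $q_L$-power Frobenius. Contraction is automatic because $\cC_H/(\pi_K)$ is a field. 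Flatness holds because $\cC_H$ is a DVR and $\varphi_L$ is a local injective map, hence torsion-free. Since $k_H$ is henselian with $\Gal_{k_H}\cong\Gal_{K_\infty}$ (Corollary \ref{cor:henselian_absheight_one}, absolute height one), Theorem \ref{thm:FDS_cat_equiv} then gives an equivalence
\[\Rep_{\cO_L}(\Gal_{K_\infty})\simeq\textup{$\varphi_L$-Mod}_{\cC_H},\]
with functors $\DD_{\cC_H}$ and $\VV(M)=(M\otimes\cC_H^\sep)^{\varphi_L=\id}$. Here $\cC_H^\sep$ embeds $\Gal_K$- and $\varphi_L$-equivariantly into $W_{\cO_L}(\Cb)$, by the argument of Lemma \ref{lem:rhsep_wcb}.

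Next we add the $\Gamma$-action. The point is that $\Gal_K$ acts on $\cC_H^\sep$ semilinearly over $\cO_L$, through $\Gal(L/K)$ on the coefficients. For $V$ a semilinear $\cO_L$-representation of $\Gal_K$, we use the diagonal action on $V\otimes_{\cO_L}\cC_H^\sep$. This is well defined precisely because both factors are $\sigma$-semilinear: $g(av\otimes x)=g(a)g(v)\otimes g(x)=g(v)\otimes g(a)g(x)$. The action commutes with $\varphi_L$ because $\varphi_L$ fixes $\cO_L$. It then descends to a semilinear $\Gamma$-action on the $\Gal_{K_\infty}$-invariants. Conversely, on $(M\otimes\cC_H^\sep)^{\varphi_L=\id}$ the diagonal $\Gal_K$-action is $\cO_L$-semilinear, since $\cO_L\subset\cC_H$ carries the $\Gal(L/K)$-action. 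The comparison isomorphisms of Propositions \ref{prop:functor_D_fds} and \ref{prop:key_isom_V} are given by multiplication maps, so they are $\Gal_K$-equivariant. The unit and counit of the $\Gal_{K_\infty}$-equivalence are therefore $\Gal_K$-equivariant, and this yields the equivalence on objects with the extra actions.

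The remaining step is continuity, and we expect it to be the only real obstacle. We copy the argument of Theorem \ref{thm:phigamma_equiv} with $B=W_{\cO_L}(\Cb)\cong W_{\cO_K}(\Cb)$ in its weak topology. The $\Gal_K$-action on $B$ is continuous; it acts on Teichm\"uller coordinates and on the coefficients through a finite quotient. The analogue of Lemma \ref{lem:padic_weak_topologies} for $\cO_L\subset B$ shows that the subspace topology on a finite $\cO_L$-module $V\subset V\otimes_{\cO_L}B$ is $\pi_K$-adic. Transporting continuity through the homeomorphisms $\DD(V)\otimes B\cong V\otimes_{\cO_L}B$ and $\VV(M)\otimes_{\cO_L}B\cong M\otimes B$ then gives both directions. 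The one check to be careful about is that these base changes along $\cC_H^\sep\to B$ are $\Gal_K$-equivariant, and not merely $\Gal_{K_\infty}$-equivariant, even though the coefficient action is semilinear. This follows from the uniqueness of the étale lifts in Lemma \ref{lem:rhsep_wcb}.
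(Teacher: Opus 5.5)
Your proposal is correct and follows the paper's route. The paper's proof simply notes that $(\cC_H,\varphi_L)$ is a flat $F$-dynamical system over $\cO_L$, applies Theorem \ref{thm:FDS_cat_equiv} to obtain $\Rep_{\cO_L}(\Gal_{K_\infty})\simeq\textup{$\varphi_L$-Mod}_{\cC_H}$, and defers to the proof of Theorem \ref{thm:phigamma_equiv} for the $\Gamma$-action and continuity. You spell out what the paper leaves implicit — the verification of the axioms, the well-definedness of the diagonal semilinear action on $V\otimes_{\cO_L}\cC_H^\sep$, and the $\Gal_K$-equivariance of $\cC_H^\sep\to W_{\cO_L}(\Cb)$ — and these checks are sound.
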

\begin{proof}
The pair $(\cC_H,\varphi_L)$ is a flat $F$-dynamical system over $\cO_L$.
Theorem \ref{thm:FDS_cat_equiv} therefore gives the equivalence
\[\DD_{\cC_H}:\Rep_{\cO_L}(\Gal_{K_\infty})\simeq \textup{$\varphi_L$-Mod}_{\cC_H}, \quad V\mapsto (V\otimes_{\cO_L}\cC_H^\sep)^{\Gal_{K_\infty}}.\]
By the proof of Theorem \ref{thm:phigamma_equiv}, the functor $\DD_{\cC_H}$ gives the asserted equivalence. 
\end{proof}
Let $\Gamma_L=\Gal(K_\infty/L)\subset\Gamma$.
Restricting the base ring action to $\Gamma_L$ gives the classical Lubin-Tate equivalence for $L$ \cite{Ren2009Galois}
\[\DD_{\LT}:\Rep_{\cO_{L}}(\Gal_{L})\simeq \textup{$(\varphi_L,\Gamma_L)$-Mod}_{\cC_H}.\]
The invariant formula gives a natural isomorphism
\[\res_{\Gamma,\Gamma_L}\circ\DD_{\cC_H}\simeq\DD_\LT\circ\res_{K,L}\]
where the two restriction functors are those associated with $\Gamma_L\subset\Gamma$ and $\Gal_L\subset\Gal_K$, making the following diagram commute.
\begin{center}
\begin{tikzcd}
\Rep^{\mathrm{semi}}_{\cO_{L}}(\Gal_K)\arrow[d,"\res_{K,L}"']\arrow[r,"\DD_{\cC_H}"]&\textup{$(\varphi_L,\Gamma)$-Mod}_{\cC_H}\arrow[d,"\res_{\Gamma,\Gamma_L}"]\\
\Rep_{\cO_{L}}(\Gal_{L})\arrow[r,"\DD_{\LT}"]&\textup{$(\varphi_L,\Gamma_L)$-Mod}_{\cC_H}
\end{tikzcd}
\end{center}

\begin{remark}\label{rmk:tension_with_berger}
Under the hypotheses of Appendix \ref{app:onevariable}, an infinite group acting faithfully by coefficient-linear invertible power series and commuting with the specified nonzero contracting power series is commutative.
For the formal group considered here, $\cT_H\cong\cO_L\llbracket t\rrbracket$ carries commuting actions of $\varphi_L$ and $\Gamma\cong\cO_L^\times\rtimes\Gal(L/K)$, and $\Gamma$ is noncommutative when $f>1$.
The obstruction does not apply because the $\Gamma$-action is semilinear over $\cO_{L}$.
With the chosen generator $\alpha\in T(H)$, let $\widetilde{\sigma}\in\Gamma$ be the order $f$ lift of $\sigma$ which fixes $\alpha$ as in the proof of Lemma \ref{lem:pot_lubintate_galois_group}.
The assignment $\gamma\mapsto\gamma(t)$ has fiber $\langle \widetilde{\sigma}\rangle$ over the identity series $t$, and is therefore noninjective for $f>1$.
It is not a homomorphism for ordinary composition of power series, because $\Gamma$ also acts on their coefficients.
For $f>1$, the action is therefore outside the coefficient-linear setting of Appendix \ref{app:onevariable}, although the action of $\Gamma$ on the full ring $\cT_H$ is faithful.
\end{remark}

\subsection{Comparison}\label{subsec:comparison_plt}
By Proposition \ref{prop:coefficient_ring}, the natural homomorphism $j:\cC_H\to R_H$ identifies $\cC_H$ with a coefficient ring of $R_H$ and gives an isomorphism
\[\cC_H\llbracket s_1,\cdots,s_{f-1}\rrbracket\isomto R_H,\quad s_k\mapsto t_{k,1}-(t_{0,1})^{q^k}.\]
The map $j$ is equivariant for $\varphi_L$ and $\Gamma$ and induces the identity on the common residue field $k_H$.
We first compare the theories over $\cC_H$ and $R_H$ with the common Frobenius $\varphi_L$, and then relate iteration of Frobenius to extension of coefficients from $\cO_K$ to $\cO_L$.

\begin{theorem}\label{thm:pLT_LT}
Base change along $j:\cC_H\to R_H$ induces an equivalence
\[j^*:\textup{$(\varphi_L,\Gamma)$-Mod}_{\cC_H}\to \textup{$(\varphi_L,\Gamma)$-Mod}_{R_H}.\]
If we write
\[\DD_{R_H}^{(f)}(V)=(V\otimes_{\cO_L}R_H^\sep)^{\Gal_{K_\infty}}\]
then there is a natural isomorphism
\[j^*\circ\DD_{\cC_H}\simeq\DD_{R_H}^{(f)}\]
making the following diagram commute
\begin{center}
\begin{tikzcd}
    \Rep_{\cO_L}^{\mathrm{semi}}(\Gal_K)\arrow[r,"{\DD_{\cC_H}}"]\arrow[rd,"{\DD_{R_H}^{(f)}}"']&\textup{$(\varphi_L,\Gamma)$-Mod}_{\cC_H}\arrow[d,"j^*"]\\
    &\textup{$(\varphi_L,\Gamma)$-Mod}_{R_H}
\end{tikzcd}
\end{center}
All three arrows in the diagram are equivalences.
\end{theorem}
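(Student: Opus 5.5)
The plan is to obtain both equivalences in the triangle from the general machinery, and then use them to force $j^*$ to be an equivalence. The first step is to show that $(R_H,\varphi_L)$ is a flat $F$-dynamical system over $\cO_L$. For this, $R_H$ must be regarded as an $\cO_L$-algebra via $\cO_L\subset\cA_H$; it is faithfully flat over $\cO_L$, since it is $\pi_K$-torsion free with $\pi_K\in\frakm$. Next, $\varphi_L=\varphi_K^f$ fixes $\cO_L$, because $\varphi_K$ acts on $\cO_L$ by $\sigma$. Contraction modulo $\pi_K$ and flatness hold for any iterate of a contracting flat endomorphism. Finally, $\varphi_L$ induces the $q^f$-power Frobenius on $k_H$.

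Theorem \ref{thm:FDS_cat_equiv} over $\cO_L$ then gives $\DD:\Rep_{\cO_L}(\Gal_{k_H})\simeq\textup{$\varphi_L$-Mod}_{R_H}$. Since $k_{H,K}=k_H$, Proposition \ref{prop:deperfection} identifies $\Gal_{k_H}\cong\Gal_{K_\infty}$, and since $L\subset K_\infty$, $\Gal_{K_\infty}$ acts $\cO_L$-linearly. Running the argument of Theorem \ref{thm:phigamma_equiv} with the semilinear $\Gal_K$-action on $V$ and on $R_H^\sep$ shows that $\DD^{(f)}_{R_H}$ is an equivalence $\Rep^{\mathrm{semi}}_{\cO_L}(\Gal_K)\simeq\textup{$(\varphi_L,\Gamma)$-Mod}_{R_H}$. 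The continuity transfer goes through $B=W_{\cO_L}(\Cb)$ and is unchanged. The equivalence for $\DD_{\cC_H}$ is Proposition \ref{prop:equiv_plt}.

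Next I construct the natural isomorphism $j^*\circ\DD_{\cC_H}\simeq\DD^{(f)}_{R_H}$. Proposition \ref{prop:fds_compatibiliy} provides a local map $\cC_H^\sep\to R_H^\sep$ extending $j$. It is $\varphi_L$-equivariant and $\Gal_K$-equivariant, by uniqueness of étale lifts and since $j$ is the identity on residue fields. This map yields a natural map
\[\DD_{\cC_H}(V)\otimes_{\cC_H}R_H\to (V\otimes_{\cO_L}R_H^\sep)^{\Gal_{K_\infty}},\]
compatible with $\varphi_L$ and $\Gamma$. To see that it is an isomorphism, I base change to $R_H^\sep$. Proposition \ref{prop:functor_D_fds} for $\cC_H$, tensored up to $R_H^\sep$, identifies the source with $V\otimes_{\cO_L}R_H^\sep$, and the same proposition for $R_H$ identifies the target with the same module. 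The two identifications agree since both are induced by multiplication, and faithful flatness of $R_H\to R_H^\sep$ concludes.

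With the natural isomorphism and two of the three arrows known to be equivalences, $j^*$ restricted to $(\varphi_L,\Gamma)$-modules is isomorphic to $\DD^{(f)}_{R_H}\circ\DD_{\cC_H}^{-1}$, so it is an equivalence. One must also check that $j^*$ preserves the continuity condition of Definition \ref{def:phigammamod}. This holds because the maps to $B$ are compatible: $R_H^\sep\to B$ restricts to the period map on $\cC_H^\sep$. It follows that $M\otimes_{\cC_H}B\cong j^*M\otimes_{R_H}B$ equivariantly.

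I expect the main obstacle to be a foundational subtlety rather than the formal diagram chase. The $\Gal_{k_H}$-representations coming from $\Gal_K$ are only semilinear over $\cO_L$, whereas Theorem \ref{thm:phigamma_equiv} was proved for linear coefficients. I would resolve this by noting that $\Gal_{K_\infty}$ acts linearly, so all of Section \ref{sec:fds} applies verbatim. The semilinear $\Gamma$-action then only enters through equivariance of the comparison isomorphisms and the continuity check, and neither step uses linearity.
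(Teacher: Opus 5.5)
Your proposal is correct and follows essentially the same route as the paper. In both, $j^*$ is shown to be an equivalence by the two-out-of-three argument on the triangle, using that $\DD_{\cC_H}$ and $\DD^{(f)}_{R_H}$ are equivalences, and $\Gamma$-equivariance and continuity are transported through the compatible maps to $B$. The differences are minor. The paper derives the commutativity from the $\VV$-side compatibility of Proposition \ref{prop:fds_compatibiliy}. You instead construct the $\DD$-side isomorphism directly through $\cC_H^{\sep}\to R_H^{\sep}$ and check it after faithfully flat base change. You also spell out that $(R_H,\varphi_L)$ is a flat $F$-dynamical system over $\cO_L$, which the paper uses without verifying.
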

\begin{proof}
Applying Proposition \ref{prop:fds_compatibiliy} to $j:\cC_H\to R_H$ as a morphism of flat $F$-dynamical systems over $\cO_L$, both equipped with $\varphi_L$, gives the commutative diagram
\begin{center}
\begin{tikzcd}
    \Rep_{\cO_L}(\Gal_{K_\infty})\arrow[d,"\DD_{\cC_H}"]\arrow[r,equal]&\Rep_{\cO_L}(\Gal_{K_\infty})\arrow[d,"\DD^{(f)}_{R_H}"]\\
    \textup{$\varphi_L$-Mod}_{\cC_H}\arrow[r,"j^*"]&\textup{$\varphi_L$-Mod}_{R_H}
\end{tikzcd}
\end{center}
The natural comparison isomorphisms are $\Gal_K$-equivariant, so they respect the residual $\Gamma$-actions.
Moreover, using the compatible maps $\cC_H\to R_H\to B\coloneqq W_{\cO_K}(\Cb)$, we obtain a canonical equivariant isomorphism
\[(M\otimes_{\cC_H}R_H)\otimes_{R_H}B\cong M\otimes_{\cC_H}B.\]
It is a homeomorphism for the finite-module weak topologies.
Therefore, the continuity condition of Definition \ref{def:phigammamod} is preserved and reflected by $j^*$.
This proves the assertion for the categories of \'{e}tale $(\varphi,\Gamma)$-modules.
\end{proof}

Let $M$ be an \'{e}tale $\varphi$-module over $(R_H,\varphi_K)$.
Iterating the invertible Frobenius linearization shows that $M$ with $\varphi_M^f$ is an \'{e}tale $\varphi$-module over $(R_H,\varphi_L)$.
Therefore, iteration defines
\[I_f(M,\varphi_M)=(M,\varphi_M^f)\]
both on \'{e}tale $\varphi$-modules and \'{e}tale $(\varphi,\Gamma)$-modules:
\[I_f:\textup{$(\varphi_K,\Gamma)$-Mod}_{R_H}\to \textup{$(\varphi_L,\Gamma)$-Mod}_{R_H}\]
Iteration of Frobenius corresponds to the coefficient extension functor $V\mapsto V\otimes_{\cO_K}\cO_L$ with diagonal semilinear $\Gal_K$-action.

\begin{proposition}
For $V\in \Rep_{\cO_K}(\Gal_K)$, there is a natural isomorphism
\[I_f(\DD_{R_H}(V))\cong\DD_{R_H}^{(f)}(V\otimes_{\cO_K}\cO_L)\]
making the following diagram commute.
\begin{center}
\begin{tikzcd}
    \Rep_{\cO_K}(\Gal_K)\arrow[d,"\DD_{R_H}"]\arrow[r,"-\otimes_{\cO_K}\cO_L"]&\Rep^{\mathrm{semi}}_{\cO_L}(\Gal_K)\arrow[d,"\DD^{(f)}_{R_H}"]\\
    \textup{$(\varphi_K,\Gamma)$-Mod}_{R_H}\arrow[r,"I_f"]&\textup{$(\varphi_L,\Gamma)$-Mod}_{R_H}
\end{tikzcd}
\end{center}
\end{proposition}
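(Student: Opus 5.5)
The plan is to compare both sides inside the common ambient module $V\otimes_{\cO_K}R_H^\sep$ and write down the isomorphism explicitly. Since $\Gal_{K_\infty}$ fixes $L\subset K_\infty$ and $\cO_L\subset R_H\subset R_H^\sep$ (through $\cA_H\cong\cO_L\llbracket t_{k,1}\rrbracket$), the structure map $\cO_L\to R_H^\sep$ makes $R_H^\sep$ an $\cO_L$-algebra. For $V\in\Rep_{\cO_K}(\Gal_K)$ we have the canonical identification
\[(V\otimes_{\cO_K}\cO_L)\otimes_{\cO_L}R_H^\sep\cong V\otimes_{\cO_K}R_H^\sep,\quad (v\otimes a)\otimes r\mapsto v\otimes ar.\]
This identification is $\Gal_K$-equivariant for the diagonal actions: the semilinear action $g(v\otimes a)=gv\otimes g(a)$ on $V\otimes\cO_L$ matches the action of $g$ on the coefficient $a\in\cO_L\subset R_H^\sep$. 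Taking $\Gal_{K_\infty}$-invariants on both sides then gives an $R_H$-linear, $\Gamma$-equivariant isomorphism
\[\DD^{(f)}_{R_H}(V\otimes_{\cO_K}\cO_L)\cong\DD_{R_H}(V)\]
of underlying modules. This map is natural in $V$.

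Next I would check the Frobenius structures. On $\DD_{R_H}(V)$, the operator $I_f$ gives $(\id\otimes\varphi_K^\sep)^f=\id\otimes\varphi_L^\sep$, where $\varphi_L^\sep=(\varphi_K^\sep)^f$. This agrees with the unique extension of $\varphi_L$ to $R_H^\sep$ inducing the $q^f$-power map on $k^\sep$, so uniqueness in Lemma \ref{lem:fds_extension_fields} ensures that the $\varphi_L$ used to define $\DD^{(f)}_{R_H}$ is the same operator. The Frobenius on $(V\otimes\cO_L)\otimes_{\cO_L}R_H^\sep$ is $\id\otimes\varphi_L^\sep$, which is well defined because $\varphi_L$ fixes $\cO_L$: $\varphi_K$ acts on $\cO_L$ by $\sigma$, and $\sigma^f=\id$. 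Under the identification, $\id\otimes\varphi_L^\sep$ corresponds to $(v\otimes a)\otimes r\mapsto (v\otimes a)\otimes\varphi_L(r)$, that is, to $v\otimes a\varphi_L(r)=v\otimes\varphi_L(ar)$. So the two Frobenius structures agree.

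Finally, continuity needs no separate argument, because both sides are objects of the respective categories by Theorem \ref{thm:phigamma_equiv} and Theorem \ref{thm:pLT_LT}, and the isomorphism is of the underlying $\Gamma$-modules. The only delicate point, which I expect to be the main obstacle, is the consistency check above: that $\varphi_L$ fixes $\cO_L$ inside $R_H^\sep$, so that the tensor over $\cO_L$ is compatible with Frobenius. This follows from $\varphi_K|_{\cO_L}=\sigma$ of order $f$.
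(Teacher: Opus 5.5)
Your proposal is correct and follows the paper's proof. Both identify $\DD_{R_H}(V)$ and $\DD^{(f)}_{R_H}(V\otimes_{\cO_K}\cO_L)$ as the $\Gal_{K_\infty}$-invariants of the same module $V\otimes_{\cO_K}R_H^\sep=(V\otimes_{\cO_K}\cO_L)\otimes_{\cO_L}R_H^\sep$, and both match $\id\otimes\varphi_K^f$ with $\id\otimes\varphi_L$. Your extra checks fill in details the paper leaves implicit: Galois equivariance of the identification, $\varphi_L\vert_{\cO_L}=\sigma^f=\id$, and uniqueness of the extension of $\varphi_L$ to $R_H^\sep$.
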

\begin{proof}
Fix $V\in \Rep_{\cO_K}(\Gal_K)$.
By definition, $M=\DD_{R_H}(V)$ consists of $\Gal_{K_\infty}$-invariant vectors inside
\[V\otimes_{\cO_K}R_H^\sep=(V\otimes_{\cO_K}\cO_L)\otimes_{\cO_L}R_H^\sep\]
so its underlying module is naturally identified with $M'=\DD_{R_H}^{(f)}(V\otimes_{\cO_K}\cO_L)$.
The endomorphism $\varphi_{M}$ is induced by $\id_V\otimes\varphi_K$, whereas $\varphi_{M'}$ is induced by $\id_{(V\otimes_{\cO_K}\cO_L)}\otimes\varphi_L$.
Therefore, $I_f(M)=M'$, proving the assertion.
\end{proof}

Therefore, passage from $\cC_H$ to $R_H$ preserves the category for the common Frobenius $\varphi_L$.
On the other hand, iteration from $\varphi_K$ to $\varphi_L$ corresponds to extension of representation coefficients from $\cO_K$ to $\cO_L$, and it is not an equivalence for $f>1$.

\newpage 
\appendix

\section{Ramified Witt vectors}\label{app:witt}
We recall the construction and basic properties of ramified Witt vectors, following \cite[\S 1.2]{Fargues2018Courbes}.

Let $K/\Qp$ be finite, with ring of integers $\cO_K$, uniformizer $\pi_K$, and residue field $\Fq$.
For each $n\ge0$, define the ramified Witt polynomial
\[w_n(X)=\sum_{i=0}^n \pi_K^i X_i^{q^{n-i}}\in\cO_K[X_0,\cdots,X_n].\]
There is a unique functor $W_{\pi_K}$ from $\cO_K$-algebras to $\cO_K$-algebras whose underlying set-valued functor is $R\mapsto R^\NN$ for which the ghost map
\[w_{\pi_K}:W_{\pi_K}(R)\to R^{\NN},\quad (a_i)_i\mapsto (w_n(a_0,\cdots,a_n))_n\]
is a natural $\cO_K$-algebra homomorphism, where the target has coordinatewise ring operations and the diagonal $\cO_K$-algebra structure.

\begin{lemma}[cf. {\cite[Lemma 1.2.1]{Fargues2018Courbes}}]\label{lem:Phi}
Let $\Phi(X,Y)\in \cO_K[X,Y]$.
There is a unique sequence
\[\Phi_i(X,Y)\in \cO_K[X_0,\cdots,X_i,Y_0,\cdots,Y_i],\quad i\ge0\]
such that, for every $n\ge0$,
\[\Phi(w_n(X),w_n(Y))=w_n(\Phi_0(X,Y),\cdots,\Phi_n(X,Y)).\]
\end{lemma}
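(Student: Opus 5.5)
We will prove Lemma \ref{lem:Phi} by the standard ghost-component argument of Witt, adapted to the ramified Witt polynomials, in the universal polynomial ring $A=\cO_K[X_0,X_1,\dots,Y_0,Y_1,\dots]$ and its localization $A[1/\pi_K]=K[X_i,Y_i]$. The defining identity
\[\Phi(w_n(X),w_n(Y))=\sum_{i=0}^{n}\pi_K^i\Phi_i^{q^{n-i}}\]
determines $\Phi_n$ recursively over $K$: assuming $\Phi_0,\dots,\Phi_{n-1}$ are already constructed, we set
\[\Phi_n=\pi_K^{-n}\Bigl(\Phi(w_n(X),w_n(Y))-\sum_{i=0}^{n-1}\pi_K^i\Phi_i^{q^{n-i}}\Bigr),\]
and this only involves the variables $X_0,\dots,X_n,Y_0,\dots,Y_n$. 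Since $\pi_K$ is a nonzerodivisor in $A$, this formula shows that the solution is unique even in $K[X,Y]$, hence uniqueness is immediate. The whole content of the lemma is therefore integrality: we must show $\Phi_n\in A$, i.e.\ that the bracketed expression is divisible by $\pi_K^n$ in $A$.

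For integrality we use the key congruence lemma (Dwork-type): if $a\equiv b\pmod{\pi_K^{k}A}$ with $k\ge1$, then $a^{q}\equiv b^{q}\pmod{\pi_K^{k+1}A}$. This follows by writing $a=b+\pi_K^k c$ and expanding $a^q$ binomially: the terms with $1\le j\le q-1$ carry the factor $\binom{q}{j}$, which is divisible by $p$ and hence by $\pi_K$, while the last term $\pi_K^{kq}c^q$ has $kq\ge k+1$. Iterating gives $a^{q^{m}}\equiv b^{q^{m}}\pmod{\pi_K^{k+m}}$. We also use the Frobenius-type observation that for any $F\in A$, $F(X_0^q,\dots,Y_0^q,\dots)\equiv F(X,Y)^q\pmod{\pi_K A}$; this holds because $\cO_K/\pi_K=\Fq$, so coefficients are fixed by $q$-th powering modulo $\pi_K$, and $q$-th powering is additive modulo $\pi_K$.

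Now argue by induction. Write $X^{(q)}=(X_0^q,X_1^q,\dots)$ and similarly for $Y$. A direct computation gives $w_n(X)=w_{n-1}(X^{(q)})+\pi_K^nX_n$, so $w_n(X)\equiv w_{n-1}(X^{(q)})\pmod{\pi_K^n}$, and since $\Phi$ has $\cO_K$-coefficients, $\Phi(w_n(X),w_n(Y))\equiv\Phi(w_{n-1}(X^{(q)}),w_{n-1}(Y^{(q)}))\pmod{\pi_K^n}$. By the defining identity at level $n-1$, the latter equals $\sum_{i=0}^{n-1}\pi_K^i\Phi_i(X^{(q)},Y^{(q)})^{q^{n-1-i}}$. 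By the Frobenius observation, $\Phi_i(X^{(q)},Y^{(q)})\equiv\Phi_i^q\pmod{\pi_K}$, and the congruence lemma upgrades this to $\Phi_i(X^{(q)},Y^{(q)})^{q^{n-1-i}}\equiv\Phi_i^{q^{n-i}}\pmod{\pi_K^{n-i}}$. Multiplying by $\pi_K^i$, each term is congruent modulo $\pi_K^n$ to $\pi_K^i\Phi_i^{q^{n-i}}$, which is exactly the required divisibility. The step needing the most care is this last bookkeeping: matching the exponents $q^{n-1-i}$ against $q^{n-i}$ and ensuring the correct power $\pi_K^{n-i}$ survives for every $i$. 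The integrality of $\Phi_i$ for $i<n$, provided by the induction hypothesis, is what lets the congruence lemma be applied in $A$.
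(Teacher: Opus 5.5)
Your proof is correct: uniqueness holds because $\pi_K$ is invertible in $K[X_i,Y_i]$, and integrality follows from $w_n(X)=w_{n-1}(X^{(q)})+\pi_K^nX_n$, the congruence $\Phi_i(X^{(q)},Y^{(q)})\equiv\Phi_i^q \pmod{\pi_K}$, and the lifting congruence $a\equiv b \pmod{\pi_K^k}\Rightarrow a^q\equiv b^q\pmod{\pi_K^{k+1}}$, starting from the trivial base case $\Phi_0=\Phi(X_0,Y_0)$. The paper gives no proof of its own and only cites Fargues--Fontaine; your induction is the standard Witt--Dwork ghost-component argument behind that reference.
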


Applying Lemma \ref{lem:Phi} to $\Phi(X,Y)=X+Y$ and $\Phi(X,Y)=XY$ gives the addition and multiplication polynomials in $\pi_K$-Witt coordinates.

Let $\pi_K$ and $\pi_K'$ be two uniformizers of $\cO_K$.
There is a  unique natural $\cO_K$-algebra isomorphism 
\[u_R:W_{\pi_K}(R)\isomto W_{\pi_K'}(R)\]
such that $w_{\pi_K'}\circ u_R=w_{\pi_K}$.
These comparison isomorphisms satisfy the cocycle condition, so they define a functor $W_{\cO_K}$ together with its ghost map, independently of the choice of the uniformizer.

The multiplicative Teichm\"{u}ller map 
\[R\to W_{\cO_K}(R),\quad r\mapsto [r]\]
is given in $\pi_K$-Witt coordinates by $[r]=(r,0,0,\cdots)$, independently of $\pi_K$, and has ghost coordinates $(r,r^q,r^{q^2},\cdots)$.

\underline{Frobenius.}
There is a unique natural $\cO_K$-algebra endomorphism of $W_{\cO_K}$ satisfying
\[w_n(\varphi_K(x))=w_{n+1}(x)\]
for $n\ge0$.
Equivalently, the ghost map intertwines $\varphi_K$ with the left shift $(a_i)_{i\ge0}\mapsto (a_{i+1})_{i\ge0}$, making the following diagram commute.
\begin{center}
    \begin{tikzcd}
    W_{\cO_K}(R)\arrow[r,"\varphi_K"]\arrow[d,"w"]&W_{\cO_K}(R)\arrow[d,"w"]\\
    R^{\NN}\arrow[r,"\textup{shift}"]&R^\NN
    \end{tikzcd}
\end{center}
We call $\varphi_K$ the \textit{Frobenius} endomorphism.
In $\pi_K$-Witt coordinates,
\[(\varphi_K(X))_i=X_i^q+\pi_K f_i(X_0,\cdots,X_{i+1})\]
for $i\ge0$, where $f_i\in\cO_K[X_0,\cdots,X_{i+1}]$ is weighted homogeneous of degree $q^{i+1}$ for the weights $\deg X_j=q^j$.

\underline{Verschiebung.}
For each uniformizer $\pi_K$, the \textit{Verschiebung} is the natural $\cO_K$-linear map
\[V_{\pi_K}:W_{\cO_K}(R)\to W_{\cO_K}(R)\]
given in $\pi_K$-Witt coordinates by
\[(a_0,a_1,\cdots)\mapsto (0,a_0,a_1,\cdots).\]
It is generally not an algebra homomorphism.
For every $\cO_K$-algebra $R$,
\[\varphi_KV_{\pi_K}=\pi_K\]
where the right-hand side denotes multiplication by $\pi_K$.
If the $\cO_K$-algebra structure on $R$ factors through $\Fq$, then also
\[V_{\pi_K}\varphi_K=\pi_K.\]
In this case, we have
\[\varphi_K((r_i)_i)=(r_i^q)\]
and multiplication by $\pi_K$ is given in $\pi_K$-Witt coordinates by 
\[(a_0,a_1,a_2,\cdots)\mapsto (0,a_0^q,a_1^q,\cdots).\]
\begin{lemma}\label{lem:pth_power_Witt}
Use the $\pi_K$-Witt coordinates associated with the chosen uniformizer of $K$.
For each $n\ge0$, there is a unique polynomial $f_n\in\Fq[X_0,\cdots,X_n]$ such that for every $\Fq$-algebra $R$ and $r=(r_0,r_1,\cdots)\in W_{\cO_K}(R)$, the $n$-th Witt coordinate of $r^p$ is $f_n(r_0,\cdots,r_n)$.
Moreover, $f_0=X_0^p$, $f_1=0$, and $f_n$ is independent of $X_n$ for every $n\ge2$.
\end{lemma}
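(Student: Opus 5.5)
The plan is to prove the lemma first for $p$-torsion-free lifts using ghost components, and then to descend to $\Fq$-algebras by universality. Work over the universal case: let $A=\cO_K[X_0,X_1,\cdots]$ and $x=(X_0,X_1,\cdots)\in W_{\cO_K}(A)$. The $\pi_K$-Witt coordinates of $x^p$ are polynomials $P_n(X_0,\cdots,X_n)\in\cO_K[X_0,\cdots,X_n]$, by Lemma \ref{lem:Phi} applied iteratively to multiplication. Define $f_n$ as the image of $P_n$ in $\Fq[X_0,\cdots,X_n]$. Naturality of $W_{\cO_K}$ then shows that for any $\Fq$-algebra $R$ and $r\in W_{\cO_K}(R)$, the $n$-th coordinate of $r^p$ is $f_n(r_0,\cdots,r_n)$. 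Uniqueness follows by taking $R=\Fq[X_0,\cdots,X_n]$ and $r=(X_0,\cdots,X_n,0,\cdots)$. Since $P_0=X_0^p$ (the zeroth ghost component is just $X_0$), we get $f_0=X_0^p$.

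It remains to show that $f_n$ is independent of $X_n$ for $n\ge1$, and that $f_1=0$. Here I would use the Verschiebung over an $\Fq$-algebra. Write $r=[r_0]+V_{\pi_K}(s)$ with $s=(r_1,r_2,\cdots)$. This decomposition is valid because Teichm\"{u}ller addition with a vector of zero zeroth coordinate is coordinatewise in that situation; one can check this via ghost components, since $w_n([a]+V y)=a^{q^n}+\pi_K w_{n-1}(y)$. Over an $\Fq$-algebra we have $V_{\pi_K}(y)=\pi_K\varphi_K^{-1}$-type expressions only when $R$ is perfect. To avoid this, I would reduce to perfect $R$: the universal ring $\Fq[X_i]$ embeds into its perfection, and an identity of polynomials can be checked after this injective extension. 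For perfect $R$, $V_{\pi_K}(y)=\pi_K\varphi_K^{-1}(y)$, so $r=[r_0]+\pi_K z$ where $z=\varphi_K^{-1}(s)$ has coordinates $r_{i+1}^{1/q}$.

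Expand by the binomial theorem:
\[r^p=[r_0]^p+\sum_{j=1}^{p}\binom{p}{j}[r_0]^{p-j}\pi_K^jz^j.\]
Each term with $1\le j\le p-1$ is divisible by $p\pi_K$, and the $j=p$ term is divisible by $\pi_K^p$. With $e=v_{\pi_K}(p)\ge1$, every correction term therefore lies in $\pi_K^{m}W_{\cO_K}(R)$ with $m=\min(e+1,p)\ge2$, since $p\ge2$. Because $\pi_K^m W=V_{\pi_K}^m\varphi_K^m W$ over perfect $R$, the correction is $V^2$ of something. More precisely, it is $\pi_K^2\cdot w$ with $w$ a polynomial expression in $[r_0]$ and $z$. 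The $k$-th coordinate of $\pi_K^2 w$ is $w_{k-2}^{q^2}$, which involves only the coordinates $r_0,\cdots,r_{k-1}$ of $r$, because the coordinates of $w$ up to index $k-2$ depend on those of $z$ up to $k-2$, that is on $r_1,\cdots,r_{k-1}$. Adding $[r_0]^p=(r_0^p,0,0,\cdots)$ to this only changes coordinate $0$, since the zeroth and first coordinates of $\pi_K^2w$ vanish, and the sum of a Teichm\"{u}ller vector with an element of $V(W)$ is coordinatewise. This yields $f_1=0$ and the claim that $f_n$ is independent of $X_n$ for $n\ge2$, after noting that the resulting formula is polynomial in the $r_i$ (the $q$-th roots cancel against $q^2$-powers) and hence descends from the perfection.

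The main obstacle is the bookkeeping in this last step. One must confirm that $\pi_K^m W\subset\pi_K^2W$ and that the coordinates of $\pi_K^2 w$ up to index $k$ depend only on $r_0,\cdots,r_{k-1}$. This requires checking that multiplication in Witt coordinates respects the filtration by index, i.e. the $i$-th coordinate of a product or sum depends only on coordinates $\le i$, which is standard from Lemma \ref{lem:Phi}.
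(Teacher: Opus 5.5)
Your proof is correct, and it takes a genuinely different route from the paper's.

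\textbf{The paper's route.} The paper never leaves characteristic zero. In the universal $\pi_K$-torsion-free ring it compares the ghost identity $w_n(X)^p=\sum_{i=0}^n\pi_K^i\Phi_i^{q^{n-i}}$ with $\bigl(\sum_{i<n}\pi_K^iX_i^{q^{n-i}}\bigr)^p$ modulo $\pi_K^{n+1}$. The only tool is the congruence $\alpha\equiv\beta \bmod \pi_K^{\ell}\Rightarrow \alpha^p\equiv\beta^p\bmod\pi_K^{\ell+1}$. Since the lifts $\widetilde f_i$ with $i<n$ do not involve $X_n$, this pins down $f_n$ as a function of $X_0,\ldots,X_{n-1}$. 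The case $n=1$ reads $\pi_K\widetilde f_1\equiv0\bmod\pi_K^2$, giving $f_1=0$.

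\textbf{Your route.} You work inside $W_{\cO_K}(R)$ for perfect $\Fq$-algebras $R$. Using $[a]+V_{\pi_K}(y)=(a,y_0,y_1,\ldots)$, $V_{\pi_K}\varphi_K=\pi_K$, and bijectivity of $\varphi_K$, you write $r=[r_0]+\pi_Kz$. The binomial theorem then gives $r^p\in[r_0^p]+\pi_K^2W_{\cO_K}(R)$, and the shift description of multiplication by $\pi_K$ finishes the argument.

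\textbf{Comparison.} The paper's argument is more economical: it needs nothing beyond Lemma \ref{lem:Phi}, and there is no passage to the perfection and back. Yours requires that detour, which you handle correctly through the injection of $\Fq[X_0,X_1,\ldots]$ into its perfection. In exchange, your argument is more explicit and slightly stronger.
\begin{itemize}
\item It gives $f_n(X_0,\ldots,X_n)=G_{n-2}(X_0^{q^2},X_1^q,\ldots,X_{n-1}^q)$ for a universal polynomial $G_{n-2}$ over $\Fq$, so $f_n$ is in fact a $q$-th power.
\item Running it with $\pi_K^m$, where $m=\min(e+1,p)$, in place of $\pi_K^2$ shows that coordinates $1,\ldots,m-1$ of $r^p$ vanish. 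It also shows that $f_n$ depends only on $X_0,\ldots,X_{n-m+1}$.
\end{itemize}
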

\begin{proof}
Lemma \ref{lem:Phi} applied to $\Phi(X)=X^p$ gives polynomials $\Phi_n(X)\in\cO_K[X_0,\cdots,X_n]$ satisfying 
\[(w_n(X_0,\cdots,X_n))^p=w_n(\Phi_0,\cdots,\Phi_n).\]
Reducing $\Phi_n$ modulo $\pi_K$ gives the required polynomial $f_n$, whereas uniqueness follows by evaluating the operation on the universal tuple in $\Fq[X_0,\cdots,X_n]$.
Since $w_0(X)=X_0$, we have $\Phi_0=X_0^p$ and therefore $f_0=X_0^p$.

Before we proceed with the proof, we prepare a standard approximation lemma.
Fix an integer $\ell\ge1$.
If $\alpha,\beta$ lie inside a polynomial ring over $\cO_K$, then
\begin{equation}\label{eq:little_lemma}
    \alpha\equiv\beta\bmod{\pi_K^\ell}\Rightarrow \alpha^p\equiv\beta^p\bmod{\pi_K^{\ell+1}}
\end{equation}
Put $\delta=\beta-\alpha\in\pi_K^\ell\cO_K[X]$.
For $1\le j<p$, the term $\binom{p}{j}\alpha^{p-j}\delta^j$ is divisible by $\pi_K^{\ell+1}$, so $\beta^p\equiv \alpha^p+\delta^p\bmod{\pi_K^{\ell+1}}$.
Since $p \ell\ge\ell+1$, the term $\delta^p$ also vanishes modulo $\pi_K^{\ell+1}$.
This proves (\ref{eq:little_lemma}).

By definition of the polynomials $\Phi_n$, 
\[w_n(X)^p=w_n(\Phi_0,\cdots,\Phi_n)=\sum_{i=0}^n \pi_K^i(\Phi_i(X))^{q^{n-i}}.\]
Choose a polynomial lift $\widetilde{f}_i\in\cO_K[X_0,\cdots,X_i]$ of each $f_i$.
Repeated application of (\ref{eq:little_lemma}) gives
\[\pi_K^i(\Phi_i(X))^{q^{n-i}}\equiv \pi_K^i(\widetilde{f}_i)^{q^{n-i}}\bmod{\pi_K^{n+1}}.\]
For $n\ge1$, removing the summand $\pi_K^nX_n$ changes $w_n(X)$ by an element of $\pi_K^n\cO_K[X]$, so (\ref{eq:little_lemma}) gives 
\[(w_n(X))^p\equiv \left(\sum_{i=0}^{n-1}\pi_K^iX_i^{q^{n-i}}\right)^p\bmod{\pi_K^{n+1}}.\]
Therefore, for $n\ge1$,
\begin{equation}\label{eq:witt}
    \sum_{i=0}^n \pi_K^i(\widetilde{f}_i)^{q^{n-i}}\equiv \left(\sum_{i=0}^{n-1}\pi_K^iX_i^{q^{n-i}}\right)^p\bmod{\pi_K^{n+1}}.
\end{equation}
Therefore, $f_n$ is independent of $X_n$.
For $n=1$, the congruence reduces to
\[X_0^{pq}+\pi_K\widetilde{f}_1\equiv X_0^{pq}\bmod{\pi_K^2}\]
hence $f_1=0$.
\end{proof}

\newpage
\section{An obstruction to coefficient-linear actions in one variable}\label{app:onevariable}
Berger proved that a coefficient-linear lift of the field of norms satisfying the finite-height condition forces $\Gamma$ to be abelian \cite{Berger2014Lifting}.
We isolate the formal power series argument underlying this obstruction.
\begin{assumption}\label{assum:ps}
Let $A$ be a complete discrete valuation ring of characteristic zero, with maximal ideal $\frakm_A$, and let $\Gamma$ be an infinite set.
Suppose that $P(T)\in A[[T]]_0\setminus\{0\}$ satisfies $P'(0)\in\frakm_A$ and that $\{F_{\gamma}(T)\in A[[T]]_0\}_{\gamma\in\Gamma}$ is a family of pairwise distinct power series, each invertible under composition, such that
\[P\circ F_{\gamma}=F_{\gamma}\circ P\] 
for every $\gamma\in\Gamma$.
\end{assumption}

\begin{proposition}\label{prop:injectivity}
Under Assumption \ref{assum:ps}, the map 
\[\Gamma\to A,\quad\gamma\mapsto F_{\gamma}'(0)\]
is injective.
In particular, an infinite group acting faithfully on $A\llbracket T\rrbracket$ by $A$-algebra automorphisms fixing the origin and commuting with $T\mapsto P(T)$ must be abelian.
\end{proposition}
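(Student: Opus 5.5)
Suppose $\gamma,\delta\in\Gamma$ satisfy $F_\gamma'(0)=F_\delta'(0)$. The plan is to derive a contradiction from $F_\gamma\neq F_\delta$. Both series are invertible under composition, so I would set $G=F_\delta^{-1}\circ F_\gamma$. Then $G'(0)=1$, and $G$ commutes with $P$, since each factor does. It therefore suffices to prove the following claim: \emph{a series $G(T)=T+\cdots\in A\llbracket T\rrbracket_0$ commuting with $P$ must equal $T$.} For the group-theoretic statement I would apply the first part to the family $F_\gamma=\gamma(T)$. Faithfulness gives pairwise distinct series. Composition is contravariant in the action, so $\gamma\mapsto F_\gamma'(0)$ is an anti-homomorphism $\Gamma\to A^\times$ into a commutative group. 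Being injective, it forces $\Gamma$ to be abelian.

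To prove the claim, write $P(T)=aT^m+O(T^{m+1})$ with $a\neq0$ and $m\ge1$, which is possible since $P\neq0$. I split into cases according to $m$.

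\textbf{Case $m=1$.} Here $a=P'(0)\in\frakm_A\setminus\{0\}$, so $a$ is neither zero nor a root of unity. Suppose $G\neq T$ and write $G(T)=T+cT^k+O(T^{k+1})$ with $k\ge2$ and $c\neq0$. Comparing the $T^k$-coefficients of $P\circ G$ and $G\circ P$ gives $ac=ca^k$ plus contributions from the coefficients of $P$. A cleaner route is to linearize first. Since $a^j\neq a$ for $j\ge2$ and $A$ is a domain of characteristic zero, the standard Koenigs argument over $\Frac(A)$ gives a unique series $u(T)=T+\cdots$ with $u\circ P\circ u^{-1}=aT$. Conjugating, $\widetilde G=u\circ G\circ u^{-1}$ commutes with $aT$. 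Coefficientwise this means $\widetilde g_j a^j=a\widetilde g_j$, so $\widetilde g_j=0$ for $j\ge2$. Since also $\widetilde g_1=1$, we get $\widetilde G=T$ and hence $G=T$.

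\textbf{Case $m\ge2$.} I would argue directly by comparing lowest-order terms. Suppose $G=T+cT^k+O(T^{k+1})$ with $k\ge2$ and $c\neq0$. The error term in $P\circ G$ comes from the leading term of $P$: $P(G)-P(T)=amcT^{m+k-1}+O(T^{m+k})$, using $(T+cT^k)^m=T^m+mcT^{m+k-1}+\cdots$. The error term in $G\circ P$ is $G(P)-P=cP^k+\cdots=ca^kT^{mk}+O(T^{mk+1})$. Since $m\ge2$ and $k\ge2$, we have $mk>m+k-1$, because $(m-1)(k-1)>0$. Equating $P\circ G=G\circ P$ therefore forces $amc=0$, and characteristic zero gives $c=0$, a contradiction.

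The main obstacle I expect is case $m=1$. There the naive coefficient comparison involves the higher coefficients of $P$, and I rely on the Koenigs linearization. Its recursive solvability needs $a^j-a\neq0$ for every $j\ge2$. This holds because $0<|a|<1$, and it is exactly where the hypothesis $P'(0)\in\frakm_A$ enters. The linearizing series may have denominators, so the argument should be run in $\Frac(A)\llbracket T\rrbracket$; this is harmless because the claim $G=T$ is coefficientwise. Infiniteness of $\Gamma$ is not needed for injectivity itself.
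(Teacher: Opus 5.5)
Your proof is correct, but it takes a different route from the paper's.

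The paper uses the infiniteness of $\Gamma$ first. Its leading-degree lemma shows that if $P$ has order $k\ge 2$, then $\Comm_K(P)^\times$ injects into $\mu_{k-1}(K)$ and so is finite. Infinitely many $F_\gamma$ therefore force $P'(0)\neq 0$. The paper then cites Lubin's result that the derivative map is injective on the centralizer of a stable series.

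You instead reduce to the tangent-to-identity series $G=F_\delta^{-1}\circ F_\gamma$ commuting with $P$, and treat both cases directly. For order $m\ge 2$, your lowest-order comparison is exactly the computation in the paper's lemma, and it already gives injectivity rather than just finiteness. For $m=1$, you replace the citation of Lubin with a Koenigs linearization over $\Frac(A)$.

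Your route is self-contained. It also shows, correctly, that the hypothesis that $\Gamma$ is infinite plays no role in the injectivity. You also spell out the passage to the group statement, which the paper leaves implicit: $F_{\gamma\delta}=F_\delta\circ F_\gamma$, so $\gamma\mapsto F_\gamma'(0)$ is an injective homomorphism into the abelian group $A^\times$. The paper's route is shorter only because it outsources the $m=1$ case.

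One remark: your worry that the naive comparison for $m=1$ is contaminated by the higher coefficients of $P$ is unfounded. If you compare differences rather than raw coefficients, you get $P(G)-P=acT^k+O(T^{k+1})$ and $G(P)-P=ca^kT^k+O(T^{k+1})$, hence $c(a-a^k)=0$. In general the two differences have leading terms $macT^{m+k-1}$ and $ca^kT^{mk}$. So one leading-term computation handles every $m\ge1$ uniformly, and the Koenigs detour, while valid, is unnecessary.
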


The rest of this appendix is devoted to the proof of Proposition \ref{prop:injectivity}.
Let $K$ be a field, and let $f(T)\in K\llbracket T\rrbracket_0$.
Define
\[\Comm_K(f)=\{g(T)\in K\llbracket T\rrbracket_0:f\circ g=g\circ f\}\]
and write $\Comm_K(f)^\times$ for the subgroup consisting of series with $g'(0)\neq0$.

\begin{proposition}[{\cite[Proposition 1.1]{Lubin1994NonArchimedean}}]\label{prop:stable}
Let $f(T)\in K\llbracket T\rrbracket_0$.
Suppose that $a=f'(0)$ is nonzero and is not a root of unity\footnote{These two conditions say that $f(T)$ is stable in Lubin's terminology \cite{Lubin1994NonArchimedean}}.
The map 
\[\Comm_K(f)\to K,\quad g(T)\mapsto g'(0)\]
is injective.
\end{proposition}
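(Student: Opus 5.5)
We need to prove Lubin's Proposition: if $f'(0)=a$ is nonzero and not a root of unity, then any $g\in\Comm_K(f)$ is determined by $g'(0)$.

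The plan is to compare two commuting series with the same derivative and show, degree by degree, that their coefficients agree. Suppose $g,\tilde g\in\Comm_K(f)$ have $g'(0)=\tilde g'(0)=b$. If $g\neq\tilde g$, let $m\ge2$ be the least degree at which they differ, and write
\[\tilde g(T)=g(T)+cT^m+O(T^{m+1}),\qquad c\neq0.\]

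We then expand both sides of the commuting relation $f\circ\tilde g=\tilde g\circ f$ to degree $m$.

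On the left, $f(\tilde g)=f(g+cT^m+\cdots)$. Since $g$ has zero constant term, a Taylor expansion in the perturbation gives
\[f(\tilde g)=f(g)+f'(g)\,cT^m+O(T^{m+1}).\]
The factor $f'(g)$ has constant term $a$, so the left side is $f(g)+acT^m+O(T^{m+1})$.

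On the right, $\tilde g(f)=g(f)+c f^m+O(f^{m+1})$. Because $f=aT+O(T^2)$, this equals $g(f)+ca^mT^m+O(T^{m+1})$.

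Now subtract the identity $f\circ g=g\circ f$. The degree-$m$ coefficients give $ac=a^mc$, and since $a\neq0$ and $c\neq0$ this forces $a^{m-1}=1$. That contradicts the assumption that $a$ is not a root of unity, because $m-1\ge1$. Hence $g=\tilde g$, and the map $g\mapsto g'(0)$ is injective on all of $\Comm_K(f)$.

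No separate case is needed for $b=0$: the argument never uses that $b\neq0$, since it only tracks the first degree where the two series differ.

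The step needing most care is justifying that the higher-order terms in both expansions really lie in $O(T^{m+1})$. On the left, this holds because $(cT^m+\cdots)^2$ has order at least $2m\ge m+1$. On the right, it holds because $f^{m+1}$ has order at least $m+1$. Both are routine order counts.
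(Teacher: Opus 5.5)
Your argument is correct and complete. Comparing the $T^m$-coefficients of $f(\tilde g)-f(g)$ and $(\tilde g-g)\circ f$ at the first degree $m\ge 2$ where the two series differ gives $ac=a^mc$, hence $a^{m-1}=1$; nothing beyond $a\neq 0$ and the order counts you list is used, and in particular no hypothesis on $\mathrm{char}\,K$ is needed.

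The paper gives no proof of its own here and imports the result from Lubin. Your first-differing-coefficient comparison is a standard self-contained proof of it. It is the order-one analogue of the leading-degree comparison of $f(g+\Delta)-f(g)$ with $\Delta(f)$ that the paper uses in Lemma~\ref{lem:deg2_finiteneness}: there $f$ has order $k\ge 2$ and one compares orders, whereas here the orders agree and one compares leading coefficients.
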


If $\mathrm{char}(K)=0$ and $f\neq0$, the existence of infinitely many invertible elements of $\Comm_K(f)$ forces $f'(0)\neq0$.
The following lemma abstracts the leading-degree argument used in the proof of \cite[Lemma 4.5]{Berger2014Lifting}.
\begin{lemma}\label{lem:deg2_finiteneness}
Let $K$ be a field of characteristic zero.
Let $f(T) \in K\llbracket T\rrbracket_0\setminus\{0\}$ have order $k\ge2$.
The derivative at the origin defines an injective group homomorphism
\[\Comm_K(f)^\times\to \mu_{k-1}(K),\quad g\mapsto g'(0)\]
Therefore, 
\[\#\Comm_K(f)^\times\le k-1.\]
\end{lemma}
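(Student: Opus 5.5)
We compare leading terms in the identity $f\circ g=g\circ f$ for $g\in\Comm_K(f)^\times$. Write $f(T)=cT^k+(\text{higher order})$ with $c\neq0$ and $k\ge2$, and $g(T)=bT+(\text{higher order})$ with $b=g'(0)\neq0$. The lowest-order term of $f(g(T))$ is $cb^kT^k$, and that of $g(f(T))$ is $bcT^k$. Equating them gives $b^{k}=b$, so $b^{k-1}=1$. Hence the map $g\mapsto g'(0)$ lands in $\mu_{k-1}(K)$. It is a group homomorphism because $(g_1\circ g_2)'(0)=g_1'(0)g_2'(0)$, and the group law on $\Comm_K(f)^\times$ is composition. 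We first check that $\Comm_K(f)^\times$ really is a group: compositional inverses of series commuting with $f$ again commute with $f$.

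The main step is injectivity. Since the map is a homomorphism, it suffices to show that any $g\in\Comm_K(f)$ with $g'(0)=1$ equals $T$.

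Suppose $g\neq T$, and write $g(T)=T+aT^m+O(T^{m+1})$ with $a\neq0$ and $m\ge2$. Expand $f(g(T))=f(T+aT^m+\cdots)$. By Taylor expansion this equals
\[f(T)+f'(T)\,aT^m+(\text{terms of order}\ge k+2m-2).\]
Here $f'(T)aT^m$ has leading term $kcaT^{k+m-1}$. Because $K$ has characteristic zero, $kca\neq0$, and this is the first place the characteristic hypothesis enters.

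On the other side, $g(f(T))=f(T)+af(T)^m+\cdots$, and $af(T)^m$ has order $km$. We compare the difference $f\circ g-g\circ f$ at order $k+m-1$. The $g\circ f$ side contributes nothing below order $km$. Since $k\ge2$ and $m\ge2$, we have $km>k+m-1$, because $(k-1)(m-1)>0$. The $f\circ g$ side also contributes no other terms below order $k+m$, since $k+2m-2\ge k+m$.

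So the coefficient of $T^{k+m-1}$ in $f\circ g-g\circ f$ is $kca\neq0$, contradicting commutativity. Therefore $g=T$, and the map is injective.

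Finally, $\mu_{k-1}(K)$ has at most $k-1$ elements, since $X^{k-1}-1$ has at most $k-1$ roots in a field. This gives the bound $\#\Comm_K(f)^\times\le k-1$.

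The only delicate point is the order bookkeeping in the injectivity step. In particular, one must check that the higher Taylor terms $f^{(j)}(T)(aT^m)^j/j!$ with $j\ge2$ have order at least $k-j+jm\ge k+m$. This is precisely where both $k\ge2$ and characteristic zero are needed.
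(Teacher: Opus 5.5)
Your proof is correct and is the same argument as the paper's. Comparing $T^k$-coefficients gives $g'(0)^{k-1}=1$, and injectivity comes from the order mismatch between $k+m-1$ (from $f'\cdot(g-T)$, nonzero since $\mathrm{char}(K)=0$) and $km$ (from $(g-T)\circ f$), using $(k-1)(m-1)>0$; the only cosmetic difference is that you reduce to the kernel via the homomorphism property, whereas the paper directly compares two commuting series $g$ and $g+\Delta$ with the same derivative through $f(g+\Delta)-f(g)=\Delta\circ f$.
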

\begin{remark}
\begin{enumerate}
\item The restriction to invertible elements is essential for its finiteness conclusion.
For example, $f(T)=T^2$ commutes with $g_d(T)=T^d$ for every $d\ge1$, so its full centralizer is infinite.

\item 
The characteristic zero hypothesis is also crucial.
If $K$ is a field of characteristic $p>0$, then every $g(T)\in\Fp\llbracket T\rrbracket_0$ commutes with $f(T)=T^p$.
\end{enumerate}
\end{remark}
\begin{proof}[Proof of Lemma \ref{lem:deg2_finiteneness}]
Write $f(T)=c_kT^k+\cdots$ with $c_k\neq0$, and let $g(T)=\lambda T+\cdots\in \Comm_K(f)^\times$, so that $\lambda\neq0$.
Comparing coefficients of $T^k$ in $f\circ g=g\circ f$ gives $c_k\lambda^k=c_k\lambda$, so $\lambda^{k-1}=1$.

Suppose that $g$ and $g+\Delta$ are two elements of $\Comm_K(f)^\times$ with the same derivative.
If $\Delta\neq0$, then $\Delta(T)=bT^d+\cdots$ with $b\neq0$ and $d\ge2$.
Commutativity gives
\[f(g+\Delta)-f(g)=\Delta(f).\]
The left-hand side has order $k-1+d$ by the Taylor expansion, whereas the right-hand side has order $kd$.
Therefore, $k-1+d=kd$, contradicting $(k-1)(d-1)>0$.
Therefore, $\Delta=0$.
\end{proof}

We obtain the following immediate consequence.
\begin{corollary}\label{cor:infinite_inv_comm_then_linear}
Let $K$ be a field of characteristic zero.
If $f(T)\in K\llbracket T\rrbracket_0\setminus\{0\}$ and $\Comm_K(f)^\times$ is infinite, then $f'(0)\neq0$.
\end{corollary}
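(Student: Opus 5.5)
We have to prove Corollary \ref{cor:infinite_inv_comm_then_linear}: if $f\neq 0$ in $K\llbracket T\rrbracket_0$, $\mathrm{char}(K)=0$, and $\Comm_K(f)^\times$ is infinite, then $f'(0)\neq0$. The plan is a short argument by contradiction that reduces everything to Lemma \ref{lem:deg2_finiteneness}.

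Suppose instead that $f'(0)=0$. Since $f$ is nonzero and has no constant term, it has a well-defined order $k=\mathrm{ord}_T(f)$. The assumption $f'(0)=0$ means the linear coefficient vanishes, so $k\ge2$.

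This is exactly the situation of Lemma \ref{lem:deg2_finiteneness}. That lemma shows that $g\mapsto g'(0)$ embeds $\Comm_K(f)^\times$ into $\mu_{k-1}(K)$, so $\#\Comm_K(f)^\times\le k-1<\infty$. This contradicts the hypothesis that $\Comm_K(f)^\times$ is infinite, and therefore $f'(0)\neq0$.

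No step is a real obstacle. The only things to check are that the nonvanishing of $f$ is used to give it a finite order $k$, and that the characteristic zero hypothesis needed by Lemma \ref{lem:deg2_finiteneness} is among our assumptions. Both hold directly from the statement.
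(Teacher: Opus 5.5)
Your proposal is correct and follows the paper's approach exactly: the paper states this as an immediate consequence of Lemma~\ref{lem:deg2_finiteneness}, and you argue by contradiction in the same way. If $f'(0)=0$, then the nonzero series $f$ has order $k\ge2$, so the lemma bounds $\#\Comm_K(f)^\times$ by $k-1$, which contradicts the assumption that $\Comm_K(f)^\times$ is infinite.
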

We now prove the claimed assertion.
\begin{proof}[Proof of Proposition \ref{prop:injectivity}]
Applying Corollary \ref{cor:infinite_inv_comm_then_linear} over $K=\Frac(A)$ gives $P'(0)\neq0$.
Since $P'(0)\in\frakm_A\setminus\{0\}$, it is not a root of unity.
Proposition \ref{prop:stable} therefore makes the derivative map
\[\Comm_{K}(P)\to K,\quad g\mapsto g'(0)\]
injective.
Restricting to the distinct series $F_\gamma$ proves the assertion.
\end{proof}

\newpage
\bibliographystyle{plain}
\bibliography{ref}
\end{document}